\documentclass[11pt,a4paper]{amsart}
\usepackage[a4paper,margin=25mm]{geometry}
\usepackage[utf8]{inputenc}
\usepackage[expansion=false]{microtype}
\usepackage{amsmath,amssymb,amsthm,mathtools}
\usepackage{enumitem}
\usepackage{xcolor}
\usepackage[colorlinks=true,linkcolor=blue,citecolor=blue,urlcolor=blue]{hyperref}
\hypersetup{pdftitle={Iterates of Ritt operators close to the identity},pdfauthor={Catalin Badea}}

\allowdisplaybreaks

\theoremstyle{plain}
\newtheorem{theorem}{Theorem}[section]
\newtheorem{proposition}[theorem]{Proposition}
\newtheorem{lemma}[theorem]{Lemma}
\newtheorem{corollary}[theorem]{Corollary}
\newtheorem*{theoremA}{Theorem A}
\newtheorem*{theoremB}{Theorem B}
\newtheorem*{theoremC}{Theorem C}
\newtheorem*{theoremD}{Theorem D}
\newtheorem*{theoremE}{Theorem E}
\theoremstyle{definition}
\newtheorem{definition}[theorem]{Definition}
\newtheorem{example}[theorem]{Example}

\theoremstyle{remark}
\newtheorem{remark}[theorem]{Remark}
\newtheorem*{remarks}{Remarks}

\numberwithin{equation}{section}

\newcommand{\D}{\mathbb D}
\newcommand{\T}{\mathbb T}
\newcommand{\C}{\mathbb C}
\newcommand{\N}{\mathbb N}
\newcommand{\Z}{\mathbb Z}

\newcommand{\B}{\mathcal B}
\newcommand{\CR}{C_{\mathrm R}}
\newcommand{\dist}{\operatorname{dist}}
\newcommand{\Id}{I}
\newcommand{\ol}{\overline}
\newcommand{\bn}{\mathbf n}
\newcommand{\Ritt}[1]{\mathrm{Ritt}_{#1}}

\newcommand{\Res}{R}
\newcommand{\od}{\Delta}            
\newcommand{\arc}{\Gamma}           
\newcommand{\zR}{Z}                 

\title[Iterates of Ritt operators close to the identity]{Iterates of Ritt operators\\ close to the identity}
\author[C. Badea]{Catalin Badea}
\address{Department of Mathematics and Statistics, University of Reading,
Reading, United Kingdom and 
UMR 8524 -- Laboratoire Paul Painlev\'e, Universit\'e de Lille,
Lille, France}
\email{c.badea@reading.ac.uk ; cbadea@univ-lille.fr}
\subjclass[2020]{47A10, 47A35, 47A60, 37A44}
\keywords{Ritt operators, power bounded operators, Beurling--Kato theorem, Jamison sequences, resolvent estimates, uniformly convex spaces}
\date{22 September 2026}

\begin{document}

\begin{abstract}
We study bounded operators whose prescribed powers remain uniformly close to the
identity, and resolvent conditions sampled along sequences of powers.
The displacement bound $\sup_{n\ge1}\|I-T^n\|\le q<2$ forces a finite peripheral
spectrum consisting of odd-order roots of unity and makes an explicitly
determined odd power of $T$ a Ritt operator. The sharp unconditional
threshold for $T$ itself is $\sqrt3$. If $\Phi_*(q)$ denotes the optimal
universal Ritt resolvent bound below this threshold, then
$\Phi_*(q)=2/(\sqrt3-q)+O(1)$ as $q\uparrow\sqrt3$.
At every higher peripheral threshold the optimal finite-peripheral
resolvent bound has reciprocal order of growth. We also obtain constructive
bounds for general weighted Wiener symbols.
For each $1<q<2$, an angular escape invariant characterises exactly the
sampling sequences for which a displacement bound by $q$, together with
the peripheral spectral condition, forces the Ritt property. We compute
this invariant for asymptotically geometric sequences. At the endpoint
$q=1$, suitable phase conditions imply the full displacement bound without
assuming power boundedness; normal contractions admit an exact scalar
criterion. Finally, we establish quantitative Hilbert-space and $L^p$
resolvent estimates, a characterisation of operators with odd Ritt powers on uniformly
convex spaces, and strict norm gaps for operator-valued disc-algebra
functions, with explicit matrix-polynomial bounds from Fej\'er--Riesz
factorisation.
\end{abstract}

\maketitle
\setcounter{tocdepth}{1}
\tableofcontents

\section{Introduction}\label{sec:intro}

Let $X$ be a complex Banach space and $T\in\mathcal B(X)$. The bounded linear operator $T$ is called a \emph{Ritt operator} if its spectrum $\sigma(T)$ is contained in the closed unit disc $\overline{\mathbb D}$ and its Ritt constant is finite:
\begin{equation}\label{eq:rittdef}
\CR(T):=\sup_{|\lambda|>1}|\lambda-1|\,\|(\lambda-T)^{-1}\|<\infty .
\end{equation}
Ritt operators are the discrete-time counterparts of 
sectorially bounded holomorphic $C_0$-semigroups.
It is known that $T$ is a Ritt operator if and only if $T$ is power bounded and
$$
\sup_{n\ge1} n\|T^n-T^{n-1}\|<\infty;
$$
see \cite{Lyubich,NagyZemanek,Nevanlinna}. Equivalently, $T$ is Ritt if and only if
$$
\sigma(T)\subset\mathbb D\cup\{1\}
$$
and $\Id-T$ is sectorial of angle strictly less than $\pi/2$.

Ritt operators also appear under the name \emph{analytic operators} in \cite{CSC}, apparently independently of the operator-theoretic terminology. We refer to the monograph \cite{LeMerdy} for a comprehensive account of the theory.

The classical Beurling--Kato theorem \cite{Beurling,Kato} says that a $C_0$-semigroup $(e^{-tA})_{t\ge0}$ is holomorphic as soon as $\limsup_{t\downarrow0}\|\Id-e^{-tA}\|<2$. Borichev, Gomilko and Tomilov \cite{BGT} recently developed a discrete Beurling--Kato theory for the powers $(T^n)_{n\ge0}$ of a Ritt operator. Two of their results are the starting points of the present paper. The first is a \emph{discrete Kato criterion}: if $T$ is power bounded, $\sigma(T)\cap\T\subset\{1\}$, and
\begin{equation}\label{eq:onepoint}
  \sup_{n\ge1}\|(\zeta-T^n)^{-1}\|<\infty\qquad\text{for one point }\zeta\in\T\setminus\{1\},
\end{equation}
then $T$ is a Ritt operator; conversely, if $T$ is Ritt, then \eqref{eq:onepoint} holds for every $\zeta\in\T\setminus\{1\}$ \cite[Corollary~7.8]{BGT}. Applied with $\zeta=-1$ and using Neumann's series for the inverse, this gives the discrete zero--two law: if $\sigma(T)\subset\D\cup\{1\}$ and $\sup_{n\ge1}\|\Id-T^n\|<2$, then $T$ is Ritt \cite[Corollary~6.1]{BGT}.
This zero--two law has a converse for contractions acting on uniformly convex spaces: if $X$ is uniformly convex and $T$ is a contractive Ritt operator, then $\sup_{n\ge1}\|\Id-T^n\|<2$ \cite[Theorem~7.9]{BGT}. This is a discrete counterpart of a result due to Pazy for
strongly continuous semigroups on Banach spaces, see \cite[Ch. 2.5, Cor. 5.8]{Pazy}. Borichev, Gomilko and Tomilov also show that \eqref{eq:onepoint} cannot in general be tested along a subsequence $(n_k)$ with $\sup_kn_k/n_{k-1}=\infty$; see \cite[Proposition~6.8]{BGT}. This raises the question of which sampling
sequences preserve the Ritt criterion\footnote{This question has been explicitely asked at page $51$ in the v1 version of \cite{BGT} available on arXiv and during personal communications with Yuri Tomilov.}.

The aim of the present paper is to investigate these sampling questions and determine what the \emph{displacement condition}
\begin{equation}\label{eq:displacement}
  \sup_{n\ge1}\|\Id-T^n\|\le q<2
\end{equation}
says about $T$ when no spectral assumption is made. The case $q<1$ is special: \(\sup_{n\ge1}\|I-T^n\|\le q<1\) implies $T=I$. We call $\|\Id-T^n\|$ the \emph{displacement} of $T^n$ from the identity. Also, for each fixed \(1<q<2\), we characterise the sequences \(\mathcal N\) of positive integers for which the conditions \(\sup_{n\in\mathcal N}\|I-T^n\|\le q\) and \(\sigma(T)\cap\mathbb T\subset\{1\}\) imply that \(T\) is Ritt, for every power bounded operator \(T\) on every complex Banach space. Results of this type bear some analogy with those concerning Jamison sequences; see \cite{BDG,BadeaGrivaux07,BadeaGrivaux17,BGSurvey,EisnerGrivaux,RansfordRoginskaya}.  

The following five theorems describe the main results; the notation is explained after their statements and in Section~\ref{sec:prelim}.

\subsection*{All powers}
Condition \eqref{eq:displacement} alone does not imply the Ritt property: the scalar operator $T=e^{2\pi i/3}\Id$ satisfies $\sup_n\|\Id-T^n\|=\sqrt3$. It turns out that this is the only kind of obstruction. For $\lambda\in\T$ let $\od(\lambda):=\sup_{n\ge1}|\lambda^n-1|$ be the \emph{diameter of the orbit} of $\lambda$ seen from $1$. For $q\in[0,2)$ let
\[
   E_q:=\{1\}\cup\{\lambda\in\T:\od(\lambda)\le q\}.
\]
The set $E_q$ is the finite set consisting of $1$ and the roots of unity of odd order $m$ with $2\cos\frac{\pi}{2m}\le q$ (Lemma~\ref{lem:orbit}), and we let $N(q)$ be the least common multiple of the orders of its elements, an odd integer. For a finite nonempty $E\subset\T$, $T$ is said to be a $\Ritt{E}$ operator, in the sense of Bouabdillah and Le Merdy \cite{BLM}, if $\sigma(T)\subset\ol\D$ and $K_E(T):=\sup_{|\lambda|>1}\dist(\lambda,E)\|(\lambda-T)^{-1}\|<\infty$.

We write $M(T):=\sup_{n\ge0}\|T^n\|$ for the power bound of $T$.

\begin{theoremA}
Let $T\in\B(X)$ satisfy \eqref{eq:displacement} with $1\le q<2$. Then:
\begin{enumerate}[label=\textup{(\roman*)}]
\item $\sigma(T)\cap\T\subset E_q$, and $T$ is a Ritt operator if and only if $\sigma(T)\cap\T\subset\{1\}$;
\item $T$ is a $\Ritt{E_q}$ operator with $K_{E_q}(T)\le\Phi(q)$, where $\Phi(q)$ depends only on $q$; consequently $T^{N(q)}$ is a Ritt operator with $\CR(T^{N(q)})\le N(q)\Phi(q)$;
\item for $L\ge1$, the implication ``\eqref{eq:displacement} $\Rightarrow$ $T^L$ is Ritt'' holds on every Banach space if and only if $N(q)$ divides $L$;
\item if $q<\sqrt3$, then $E_q=\{1\}$, $T$ is a Ritt operator, and
\[
\CR(T)\le\frac{2\sqrt3 M(T)}{\sqrt3-q}
          \le\frac{6+2\sqrt3}{\sqrt3-q}.
\]
Define the optimal universal bound by
\begin{equation}\label{eq:optimalPhi}
 \Phi_*(q):=\sup\left\{\CR(S):
 \begin{array}{l}
 Y\text{ a complex Banach space},\quad S\in\B(Y),\\
 \displaystyle\sup_{n\ge1}\|\Id-S^n\|\le q
 \end{array}\right\},
 \qquad 1\le q<\sqrt3.
\end{equation}
Then $\Phi_*(q)<\infty$, the threshold $\sqrt3$ is sharp, and
\[
 \Phi_*(q)=\frac{2}{\sqrt3-q}+O(1)
 \qquad(q\uparrow\sqrt3).
\]
\end{enumerate}
\end{theoremA}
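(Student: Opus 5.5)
The plan is to derive everything from two scalar/operator facts: the spectral mapping theorem for powers, and a Neumann-series resolvent identity built from the displacement bound. By \eqref{eq:displacement}, $M(T)\le 1+q<3$. If $\lambda\in\sigma(T)\cap\T$, then $\lambda^n\in\sigma(T^n)$, so $|1-\lambda^n|\le\|I-T^n\|\le q$ for all $n$, i.e.\ $\od(\lambda)\le q$; Lemma~\ref{lem:orbit} then gives $\lambda\in E_q$. This proves the first half of (i). In (i) the ``only if'' direction is standard, since a Ritt operator has $\sigma(T)\subset\D\cup\{1\}$. The ``if'' direction is the BGT zero--two law \cite[Corollary~6.1]{BGT}, which applies because $q<2$ and $T$ is power bounded.

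For (ii) I would prove a quantitative local resolvent estimate near each $\mu\in E_q$ and far from $E_q$. Write $\lambda=r\mu e^{i\theta}$ with $r>1$. The idea is to pick $n$, depending on $\lambda$, such that $\lambda^n$ lies near $-1$ in a controlled way, namely $|1+\lambda^n|\ge 2-\text{(small)}$ with margin $2-q-\epsilon$. Then $\zeta-T^n=(\zeta-1)+(I-T^n)$ is inverted by a Neumann series with norm at most $1/(|\zeta-1|-q)$. From this, $(\lambda-T)^{-1}=(\lambda^n-T^n)^{-1}\sum_{k<n}\lambda^{n-1-k}T^k$ gives $\|(\lambda-T)^{-1}\|\lesssim nM r^n/(|\lambda^n-1|-q)$.

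The choice of $n$ must make $n\approx c/\dist(\lambda,E_q)$. Near $\mu$ of odd order $m$ one has $\lambda^{m}\approx (re^{i\theta})^m$ with $\mu^m=1$. Choosing $n=mj$ with $j\theta m\approx\pi$ places $\lambda^n$ near $-r^n$, giving $|\lambda^n-1|\ge 1+r^n>q$ robustly. One must then check that a suitable $j$ exists in every angular range away from the other points of $E_q$. This covering argument, compactness over the annulus region $1<|\lambda|\le 2$ combined with the trivial bound for $|\lambda|\ge2$, yields a constant $\Phi(q)$. I expect this step to be the main technical obstacle, because points of $\T$ outside $E_q$ satisfy $\od>q$ only pointwise, and uniformity needs a continuity or compactness argument for $\lambda\mapsto\sup_n(|\lambda^n-1|-q)$ with a controlled $n$.

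Once (ii) is proved, $T^{N}$ with $N=N(q)$ has peripheral spectrum $\{1\}$. Factoring $\lambda^N-T^N=\prod_{\omega^N=1}(\lambda_0\omega-T)$ for $\lambda_0^N=\lambda$ gives the Ritt bound $N\Phi(q)$; alternatively one can use $\dist(\lambda_0\omega,E_q)\ge$ comparable to $|\lambda-1|/N$ for the relevant factor.

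For (iii), sufficiency follows from (ii). For necessity, take $T=\mu I$ with $\mu\in E_q$ of order $m$ not dividing $L$. Then \eqref{eq:displacement} holds, but $\mu^L\ne1$ is a peripheral eigenvalue, so $T^L$ is not Ritt. To get all of $N(q)$, use a direct sum of such scalars, one for each $\mu\in E_q$: $N(q)\mid L$ iff every order divides $L$.

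For (iv), note that $2\cos(\pi/2m)\ge\sqrt3$ for $m\ge3$, so $E_q=\{1\}$. For the explicit bound I would take $\zeta=-1$ and choose $n$ so that $\arg\lambda^n$ lies within $\pi/3$ of $\pi$. Then $|\lambda^n-1|\ge\sqrt3$-ish, which gives a denominator $\sqrt3-q$ and $n|\lambda-1|\le$ about $2\sqrt3$ after optimizing. The bound $M\le1+q\le1+\sqrt3$ gives the second inequality.

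For the asymptotics I would take $S$ on a two-dimensional, or $\ell^\infty$-type, space perturbing $e^{2\pi i/3}$ slightly toward $1$ to obtain the lower bound $2/(\sqrt3-q)-O(1)$. For the upper bound I would refine the above estimate via $\sum_{k<n}$ using $T^k$ close to $I$ in a sharper way. Sharpness of $\sqrt3$ is the example $e^{2\pi i/3}I$.
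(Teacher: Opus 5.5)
There is a genuine gap in the upper half of $\Phi_*(q)=2/(\sqrt3-q)+O(1)$ in (iv). Your plan to ``refine the estimate via $\sum_{k<n}$ using $T^k$ close to $\Id$'' does not work. The hypothesis only gives $\|\Id-T^k\|\le q\approx\sqrt3$, so $T^k$ is not close to $\Id$, and estimates that invert one power at a time lose the leading constant. Write $\varepsilon=\sqrt3-q$, $\omega=e^{2\pi i/3}$ and $\xi=\omega e^{i\delta}$.

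The first power gives $|\xi-1|\,\|\Res(\xi,T)\|\le|\xi-1|/(|\xi-1|-q)$. For $\delta<0$ this denominator is about $\varepsilon-|\delta|/2$, which degenerates at $|\delta|\approx2\varepsilon$. The second power, via $\Res(\xi,T)=(\xi+T)\Res(\xi^2,T^2)$, has denominator about $\varepsilon+|\delta|$ but brings in the factor $\|\xi+T\|\le1+q$. Taking the better of these two bounds at each $\delta$ gives a leading constant of about $1+\sqrt3$, not $2$. Your explicit-escape bound gives $2\sqrt3M(T)$.

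What is missing is a way to use the first two displacement bounds simultaneously. The paper takes
\[
p(z)=\tfrac23e^{i\pi/6}(1-z)+\tfrac13e^{-i\pi/6}(1-z^2).
\]
Then $\|p(T)\|\le q$ and $p(\omega)=\sqrt3$. The modulus of $p$ on $\T$ is stationary at $\omega$, so $|p(\omega e^{i\delta})|\ge\sqrt3-A\delta^2$. Moreover $|\omega-1|\,|p'(\omega)|=2$.

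The argument then runs as follows.
\begin{itemize}
\item The exact partial-fraction identity $\Res(\xi,T)=p'(\xi)\bigl(p(\xi)\Id-p(T)\bigr)^{-1}+\Res(\eta,T)$ holds, where $\eta=-2e^{i\pi/3}-\xi$ is the other root of $p(z)=p(\xi)$ and $|\eta|\ge2$ near $\omega$. It gives $|\xi-1|\,\|\Res(\xi,T)\|\le2/\varepsilon+C_K$ for $|\delta|\le K\varepsilon$.
\item The single-power estimates have denominators $\ge\varepsilon+c|\delta|$. Once $K$ is fixed large, they give at most $1/\varepsilon$ for $K\varepsilon\le|\delta|\le\delta_0$.
\item The same is done at $\bar\omega$.
\item The rest of $\T$ is bounded independently of $\varepsilon$: by Theorem~\ref{thm:local} near $1$, and by compactness elsewhere.
\item Proposition~\ref{prop:boundary} then passes the bound to $|\lambda|>1$.
\end{itemize}

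Your lower-bound sketch also needs the right direction of approach. For the radial family $a=r\omega$ one has $\sup_n|1-a^n|=\sqrt{1+r+r^2}$ and $\CR(a)=2|1-a|/(1-r^2)$. This is only $\sim\frac32(\sqrt3-q)^{-1}$. To get the constant $2$ you must approach $\omega$ tangentially to the circle $|1+z|=1$; the chord towards $1$ has this property, and so do the paper's scalars $a_r=r\exp(i\arccos(-r/2))$. You must also control all powers. The paper does this with the identity $a_r^{n+3}=r^2a_r^n+(1-r^2)a_r^{n+2}$, which keeps every power in $\operatorname{conv}\{1,a_r,a_r^2\}$. Scalars suffice; no two-dimensional or $\ell^\infty$-type construction is needed.

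Two smaller corrections.
\begin{itemize}
\item In (ii), inverting $\lambda^N-T^N=\prod_{\omega^N=1}(\lambda_0\omega-T)$ bounds $\Res(\lambda,T^N)$ by a product of resolvents. As $\lambda\to1$ this bound is of order $|\lambda-1|^{-\#E_q}$, which is too weak for $\CR(T^N)\le N\Phi(q)$ as soon as $E_q\ne\{1\}$. Use instead the partial fractions $\Res(\nu,T^N)=\sum_j\Res(\lambda_j,T)/(N\lambda_j^{N-1})$ over the $N$-th roots $\lambda_j$ of $\nu$. Combine this with $|\nu-1|\le N|\nu|^{(N-1)/N}\dist(\lambda_j,\mu_N)$ and $K_N(T)\le K_{E_q}(T)$.
\item For the exact constant $2\sqrt3M(T)/(\sqrt3-q)$, run the escape argument only on $\T\setminus\{1\}$ and then invoke Proposition~\ref{prop:boundary}. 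At $\lambda=r\xi$ the factor $r^{n-1}$ in $\|Q_n(\lambda,T)\|$ spoils the constant. The same factor is why your argument for (ii) needs a separate Neumann-series treatment of the non-tangential region.
\end{itemize}

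Apart from these points, your treatment of (i) and (iii) agrees with the paper. So does the architecture of (ii): local estimates through multiples of the order near each point of $E_q$, and a uniform-escape compactness argument away from $E_q$.
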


Thus $\sqrt3$ is the exact threshold of the unconditional discrete
zero--two law. For $q\ge\sqrt3$, the peripheral hypothesis in
\cite[Corollary~6.1]{BGT} excludes precisely the nontrivial roots in $E_q$.
In the range $\sqrt3\le q<2\cos(\pi/10)$, the least universal exponent
is $N(q)=3$. The sharp asymptotic constant in part (iv) is obtained by
balancing the first two displacement bounds through a quadratic polynomial.
The scalar family
$a_r=r\exp(i\arccos(-r/2))$, the elements of which are viewed as operators on $\C$, satisfies
\[
 \sup_n|1-a_r^n|=\sqrt{1+2r^2}=q_r,
 \qquad \CR(a_r)=\frac{4q_r}{3-q_r^2},
\]
and is asymptotically extremal. At every higher threshold $Q$, the
optimal $K_{E_q}$ bound has order $(Q-q)^{-1}$ from below
(Theorem~\ref{thm:higher-thresholds}). General norm-gap symbols and
constructive resolvent estimates are treated in
Section~\ref{sec:generalsymbol}.

\subsection*{Converses on uniformly convex spaces} The following results are proved under the assumption that the Banach space is uniformly convex.
 
\begin{theoremB}
Let $X$ be uniformly convex and let $T\in\B(X)$ be a contraction. The following assertions are equivalent:
\begin{enumerate}[label=\textup{(\roman*)}]
\item $\sup_{n\ge1}\|\Id-T^n\|<2$;
\item $T^N$ is a Ritt operator for some odd positive integer $N$;
\item $T$ is a $\Ritt{E}$ operator for some finite nonempty set $E\subset\T$ of roots of unity of odd order.
\end{enumerate}
The quantitative version of \textup{(ii)}$\Rightarrow$\textup{(i)} is: $\sup_n\|\Id-T^n\|\le2(1-\delta_X(1/(NK_+)))$, where $\delta_X$ is the modulus of convexity of $X$ and $K_+=\sup_n\|(\Id+T^{nN})^{-1}\|$.
\end{theoremB}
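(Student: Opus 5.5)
The plan is to prove the cycle (i)$\Rightarrow$(iii)$\Rightarrow$(ii)$\Rightarrow$(i). Only the last implication uses uniform convexity, and I would obtain the quantitative bound directly from it.

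\emph{(i)$\Rightarrow$(iii).} Put $q:=\sup_{n\ge1}\|\Id-T^n\|<2$. If $q<1$, then $T=\Id$ by the remark in the introduction, and $T$ is $\Ritt{\{1\}}$. Otherwise $1\le q<2$, and Theorem~A(ii) shows that $T$ is a $\Ritt{E_q}$ operator. By Lemma~\ref{lem:orbit}, $E_q$ is a finite set consisting of $1$ and roots of unity of odd order, which is (iii).

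\emph{(iii)$\Rightarrow$(ii).} Let $N$ be the least common multiple of the orders of the elements of $E$; it is odd. I would reuse the argument behind the last assertion of Theorem~A(ii), which deduces that $T^{N(q)}$ is Ritt from the $\Ritt{E_q}$ property and never used the specific set $E_q$. Write $\mu-T^N=\prod_{\omega^N=1}(\lambda\omega-T)$ with $\lambda^N=\mu$, $|\mu|>1$. Every $\omega$ with $\lambda\omega$ far from $E$ contributes a resolvent factor that is controlled uniformly. A root with $\lambda\omega$ close to some $e\in E$ contributes $\|(\lambda\omega-T)^{-1}\|\lesssim K_E(T)/|\lambda\omega-e|$, and $|\lambda\omega-e|\gtrsim|\mu-1|/N$ because $e^N=1$. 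This gives $\CR(T^N)<\infty$. This is the Bouabdillah--Le Merdy argument, and I would cite it rather than redo it.

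\emph{(ii)$\Rightarrow$(i).} This is the substantive step. Let $S:=T^N$, which is a Ritt contraction. By the converse part of the discrete Kato criterion \cite[Corollary~7.8]{BGT} with $\zeta=-1$, we get $K_+=\sup_n\|(\Id+S^n)^{-1}\|<\infty$.

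Fix $n\ge1$ and a unit vector $x$, and put $\varepsilon:=\|x+T^nx\|$. Since $T$ is a contraction, $\|T^nx\|\le1$. Odd $N$ gives the telescoping identity
\[
x+T^{nN}x=\sum_{k=0}^{N-1}(-1)^kT^{kn}(x+T^nx),
\]
so $\|x+S^nx\|\le N\varepsilon$. Hence $1=\|x\|\le K_+\|(\Id+S^n)x\|\le NK_+\varepsilon$, that is, $\varepsilon\ge1/(NK_+)$.

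Next apply the definition of the modulus of convexity to the vectors $x$ and $-T^nx$. Both lie in the unit ball and $\|x-(-T^nx)\|=\varepsilon\ge 1/(NK_+)$. Therefore
\[
\tfrac12\|x-T^nx\|\le1-\delta_X(\varepsilon)\le1-\delta_X\bigl(1/(NK_+)\bigr),
\]
using monotonicity of $\delta_X$. Taking the supremum over unit $x$ and over $n$ gives $\sup_n\|\Id-T^n\|\le2(1-\delta_X(1/(NK_+)))<2$, since $\delta_X(t)>0$ for $t>0$ in a uniformly convex space.

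The main point to check carefully is the finiteness of $K_+$ uniformly in $n$; this is exactly where the Ritt property of $T^N$ enters, through the BGT converse. The sign bookkeeping in the telescoping sum also needs care, since it is where the oddness of $N$ is essential. With even $N$ the sum would produce $x-S^nx$, about which the Ritt property of $S$ gives no lower bound.
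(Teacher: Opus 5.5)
Your proof is correct and follows the paper's route. Implication (i)$\Rightarrow$(iii) goes through Theorem~\ref{thm:oddN}. Implication (ii)$\Rightarrow$(i) goes through the odd-$N$ identity $\Id+T^{nN}=(\Id+T^n)\sum_{j<N}(-T^n)^j$ and the modulus-of-convexity estimate of Theorem~\ref{thm:UC}. Your vector bound $\|x+T^nx\|\ge1/(NK_+)$ is exactly what the paper's bound $\|(\Id+T^n)^{-1}\|\le NK_+$ feeds into that estimate. Citing \cite[Corollary~7.8]{BGT} for $K_+<\infty$ is interchangeable with the paper's use of Theorem~\ref{thm:BGTpowers} and Proposition~\ref{prop:boundary}.

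The mechanism you sketch for (iii)$\Rightarrow$(ii) would not work if carried out. Bounding $(\mu-T^N)^{-1}=\prod_{\omega}(\lambda\omega-T)^{-1}$ by the product of the norms only yields $\|\Res(\mu,T^N)\|\lesssim|\mu-1|^{-\#E}$. The reason is that for $\mu$ near $1$, one root $\lambda\omega$ lies close to each point of $E$; for $E=\mu_3$ all three roots do. So the sketch fails whenever $\#E\ge2$, which is the interesting case.

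The argument that works, and the one the paper uses, has two steps. First, $E\subset\mu_N$ gives $K_N(T)\le K_E(T)$. Second, the partial-fraction identity $\Res(\mu,T^N)=\sum_j\Res(\lambda_j,T)/(N\lambda_j^{N-1})$ of Proposition~\ref{prop:TN}, which is a sum rather than a product, gives $\CR(T^N)\le NK_N(T)$. Since you cite this step rather than redo it, the proof stands, but the heuristic should be replaced by this one.
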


Neither contractivity nor uniform convexity can be dropped (see the remarks after Theorem~\ref{thm:UCchar}), and the parity of $N$ is essential: $T=-\Id$ has $T^2=\Id$.

\subsection*{Quantitative estimates on Hilbert space} More is true in the case of Hilbert spaces. 
\begin{theoremC}
Let $T$ be a Ritt operator on a Hilbert space $H$, $C=\CR(T)$, $M=M(T)=\sup_{n\ge0}\|T^n\|$.
\begin{enumerate}[label=\textup{(\roman*)}]
\item For every $\zeta\in\T\setminus\{1\}$,
$\displaystyle\sup_{n\ge1}\|(\zeta-T^n)^{-1}\|\le1+\frac{CM}{|\zeta-1|}$.
\item $\displaystyle\sup_{m\ge1}\CR(T^m)\le CM+2$; in particular $\sup_m\CR(T^m)\le\CR(T)+2$ if $T$ is a contraction.
\item If $T$ is a contraction and
$W\in A(\D;\B(E,F))$ for Hilbert spaces $E,F$, with
$\|W(1)\|<\|W\|_\infty$, then
$\sup_{n\ge1}\|W(T^n)\|<\|W\|_\infty$.
For matrix polynomials an explicit bound is given by
Theorem~\ref{thm:polydefect}. For the displacement itself,
\[
 \sup_{n\ge1}\|\Id-T^n\|
 \le
 \begin{cases}
  2(C+1)/(C+2),&1\le C\le\sqrt2,\\
  2\sqrt{1-C^{-2}},&C\ge\sqrt2.
 \end{cases}
\]
\end{enumerate}
\end{theoremC}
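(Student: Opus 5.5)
The plan is to prove (i) first, derive (ii) from (i) by a maximum-principle argument, and then prove (iii) by dilation theory together with the uniform estimates from (ii). Part (i) is the genuinely Hilbert-space step.

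\textbf{Part (i).} Fix $\zeta\in\T\setminus\{1\}$ and $n\ge1$, and let $\omega_1,\dots,\omega_n$ be the $n$-th roots of $\zeta$. Since $\sigma(T)\subset\D\cup\{1\}$ and $\omega_j\ne1$, partial fractions give
\[
(\zeta-T^n)^{-1}=\frac{1}{n\zeta}\sum_{j=1}^n \omega_j(\omega_j-T)^{-1}.
\]
The Ritt bound $\|(\omega-T)^{-1}\|\le C/|\omega-1|$ extends by continuity to $\T\setminus\{1\}$. Estimating the sum termwise only gives a $\log n$ bound, so the averaging has to be exploited in a Hilbert-space way. I would write $\omega(\omega-T)^{-1}=I+T(\omega-T)^{-1}$, so that the identity term produces the constant $1$. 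For the remainder I would test against $x,y\in H$, use Cauchy--Schwarz in $j$, and compare the discrete averages $\frac1n\sum_j\|(\omega_j-T)^{-1}Tx\|^2$ with the corresponding Plancherel integrals on circles $|\lambda|=r\downarrow1$. This is where $M$ and Hilbert space enter, via the Gomilko/Shi--Feng type square-function bound for power bounded operators on $H$. The target is the bound $CM/|\zeta-1|$. The points $\omega_j$ near $1$ are only at distance $\gtrsim 1/n$ from $1$, and one must check that the weights compensate for this. I expect this step to be the main obstacle. If the square-function route loses a constant, I would instead try to bound $(\zeta-1)(\zeta-T^n)^{-1}-I$ directly through the identity $\zeta-T^n=(\zeta-1)+(I-T^n)$, with $\|T^{n}(I-T)\|\le C'/n$.

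\textbf{Part (ii).} Fix $m\ge1$ and consider $F(\lambda)=(\lambda-1)(\lambda-T^m)^{-1}$ on $|\lambda|>1$. This function is holomorphic there and bounded at $\infty$, since $F(\lambda)\to I$. On $|\lambda|=1$, $\lambda\ne1$, part (i) gives
\[
\|F(\lambda)\|\le |\lambda-1|+CM\le 2+CM .
\]
To legitimise the maximum principle near $\lambda=1$, I would first note that $T^m$ is Ritt, which holds because powers of Ritt operators are Ritt, so $F$ is bounded on $\{|\lambda|>1\}$. Then Phragm\'en--Lindel\"of for bounded functions on the exterior disc, whose boundary values off the single point $1$ are dominated by $2+CM$, gives $\CR(T^m)\le CM+2$. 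For a contraction $M=1$.

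\textbf{Part (iii).} Since $T$ is a contraction, Sz.-Nagy dilation and von Neumann's inequality give $\|W(T^n)\|\le\|W\|_\infty$. For strictness, (ii) shows that the $T^n$ are Ritt contractions with uniformly bounded constants, hence Ritt of a uniform Stolz angle. By Le Merdy's theorem (polynomially bounded plus Ritt on $H$ gives a bounded $H^\infty$ calculus on a Stolz domain $B_\gamma$), applied with constants uniform in $n$, I would obtain a bound of the form $\|W(T^n)\|\le \sup_{B_\gamma}\|W\|$ up to a controlled interpolation with $\|W\|_\infty$. I would then argue by contradiction: if $\|W(T^{n_k})\|\to\|W\|_\infty$, then using compactness of $B_\gamma\cap\T=\{1\}$ and continuity of $W$, the norming must concentrate at $z=1$, which contradicts $\|W(1)\|<\|W\|_\infty$.

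For the explicit displacement bound, I would work with the numerical range. A contraction $S$ with $\CR(S)\le C$ has $1-S$ accretive of a definite sector, and $\|I-S\|^2=\sup_{\|x\|=1}\|x-Sx\|^2\le 2-2\,\mathrm{Re}\langle Sx,x\rangle+\|Sx\|^2$. The sector constraint from $C$ lower-bounds $\mathrm{Re}\langle Sx,x\rangle$ in terms of $\|Sx\|$. The two-regime formula ($C\le\sqrt2$ versus $C\ge\sqrt2$) should come from optimising this scalar/$2\times2$ extremal problem, with the switch occurring when the constraint becomes active. It then has to be applied with the constant available for $T^n$ itself: for a contraction, (ii) gives $\CR(T^n)\le C+2$ with $C=\CR(T)$. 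Getting the stated bound in terms of $C$ from this is the delicate bookkeeping.
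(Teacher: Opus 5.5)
Your reduction of (ii) to (i) is sound and is essentially the paper's argument: the boundary bound $|\xi-1|+CM\le2+CM$ followed by the maximum principle of Proposition~\ref{prop:boundary}. The genuine gap is (i), on which everything else rests. You correctly observe that termwise estimation of the partial-fraction sum loses a factor $\log n$, but the square-function comparison meant to remove it is never carried out. Nothing in it indicates how the exact constant $1+CM/|\zeta-1|$ would appear. The fallback also fails: $\zeta-T^n=(\zeta-1)+(\Id-T^n)$ cannot be inverted by a Neumann series, because $\|\Id-T^n\|$ need not be smaller than $|\zeta-1|$; and the difference bound $\|T^n(\Id-T)\|\lesssim1/n$ does not make $\Id-T^n$ small.

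The missing idea is to bound $\|(\zeta-T^n)x\|$ from below rather than to expand the resolvent. For a unit vector $x$, put $d=\|(\zeta-T^n)x\|$ and $y_j=\sum_{k<n}\omega_j^{-k}T^kx$.
\begin{itemize}
\item Telescoping gives $(T-\omega_j)y_j=\omega_j\bar\zeta(T^n-\zeta)x$, hence $\|y_j\|\le Cd/|\omega_j-1|$.
\item Orthogonality of the discrete Fourier matrix gives $\sum_j\|y_j\|^2=n\sum_{k<n}\|T^kx\|^2\ge n^2(1-d)^2/M^2$.
\item The exact identity $\sum_j|\omega_j-1|^{-2}=n^2/|\zeta-1|^2$, which follows from $\sum_{k<n}\csc^2(t+k\pi/n)=n^2\csc^2(nt)$, bounds the same sum by $C^2d^2n^2/|\zeta-1|^2$.
\end{itemize}
This is precisely how the roots at distance of order $1/n$ from $1$ are compensated: the squared weights sum to order $n^2$ without a logarithm. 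Comparing the two bounds gives $(1-d)/M\le Cd/|\zeta-1|$, which is (i).

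Part (iii) also has gaps.

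\emph{The strict norm gap.} Le Merdy's $H^\infty(B_\gamma)$ calculus carries a constant $K\ge1$, so it cannot by itself give an estimate strictly below $\|W\|_\infty$. Your concentration heuristic also points the wrong way. Vectors nearly norming $W(T^n)$ force the spectral measures of a unitary dilation to concentrate on $\mathcal F=\{\zeta\in\T:\|W(\zeta)\|=\|W\|_\infty\}$, a compact set away from $1$. The paper uses this for the single contraction $S=\bigoplus_nT^n$, which satisfies $\sigma(S)\cap\T\subset\{1\}$ by (ii). A Runge polynomial $p$ with $\|p(S)-\Id\|<1/4$ and $\max_{\mathcal F}|p|<1/4$ is then incompatible with $\|(\Id_E\otimes p(S))x_k\|^2\le\int_\T|p|^2\,d\nu_k$ for near-norming unit vectors $x_k$.

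\emph{The explicit displacement bound.} Your claim that $\CR(S)\le C$ confines the numerical range of $\Id-S$ to a sector depending only on $C$ is false. For $0<\varepsilon<1$, the contractions $S_\varepsilon=\begin{pmatrix}1-\varepsilon&\varepsilon(2-\varepsilon)\\0&1-\varepsilon\end{pmatrix}$ on $\C^2$ satisfy $\CR(S_\varepsilon)\le6$. Yet the numerical range of $\Id-S_\varepsilon$ is the disc with centre $\varepsilon$ and radius $\varepsilon(1-\varepsilon/2)$, whose opening angle at $0$ tends to $\pi$. In any case, passing through $\CR(T^n)\le C+2$ could at best give the bound with $C+2$ in place of $C$.

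What is needed is the refined Fourier estimate with $\zeta=-1$ and $M=1$. Keeping $r=\|T^nx\|$ instead of $1-d$, it yields $d=\|x+T^nx\|\ge2r/C$ with $C=\CR(T)$ for every $n$; trivially also $d\ge1-r$. The parallelogram law $\|x-T^nx\|^2=2+2r^2-d^2$ and maximisation over $r\in[0,1]$, where the two lower bounds cross at $r_0=C/(C+2)$, produce exactly the two regimes $C\le\sqrt2$ and $C\ge\sqrt2$.
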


Here $A(\D;\B(E,F))$ is the disc algebra of functions with values in $\B(E,F)$. Part (i) is proved using the Fourier vectors $\sum_k\mu_j^{-k}T^kx$ built on the $n$-th roots $\mu_j$ of $\zeta$; part (ii) follows from (i) and the observation that the Ritt constant of an operator is the supremum of $|\xi-1|\|(\xi-T)^{-1}\|$ over the unit circle (Proposition~\ref{prop:boundary}). The matrix-polynomial estimate in (iii) combines unitary dilation with matrix Fej\'er--Riesz factorisation, while the disc-algebra extension uses Runge approximation and concentration of dilation spectral measures. An analogue of (i) holds on $L^p$, $1<p<\infty$ (Theorem~\ref{thm:Lp}); the polynomial norm-gap assertion itself fails for general uniformly convex spaces, even for a nilpotent contraction on $\ell^4_3$ (Example~\ref{ex:UCpolynomial}).

\subsection*{Sparse sequences of powers}
Let $\bn=(n_k)_{k\ge0}$ be a strictly increasing sequence of positive integers with $n_0=1$, and put
\[
   c(\bn):=\sup_{k}\frac{n_{k+1}}{n_k}\in[1,\infty],\qquad c_\infty(\bn):=\limsup_{k\to\infty}\frac{n_{k+1}}{n_k}\le c(\bn).
\]
For $c\ge1$ let
\[
  \arc_c:=\Big\{e^{it}:\ \frac{2\pi}{c+1}\le t\le\frac{2\pi c}{c+1}\Big\},
\]
the closed arc symmetric about $-1$ whose endpoints have argument ratio $c$. Therefore, $\arc_1=\{-1\}$ and $\arc_c$ increases with $c$. We say that $\bn$ is a \emph{Ritt sampling sequence} if $\sup_kn_k\|T^{n_k}-T^{n_k-1}\|<\infty$ implies that $T$ is Ritt, for every power bounded $T$ on every Banach space.

\begin{theoremD}
\begin{enumerate}[label=\textup{(\roman*)}]
\item Let $c_\infty(\bn)<\infty$ and $c'\ge c_\infty(\bn)$. If $T$ is power bounded, $\sigma(T)\cap\T\subset\{1\}$, and $\sup_{k}\sup_{\eta\in\arc_{c'}}\|(\eta-T^{n_k})^{-1}\|<\infty$, then $T$ is a Ritt operator. In particular, if $n_{k+1}/n_k\to1$, the one-point condition $\sup_k\|(\zeta-T^{n_k})^{-1}\|<\infty$, for any $\zeta\in\T\setminus\{1\}$, suffices, exactly as for the full sequence; and if $c(\bn)\le c<\infty$, the norm gap $\sup_k\|\Id-T^{n_k}\|<2\sin\frac{\pi}{c+1}$ suffices for every power bounded operator, without a peripheral spectral assumption.
\item For $n_k=b^k$, where $b\ge2$ is an integer, the arc $\arc_b$ cannot be replaced by any closed arc contained in its interior, even for normal contractions with $\sigma(T)\subset\D\cup\{1\}$; under this spectral assumption the sharp norm threshold is $\max\{1,2\sin\frac{\pi}{b+1}\}$, and without it the sharp threshold is $2\sin\frac\pi{b+1}$.
\item If $c(\bn)=\infty$, then for every closed arc $\arc\subset\T\setminus\{1\}$ and every $q>1$ there are normal contractions with $\sigma(T)\subset\D\cup\{1\}$, not Ritt, with $\sup_k\sup_{\eta\in\arc}\|(\eta-T^{n_k})^{-1}\|<\infty$, respectively with $\sup_k\|\Id-T^{n_k}\|\le q$.
\item $\bn$ is a Ritt sampling sequence if and only if $c(\bn)<\infty$; in that case $\sup_nn\|T^n-T^{n-1}\|\le M(T)c(\bn)\sup_kn_k\|T^{n_k}-T^{n_k-1}\|$.
\end{enumerate}
\end{theoremD}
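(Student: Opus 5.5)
Theorem D has four parts. I will prove (i) first, since (ii)--(iv) are sharpness and consequence statements built on it.

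\emph{Plan for (i).} The aim is to reduce to the discrete Kato criterion of \cite{BGT}: power boundedness, $\sigma(T)\cap\T\subset\{1\}$ and a uniform resolvent bound at one point of $\T\setminus\{1\}$ along all powers. So I need to transfer the bound from the sampled powers $T^{n_k}$ to an arbitrary power $T^n$.

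Fix $n$ large and choose $k$ with $n_k\le n<n_{k+1}$. For $\zeta\in\T\setminus\{1\}$, the map $z\mapsto z^{n_k}$ sends the $n$-th roots of $\zeta$ to points $\eta$. The arc $\arc_{c'}$ is designed so that, when the ratio $n/n_k$ lies in $[1,c']$, one can factor $\zeta-T^n$ through $\eta-T^{n_k}$ with $\eta\in\arc_{c'}$. A cleaner route avoids exact factorisation. By Proposition~\ref{prop:boundary}, it suffices to bound $|\xi-1|\,\|(\xi-T)^{-1}\|$ for $\xi\in\T$ near $1$ with $\xi\notin\sigma(T)$; by power boundedness the sup over $|\lambda|>1$ is controlled by boundary values.

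For $\xi=e^{i\theta}$ with $\theta$ small, pick $k$ such that $n_k\theta$ lands in the arc $[2\pi/(c'+1),\,2\pi c'/(c'+1)]$ modulo $2\pi$. Such a $k$ exists for $\theta$ small because consecutive ratios are eventually at most $c'$: as $k$ increases, $n_k\theta$ passes from below $2\pi/(c'+1)$ into the arc without jumping over it, since the jump factor is at most $c'$ and the arc endpoints have ratio $c'$. This is exactly where the definition of $\arc_c$ enters. For larger $\theta$ I use the spectral hypothesis and compactness.

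With this $k$, I use the identity
\[
\xi^{n_k}-T^{n_k}=(\xi-T)\sum_{j<n_k}\xi^{n_k-1-j}T^j,
\]
which gives
\[
(\xi-T)^{-1}=(\xi^{n_k}-T^{n_k})^{-1}\sum_{j<n_k}\xi^{n_k-1-j}T^j .
\]
This yields only $\|(\xi-T)^{-1}\|\le CMn_k$, and $n_k\asymp 1/\theta$ up to the factor $c'$. Hence $|\xi-1|\,\|(\xi-T)^{-1}\|\le C M\,\theta n_k\le C' M$, which is the Ritt bound.

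For the corollary statements: if $n_{k+1}/n_k\to1$, then $\arc_{c'}$ shrinks to $\{-1\}$, and one point suffices after a rotation argument. The norm gap case follows by a Neumann series: for $\eta\in\arc_c$ we have $|\eta-1|\ge 2\sin\frac{\pi}{c+1}$, so $\|\Id-T^{n_k}\|<|\eta-1|$ inverts $\eta-T^{n_k}$. Peripheral spectrum is excluded because $\lambda\in\sigma(T)\cap\T\setminus\{1\}$ would give some $\lambda^{n_k}\in\arc_c$ by the same no-jump argument.

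\emph{Plan for (ii) and (iii).} These are constructions with diagonal normal contractions on $\ell^2$. Each $T$ has eigenvalues $r_je^{i\theta_j}$ with $\theta_j$ chosen so that $\theta_jn_k$ avoids a prescribed closed arc for all $k$, while $|1-e^{i\theta_j}|/(1-|r_je^{i\theta_j}|)$ is unbounded, so that $T$ is not Ritt.

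For $b^k$, take $\theta=2\pi/(b^m(b+1))$-type angles, whose orbits under multiplication by $b$ hit exactly the endpoints of $\arc_b$. This shows that $\arc_b$ cannot be shrunk, and it gives the threshold $2\sin\frac{\pi}{b+1}$. Without the spectral assumption, take a root of unity of order $b+1$. The $\max\{1,\cdot\}$ arises because under $\sigma(T)\subset\D\cup\{1\}$ contractions with $\|\Id-T^n\|\le 1$ are easy to realise.

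If $c(\bn)=\infty$, choose gaps $n_{k+1}/n_k\to\infty$ along a subsequence and $\theta_j\approx 2\pi/n_{k_j+1}$-scale angles so that $n_k\theta_j$ is tiny for $k\le k_j$ and near $0$ modulo $2\pi$ for $k>k_j$, for instance by making $\theta_j$ a multiple of $2\pi/n_{k_j+1}$ adjusted. This is the main obstacle, because the later $n_k$ need not be multiples of one another. I would follow \cite[Proposition~6.8]{BGT}, using a Dirichlet-type simultaneous approximation to keep all $n_k\theta_j$ outside $\arc$.

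\emph{Plan for (iv).} This follows from (i) via the Ritt sampling argument. If $c(\bn)=c<\infty$, write $n=n_k m+r$ with $n_k\le n<n_{k+1}$. Then $T^n-T^{n-1}=T^{n-n_k}(T^{n_k}-T^{n_k-1})$, so
\[
n\|T^n-T^{n-1}\|\le Mc\,n_k\|T^{n_k}-T^{n_k-1}\|,
\]
which is the stated bound. Conversely, (iii) gives failure when $c(\bn)=\infty$, since a normal operator that is not Ritt still has $n_k\|T^{n_k}-T^{n_k-1}\|$ bounded in those constructions.
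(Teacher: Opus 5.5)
Your boundary route for (i) is a legitimate substitute for the paper's tangential/non-tangential sampling lemma: you land $e^{in_k\theta}$ exactly on the arc, use Lemma~\ref{lem:factor}, and pass to $|\lambda|>1$ by Proposition~\ref{prop:boundary}. The no-jump step, however, rests on ``the quotients are eventually at most $c'$'', and this is false when $c'=c_\infty(\bn)$. For example, $n_{k+1}=2n_k+1$ has $c_\infty(\bn)=2$ with every quotient above $2$. For $c'=1$ the arc $\arc_1=\{-1\}$ is a single point, which $e^{in_k\theta}$ hits only for exceptional $\theta$. The missing step is to spread the bound $K$ first, by the Neumann series, to a closed $\varepsilon$-neighbourhood $\arc'$ of $\arc_{c'}$ with $\varepsilon\le\frac1{2K}$ and $1\notin\arc'$; there the bound is $2K$. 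Both endpoint ratios of $\arc'$ strictly exceed $c'$, so your argument runs with some $c''>c'$, and the quotients are eventually $\le c''$. The same spreading, applied to a small arc about an arbitrary $\zeta$, is what proves the one-point criterion. A ``rotation'' cannot move $\zeta$ to $-1$, since replacing $T$ by $\omega T$ multiplies $T^{n_k}$ by the varying phases $\omega^{n_k}$.

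In (ii) the sufficiency half of the threshold $\max\{1,2\sin\frac{\pi}{b+1}\}$ is missing for $b\ge5$. Your norm-gap argument stops at $2\sin\frac{\pi}{b+1}<1$. The range up to $1$ needs the separate fact that $\sup_k\|\Id-T^{n_k}\|=q<1$ together with $\sigma(T)\cap\T\subset\{1\}$ forces $T=\Id$ (Lemma~\ref{lem:subone}: $\sigma(T)=\{1\}$ by spectral mapping, and $n_k\|\log T\|\le-\log(1-q)$ for all $k$). On the failure side you still need $|1-\rho z|\le\max\{1,|1-z|\}$ to see that your diagonal example has sampled displacement at most $\max\{1,2\sin\frac{\pi}{b+1}\}$. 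The scalar witness must be $e^{\pm2\pi i/(b+1)}$, not an arbitrary root of order $b+1$.

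The more serious gaps are in (iii) and (iv). In (iii), keeping $n_k\theta_j$ near $0$ modulo $2\pi$ for every $k$ beyond the gap cannot work in general. Suppose each large gap $(n_{k_j},n_{k_j+1})$ of $\bn$ is followed by all integers in $[n_{k_j+1},2n_{k_j+1}]$. Then $e^{in\theta_j}$ walks around the whole circle in steps of $\theta_j$. With your scaling ($\theta_j\asymp1/n_{k_j+1}$, damping $1-r_j\ll\theta_j$) the moduli $r_j^n$ are still close to $1$ there, so the sampled spectra approach $\arc$. No Dirichlet-type approximation repairs this.

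What works is radial damping at the top of the gap, with the angle strictly inside it. Take $n_{K(j)}/n_{K(j)-1}\ge16j\log2/d$ with $d=\dist(1,\arc)$, and $\mu_j=(1-\varepsilon_j)e^{i\theta_j}$ with $\varepsilon_j=\log2/n_{K(j)}$, $\theta_j=j\varepsilon_j$. Then $|\mu_j^{n_k}|\le\frac12$ for $k\ge K(j)$ whatever the argument, and $|\mu_j^{n_k}-1|\le d/8$ for $k<K(j)$. The ratio $\theta_j/\varepsilon_j=j\to\infty$ destroys the Stolz condition. The displacement version takes $|\lambda_m|^{n_{m+1}}=\tau=(q-1)/2$ and $\theta_m=\tau/n_m$. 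This gives $|1-\lambda_m^{n_k}|\le1$ for $k\le m$ and $\le1+\tau<q$ for $k>m$, which is where $q>1$ enters.

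Finally, the converse in (iv) does not follow from (iii). In the damped example, $n_{K(j)}|\mu_j|^{n_{K(j)}-1}|1-\mu_j|\gtrsim n_{K(j)}\theta_j\asymp j$, so its sampled differences are unbounded. A separate construction is needed, with the damping scale at the geometric mean $N_m=\sqrt{p_mq_m}$ of consecutive sampling times $p_m<q_m$ with $R_m=q_m/p_m\to\infty$. For example, take $\lambda_m=\exp(-1/N_m+iR_m^{1/4}/N_m)$. Then $n|\lambda_m|^{n-1}|1-\lambda_m|\lesssim R_m^{1/4}xe^{-x}$ with $x=n/N_m$. This is bounded at every sampled time, where $x\le R_m^{-1/2}$ or $x\ge R_m^{1/2}$, but is of order $R_m^{1/4}$ at $n=\lfloor N_m\rfloor$. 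Your forward direction of (iv) is correct and does not need (i).
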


The angular part of these statements is governed by the Jamison constant
\[
   J(\bn)=\inf_{\lambda\in\T\setminus\{1\}}\ \sup_{k\ge0}|\lambda^{n_k}-1|
\]
of the sequence \cite{BadeaGrivaux07,EisnerGrivaux,RansfordRoginskaya}: under a sampled norm bound $q<J(\bn)$, spectral mapping forces $\sigma(T)\cap\T\subset\{1\}$. The radial part---how fast the spectrum may approach $1$ inside a Stolz domain---is governed by the growth of the quotients $n_{k+1}/n_k$, and this is the exact condition both for Ritt sampling and for the existence of an arc criterion. 

We also prove the following result; cf. Theorem~\ref{samp:exact}. Write $\mathcal N=\{n_k:k\ge0\}$ for a sequence of positive integers. We do not assume $n_0=1$. 

\begin{theoremE}
Fix $1<q<2$. The following assertions are equivalent.
\begin{enumerate}
\item For every complex Banach space $X$ and every power bounded
operator $T\in\B(X)$,
$$
 \sup_{n\in\mathcal N}\|\Id-T^n\|\le q,
 \qquad \sigma(T)\cap\T\subset\{1\}
$$
imply that $T$ is a Ritt operator.
\item The same implication holds for every normal contraction on $\ell^2$.
\item $q < \alpha(\mathcal N):= \lim_{B\to\infty}\liminf_{t\downarrow0}h_{\mathcal N}(B,t)$, where for $B,t>0$,
$$
 h_{\mathcal N}(B,t)
 :=\max\bigl\{|1-e^{int}|:n\in\mathcal N,\ nt\le B\bigr\},
$$ and the maximum of the empty set is $0$. 
\end{enumerate}
\end{theoremE}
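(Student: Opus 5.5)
We prove (1)$\Rightarrow$(2) trivially, then (2)$\Rightarrow$(3) by contraposition with a diagonal construction, and (3)$\Rightarrow$(1) by a resolvent estimate near $1$.

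\emph{Step (2)$\Rightarrow$(3).} Suppose $q\ge\alpha(\mathcal N)$. Since $h_{\mathcal N}(B,t)$ is nondecreasing in $B$, the limit defining $\alpha$ exists. For every $B$ we then have $\liminf_{t\downarrow0}h_{\mathcal N}(B,t)\le q$. We build a normal contraction $T=\bigoplus_j \lambda_j$ on $\ell^2$ with eigenvalues
\[
\lambda_j=r_je^{it_j}\in\D,\qquad t_j\downarrow0,
\]
and with $|1-\lambda_j|/(1-|\lambda_j|)\to\infty$. This makes the spectrum leave every Stolz angle at $1$, so $T$ is not Ritt, while $\sigma(T)\cap\T\subset\{1\}$.

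The parameters are chosen as follows. Take $B_j\to\infty$ and choose $t_j$ small with $h_{\mathcal N}(B_j,t_j)\le q+\varepsilon_j$. For $n\in\mathcal N$ with $nt_j\le B_j$, take $r_j=e^{-s_j}$ with $s_j=t_j/\omega_j$ and $\omega_j\to\infty$ slowly. Then
\[
|1-\lambda_j^n|\le|1-e^{int_j}|+ns_j\le q+\varepsilon_j+B_j/\omega_j .
\]
For $nt_j>B_j$ we have $|\lambda_j^n|\le e^{-B_j/\omega_j}$, which is small only if $B_j/\omega_j\to\infty$. This conflicts with the previous bound, which needs $B_j/\omega_j\to0$.

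This tension is the main obstacle. The fix is to use the angle more cleverly: in the intermediate range $B_j/\omega_j$-scale we only need $|1-\lambda^n|\le q$, not smallness. Write $\rho=e^{-ns}$ and $\theta=nt$, so that
\[
|1-\rho e^{i\theta}|^2=1-2\rho\cos\theta+\rho^2 .
\]
For $\theta$ with $|1-e^{i\theta}|\le q$ this is $\le q^2$ whenever $\rho\le1$ and $q>1$, because the expression is convex in $\rho$, equals $|1-e^{i\theta}|^2\le q^2$ at $\rho=1$, and equals $1<q^2$ at $\rho=0$. Hence only the sampled $n$ with $|1-e^{int_j}|>q$ matter. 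Those $n$ occur with $nt_j>B_j$, where the liminf information gives nothing directly. So the choice must make $\rho$ small there: require $\rho\le q-1$, which suffices since then $|1-\rho e^{i\theta}|\le1+\rho\le q$. This needs $ns_j\ge\log\frac1{q-1}$ for all sampled $n$ with $nt_j\ge B_j$. That holds if
\[
s_j=t_j\log\tfrac1{q-1}/B_j ,
\]
which gives $\omega_j=B_j/\log\frac1{q-1}\to\infty$, so the non-Ritt property holds. With the small perturbations $\varepsilon_j$ absorbed by the convexity argument (use $q'<q$ slightly, or take the limit equality case carefully), we get $\sup_{n\in\mathcal N}\|I-T^n\|\le q$. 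The boundary case $q=\alpha$ requires checking that the liminf gives values $\le q$ up to $\varepsilon_j$. We handle it by noting that $|1-e^{i\theta}|$ slightly above $q$ still gives $|1-\rho e^{i\theta}|\le q$ once $\rho<1$ is bounded away from $1$ by a matching amount.

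\emph{Step (3)$\Rightarrow$(1).} Assume $q<\alpha$ and fix $B$ and $t_0$ with $h_{\mathcal N}(B,t)>q'>q$ for $0<|t|<t_0$ (by symmetry also for negative $t$). By the Ritt characterisation via resolvent in a Stolz-type region near $1$ (as in the proof of Theorem D(i)), it suffices to bound $|\lambda-1|\|(\lambda-T)^{-1}\|$ for $\lambda=e^{s+it}$, $s>0$ small, $|t|\ge c s$. For such $\lambda$ pick $n\in\mathcal N$ with $n|t|\le B$ and $|1-e^{int}|>q'$. By the spectral-mapping/Neumann argument,
\[
(\lambda-T)^{-1}=\Big(\sum_{k<n}\lambda^{n-1-k}T^k\Big)(\lambda^n-T^n)^{-1},
\]
and
\[
\|(\lambda^n-T^n)^{-1}\|\le\big(|\lambda^n-1|-q\big)^{-1} .
\]
Here $|\lambda^n-1|\ge q'-O(ns)$ when $ns$ is small. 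Since $n\le B/|t|$, the first factor is bounded by $M(T)n e^{ns}\lesssim MB/|t|$, so $|\lambda-1|\|(\lambda-T)^{-1}\|$ is bounded.

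For $s\gtrsim|t|$ (the region near the real axis) the case is handled by power boundedness, since $\|(\lambda-T)^{-1}\|\le M/(|\lambda|-1)$. The remaining large-$\lambda$ and away-from-$1$ regions follow from $\sigma(T)\cap\T\subset\{1\}$ and compactness.
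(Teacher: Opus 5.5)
Your proposal is correct in outline and follows the paper's proof in both directions. For (3)$\Rightarrow$(1) you use an escape time $n\in\mathcal N$ with $n|t|\le B$ and $|1-e^{int}|>q'$, the Neumann series for $(\lambda^n-1)\Id+(\Id-T^n)$, and the factorisation of Lemma~\ref{lem:factor}. For (2)$\Rightarrow$(3) you build a diagonal contraction whose eigenvalues approach $1$ tangentially, and split into three cases: large sampling time, $|1-e^{i\theta}|\le q$ (handled by radial convexity), and a small overshoot above $q$.

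The differences from the paper are minor.
\begin{itemize}
\item In (3)$\Rightarrow$(1) the paper evaluates at unimodular $\xi=e^{\pm it}$. Then $\xi^n$ is exactly unimodular, no radial error $e^{ns}-1$ has to be absorbed, and Proposition~\ref{prop:boundary} finishes the argument. Your off-circle version instead needs the tangential/non-tangential split of Lemma~\ref{lem:master}, with the cone constant $c$ chosen so that $e^{B/c}-1<q'-q$. You leave this choice implicit in ``when $ns$ is small''.
\item In (2)$\Rightarrow$(3) your damping $s_j=t_jL/B_j$, with $L=\log\frac1{q-1}$, gives $|\lambda_j^n|\le q-1$ whenever $nt_j>B_j$, for every $B_j$. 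The paper's damping $B_j^{-1/2}\theta_j$ instead forces the constraint $1+e^{-\sqrt{B_j}}\le q$.
\end{itemize}

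The one step you only assert is the overshoot case: $nt_j\le B_j$ and $q<|1-e^{i\theta}|\le q+\varepsilon_j$, with $\theta=nt_j$. This case is exactly what the equality case $q=\alpha(\mathcal N)$ depends on, and hence the sharpness of the theorem. Your alternative ``use $q'<q$ slightly'' only covers $\alpha(\mathcal N)<q$.

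The step does close with your parameters, as follows.
\begin{itemize}
\item Since $|1-e^{i\theta}|\le\theta$, an overshoot forces $\theta>q>1$. Hence $\rho=e^{-\theta L/B_j}\le\rho_*:=e^{-L/B_j}$, and $a:=1-\rho_*\ge L/(2B_j)$ once $B_j\ge L$.
\item The function $\rho\mapsto|1-\rho e^{i\theta}|^2=\rho|1-e^{i\theta}|^2+(1-\rho)^2$ is convex. So on $[0,\rho_*]$ it is at most $\max\{1,\rho_*(q+\varepsilon_j)^2+a^2\}$.
\item For $\varepsilon_j\le1$,
\[
 \rho_*(q+\varepsilon_j)^2+a^2-q^2
 =-a(q^2-a)+\rho_*(2q\varepsilon_j+\varepsilon_j^2)
 \le-\frac{L(q^2-1)}{2B_j}+5\varepsilon_j .
\]
\item It therefore suffices to choose $t_j$ with overshoot $\varepsilon_j\le L(q^2-1)/(10B_j)$. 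This is possible because $\liminf_{t\downarrow0}h_{\mathcal N}(B_j,t)\le q$ allows any prescribed overshoot.
\end{itemize}
This is the same balance the paper uses, there between damping $\varepsilon_j$ and overshoot $\varepsilon_j^2$: damping of first order in the relevant small parameter must dominate the overshoot. Your non-Ritt check via the Stolz angle, with ratio $\gtrsim B_j/L\to\infty$, is equivalent to the paper's boundary-resolvent check.
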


We also prove that \[
 \alpha(\mathcal N)>0
 \quad\Longleftrightarrow\quad c_\infty(\bn)<\infty
\]
and that
\[
 \frac{n_{k+1}}{n_k}\longrightarrow b\in\{2,3,\ldots\}
 \quad\Longrightarrow\quad
 \alpha(\mathcal N)=2\sin\frac{\pi}{b+1}.
\]
The endpoint $q=1$ has a different character. There are sampling sets
with unbounded quotients for which the endpoint bound forces every bounded
operator to be Ritt, without any prior power bound. Factorial sampling, on
the other hand, admits a normal non-Ritt contraction at the same endpoint.
Section~\ref{sec:endpoint} proves these assertions and gives an exact
endpoint criterion for normal contractions.

\subsection*{Methods}
The sampling sufficiency results rest on an elementary mechanism inspired by \cite{BGT}. This is isolated in the \emph{sampling lemma} of Section~\ref{sec:sampling}: if $\lambda$ approaches $1$ tangentially from outside the disc, then some angular part $e^{in\arg\lambda}$, with $n$ belonging to our sequence $\mathcal N$ and $n|\lambda-1|$ bounded, lies on a prescribed arc of $\T\setminus\{1\}$ where the resolvents of $T^n$ are controlled; the power $\lambda^n$ is close enough to this angular part for a Neumann perturbation, and the factorisation $\lambda^n-T^n=(\lambda-T)\sum_j\lambda^{n-1-j}T^j$ transfers this control back to $(\lambda-T)^{-1}$. The converses use Fourier vectors on the roots of $\zeta$, the maximum principle for the operator function $(z-1)(z-T)^{-1}$, the modulus of convexity, and unitary dilations.

\subsection*{Organisation}
Section~\ref{sec:prelim} records resolvent preliminaries and the relation
between finite peripheral spectrum and Ritt powers.
Section~\ref{sec:sampling} proves the sampling lemma and the discrete Kato
criterion. Section~\ref{sec:allpowers} treats full displacement bounds,
sharp threshold asymptotics, and general symbols.
Section~\ref{sec:sparse} concerns sampled resolvents and differences;
Section~\ref{sec:exact-sampling} establishes the exact displacement
criterion for $1<q<2$. The endpoint is treated separately in
Section~\ref{sec:endpoint}. Section~\ref{sec:UC} proves the uniformly
convex characterisation. Sections~\ref{sec:hilbert} and~\ref{sec:matrix}
contain the quantitative resolvent estimates and the operator-valued
norm-gap results.

\subsection*{Notation}
$\D$ is the open unit disc, $\T$ the unit circle, $\mu_N=\{\xi\in\T:\xi^N=1\}$. For $T\in\B(X)$ we write $\Res(\lambda,T)=(\lambda-T)^{-1}$ for $\lambda\in\rho(T)$, $M(T)=\sup_{n\ge0}\|T^n\|$, and $\CR(T)$ as in \eqref{eq:rittdef}. Arcs of $\T$ are denoted by $\arc$ and are closed arcs of positive length unless stated otherwise. We use the principal argument $\arg z\in(-\pi,\pi]$; arc endpoints are also described by angles in $[0,2\pi]$. Throughout, $A\lesssim B$ means $A\le cB$ for a constant $c$ whose dependence is indicated. We use the elementary inequalities $\frac2\pi|t|\le|e^{it}-1|\le|t|$ for $|t|\le\pi$ without further comment.

\section{Preliminaries}\label{sec:prelim}

\subsection{Ritt operators}
We shall use freely the following standard facts; see \cite{LeMerdy,Lyubich,NagyZemanek,Nevanlinna}. If $T$ is a Ritt operator, then $T$ is power bounded, $\sigma(T)\subset\D\cup\{1\}$, and $D(T):=\sup_{n\ge1}n\|T^n-T^{n-1}\|<\infty$; conversely, a power bounded operator with $D(T)<\infty$ is Ritt. Each of the constants involved can be bounded in terms of the others: there are universal nondecreasing functions $f_0,f_1,f_2$ with
\begin{equation}\label{eq:quantequiv}
   M(T)\le f_0(\CR(T)),\qquad D(T)\le f_1(\CR(T)),\qquad \CR(T)\le f_2\big(M(T),D(T)\big),
\end{equation}
and an explicit bound $M(T)\lesssim\CR(T)\log(1+\CR(T))$ is due to Bakaev and Schwenninger \cite{Schwenninger}. We shall also use the following result of Borichev, Gomilko and Tomilov.

\begin{theorem}[{\cite[Theorem~7.6]{BGT}}]\label{thm:BGTpowers}
If $T$ is a Ritt operator, then $\sup_{m\ge1}\CR(T^m)<\infty$.
\end{theorem}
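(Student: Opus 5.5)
The plan is to reduce $\CR(T^m)$ to a boundary estimate and then exploit the factorisation of $\lambda^m-T^m$ over the $m$-th roots of $\lambda^m$. Fix $m\ge1$ and write $S=T^m$. Since $T$ is Ritt, $\sigma(T)\subset\D\cup\{1\}$. By spectral mapping, $\sigma(S)\subset\D\cup\{1\}$, and $S$ is power bounded with $M(S)\le M(T)$. So it suffices to bound $|\mu-1|\,\|(\mu-S)^{-1}\|$ uniformly in $|\mu|>1$ and in $m$. For $|\mu|>1$, let $\lambda_0,\dots,\lambda_{m-1}$ be the $m$-th roots of $\mu$, all of modulus $>1$. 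Then
\[
(\mu-S)^{-1}=\frac1m\sum_{j=0}^{m-1}\lambda_j(\lambda_j-T)^{-1}\,\mu^{-1}\cdot m\cdot\frac{1}{m}\ ,
\]
which is the standard partial-fraction identity $\frac{1}{\mu-z^m}=\frac{1}{m\mu}\sum_j\frac{\lambda_j}{\lambda_j-z}$, applied in the holomorphic functional calculus.

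The first step is the resulting bound
\[
|\mu-1|\,\|(\mu-S)^{-1}\|\le\frac{|\mu-1|}{m|\mu|}\sum_j\frac{|\lambda_j|\,\CR(T)}{|\lambda_j-1|}.
\]
Next I would show that $\frac{|\mu-1|}{m}\sum_j|\lambda_j-1|^{-1}$ is bounded independently of $m$ when $\mu$ lies in a region avoiding a neighbourhood of $\D$. This cannot hold uniformly near the circle, since at most one root is near $1$; a root far away contributes about $|\mu-1|/(m|\lambda_j-1|)$, and summing gives a harmonic-type sum of order $\log m\cdot|\mu-1|$. That is too crude, so a direct approach fails.

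The remedy is to use instead $D(T)=\sup_n n\|T^n-T^{n-1}\|<\infty$, which follows from \eqref{eq:quantequiv}, and to show that $D(T^m)$ is bounded uniformly in $m$. Write
\[
T^{mn}-T^{m(n-1)}=\sum_{k=m(n-1)}^{mn-1}(T^{k+1}-T^k).
\]
Each summand has norm at most $D(T)/k\le D(T)/(m(n-1))$ for $n\ge2$, so the sum has norm at most $D(T)/(n-1)$. Hence $n\|S^n-S^{n-1}\|\le 2D(T)$ for $n\ge2$. For $n=1$, $\|S-I\|\le M(T)+1$. Thus
\[
D(S)\le\max\{2D(T),\,M(T)+1\},\qquad M(S)\le M(T).
\]
By \eqref{eq:quantequiv} with $f_2$ nondecreasing, $\CR(T^m)\le f_2\bigl(M(T),\max\{2D(T),M(T)+1\}\bigr)$, a bound independent of $m$.

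The main obstacle is really only to make sure the converse direction in \eqref{eq:quantequiv} applies to $S$: power boundedness together with $D(S)<\infty$. Both hold, and the spectral condition $\sigma(S)\subset\D\cup\{1\}$ is automatic from the Ritt property of $T$. So the telescoping estimate is the heart of the argument, and the root-of-unity decomposition above is not needed.
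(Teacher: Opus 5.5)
Your argument is correct and is essentially the paper's own proof, which it gives for Proposition~\ref{prop:universalPhi}: you telescope $T^{mn}-T^{m(n-1)}$ into $m$ consecutive differences to get $D(T^m)\le\max\{1+M(T),2D(T)\}$, and then the monotone quantitative equivalence \eqref{eq:quantequiv} bounds $\CR(T^m)$ uniformly in $m$. The opening partial-fraction paragraph, which you abandon anyway, is not needed and can be dropped.
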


Their proof uses the direct sum $S_T=\bigoplus_{m\ge1}T^m$ on $\ell_\infty(X)$ and the shifted difference quantity $\sup_{n\ge1}n\|T^n(\Id-T)\|$. With our convention for $D(T)$, the following proof gives a universal bound directly.

\begin{proposition}\label{prop:universalPhi}
There is a nondecreasing function $F:[1,\infty)\to[1,\infty)$, independent of the Banach space, such that every Ritt operator satisfies
\[
 \sup_{m\ge1}\CR(T^m)\le F(\CR(T)).
\]
\end{proposition}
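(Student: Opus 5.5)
Since $T^m$ is power bounded with $M(T^m)\le M(T)\le f_0(\CR(T))$, the quantitative equivalence \eqref{eq:quantequiv} gives
\[
\CR(T^m)\le f_2\bigl(M(T^m),D(T^m)\bigr)\le f_2\bigl(f_0(\CR(T)),D(T^m)\bigr),
\]
because $f_2$ is nondecreasing. So it suffices to bound $D(T^m)=\sup_{n\ge1}n\|T^{mn}-T^{m(n-1)}\|$ uniformly in $m$ by a function of $\CR(T)$ alone. Then $F(C):=f_2\bigl(f_0(C),G(C)\bigr)$ works, where $G$ is that function; we may replace $F$ by $\max\{F,1\}$ to keep it nondecreasing with values in $[1,\infty)$.

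To bound $D(T^m)$, telescope:
\[
T^{mn}-T^{m(n-1)}=\sum_{j=m(n-1)+1}^{mn}\bigl(T^j-T^{j-1}\bigr),
\]
a sum of $m$ consecutive differences. Each term satisfies $\|T^j-T^{j-1}\|\le D(T)/j\le D(T)/(m(n-1)+1)$. Hence, for $n\ge2$,
\[
n\|T^{mn}-T^{m(n-1)}\|\le n\cdot\frac{m\,D(T)}{m(n-1)+1}\le \frac{n}{n-1}D(T)\le 2D(T).
\]
For $n=1$ we have the trivial bound $\|T^m-\Id\|\le M(T)+1$. Therefore
\[
D(T^m)\le \max\{2D(T),\,M(T)+1\}\le \max\{2f_1(\CR(T)),\,f_0(\CR(T))+1\}=:G(\CR(T)),
\]
which is nondecreasing in $\CR(T)$ and independent of $m$ and of the Banach space.

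There is no real obstacle here. The only point that needs care is that the convention $D(T)=\sup_n n\|T^n-T^{n-1}\|$ makes the telescoped block of $m$ consecutive differences near index $mn$ comparable to $D(T)/n$. This is exactly why the shifted quantity used in \cite{BGT} is not needed. The rest is the monotonicity bookkeeping of $f_0,f_1,f_2$ from \eqref{eq:quantequiv}.
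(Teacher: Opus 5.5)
Your proposal is correct and follows the paper's proof: the same telescoping of $T^{mn}-T^{m(n-1)}$ into $m$ consecutive differences gives $D(T^m)\le\max\{2D(T),\,1+M(T)\}$ uniformly in $m$. The same monotone bookkeeping with $f_0,f_1,f_2$ from \eqref{eq:quantequiv} then yields $F(C)=\max\bigl\{1,\,f_2\bigl(f_0(C),\max\{1+f_0(C),2f_1(C)\}\bigr)\bigr\}$.
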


\begin{proof}
Clearly $M(T^m)\le M(T)$ and $\|\Id-T^m\|\le1+M(T)$. For $m\ge1$ and $n\ge2$, writing $\Id-T^m=\sum_{j=0}^{m-1}T^j(\Id-T)$ gives
\[
 n\|T^{m(n-1)}(\Id-T^m)\|
 \le nD(T)\sum_{j=0}^{m-1}\frac1{m(n-1)+j+1}
 \le\frac n{n-1}D(T)\le2D(T).
\]
Thus $D(T^m)\le\max\{1+M(T),2D(T)\}$. With $C=\CR(T)$, \eqref{eq:quantequiv} gives the assertion with
\[
 F(C)=\max\Big\{1,\ f_2\big(f_0(C),\max\{1+f_0(C),2f_1(C)\}\big)\Big\}.
\]
The proof is complete.
\end{proof}

On Hilbert space we shall obtain the explicit bound $\sup_m\CR(T^m)\le\CR(T)M(T)+2$ (Proposition~\ref{prop:explicitpowers}).

The next statement says that the Ritt constant depends only on the behaviour of the resolvent on the unit circle. The inequality $\ge$ is the trivial half; the inequality $\le$ is a maximum principle and will be used to transfer boundary resolvent estimates to the whole exterior of the disc. 

\begin{proposition}[boundary characterisation of the Ritt constant]\label{prop:boundary}
Let $S\in\B(X)$ be power bounded with $\sigma(S)\cap\T\subset\{1\}$. Then $\T\setminus\{1\}\subset\rho(S)$ and
\begin{equation}\label{eq:boundary}
   \CR(S)=\sup_{\xi\in\T\setminus\{1\}}|\xi-1|\,\|\Res(\xi,S)\|\ \in[1,\infty].
\end{equation}
In particular $S$ is a Ritt operator if and only if the right-hand side of \eqref{eq:boundary} is finite, and then $\|\Res(\xi,S)\|\le\CR(S)/|\xi-1|$ for $\xi\in\T\setminus\{1\}$.
\end{proposition}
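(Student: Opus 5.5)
Since $S$ is power bounded, $r(S)\le1$, so $\sigma(S)\subset\ol\D$. Together with $\sigma(S)\cap\T\subset\{1\}$ this gives $\T\setminus\{1\}\subset\rho(S)$. Put $\Omega:=\{|z|\ge1\}\setminus\{1\}\subset\rho(S)$, and let $\kappa$ denote the right-hand side of \eqref{eq:boundary}. I would study the operator-valued function
\[
 G(z):=(z-1)\Res(z,S),
\]
which is holomorphic on a neighbourhood of $\Omega$ in $\rho(S)$. The inequality $\CR(S)\ge\kappa$ follows by continuity. For $\xi\in\T\setminus\{1\}$, take $\lambda=r\xi$ with $r\downarrow1$; then $\Res(\lambda,S)\to\Res(\xi,S)$ in norm, because $\xi\in\rho(S)$ and the resolvent is continuous. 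Hence $|\xi-1|\|\Res(\xi,S)\|\le\CR(S)$. The lower bound $\kappa\ge1$ comes from $\xi\to1$: if $\kappa<\infty$ then $S$ is Ritt by the reverse inequality, and for any Ritt operator $\CR\ge1$, since $|\lambda-1|\|\Res(\lambda,S)\|\ge|\lambda-1|/\dist(\lambda,\sigma(S))$ and $1\in\sigma(S)$ unless $\sigma(S)\subset\D$. If $\sigma(S)\subset\D$, I would instead use $\lambda\to\infty$, where $|\lambda-1|\|\Res(\lambda,S)\|\to1$. Letting $|\lambda|\to\infty$ gives $\CR(S)\ge1$ in every case.

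For $\CR(S)\le\kappa$, assume $\kappa<\infty$ and apply a maximum principle to $G$ on the unbounded domain $\{|z|>1\}$. Fix functionals $x\in X$, $x^*\in X^*$ of norm one and set $g(z)=\langle G(z)x,x^*\rangle$. This function is holomorphic on $|z|>1$ and continuous up to $\T\setminus\{1\}$, where $|g|\le\kappa$. At $\infty$, $G(z)=(1-1/z)(I-S/z)^{-1}\to I$, so $g$ is bounded near $\infty$. Pass to $w=1/z$, which maps the exterior onto $\D$; then $g$ becomes holomorphic on $\D$ and continuous on $\ol\D\setminus\{1\}$.

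The main obstacle is the boundary point $1$, where we have no a priori control. I would handle it with power boundedness. For $|z|>1$, the Neumann series gives $\|\Res(z,S)\|\le M(S)/(|z|-1)$, so $|g(z)|\le M(S)|z-1|/(|z|-1)$. This bounds $g$ only in Stolz angles, not tangentially. So I would instead use a Phragm\'en--Lindel\"of argument in the $w$-disc: the growth near $w=1$ is at most $O(1/(1-|w|))$, and I would remove it by multiplying by $((1-w)/2)^{\varepsilon}$ for small $\varepsilon>0$. That alone does not beat $1/(1-|w|)$, so a cleaner route is needed. First, on $\T\setminus\{1\}$ the bound $|g|\le\kappa$ makes $g$ bounded along the boundary near $1$. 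Next, on the circles $|z|=r>1$ we have $|g|\le M(r+1)/(r-1)$, so $g$ is bounded on each such circle and hence, by the maximum principle on annuli, bounded on each region $\{|z|\ge r\}$. The remaining region is $1<|z|<r$ near $1$. There I would apply the standard Phragm\'en--Lindel\"of principle for the disc with one exceptional boundary point: a holomorphic function with growth $O((1-|w|)^{-1})$ is too big for the plain version, which needs $o(|w-1|^{-1})$-type smallness. So I expect to use the auxiliary function $g(w)(1-w)^{\varepsilon}$ combined with the estimate $|1-w|\ge1-|w|$ in the Stolz part, and the boundedness of $\Res$ along rays in the tangential part.

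Alternatively, and more simply, I would avoid the delicate point by a dilation trick. For $0<\rho<1$, the operator $\rho S$ has spectrum in $\rho\ol\D$, and the function $z\mapsto(z-1)\Res(z,\rho S)$ is holomorphic on $|z|>\rho$ and bounded at $\infty$. The classical maximum principle on $\{|z|\ge1\}\cup\{\infty\}$ then bounds it by its supremum on $\T$. It remains to show that $\sup_{\xi\in\T}|\xi-1|\|\Res(\xi,\rho S)\|\to\kappa$ as $\rho\uparrow1$, using $\Res(\xi,\rho S)=\rho^{-1}\Res(\xi/\rho,S)$, which reduces this to the exterior behaviour near the circle. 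This is circular near $\xi=1$, so I regard controlling the point $1$ as the crux. My first attempt would be the $(1-w)^{\varepsilon}$ Phragm\'en--Lindel\"of argument, using the Neumann bound and letting $\varepsilon\to0$.
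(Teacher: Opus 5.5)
Your proposal handles the easy parts correctly: $\T\setminus\{1\}\subset\rho(S)$, the inequality $\kappa\le\CR(S)$ by radial continuity of the resolvent, and the lower bound $1$ by letting $|\lambda|\to\infty$. It does not, however, prove the main inequality $\CR(S)\le\kappa$.

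You correctly locate the difficulty at the point $1$, but none of your routes closes it. The dilation trick is circular, as you note yourself. The $(1-w)^{\varepsilon}$ Phragm\'en--Lindel\"of argument you settle on fails when only the Neumann bound is available. That bound gives
\[
|g(w)(1-w)^{\varepsilon}|\le M|1-w|^{1+\varepsilon}/(1-|w|),
\]
which is unbounded near $w=1$ for every $\varepsilon$, because $1-|w|$ can be arbitrarily small compared with any power of $|1-w|$. The underlying problem is that the Neumann bound degenerates at \emph{every} boundary point. Near points of $\T\setminus\{1\}$ close to $1$ you have only qualitative continuity of $g$ and no uniform control. So you cannot show that the auxiliary function is bounded, or that it tends to $0$ at $w=1$, and letting $\varepsilon\to0$ does not help.

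The missing idea is an a priori bound showing that $G(z)=(z-1)\Res(z,S)$ is bounded on $\{|z|>1\}$. This is the ``boundedness along rays'' you allude to but do not justify. It follows from the resolvent identity $\Res(z,S)=\Res(\xi,S)+(\xi-z)\Res(z,S)\Res(\xi,S)$ combined with the Neumann estimate $(r-1)\|\Res(z,S)\|\le M$. For $z=r\xi$ with $r>1$ and $\xi\in\T\setminus\{1\}$ this gives
\[
 \|\Res(z,S)\|\le\|\Res(\xi,S)\|\bigl(1+(r-1)\|\Res(z,S)\|\bigr)\le(1+M)\|\Res(\xi,S)\|.
\]
Since $|z-1|\le(r-1)+|\xi-1|$, you get $|z-1|\,\|\Res(z,S)\|\le M+(1+M)\kappa$. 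For $\xi=1$ the Neumann bound alone gives $M$. This is exactly the first step of the paper's proof.

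Once $g$ is known to be bounded, the exceptional point is harmless, and your own auxiliary function finishes the argument. The function $g(w)(1-w)^{\varepsilon}$ extends continuously to $\ol\D$ with value $0$ at $w=1$, and its boundary values are at most $2^{\varepsilon}\kappa$. The maximum principle and $\varepsilon\to0$ then give $|g|\le\kappa$. The paper instead uses the Poisson representation of the bounded function $g$, whose radial limits are bounded by $\kappa$ off the null set $\{1\}$; the two finishes are equivalent.
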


\begin{proof}
Denote the right-hand side of \eqref{eq:boundary} (the sup) by $B$. For $\xi\in\T\setminus\{1\}$ we have $\xi\in\rho(S)$, $\Res(r\xi,S)\to\Res(\xi,S)$ in norm as $r\downarrow1$, and $|r\xi-1|\to|\xi-1|$; hence $B\le\CR(S)$.

Conversely, assume $B<\infty$. We first show that $\CR(S)<\infty$. Let $M=M(S)$, $z=r\xi$ with $r>1$, $\xi\in\T$. The Neumann series gives $(r-1)\|\Res(z,S)\|\le M$. If $\xi\ne1$, the resolvent identity $\Res(z,S)=\Res(\xi,S)+(\xi-z)\Res(z,S)\Res(\xi,S)$ gives
\[
 \|\Res(z,S)\|\le\|\Res(\xi,S)\|\bigl(1+(r-1)\|\Res(z,S)\|\bigr)\le(1+M)\|\Res(\xi,S)\|,
\]
and since $|z-1|\le(r-1)+|\xi-1|$ we obtain $|z-1|\|\Res(z,S)\|\le M+(1+M)B$; when $\xi=1$ the Neumann estimate alone gives $|z-1|\|\Res(z,S)\|\le M$. Thus $\CR(S)\le M+(1+M)B<\infty$.

Consider now the operator-valued function
\[
 G(w)=(1-w)(\Id-wS)^{-1},\qquad w\in\D,
\]
with $G(0)=\Id$; for $0<|w|<1$ and $z=1/w$ one has $\Id-wS=w(z-S)$ and $(1-w)/w=z-1$, so that $G(w)=(z-1)\Res(z,S)$, and $w\mapsto z$ maps $\D\setminus\{0\}$ onto $\{|z|>1\}$ and $\T\setminus\{1\}$ onto itself. It is holomorphic on $\D$, bounded by $\CR(S)$, and extends continuously to $\ol\D\setminus\{1\}$ with $\|G(e^{it})\|\le B$ for $e^{it}\ne1$. Fix $x\in X$ and $x^*\in X^*$ of norm one and put $g(w)=\langle G(w)x,x^*\rangle$, a bounded holomorphic function on $\D$. Its radial limits $g^*(e^{it})$ exist almost everywhere and coincide with $\langle G(e^{it})x,x^*\rangle$ for $e^{it}\ne1$; the exceptional point has measure zero, so $|g^*|\le B$ almost everywhere. The Poisson representation $g(w)=\int_\T P_w\,g^*\,dm$ of bounded holomorphic functions then gives $|g(w)|\le B$ for all $w\in\D$. Taking the supremum over $x$ and $x^*$ yields $\|G(w)\|\le B$, that is, $|z-1|\|\Res(z,S)\|\le B$ for all $|z|>1$, and \eqref{eq:boundary} is proved. Finally $\CR(S)\ge\lim_{|z|\to\infty}|z-1|\,\|\Res(z,S)\|=1$, so both sides of \eqref{eq:boundary} are at least $1$.
\end{proof}

\begin{remark}
For a power bounded operator $T$ with $\sigma(T)\cap\T\subset\mu_N$, the same radial estimate and maximum-principle argument, now applied to $H(w)=(1-w^N)(\Id-wT)^{-1}$, show that $K_{\mu_N}(T)$ of Section~\ref{sec:rootsofunity} is controlled, up to a factor depending only on $N$, by the boundary supremum $\sup_{\xi\in\T\setminus\mu_N}\dist(\xi,\mu_N)\|\Res(\xi,T)\|$.
\end{remark}

\begin{lemma}[factorisation]\label{lem:factor}
Let $T\in\B(X)$, $\lambda\in\C$ and $n\ge1$. Then
\[
  \lambda^n-T^n=(\lambda-T)\,Q_n(\lambda,T),\qquad Q_n(\lambda,T):=\sum_{j=0}^{n-1}\lambda^{n-1-j}T^j,
\]
and the two factors commute. Consequently, if $\lambda^n\in\rho(T^n)$, then $\lambda\in\rho(T)$ and
\[
  \Res(\lambda,T)=Q_n(\lambda,T)\,\Res(\lambda^n,T^n),\qquad
  \|\Res(\lambda,T)\|\le M(T)\,n\max\{1,|\lambda|\}^{n-1}\,\|\Res(\lambda^n,T^n)\| .
\]
\end{lemma}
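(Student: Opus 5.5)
The plan is to verify the algebraic identity directly and then read off the resolvent consequences.

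\emph{The identity.} Both factors are polynomials in $T$ with scalar coefficients, so they commute. The product telescopes:
\[
(\lambda-T)\sum_{j=0}^{n-1}\lambda^{n-1-j}T^j=\sum_{j=0}^{n-1}\lambda^{n-j}T^j-\sum_{j=1}^{n}\lambda^{n-j}T^{j}=\lambda^n-T^n .
\]

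\emph{Invertibility of $\lambda-T$.} Assume $\lambda^n\in\rho(T^n)$ and put $R=\Res(\lambda^n,T^n)$. Since $T$ commutes with $T^n$, it commutes with $R$, and hence $Q_n(\lambda,T)$ commutes with $R$ as well. The identity then gives
\[
(\lambda-T)\,Q_n(\lambda,T)\,R=\Id,\qquad Q_n(\lambda,T)\,R\,(\lambda-T)=R\,Q_n(\lambda,T)(\lambda-T)=R(\lambda^n-T^n)=\Id .
\]
So $\lambda-T$ has the two-sided inverse $Q_n(\lambda,T)R$. This shows $\lambda\in\rho(T)$ and yields the stated formula for $\Res(\lambda,T)$.

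\emph{The norm bound.} Estimate termwise:
\[
\|Q_n(\lambda,T)\|\le\sum_{j=0}^{n-1}|\lambda|^{n-1-j}\|T^j\|\le M(T)\,n\max\{1,|\lambda|\}^{n-1}.
\]
Multiplying by $\|R\|$ gives the final inequality. If $M(T)=\infty$, the bound is trivial.

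The argument is routine; the only point requiring care is two-sidedness, which the commutation of $R$ with $T$ settles.
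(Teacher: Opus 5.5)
Your proof is correct and follows the paper's approach: the paper also reads the identity off the scalar factorisation of $\lambda^n-z^n$ and invokes the general fact that if commuting $a,b$ have $ab$ invertible then $a^{-1}=b(ab)^{-1}$. Your two-sided check using the commutation of $R$ with $T$ is just that fact verified directly, and your termwise estimate of $\|Q_n(\lambda,T)\|$ gives the stated norm bound.
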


\begin{proof}
The identity is the operator form of $\lambda^n-z^n=(\lambda-z)\sum_j\lambda^{n-1-j}z^j$. If $a,b$ commute and $ab$ is invertible, then $a$ is invertible with $a^{-1}=b(ab)^{-1}$.
\end{proof}

\begin{lemma}[localisation]\label{lem:reduction}
Let $T$ be power bounded with $\sigma(T)\cap\T\subset\{1\}$, and suppose that for some $\delta>0$, $C>0$,
\[
   |\lambda-1|\,\|\Res(\lambda,T)\|\le C\qquad(|\lambda|>1,\ |\lambda-1|<\delta).
\]
Then $T$ is a Ritt operator, and $\CR(T)\le\max\{3M(T),\,C,\,3B_\delta(T)\}$, where
\[
 B_\delta(T)=\sup\{\|\Res(\lambda,T)\|:\ 1\le|\lambda|\le2,\ |\lambda-1|\ge\delta\}<\infty.
\]
\end{lemma}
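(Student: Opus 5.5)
I will split the exterior $\{|\lambda|>1\}$ into three regions and bound $|\lambda-1|\,\|\Res(\lambda,T)\|$ separately on each. In order: $|\lambda|\ge2$; $1<|\lambda|<2$ with $|\lambda-1|<\delta$; and $1<|\lambda|<2$ with $|\lambda-1|\ge\delta$. The middle region is covered directly by the hypothesis, with the bound $C$. Finiteness of the Ritt constant then follows from the definition \eqref{eq:rittdef}, since $\sigma(T)\subset\ol\D$ by power boundedness.

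First I will check that $B_\delta(T)<\infty$. The set $K=\{1\le|\lambda|\le2,\ |\lambda-1|\ge\delta\}$ is compact. It lies in $\rho(T)$: points with $|\lambda|>1$ are in $\rho(T)$ because $r(T)\le1$, and points with $|\lambda|=1$, $\lambda\ne1$, are in $\rho(T)$ by the assumption $\sigma(T)\cap\T\subset\{1\}$. Note that $\lambda=1$ is excluded from $K$ since $\delta>0$. The map $\lambda\mapsto\|\Res(\lambda,T)\|$ is continuous on $\rho(T)$, so it is bounded on $K$.

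For $|\lambda|\ge2$, the Neumann series gives $\|\Res(\lambda,T)\|\le\sum_n\|T^n\|/|\lambda|^{n+1}\le M(T)/(|\lambda|-1)$. Since $|\lambda-1|\le|\lambda|+1\le3(|\lambda|-1)$ when $|\lambda|\ge2$, this yields $|\lambda-1|\,\|\Res(\lambda,T)\|\le3M(T)$.

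For $1<|\lambda|<2$ with $|\lambda-1|\ge\delta$, we have $\lambda\in K$, so $\|\Res(\lambda,T)\|\le B_\delta(T)$. Also $|\lambda-1|\le|\lambda|+1<3$, which gives the bound $3B_\delta(T)$.

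Taking the maximum of the three bounds gives $\CR(T)\le\max\{3M(T),C,3B_\delta(T)\}$. There is no real obstacle here. The only point needing care is the compactness argument for $B_\delta$, which is exactly where the peripheral spectral assumption is used.
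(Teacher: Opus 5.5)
Your proof is correct and follows essentially the same route as the paper. The paper uses the same Neumann-series bound for $|\lambda|\ge2$ together with $|\lambda-1|\le3(|\lambda|-1)$, the hypothesis near $1$, and compactness of $\{1\le|\lambda|\le2,\ |\lambda-1|\ge\delta\}\subset\rho(T)$ with $|\lambda-1|\le3$ there. You merely spell out the region decomposition and the verification that this set lies in the resolvent set more explicitly.
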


\begin{proof}
For $|\lambda|\ge2$ the Neumann series gives $\|\Res(\lambda,T)\|\le M(T)/(|\lambda|-1)$ and $|\lambda-1|\le3(|\lambda|-1)$. The compact set $\{1\le|\lambda|\le2,\ |\lambda-1|\ge\delta\}$ is contained in $\rho(T)$, on which the resolvent is continuous, so $B_\delta(T)<\infty$, and $|\lambda-1|\le3$ there.
\end{proof}

\subsection{Ritt operators with peripheral spectrum in the roots of unity}\label{sec:rootsofunity}
For a finite nonempty set $E\subset\T$ and $T\in\B(X)$ put
\[
  K_E(T):=\sup_{|\lambda|>1}\dist(\lambda,E)\,\|\Res(\lambda,T)\| .
\]
Following Bouabdillah and Le Merdy \cite{BLM}, we say that $T$ is a $\Ritt{E}$ \emph{operator} if $\sigma(T)\subset\ol\D$ and $K_E(T)<\infty$; then $\sigma(T)\cap\T\subset E$, and by a theorem of El-Fallah and Ransford \cite{ElFallahRansford}, $M(T)\le\frac e2K_E(T)^2\,\#E$. Bouabdillah and Le Merdy show that $T$ is $\Ritt{E}$ if and only if $T$ is power bounded and $\sup_n n\|T^{n-1}\prod_{\xi\in E}(\xi-T)\|<\infty$ \cite[Theorem~2.10]{BLM}. 
In this paper we shall consider the case $E=\mu_N$, which is simpler and symmetric.
We write $K_N(T):=K_{\mu_N}(T)$.

\begin{lemma}\label{lem:scalarroots}
Let $N\ge1$ and $c_N:=\sin^{N-1}(\pi/N)$, with the convention $c_1=1$. For $1\le|\lambda|\le2$,
\[
   c_N\,\dist(\lambda,\mu_N)\le|\lambda^N-1|\le 3^{N-1}\dist(\lambda,\mu_N).
\]
\end{lemma}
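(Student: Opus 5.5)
We factor the polynomial and bound each factor separately:
\[
\lambda^N-1=\prod_{\xi\in\mu_N}(\lambda-\xi).
\]
Fix $\lambda$ with $1\le|\lambda|\le2$ and choose $\xi_0\in\mu_N$ with $|\lambda-\xi_0|=\dist(\lambda,\mu_N)$. Splitting off this factor gives
\[
|\lambda^N-1|=\dist(\lambda,\mu_N)\prod_{\xi\ne\xi_0}|\lambda-\xi|.
\]
Both inequalities then reduce to bounds on the remaining $N-1$ factors. For $N=1$ there is nothing to prove: $\mu_1=\{1\}$, $\dist(\lambda,\mu_1)=|\lambda-1|$, and both constants equal $1$. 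So we assume $N\ge2$.

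For the upper bound, each remaining factor satisfies $|\lambda-\xi|\le|\lambda|+1\le3$. This gives $|\lambda^N-1|\le 3^{N-1}\dist(\lambda,\mu_N)$ at once.

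For the lower bound, we need $|\lambda-\xi|\ge\sin(\pi/N)$ for every $\xi\in\mu_N\setminus\{\xi_0\}$. The key geometric observation is that the nearest-point region of $\xi_0$ is cut out by the perpendicular bisectors of the chords $[\xi_0,\xi]$. Since $\lambda$ is at least as close to $\xi_0$ as to $\xi$, it lies in the closed half-plane bounded by the bisector of $[\xi_0,\xi]$ on the $\xi_0$ side. Hence
\[
|\lambda-\xi|\ge\tfrac12|\xi-\xi_0|.
\]
The distance between distinct $N$-th roots of unity is at least the adjacent-chord length $2\sin(\pi/N)$. Therefore $|\lambda-\xi|\ge\sin(\pi/N)$, and multiplying over the $N-1$ factors yields $|\lambda^N-1|\ge\sin^{N-1}(\pi/N)\dist(\lambda,\mu_N)=c_N\dist(\lambda,\mu_N)$.

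Neither step is really an obstacle. The only point needing care is the half-distance claim. It holds because the distance from $\xi$ to the half-plane $\{w:|w-\xi_0|\le|w-\xi|\}$ equals the distance from $\xi$ to the bisector line, which is exactly $|\xi-\xi_0|/2$. Notably, the restriction $|\lambda|\ge1$ is not needed for the lower bound, and $|\lambda|\le2$ is used only in the upper bound.
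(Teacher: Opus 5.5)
Your proposal is correct and is essentially the paper's proof: factor $\lambda^N-1$ over $\mu_N$, split off a nearest root, bound each remaining factor above by $|\lambda|+1\le3$ and below by $\tfrac12|\xi-\xi_0|\ge\sin(\pi/N)$. The only cosmetic difference is that the paper gets the half-distance bound from the one-line triangle inequality $2|\lambda-\xi|\ge|\lambda-\xi|+|\lambda-\xi_0|\ge|\xi-\xi_0|$ instead of the perpendicular-bisector picture.
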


\begin{proof}
Write $\lambda^N-1=\prod_{\xi\in\mu_N}(\lambda-\xi)$ and let $\xi_0\in\mu_N$ be a nearest point to $\lambda$. For $\xi\ne\xi_0$, $|\lambda-\xi|\le|\lambda|+1\le3$, while $2|\lambda-\xi|\ge|\lambda-\xi|+|\lambda-\xi_0|\ge|\xi-\xi_0|\ge2\sin(\pi/N)$.
\end{proof}

\begin{proposition}[power reduction]\label{prop:TN}
Let $N\ge1$, $s_N=\sin(\pi/(2N))$, and let $T\in\B(X)$ with $\sigma(T)\subset\ol\D$.
Then $T$ is $\Ritt{\mu_N}$ if and only if $T^N$ is Ritt. More precisely,
\begin{align*}
 T^N\text{ Ritt}&\quad\Longrightarrow\quad
 M(T)\le\max_{0\le r<N}\|T^r\|M(T^N),\qquad
 K_N(T)\le \frac{M(T)\CR(T^N)}{s_N},\\
 T\text{ is }\Ritt{\mu_N}&\quad\Longrightarrow\quad
 \CR(T^N)\le N K_N(T).
\end{align*}
For $N=1$ the two constants coincide.
\end{proposition}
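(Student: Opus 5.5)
The proof splits into the two implications, and both rest on the factorisation Lemma~\ref{lem:factor} applied to $\lambda^N-T^N$.

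\emph{From $T^N$ Ritt to $T$ being $\Ritt{\mu_N}$.} First, since $T^N$ is power bounded, writing $n=kN+r$ with $0\le r<N$ gives $\|T^n\|\le\|T^r\|\,\|T^{kN}\|$. This yields the stated bound on $M(T)$, and in particular $T$ is power bounded. Next fix $|\lambda|>1$. Because $|\lambda^N|>1$, the point $\lambda^N$ lies in $\rho(T^N)$, so Lemma~\ref{lem:factor} gives $\lambda\in\rho(T)$ and
\[
 \Res(\lambda,T)=Q_N(\lambda,T)\Res(\lambda^N,T^N).
\]
Using $|\lambda|^{N-1}$ directly would lose control for large $|\lambda|$, so the plan splits $|\lambda|$ into two regimes.
\begin{itemize}
\item For $|\lambda|\ge2$ (or any fixed radius), use the Neumann series bound $\|\Res(\lambda,T)\|\le M(T)/(|\lambda|-1)$. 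Together with $\dist(\lambda,\mu_N)\le|\lambda|+1$, this gives $\dist(\lambda,\mu_N)\|\Res(\lambda,T)\|\le 3M(T)$.
\item For $1<|\lambda|<2$, combine $\|\Res(\lambda^N,T^N)\|\le \CR(T^N)/|\lambda^N-1|$ with the lower bound $|\lambda^N-1|\ge c\,\dist(\lambda,\mu_N)$.
\end{itemize}

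The stated constant $M(T)\CR(T^N)/s_N$ is sharper than this. It suggests an orthogonal-decomposition argument rather than the crude factorisation bound. The idea is the spectral projection-type identity
\[
 \Res(\lambda,T)=\frac1N\sum_{\xi\in\mu_N}\ldots,
\]
or equivalently the partial fraction
\[
 \frac{1}{\lambda-z}=\frac{1}{\lambda^N-z^N}\sum_j\lambda^{N-1-j}z^j.
\]
Then, by Proposition~\ref{prop:boundary} and its remark, one only needs the estimate on the circle $|\lambda|=1$. There $|\lambda|^{N-1}=1$, and for $\lambda=e^{it}$ near $\xi_0\in\mu_N$ one uses
\[
 |\lambda^N-1|=2|\sin(N\theta/2)|\ge\ldots,
\]
where $\theta=t-\arg\xi_0$ with $|\theta|\le\pi/N$, together with $\dist(\lambda,\mu_N)=2|\sin(\theta/2)|$. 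Concavity of $\sin$ on $[0,\pi/2]$ should produce a ratio controlled by $N s_N$, or by $1/s_N$ after using $\|Q_N\|\le NM(T)$. This is the step I expect to be the main obstacle, namely matching exactly the constant $1/s_N$. If it fails, I will settle for a constant depending only on $N$ via Lemma~\ref{lem:scalarroots}, which is all the qualitative equivalence needs.

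\emph{From $T$ being $\Ritt{\mu_N}$ to $T^N$ Ritt.} Take $|\mu|>1$ and write
\[
 \mu-T^N=\prod_{\lambda^N=\mu}(\lambda-T),
\]
a product of commuting factors over the $N$ roots $\lambda$ of $\mu$, each with $|\lambda|>1$. Hence
\[
 \Res(\mu,T^N)=\prod_\lambda\Res(\lambda,T).
\]
Estimating all $N$ factors by $K_N$ would give powers of $K_N$, so I will instead use
\[
 \Res(\mu,T^N)=\frac1N\sum_{\lambda^N=\mu}\lambda\,\mu^{-1}\Res(\lambda,T),
\]
which is the partial fraction of $1/(\mu-z^N)$ and has residue $1/(N\lambda^{N-1})$ at each root. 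Then
\[
 |\mu-1|\,\|\Res(\mu,T^N)\|\le\frac{1}{N|\mu|}\sum_\lambda|\lambda|\,|\mu-1|\,\frac{K_N}{\dist(\lambda,\mu_N)}.
\]
It remains to check the scalar inequality
\[
 |\lambda^N-1|\le N|\lambda|^{N-1}\dist(\lambda,\mu_N),
\]
which follows by integrating $z^{N-1}$ along the segment from the nearest root $\xi_0$, whose modulus is at most $|\lambda|$. Each term is then at most $N K_N$ after dividing by $N$ and summing $N$ terms, giving $\CR(T^N)\le NK_N(T)$. Finally, for $N=1$ both implications are tautological, so the two constants coincide.
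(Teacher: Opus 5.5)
Your second implication is correct and coincides with the paper's proof. The partial-fraction formula $\Res(\mu,T^N)=\sum_j\Res(\lambda_j,T)/(N\lambda_j^{N-1})$, combined with $|\mu-1|\le N|\lambda_j|^{N-1}\dist(\lambda_j,\mu_N)$, gives $\CR(T^N)\le NK_N(T)$. The product formula you write first is off by the sign $(-1)^{N+1}$, but you discard it. The power-bound estimate is also fine.

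The gap is in the first implication. The proposition asserts the specific bound $K_N(T)\le M(T)\CR(T^N)/s_N$, and you do not prove it. You flag the key step as an expected obstacle and fall back on the regime splitting plus Lemma~\ref{lem:scalarroots}. That only yields a constant of the form $\max\{3M(T),\,M(T)(2^N-1)\CR(T^N)/\sin^{N-1}(\pi/N)\}$.

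The circle detour does not close the gap either. Whatever you obtain on $\T$, the maximum-principle transfer in the remark after Proposition~\ref{prop:boundary} controls $K_{\mu_N}(T)$ only up to an $N$-dependent factor. So the exact constant is lost precisely at the step from $\T$ to $|\lambda|>1$.

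The missing idea is that the growth of $\|Q_N(\lambda,T)\|\le M(T)\sum_{j<N}|\lambda|^j$, which worried you, is matched exactly by a lower bound for $|\lambda^N-1|$ valid on the whole exterior. Write $\lambda=\xi re^{it}$ with $\xi\in\mu_N$ a nearest point, $r>1$, $|t|\le\pi/N$. Then
\[
 \lambda^N-1=(\lambda-\xi)\,\xi^{N-1}\sum_{j=0}^{N-1}r^je^{ijt}.
\]
Rotate the sum by $e^{-i(N-1)t/2}$ and take real parts. Each term becomes $r^j\cos\bigl((j-\tfrac{N-1}2)t\bigr)\ge s_Nr^j$, since $|(j-\tfrac{N-1}2)t|\le\tfrac\pi2-\tfrac\pi{2N}$. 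Thus $|\lambda^N-1|\ge s_N\dist(\lambda,\mu_N)\sum_jr^j$.

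Now insert this into $\|\Res(\lambda,T)\|\le\|Q_N(\lambda,T)\|\,\CR(T^N)/|\lambda^N-1|$. The factor $\sum_jr^j$ cancels, giving $\dist(\lambda,\mu_N)\|\Res(\lambda,T)\|\le M(T)\CR(T^N)/s_N$ for every $|\lambda|>1$ at once, with no splitting and no boundary reduction.
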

\begin{proof}
Suppose first that $T^N$ is Ritt. Writing $n=Nk+r$ with $0\le r<N$ proves the power bound.
For $|\lambda|>1$, choose a nearest $\xi\in\mu_N$ and write
$\lambda=\xi r e^{it}$ with $r>1$ and $|t|\le\pi/N$. Then $\lambda^N-1=\lambda^N-\xi^N=(\lambda-\xi)\xi^{N-1}\sum_{j=0}^{N-1}r^je^{ijt}$, and rotation of the sum through the angle $-(N-1)t/2$ gives
\[
 \left|\sum_{j=0}^{N-1}r^je^{ijt}\right|
 \ge \sum_{j=0}^{N-1}r^j\cos\bigl((j-\tfrac{N-1}{2})t\bigr)
 \ge s_N\sum_{j=0}^{N-1}r^j,
\]
because $|(j-\frac{N-1}2)t|\le\frac{(N-1)\pi}{2N}=\frac\pi2-\frac{\pi}{2N}$. Consequently $|\lambda^N-1|\ge s_N\dist(\lambda,\mu_N)\sum_{j=0}^{N-1}r^j$.
Lemma~\ref{lem:factor}, with
$\|Q_N(\lambda,T)\|\le M(T)\sum_{j=0}^{N-1}r^j$ and $\|\Res(\lambda^N,T^N)\|\le\CR(T^N)/|\lambda^N-1|$, proves the first resolvent estimate.

Conversely, let $T$ be $\Ritt{\mu_N}$, let $|\mu|>1$, let $\lambda_0,\ldots,\lambda_{N-1}$ be the $N$-th roots of $\mu$, and put $r=|\mu|^{1/N}$. The partial fraction decompositions of $1/(\mu-z^N)$ give
\[
 \Res(\mu,T^N)=\sum_{j=0}^{N-1}\frac{\Res(\lambda_j,T)}{N\lambda_j^{N-1}}.
\]
If $\xi$ is nearest to $\lambda_j$ in $\mu_N$, then
$|\mu-1|=|\lambda_j^N-\xi^N|\le N r^{N-1}|\lambda_j-\xi|=N r^{N-1}\dist(\lambda_j,\mu_N)$.
Multiplying the partial fraction decomposition formula by $|\mu-1|$ and using $\dist(\lambda_j,\mu_N)\|\Res(\lambda_j,T)\|\le K_N(T)$ gives
$|\mu-1|\|\Res(\mu,T^N)\|\le NK_N(T)$.
\end{proof}

\subsection{Orbit diameters of unimodular numbers}\label{sec:orbits}
For $\lambda\in\T$ put $\od(\lambda):=\sup_{n\ge1}|\lambda^n-1|$, the \emph{orbit diameter} of $\lambda$ seen from $1$.

\begin{lemma}\label{lem:orbit}
Let $\lambda\in\T\setminus\{1\}$. If $\lambda$ is not a root of unity, then $\od(\lambda)=2$. If $\lambda$ is a root of unity of order $m\ge2$, then $\od(\lambda)=2$ if $m$ is even and $\od(\lambda)=2\cos\frac{\pi}{2m}$ if $m$ is odd. In particular $\od(\lambda)\ge\sqrt3$ for every $\lambda\in\T\setminus\{1\}$, with equality exactly for $\lambda=e^{\pm2\pi i/3}$.
\end{lemma}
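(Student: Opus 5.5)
We prove Lemma~\ref{lem:orbit} by splitting into the three cases of the statement and reducing each to an elementary question about how close the orbit of $\lambda$ comes to $-1$, using $|\lambda^n-1|=2|\sin(n\theta/2)|$ for $\lambda^n=e^{in\theta}$.

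\emph{Non-roots of unity.} If $\lambda$ is not a root of unity, then $\{\lambda^n:n\ge1\}$ is dense in $\T$ (Kronecker/Weyl, or the pigeonhole argument showing that the orbit contains points $\lambda^k$ arbitrarily close to $1$, whose powers then form arbitrarily fine nets). Hence $\lambda^n$ comes arbitrarily close to $-1$, and $\od(\lambda)=2$; the supremum need not be attained.

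\emph{Roots of unity.} Let $\lambda$ be a root of unity of order $m\ge2$. The set $\{\lambda^n:n\ge1\}$ is the full group $\mu_m$, since a primitive $m$-th root generates $\mu_m$. Therefore
\[
\od(\lambda)=\max_{0\le k<m}|e^{2\pi ik/m}-1|=\max_k 2\bigl|\sin(\pi k/m)\bigr|.
\]
If $m$ is even, take $k=m/2$: then $-1\in\mu_m$ and $\od(\lambda)=2$. If $m$ is odd, the maximum of $|\sin(\pi k/m)|$ over integers $k$ occurs at $k=(m\pm1)/2$, the integers nearest $m/2$. This gives
\[
2\sin\Bigl(\frac{\pi(m-1)}{2m}\Bigr)=2\sin\Bigl(\frac\pi2-\frac{\pi}{2m}\Bigr)=2\cos\frac{\pi}{2m}.
\]

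\emph{The final assertion.} The function $m\mapsto 2\cos(\pi/(2m))$ is strictly increasing in $m$. For odd $m\ge3$ its minimum is therefore attained only at $m=3$, with value $2\cos(\pi/6)=\sqrt3$, and all other cases give $2>\sqrt3$. Hence $\od(\lambda)\ge\sqrt3$ on $\T\setminus\{1\}$, with equality exactly for the primitive cube roots $e^{\pm2\pi i/3}$.

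No step is a genuine obstacle. The only point needing care is the density claim in the irrational case, which is classical, together with checking that the orbit of a primitive $m$-th root is all of $\mu_m$.
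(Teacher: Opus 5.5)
Your proposal is correct and follows essentially the same route as the paper: density of the orbit when $\lambda$ is not a root of unity, identification of the orbit with $\mu_m$ and maximisation of $2\sin(\pi k/m)$ at $k=m/2$ or $k=(m\pm1)/2$, and monotonicity of $2\cos\frac{\pi}{2m}$ in $m$, with value $\sqrt3$ at $m=3$. The only difference is that you spell out the supporting facts (why the orbit is dense, why it equals $\mu_m$) slightly more than the paper does.
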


\begin{proof}
If $\lambda$ is not a root of unity, then $\{\lambda^n\}$ is dense in $\T$ and $\od(\lambda)=\sup_{\eta\in\T}|\eta-1|=2$. If $\lambda$ has order $m$, then $\{\lambda^n:n\ge1\}=\mu_m$ and $\od(\lambda)=\max_{0\le k<m}2\sin\frac{\pi k}{m}$, which is $2$ if $m$ is even ($k=m/2$) and $2\sin\frac{\pi(m-1)}{2m}=2\cos\frac{\pi}{2m}$ if $m$ is odd. Finally $2\cos\frac\pi{2m}$ increases with $m$ and equals $\sqrt3$ for $m=3$.
\end{proof}

Recall the following definition of $E_q$ for $q\in[0,2)$. We define
\begin{equation}\label{eq:Eq}
   E_q:=\{1\}\cup\{\lambda\in\T:\ \od(\lambda)\le q\}
   =\{1\}\cup\Big\{\lambda\in\T:\ \lambda\text{ has odd order }m\ \text{and}\ 2\cos\tfrac{\pi}{2m}\le q\Big\},
\end{equation}
Let $N(q)$ be the least common multiple of the orders of the elements of $E_q$. By Lemma~\ref{lem:orbit}, $E_q$ is a finite set of roots of unity of odd order, $N(q)$ is odd, $E_q\subset\mu_{N(q)}$, and
\[
  N(q)=1\ \ (q<\sqrt3),\qquad N(q)=3\ \ (\sqrt3\le q<2\cos\tfrac{\pi}{10}),\qquad N(q)=15\ \ (2\cos\tfrac{\pi}{10}\le q<2\cos\tfrac{\pi}{14}),
\]
and so on. Note that $E_q$ is invariant under conjugation and that $E_q=\mu_3$ for $\sqrt3\le q<2\cos\frac\pi{10}\approx1.902$.

\section{The sampling lemma and the discrete Kato criterion}\label{sec:sampling}

The sampling criteria rest on an elementary mechanism inspired by \cite{BGT}: if $\lambda$ approaches $1$ tangentially from outside the disc, then an angular part $e^{in\arg\lambda}$, with $n|\lambda-1|$ bounded, lies on a prescribed arc of $\T\setminus\{1\}$ where the resolvents of $T^n$ are controlled. The power $\lambda^n$ is sufficiently close to this angular part for a Neumann perturbation, and the factorisation of Lemma~\ref{lem:factor} then transfers this control back to $\Res(\lambda,T)$. We isolate the mechanism in a form that applies to arbitrary sets of sampling times.

\begin{definition}\label{def:sampling}
Let $\mathcal N\subset\N$ and let $\arc\subset\T\setminus\{1\}$ be a closed arc. We say that $(\mathcal N,\arc)$ has the \emph{sampling property with constants} $(\theta_*,B)$, $\theta_*>0$, $B>0$, if for every $\theta$ with $0<|\theta|<\theta_*$ there is $n\in\mathcal N$ such that
\[
   e^{in\theta}\in\arc\qquad\text{and}\qquad n|\theta|\le B .
\]
\end{definition}

\begin{lemma}[sampling lemma]\label{lem:master}
Let $T\in\B(X)$ be power bounded, $M=M(T)$. Let $(\mathcal N,\arc)$ have the sampling property with constants $(\theta_*,B)$, and assume that $\arc\subset\rho(T^n)$ for $n\in\mathcal N$ and
\[
   K:=\sup_{n\in\mathcal N}\ \sup_{\eta\in\arc}\|(\eta-T^n)^{-1}\|<\infty .
\]
Put $a:=\max\{1,\,B/\log(1+\tfrac1{2K})\}$, $\delta:=\min\{\theta_*/\pi,\,\tfrac12\}$ and $C:=\max\{(1+2a)M,\ 3(2K+1)MB\}$. Then
\begin{equation}\label{eq:master}
   |\lambda-1|\,\|\Res(\lambda,T)\|\le C\qquad(|\lambda|>1,\ |\lambda-1|<\delta).
\end{equation}
\end{lemma}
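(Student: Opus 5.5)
Write $\lambda=re^{i\theta}$ with $r>1$ and $\theta\in(-\pi,\pi]$. We split the region $\{|\lambda|>1,\ |\lambda-1|<\delta\}$ into a \emph{radial} part and a \emph{tangential} part, according to whether $r-1$ is large or small compared with $|\theta|$. In the radial part the Neumann series suffices. In the tangential part we use the sampling property to pick $n\in\mathcal N$, and then transfer the resolvent bound on $\arc$ for $T^n$ back to $T$ through Lemma~\ref{lem:factor}.

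\emph{Radial case.} Suppose $|\theta|\le a(r-1)$; this includes $\theta=0$. We have $|\lambda-1|\le (r-1)+|e^{i\theta}-1|\le (r-1)+|\theta|\le (1+a)(r-1)$. The Neumann series gives $\|\Res(\lambda,T)\|\le M/(r-1)$. Hence $|\lambda-1|\|\Res(\lambda,T)\|\le (1+a)M\le(1+2a)M$.

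\emph{Tangential case.} Suppose $|\theta|>a(r-1)$ and $\theta\ne0$.
\begin{itemize}
\item First we need $|\theta|<\theta_*$, so that the sampling property applies. Since $|\lambda-1|<\delta\le\frac12$ and $r>1$, the point $e^{i\theta}$ lies within distance $2|\lambda-1|$ of $1$. Indeed $|e^{i\theta}-1|\le|\lambda-1|+(r-1)\le 2|\lambda-1|$. Then $\frac2\pi|\theta|\le|e^{i\theta}-1|<2\delta$ gives $|\theta|<\pi\delta\le\theta_*$; here the choice $\delta\le\theta_*/\pi$ is used.
\item Choose $n\in\mathcal N$ with $\eta:=e^{in\theta}\in\arc$ and $n|\theta|\le B$. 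Then $\lambda^n=r^n\eta$ and $|\lambda^n-\eta|=r^n-1$. Since $n(r-1)<n|\theta|/a\le B/a\le\log(1+\frac1{2K})$, we get $r^n-1\le e^{n(r-1)}-1\le\frac1{2K}$.
\item Write $\lambda^n-T^n=(\eta-T^n)\bigl(\Id+(\lambda^n-\eta)\Res(\eta,T^n)\bigr)$. The second factor has perturbation norm at most $\frac12$, so a Neumann perturbation gives $\|\Res(\lambda^n,T^n)\|\le 2K$. This is where the precise form of $a$ enters. The cruder bound $\|\Res(\lambda^n,T^n)\|\le K/(1-(r^n-1)K)$ would give an estimate with $2K$ replaced by something like $2K+1$. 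I expect the constant $3(2K+1)MB$ to absorb some such slack, possibly from a slightly different Neumann bookkeeping, so I will not insist on matching it exactly.
\item Lemma~\ref{lem:factor} gives $\|\Res(\lambda,T)\|\le M n r^{n-1}\|\Res(\lambda^n,T^n)\|$. Here $r^{n-1}\le r^n\le 1+\frac1{2K}$. Multiplying by $|\lambda-1|$ gives at most $M n r^n\cdot 2K\,|\lambda-1|\le M(2K+1)\,n|\lambda-1|$, using $r^n\cdot 2K\le 2K+1$.
\item It remains to bound $n|\lambda-1|$. We have $|\lambda-1|\le (r-1)+|\theta|\le(1+\frac1a)|\theta|\le 2|\theta|$. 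Hence $n|\lambda-1|\le 2B\le 3B$, and the total is at most $3(2K+1)MB$, within the stated $C$.
\end{itemize}

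The main obstacle is only bookkeeping. We must make sure the two cases cover everything, which they do since the threshold is $a(r-1)$. We must also check that $\delta$ forces $|\theta|<\theta_*$, and that $a\ge1$ makes $n(r-1)$ small enough for the Neumann step. The lemma carries no genuine analytic difficulty beyond the factorisation lemma.
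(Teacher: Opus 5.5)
Your proof is correct and follows the paper's argument essentially line by line: the same split at $|\theta|=a(r-1)$, the Neumann series in the non-tangential region, and in the tangential region the choice of $n$ from the sampling property, the perturbation bound $\|\Res(\lambda^n,T^n)\|\le2K$, and Lemma~\ref{lem:factor}. You use $\lambda-1=(r-1)e^{i\theta}+(e^{i\theta}-1)$ where the paper uses $(r-1)+r(e^{i\theta}-1)$, which gives the slightly better constants $(1+a)M$ and $2(2K+1)MB$ without needing $r<\frac32$; your hedging aside about a ``cruder'' Neumann bound is unnecessary, since $K/(1-(r^n-1)K)\le2K$ as well.
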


\begin{proof}
Let $\lambda=re^{i\theta}$ with $r>1$, $|\theta|\le\pi$ and $|\lambda-1|<\delta$. Then $r<\frac32$, and since $|\theta|\le\frac\pi2|e^{i\theta}-1|\le\frac\pi2\big((r-1)+|\lambda-1|\big)\le\pi|\lambda-1|$, we have $|\theta|<\theta_*$.

\emph{Non-tangential region} $|\theta|\le a(r-1)$. The Neumann series gives $\|\Res(\lambda,T)\|\le M/(r-1)$, while $|\lambda-1|\le(r-1)+r|e^{i\theta}-1|\le(r-1)+\frac32|\theta|\le(1+2a)(r-1)$. Hence $|\lambda-1|\|\Res(\lambda,T)\|\le(1+2a)M$.

\emph{Tangential region} $|\theta|>a(r-1)$. In particular $\theta\ne0$, so there is $n\in\mathcal N$ with $\eta:=e^{in\theta}\in\arc$ and $n|\theta|\le B$. Then $n(r-1)<n|\theta|/a\le B/a$, so $r^n\le e^{n(r-1)}\le e^{B/a}\le1+\frac1{2K}$ and
\[
   |\lambda^n-\eta|=|r^ne^{in\theta}-e^{in\theta}|=r^n-1\le\frac1{2K}.
\]
Since $\lambda^n-T^n=(\eta-T^n)\big(\Id+(\lambda^n-\eta)(\eta-T^n)^{-1}\big)$ and $\|(\lambda^n-\eta)(\eta-T^n)^{-1}\|\le\frac12$, the operator $\lambda^n-T^n$ is invertible with $\|(\lambda^n-T^n)^{-1}\|\le2K$. Lemma~\ref{lem:factor} gives
\[
   \|\Res(\lambda,T)\|\le Mnr^{n-1}\cdot2K\le(2K+1)Mn .
\]
Finally $|\lambda-1|\le(r-1)+\frac32|\theta|\le(\frac1a+\frac32)|\theta|\le3|\theta|$, whence $|\lambda-1|\|\Res(\lambda,T)\|\le3(2K+1)M\,n|\theta|\le3(2K+1)MB$.
\end{proof}

\begin{lemma}[sampling by all powers]\label{lem:allpowers}
Let $\arc=\{e^{it}:\alpha\le t\le\gamma\}$ with $0<\alpha<\gamma<2\pi$. Then $(\N,\arc)$ has the sampling property with constants $(\gamma-\alpha,\,2\pi)$.
\end{lemma}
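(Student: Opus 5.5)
The plan is a direct discrete intermediate-value argument: the multiples $n\theta$ advance in steps of length $|\theta|$, which is shorter than the arc, so they cannot jump over it. We treat $\theta>0$ first and then reduce $\theta<0$ to the same situation by reflecting the arc.

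\emph{Case $0<\theta<\gamma-\alpha$.} Let $n\ge1$ be the least positive integer with $n\theta\ge\alpha$; it exists because $\theta>0$.
\begin{itemize}
\item If $n=1$, then $\alpha\le\theta<\gamma-\alpha<\gamma$.
\item If $n\ge2$, minimality gives $(n-1)\theta<\alpha$, so $n\theta<\alpha+\theta<\alpha+(\gamma-\alpha)=\gamma$.
\end{itemize}
In either case $n\theta\in[\alpha,\gamma]$, so $e^{in\theta}\in\arc$ by definition of $\arc$ (no reduction modulo $2\pi$ is needed). Moreover $n|\theta|=n\theta<\gamma<2\pi$.

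\emph{Case $-(\gamma-\alpha)<\theta<0$.} Put $\phi=|\theta|$. The condition $e^{in\theta}=e^{-in\phi}\in\arc$ holds as soon as $n\phi\in[2\pi-\gamma,\,2\pi-\alpha]$. This interval satisfies $0<2\pi-\gamma<2\pi-\alpha<2\pi$ and has the same length $\gamma-\alpha>\phi$. The argument of the first case, applied with $(\alpha,\gamma)$ replaced by $(2\pi-\gamma,2\pi-\alpha)$, yields $n\ge1$ with $n\phi\in[2\pi-\gamma,2\pi-\alpha]$. Then $e^{in\theta}\in\arc$ and $n|\theta|\le2\pi-\alpha<2\pi$.

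Together the two cases show that $(\N,\arc)$ has the sampling property with constants $(\gamma-\alpha,2\pi)$. The only point needing care is the first step $n=1$, where minimality gives no information; there we use $\theta<\gamma-\alpha\le\gamma$ directly. There is no real obstacle beyond this.
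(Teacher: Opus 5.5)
Your proof is correct and is essentially the paper's argument: take the least $n$ with $n|\theta|$ reaching the near endpoint ($\alpha$ for $\theta>0$, $2\pi-\gamma$ for $\theta<0$), so the overshoot is less than $|\theta|<\gamma-\alpha$. Treating $n=1$ separately does no harm but is not needed, since $(n-1)\theta=0<\alpha$ also holds in that case.
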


\begin{proof}
Let $0<\theta<\gamma-\alpha$ and let $n$ be the least positive integer with $n\theta\ge\alpha$. Then $n\theta<\alpha+\theta<\gamma$, so $e^{in\theta}\in\arc$ and $n\theta<2\pi$. Let $-(\gamma-\alpha)<\theta<0$ and let $n$ be the least positive integer with $n|\theta|\ge2\pi-\gamma$. Then $n|\theta|<2\pi-\gamma+|\theta|<2\pi-\alpha$, so $2\pi-n|\theta|\in(\alpha,\gamma]$ and $e^{in\theta}=e^{i(2\pi-n|\theta|)}\in\arc$, while $n|\theta|<2\pi$.
\end{proof}

\begin{lemma}[sampling by a subsequence]\label{lem:subseq}
Let $\bn=(n_k)_{k\ge0}$ be strictly increasing, let $c'>\limsup_kn_{k+1}/n_k$, and let $k_0$ be such that $n_{k+1}\le c'n_k$ for all $k\ge k_0$. Let $\arc=\{e^{it}:\alpha\le t\le\gamma\}$ with $0<\alpha<\gamma<2\pi$ be a closed arc such that
\begin{equation}\label{eq:arccond}
   \frac{\gamma}{\alpha}\ge c'\qquad\text{and}\qquad\frac{2\pi-\alpha}{2\pi-\gamma}\ge c' .
\end{equation}
Then $(\{n_k\},\arc)$ has the sampling property with constants $\big(\min\{\alpha,2\pi-\gamma\}/n_{k_0},\ \max\{\gamma,2\pi-\alpha\}\big)$. In particular this applies to $\arc=\arc_{c'}$, for which both ratios in \eqref{eq:arccond} equal $c'$.
\end{lemma}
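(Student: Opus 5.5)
We mimic the proof of Lemma~\ref{lem:allpowers}, now restricting the multiplier $n$ to the sequence $\{n_k\}$. Put $\theta_*:=\min\{\alpha,2\pi-\gamma\}/n_{k_0}$ and $B:=\max\{\gamma,2\pi-\alpha\}$. Since $e^{in\theta}\in\arc$ if and only if $e^{-in\theta}$ lies in the conjugate arc $\{e^{it}:2\pi-\gamma\le t\le2\pi-\alpha\}$, the case $\theta<0$ is the case $\theta>0$ applied to the arc with parameters $(\alpha',\gamma')=(2\pi-\gamma,2\pi-\alpha)$. The hypotheses \eqref{eq:arccond} are symmetric under this exchange: $\gamma'/\alpha'=(2\pi-\alpha)/(2\pi-\gamma)$ and $(2\pi-\alpha')/(2\pi-\gamma')=\gamma/\alpha$. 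The constants $\theta_*$ and $B$ are also unchanged. So it suffices to treat $0<\theta<\theta_*\le\alpha/n_{k_0}$.

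\textbf{Case $\theta>0$.} Let $k$ be the least index with $n_k\theta\ge\alpha$. Such a $k$ exists because $n_k\to\infty$. Since $n_{k_0}\theta<\alpha$, we have $k>k_0$, so $k-1\ge k_0$ and the growth bound applies: $n_k\le c'n_{k-1}$. Minimality gives $n_{k-1}\theta<\alpha$, hence
\[
 \alpha\le n_k\theta\le c'n_{k-1}\theta<c'\alpha\le\gamma,
\]
using $\gamma/\alpha\ge c'$. Therefore $e^{in_k\theta}\in\arc$ and $n_k\theta<\gamma\le B$. This is the only point where some care is needed: the threshold $\theta_*$ must be chosen so that the first index reaching $\alpha$ lies strictly beyond $k_0$, since only from there on is the ratio bound $n_{k+1}\le c'n_k$ available. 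Note also that $n_0=1$ is not required here.

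\textbf{Case $\theta<0$.} By the symmetry reduction above, we apply the same argument to $|\theta|$ with $\alpha'=2\pi-\gamma$, using $|\theta|<(2\pi-\gamma)/n_{k_0}$. This produces $n_k$ with
\[
 2\pi-\gamma\le n_k|\theta|<2\pi-\alpha,
\]
so $e^{in_k\theta}=e^{i(2\pi-n_k|\theta|)}$ has argument in $(\alpha,\gamma]$ and lies in $\arc$, while $n_k|\theta|<2\pi-\alpha\le B$.

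\textbf{The arc $\arc_{c'}$.} Here $\alpha=2\pi/(c'+1)$ and $\gamma=2\pi c'/(c'+1)$. Then $\gamma/\alpha=c'$, and since $2\pi-\alpha=\gamma$ and $2\pi-\gamma=\alpha$, the second ratio in \eqref{eq:arccond} also equals $c'$. We need $c'>1$ so that $\alpha<\gamma$; this holds because $c'>\limsup_k n_{k+1}/n_k\ge1$.
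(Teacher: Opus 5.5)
Your proof is correct and is essentially the paper's argument. Your least index $k$ with $n_k\theta\ge\alpha$ is exactly the successor of the paper's largest index with $n_k\theta<\alpha$, and it yields the same chain $\alpha\le n\theta\le c'n_{k-1}\theta<c'\alpha\le\gamma$. For negative $\theta$ you use a symmetry reduction under $(\alpha,\gamma)\mapsto(2\pi-\gamma,2\pi-\alpha)$ where the paper simply repeats the argument; the reduction is valid because \eqref{eq:arccond}, $\theta_*$ and $B$ are all invariant under that exchange.
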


\begin{proof}
Let $0<\theta<\alpha/n_{k_0}$ and let $k$ be the largest index with $n_k\theta<\alpha$; it exists since $n_{k_0}\theta<\alpha$ and $n_k\to\infty$, and $k\ge k_0$. Put $n=n_{k+1}$. Then $\alpha\le n\theta\le c'n_k\theta<c'\alpha\le\gamma$, so $e^{in\theta}\in\arc$ and $n\theta\le\gamma$. Let $-(2\pi-\gamma)/n_{k_0}<\theta<0$ and let $k$ be the largest index with $n_k|\theta|<2\pi-\gamma$; again $k\ge k_0$, and $n=n_{k+1}$ satisfies $2\pi-\gamma\le n|\theta|<c'(2\pi-\gamma)\le2\pi-\alpha$. Hence $2\pi-n|\theta|\in(\alpha,\gamma]$, so $e^{in\theta}=e^{i(2\pi-n|\theta|)}\in\arc$, and $n|\theta|\le2\pi-\alpha$.
\end{proof}

Combining the sampling lemma with Lemma~\ref{lem:reduction} we obtain the following general criterion.

\begin{proposition}\label{thm:general}
Let $T\in\B(X)$ be power bounded with $\sigma(T)\cap\T\subset\{1\}$. Let $(\mathcal N,\arc)$ have the sampling property and assume that
\[
   \sup_{n\in\mathcal N}\ \sup_{\eta\in\arc}\|(\eta-T^n)^{-1}\|<\infty .
\]
Then $T$ is a Ritt operator.
\end{proposition}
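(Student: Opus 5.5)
The proof is a direct combination of Lemma~\ref{lem:master} and Lemma~\ref{lem:reduction}; the only extra ingredient is a check that the resolvent of $T$ is well behaved near $1$.

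\emph{Step 1.} Let $(\theta_*,B)$ be the sampling constants of $(\mathcal N,\arc)$ and put
\[
 K:=\sup_{n\in\mathcal N}\sup_{\eta\in\arc}\|(\eta-T^n)^{-1}\|<\infty .
\]
The hypothesis presupposes that $\arc\subset\rho(T^n)$ for every $n\in\mathcal N$. Independently of this, the assumption $\sigma(T)\cap\T\subset\{1\}$ together with the spectral mapping theorem gives $\sigma(T^n)\cap\T\subset\{1\}$. Since $\arc\subset\T\setminus\{1\}$, we obtain $\arc\subset\rho(T^n)$ in any case. Thus all hypotheses of the sampling lemma (Lemma~\ref{lem:master}) are satisfied. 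It yields constants $\delta>0$ and $C<\infty$, depending only on $M(T)$, $K$, $\theta_*$ and $B$, such that
\[
 |\lambda-1|\,\|\Res(\lambda,T)\|\le C\qquad(|\lambda|>1,\ |\lambda-1|<\delta).
\]

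\emph{Step 2.} This is precisely the hypothesis of the localisation lemma (Lemma~\ref{lem:reduction}), with the same $\delta$ and $C$. That lemma uses power boundedness and $\sigma(T)\cap\T\subset\{1\}$. The latter ensures that the compact set $\{1\le|\lambda|\le2,\ |\lambda-1|\ge\delta\}$ lies in $\rho(T)$, so that $B_\delta(T)<\infty$. Lemma~\ref{lem:reduction} then gives
\[
 \CR(T)\le\max\{3M(T),\,C,\,3B_\delta(T)\}<\infty .
\]
Since $\sigma(T)\subset\ol\D$ by power boundedness, $T$ is a Ritt operator.

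There is no real obstacle here. The only point requiring attention is that the sampling lemma needs $\arc\subset\rho(T^n)$, and this follows from the peripheral spectral hypothesis via spectral mapping. Note also that the resulting bound on $\CR(T)$ is not universal, because $B_\delta(T)$ depends on $T$; this is acceptable, since the statement is qualitative.
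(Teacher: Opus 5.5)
Your proposal is correct and is exactly the paper's argument: the paper obtains this proposition by combining the sampling lemma (Lemma~\ref{lem:master}) with the localisation lemma (Lemma~\ref{lem:reduction}). Your added observation that $\arc\subset\rho(T^n)$ already follows from $\sigma(T)\cap\T\subset\{1\}$ via spectral mapping is correct, though the hypothesis presupposes it anyway.
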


For $\mathcal N=\N$ and $\arc$ a small arc around a point $\zeta$, Proposition~\ref{thm:general} is the discrete Kato criterion of Borichev, Gomilko and Tomilov. We record the deduction, which shows in particular that the constant in \eqref{eq:master} depends only on $M(T)$, $\zeta$ and $\sup_n\|(\zeta-T^n)^{-1}\|$.

\begin{corollary}[{discrete Kato criterion, \cite[Theorem~5.6]{BGT}}]\label{cor:kato}
Let $T$ be power bounded with $\sigma(T)\cap\T\subset\{1\}$, and let $\zeta\in\T\setminus\{1\}$ with $K_\zeta:=\sup_{n\ge1}\|(\zeta-T^n)^{-1}\|<\infty$. Then $T$ is a Ritt operator.
\end{corollary}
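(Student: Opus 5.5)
The deduction is a direct application of Proposition~\ref{thm:general} with $\mathcal N=\N$. The first task is to produce a closed arc $\arc\ni\zeta$ on which the resolvents of all powers $T^n$ are uniformly bounded. The second is to check that $(\N,\arc)$ has the sampling property, which Lemma~\ref{lem:allpowers} provides for any closed arc of positive length.

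For the arc, write $\zeta=e^{i\phi}$ with $0<\phi<2\pi$. Put $\rho:=\min\{1/(2K_\zeta),\ \phi/2,\ (2\pi-\phi)/2\}$ and $\arc:=\{e^{it}:\phi-\rho\le t\le\phi+\rho\}$, so that $\arc\subset\T\setminus\{1\}$ and $0<\phi-\rho<\phi+\rho<2\pi$. For $\eta\in\arc$ we have $|\eta-\zeta|\le\rho\le 1/(2K_\zeta)$. The Neumann perturbation already used in the proof of Lemma~\ref{lem:master} is
\[
 \eta-T^n=(\zeta-T^n)\bigl(\Id+(\eta-\zeta)(\zeta-T^n)^{-1}\bigr),
\]
and since $\|(\eta-\zeta)(\zeta-T^n)^{-1}\|\le\frac12$ it shows that $\eta\in\rho(T^n)$ with $\|(\eta-T^n)^{-1}\|\le 2K_\zeta$ for all $n\ge1$. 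Hence
\[
\sup_{n\ge1}\sup_{\eta\in\arc}\|(\eta-T^n)^{-1}\|\le 2K_\zeta<\infty .
\]

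By Lemma~\ref{lem:allpowers}, $(\N,\arc)$ has the sampling property with constants $(2\rho,2\pi)$. Since $T$ is power bounded with $\sigma(T)\cap\T\subset\{1\}$, Proposition~\ref{thm:general} shows that $T$ is Ritt.

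To justify the remark about constants, apply Lemma~\ref{lem:master} with $K=2K_\zeta$, $B=2\pi$ and $\theta_*=2\rho$. This gives \eqref{eq:master} with $\delta=\min\{2\rho/\pi,\frac12\}$ and a constant depending only on $M(T)$, $K_\zeta$ and $\zeta$ (through $\rho$). Lemma~\ref{lem:reduction} then completes the Ritt estimate away from $1$.

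The only step needing care is the choice of $\rho$. It must keep the arc strictly inside $(0,2\pi)$ in angle so that Lemma~\ref{lem:allpowers} applies, and it must be at most $1/(2K_\zeta)$ so that the Neumann series converges. Beyond that the argument is routine.
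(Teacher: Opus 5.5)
Your proof is correct and follows the paper's argument. You enlarge $\zeta$ to a small closed arc on which a Neumann perturbation bounds $\|(\eta-T^n)^{-1}\|$ uniformly in $n$, then invoke Lemma~\ref{lem:allpowers} and Proposition~\ref{thm:general}. The only cosmetic difference is that the paper defines the arc by chordal radius $\varepsilon=\frac12\min\{1/(2K_\zeta),|\zeta-1|\}$ and gets the bound $\frac43K_\zeta$, whereas you use angular half-width $\rho$ and get $2K_\zeta$.
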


\begin{proof}
Let $\varepsilon:=\frac12\min\{1/(2K_\zeta),|\zeta-1|\}$ and $\arc:=\{\eta\in\T:|\eta-\zeta|\le\varepsilon\}$, a closed arc in $\T\setminus\{1\}$. For $\eta\in\arc$ and $n\ge1$, the Neumann series gives $\eta\in\rho(T^n)$ and $\|(\eta-T^n)^{-1}\|\le\frac43K_\zeta\le2K_\zeta$. Apply Lemma~\ref{lem:allpowers} and Proposition~\ref{thm:general}.
\end{proof}

\begin{remark}[the Ritt constant is not controlled]\label{rem:dependT}
The local estimate \eqref{eq:master} is uniform, but Proposition~\ref{thm:general} and Corollary~\ref{cor:kato} do not bound $\CR(T)$ in terms of upper bounds on $M(T)$ and $K_\zeta$ and the point $\zeta$ alone. Indeed, let $\zeta\in\T\setminus\mu_3$ and $T_\rho=\rho\omega\Id$ on $\C$, where $\omega=e^{2\pi i/3}$ and $0<\rho<1$. Then $M(T_\rho)=1$ and $\sigma(T_\rho)\cap\T=\emptyset$. For $\zeta,\xi\in\T$ and $0\le r\le1$,
\[
 |\zeta-\xi|\le|\zeta-r\xi|+(1-r)\le2|\zeta-r\xi|,
\]
because $|\zeta-r\xi|\ge1-r$. Taking $\xi=\omega^n$ and $r=\rho^n$ gives
\[
 K_\zeta(T_\rho):=\sup_n|\zeta-\rho^n\omega^n|^{-1}
 \le\frac2{\dist(\zeta,\mu_3)}
 \qquad(0<\rho<1).
\]
On the other hand,
$\CR(T_\rho)\ge\lim_{r\downarrow1}|r\omega-1|/(r-\rho)=\sqrt3/(1-\rho)\to\infty$ as $\rho\uparrow1$. This explains the operator-dependent constant $B_\delta(T)$ in Lemma~\ref{lem:reduction}.
\end{remark}

Norm-gap conditions are converted into arc-resolvent conditions by the following divided-difference argument from \cite{BGT}. Let $A^{1,1}(\D)$ be the algebra of $w(z)=\sum_{k\ge0}c_kz^k$ with $\|w\|_{A^{1,1}}:=\sum_k(k+1)|c_k|<\infty$; for a closed arc $\arc\subset\T$ put $m_{\arc}(w):=\min_{\eta\in\arc}|w(\eta)|$ and $\|w\|_\infty:=\max_{\T}|w|$.

\begin{lemma}[{\cite[Proposition~5.7]{BGT}}]\label{lem:divdiff}
Let $S\in\B(X)$ be power bounded, $w\in A^{1,1}(\D)$, and let $\arc\subset\T$ be a closed arc with $\|w(S)\|<m_{\arc}(w)$. Then $\arc\subset\rho(S)$ and
\[
   \sup_{\eta\in\arc}\|(\eta-S)^{-1}\|\le\frac{M(S)\|w\|_{A^{1,1}}}{m_{\arc}(w)-\|w(S)\|}.
\]
\end{lemma}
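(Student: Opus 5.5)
The plan is a divided-difference argument. For $\eta\in\arc$ we factor $w(\eta)-w(z)=(\eta-z)\,g_\eta(z)$ and substitute $S$; this writes $w(\eta)\Id - w(S)$ as $(\eta-S)$ times a bounded operator. When $\|w(S)\|<|w(\eta)|$, the left-hand side is invertible by the Neumann series. Commutativity then gives invertibility of $\eta-S$, exactly as in Lemma~\ref{lem:factor}.

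First we write $w(z)=\sum_k c_kz^k$. For $|\eta|=1$ the identity $\eta^k-z^k=(\eta-z)\sum_{j=0}^{k-1}\eta^{k-1-j}z^j$ gives
\[
 w(\eta)-w(z)=(\eta-z)\,g_\eta(z),\qquad g_\eta(z):=\sum_{k\ge1}c_k\sum_{j=0}^{k-1}\eta^{k-1-j}z^j .
\]
We then define the operator $g_\eta(S):=\sum_{k\ge1}c_k\sum_{j=0}^{k-1}\eta^{k-1-j}S^j$. The series converges absolutely in $\B(X)$, since the $k$-th block has norm at most $k|c_k|M(S)$. This yields
\[
 \|g_\eta(S)\|\le M(S)\sum_k k|c_k|\le M(S)\|w\|_{A^{1,1}} .
\]
The operator $w(S)=\sum_k c_kS^k$ likewise converges absolutely, because $\sum|c_k|<\infty$ and $S$ is power bounded. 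Multiplying out partial sums and passing to the limit gives the operator identity
\[
 w(\eta)\Id-w(S)=(\eta-S)\,g_\eta(S)=g_\eta(S)\,(\eta-S).
\]
All factors are norm limits of polynomials in $S$, so they commute.

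Next, fix $\eta\in\arc$. Since $|w(\eta)|\ge m_\arc(w)>\|w(S)\|$, we have $w(\eta)\ne0$. The Neumann series shows that $w(\eta)\Id-w(S)$ is invertible, with inverse of norm at most $1/(|w(\eta)|-\|w(S)\|)\le 1/(m_\arc(w)-\|w(S)\|)$. Because $\eta-S$ commutes with $g_\eta(S)$ and the product is invertible, $\eta-S$ is invertible with
\[
(\eta-S)^{-1}=g_\eta(S)\bigl(w(\eta)\Id-w(S)\bigr)^{-1}.
\]
Taking norms gives the stated bound, uniformly in $\eta\in\arc$, and in particular $\arc\subset\rho(S)$.

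The only point requiring care is the convergence of $g_\eta(S)$, together with the justification that the scalar identity transfers to operators. This is exactly where the weight $(k+1)$ in the $A^{1,1}$ norm and the power bound enter. We handle it by working with partial sums, for which the polynomial identity holds exactly, and then passing to norm limits.
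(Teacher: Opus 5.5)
Your proposal is correct and follows the paper's own proof essentially verbatim. Both factor $w(\eta)-w(S)=(\eta-S)h_\eta(S)$ with commuting factors, bound $\|h_\eta(S)\|\le M(S)\sum_k k|c_k|$, invert the left-hand side by the Neumann series, and conclude $(\eta-S)^{-1}=h_\eta(S)\bigl(w(\eta)\Id-w(S)\bigr)^{-1}$. Your partial-sum justification of the operator identity fills in a detail the paper leaves implicit.
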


\begin{proof}
We give a short proof for completeness. For $\eta\in\T$ let $h_\eta(z)=(w(\eta)-w(z))/(\eta-z)=\sum_{k\ge1}c_k\sum_{r=0}^{k-1}\eta^{k-1-r}z^r$. The series converges absolutely after evaluation at $S$, with $\|h_\eta(S)\|\le M(S)\sum_{k\ge1}k|c_k|\le M(S)\|w\|_{A^{1,1}}$. The identity $w(\eta)-w(S)=(\eta-S)h_\eta(S)$ holds with commuting factors. If $\eta\in\arc$, the left-hand side is invertible with inverse of norm at most $(m_{\arc}(w)-\|w(S)\|)^{-1}$, so $\eta-S$ is invertible and $(\eta-S)^{-1}=h_\eta(S)(w(\eta)-w(S))^{-1}$.
\end{proof}

\begin{corollary}\label{cor:defect}
Let $T$ be power bounded with $\sigma(T)\cap\T\subset\{1\}$, let $(\mathcal N,\arc)$ have the sampling property, and let $w\in A^{1,1}(\D)$ satisfy $\sup_{n\in\mathcal N}\|w(T^n)\|<m_{\arc}(w)$. Then $T$ is a Ritt operator.
\end{corollary}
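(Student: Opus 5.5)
The plan is to combine Lemma~\ref{lem:divdiff} with Proposition~\ref{thm:general}. The key input is that the norm gap supplies a uniform bound for the resolvents of the sampled powers $T^n$, $n\in\mathcal N$, on the arc $\arc$. That is exactly the hypothesis of the sampling criterion.

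First, fix $n\in\mathcal N$ and put $S=T^n$. Then $S$ is power bounded with $M(S)\le M(T)$, since every power of $T^n$ is a power of $T$. Next let $\varepsilon:=m_{\arc}(w)-\sup_{n\in\mathcal N}\|w(T^n)\|>0$, which is positive by hypothesis. Since $\|w(S)\|<m_{\arc}(w)$, Lemma~\ref{lem:divdiff} gives $\arc\subset\rho(T^n)$ together with
\[
 \sup_{\eta\in\arc}\|(\eta-T^n)^{-1}\|\le\frac{M(T)\|w\|_{A^{1,1}}}{\varepsilon}.
\]
This bound is uniform in $n\in\mathcal N$. Note that $w(T^n)$ is well defined because $T^n$ is power bounded and $\sum_k|c_k|\le\|w\|_{A^{1,1}}<\infty$.

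Finally, $T$ is power bounded with $\sigma(T)\cap\T\subset\{1\}$, and $(\mathcal N,\arc)$ has the sampling property. We have just shown $\sup_{n\in\mathcal N}\sup_{\eta\in\arc}\|(\eta-T^n)^{-1}\|<\infty$, so Proposition~\ref{thm:general} yields that $T$ is Ritt.

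The only point needing care is the uniformity in $n$. It comes from two facts: the gap $\varepsilon$ is taken over the whole supremum, not $n$ by $n$, and $M(T^n)\le M(T)$. There is no genuine obstacle; the argument is a direct chaining of the two earlier results.
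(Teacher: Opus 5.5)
Your proof is correct and follows the paper's own argument: apply Lemma~\ref{lem:divdiff} to $S=T^n$ for each $n\in\mathcal N$, use $M(T^n)\le M(T)$ together with the uniform gap $m_{\arc}(w)-\sup_{n\in\mathcal N}\|w(T^n)\|>0$ to get a resolvent bound on $\arc$ that is uniform in $n$, and then invoke Proposition~\ref{thm:general}. Your remark on where the uniformity comes from is exactly the point the paper's one-line proof relies on.
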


\begin{proof}
Apply Lemma~\ref{lem:divdiff} to $S=T^n$, $n\in\mathcal N$; note that $M(T^n)\le M(T)$. Then apply Proposition~\ref{thm:general}.
\end{proof}

With $\mathcal N=\N$ and $w(z)=1-z$, and using Lemma~\ref{lem:allpowers}, one recovers the discrete zero--two law \cite[Corollary~6.1]{BGT}: if $\sigma(T)\subset\D\cup\{1\}$ and $\sup_n\|\Id-T^n\|\le q<2$, then $T$ is Ritt; indeed $m_{\arc}(1-z)=a$ for $\arc=\{\eta\in\T:|\eta-1|\ge a\}$, an arc of positive length for every $a\in(q,2)$. In the next section we examine what happens without the spectral hypothesis.

\section{Iterates close to the identity: all powers}\label{sec:allpowers}

Until the general-symbol extension in Section~\ref{sec:generalsymbol}, we assume that $T\in\B(X)$ satisfies
\begin{equation}\label{eq:q}
   \sup_{n\ge1}\|\Id-T^n\|\le q<2 .
\end{equation}
Condition \eqref{eq:q} gives $\|T^n\|\le1+q$ for all $n$, so
$M(T)\le1+q$ and $\sigma(T)\subset\ol\D$.
If $q<1$, the spectral mapping theorem gives $|1-\lambda^n|\le q$ for every
$\lambda\in\sigma(T)$ and every $n\ge1$. Values with $|\lambda|<1$
are excluded by letting $n\to\infty$, and Lemma~\ref{lem:orbit}
excludes every point of $\T\setminus\{1\}$. Thus $\sigma(T)=\{1\}$.
Moreover, the Neumann series gives
$\|T^{-n}\|\le(1-q)^{-1}$ for all $n\ge1$.
Gelfand's theorem therefore implies $T=\Id$; see \cite{BDG}.
This conclusion also follows from the stronger sampled statement in
Lemma~\ref{lem:subone}, whose proof does not require power boundedness.
Hence it remains to consider $1\le q<2$.

\subsection{Local structure}
\begin{theorem}\label{thm:local}
Let $T$ satisfy \eqref{eq:q}. Then:
\begin{enumerate}[label=\textup{(\roman*)}]
\item $\sigma(T)\cap\T\subset E_q$;
\item there are $\delta(q)>0$ and $C(q)<\infty$, depending only on $q$, such that
\[
   |\lambda-1|\,\|\Res(\lambda,T)\|\le C(q)\qquad(|\lambda|>1,\ |\lambda-1|<\delta(q));
\]
one may take, with $t_q:=2\arcsin\frac{q+2}4\in(0,\pi)$ and $a_q:=\max\{1,\,2\pi/\log(1+\frac{2-q}4)\}$,
\begin{equation}\label{eq:localconstants}
   \delta(q)=\min\Big\{2-\frac{2t_q}\pi,\ \frac12\Big\},\qquad
   C(q)=\max\Big\{(1+2a_q)(1+q),\ \frac{6\pi(1+q)(6-q)}{2-q}\Big\};
\end{equation}
these functions are respectively nonincreasing and nondecreasing in $q$;
\item $T$ is a Ritt operator if and only if $\sigma(T)\cap\T\subset\{1\}$.
\end{enumerate}
\end{theorem}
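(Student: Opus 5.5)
Part (i) is a spectral mapping argument. If $\lambda\in\sigma(T)\cap\T$, then $1-\lambda^n\in\sigma(\Id-T^n)$ for every $n\ge1$. Hence $|1-\lambda^n|\le\|\Id-T^n\|\le q$, that is, $\od(\lambda)\le q$, and so $\lambda\in E_q$ by \eqref{eq:Eq}.

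For (ii) we apply the sampling lemma (Lemma~\ref{lem:master}) with $\mathcal N=\N$, using the arc on which $|1-\eta|$ is large. Put $s:=\frac{q+2}{2}\in(q,2)$, the midpoint between $q$ and $2$. The condition $|\eta-1|\ge s$ means $2\sin(t/2)\ge s$, i.e.\ $t\in[t_q,2\pi-t_q]$ with $t_q=2\arcsin\frac{q+2}{4}$, which matches the stated $t_q$. So let
\[
\arc:=\{e^{it}:\ t_q\le t\le 2\pi-t_q\}.
\]
This is a closed arc of positive length in $\T\setminus\{1\}$, because $s<2$ gives $t_q<\pi$.

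For $\eta\in\arc$ and $n\ge1$ we write $\eta-T^n=(\eta-1)\bigl(\Id+(\eta-1)^{-1}(\Id-T^n)\bigr)$. The perturbation term has norm $\le q/s<1$, so the Neumann series gives
\[
\|(\eta-T^n)^{-1}\|\le\frac1{s-q}=\frac{2}{2-q}=:K .
\]
This is also the case $w(z)=1-z$ of Lemma~\ref{lem:divdiff}, but the direct argument avoids the factor $M\|w\|_{A^{1,1}}$.

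Lemma~\ref{lem:allpowers}, with $\alpha=t_q$ and $\gamma=2\pi-t_q$, gives the sampling property with constants $\theta_*=2\pi-2t_q$ and $B=2\pi$. We also have $M(T)\le1+q$ from \eqref{eq:q}. Substituting into Lemma~\ref{lem:master}:
\begin{itemize}
\item $\log(1+\frac1{2K})=\log(1+\frac{2-q}4)$, which gives $a=a_q$;
\item $\delta=\min\{\theta_*/\pi,\frac12\}=\min\{2-2t_q/\pi,\frac12\}$;
\item $3(2K+1)MB=3\bigl(\frac{4}{2-q}+1\bigr)(1+q)2\pi=\frac{6\pi(1+q)(6-q)}{2-q}$.
\end{itemize}
These reproduce \eqref{eq:localconstants} exactly, so the main task in (ii) is this bookkeeping, matching the constants. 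Monotonicity in $q$ is then read off. As $q$ increases, $t_q$ increases, so $\delta$ is nonincreasing. Likewise $a_q$, $(1+q)$ and $(6-q)/(2-q)$ are all nondecreasing on $[1,2)$; for the last one, its derivative has the sign of $-(2-q)+(6-q)=4>0$.

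For (iii): if $T$ is Ritt, then $\sigma(T)\cap\T\subset\{1\}$ by the standard facts recalled in Section~\ref{sec:prelim}. Conversely, if $\sigma(T)\cap\T\subset\{1\}$, then $T$ is power bounded with $M(T)\le1+q$, and (ii) supplies exactly the local hypothesis of Lemma~\ref{lem:reduction}. That lemma then yields that $T$ is Ritt.

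I expect no real obstacle. The only delicate point is choosing the arc threshold $s=\frac{q+2}{2}$ so that both $K$ and $t_q$ come out in the stated form.
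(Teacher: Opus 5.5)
Your proposal is correct and follows the paper's proof essentially step for step. The paper uses the same spectral-mapping argument for (i), the same arc $\{|\eta-1|\ge (q+2)/2\}$ with the Neumann bound $K=2/(2-q)$, Lemma~\ref{lem:allpowers} and Lemma~\ref{lem:master} for (ii), and Lemma~\ref{lem:reduction} for (iii), with the same bookkeeping of constants.
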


\begin{proof}
(i) If $\lambda\in\sigma(T)\cap\T$, then $1-\lambda^n\in\sigma(\Id-T^n)$ and $|1-\lambda^n|\le\|\Id-T^n\|\le q$ for all $n$, i.e.\ $\od(\lambda)\le q$.

(ii) Fix $a:=(q+2)/2\in(q,2)$ and let $\arc:=\{\eta\in\T:|\eta-1|\ge a\}=\{e^{it}:t_q\le t\le2\pi-t_q\}$, since $|e^{it}-1|=2\sin(t/2)\ge a$ exactly when $t_q\le t\le2\pi-t_q$. For $\eta\in\arc$,
$\eta-T^n=(\eta-1)\Id+(\Id-T^n)$ with $\|\Id-T^n\|\le q<a\le|\eta-1|$, so the Neumann series gives
$\arc\subset\rho(T^n)$ and
$\|(\eta-T^n)^{-1}\|\le(a-q)^{-1}=2/(2-q)=:K$, uniformly in $n$. By Lemma~\ref{lem:allpowers}, $(\N,\arc)$ has the sampling property with constants $(2\pi-2t_q,2\pi)$. Lemma~\ref{lem:master} with $M=1+q$, $B=2\pi$ and $\frac1{2K}=\frac{2-q}4$ gives the estimate with $\delta=\min\{(2\pi-2t_q)/\pi,\frac12\}$ and $C=\max\{(1+2a_q)(1+q),3(2K+1)(1+q)2\pi\}$, where $2K+1=(6-q)/(2-q)$; this is \eqref{eq:localconstants}. As $q$ increases, $t_q$, $K$ and $a_q$ increase, so $\delta(q)$ decreases and $C(q)$ increases.

(iii) Ritt operators satisfy $\sigma(T)\cap\T\subset\{1\}$. Conversely, Lemma~\ref{lem:reduction} applies.
\end{proof}

\subsection{The structure theorem}
We now determine the structure of $T$ for $1\le q<2$ without any spectral assumption. We need a uniform version of Lemma~\ref{lem:orbit} away from $E_q$.

\begin{lemma}[uniform escape]\label{lem:uniformescape}
Let $1\le q<2$ and $\varepsilon>0$. There are $a=a(q,\varepsilon)>q$, $n_0=n_0(q,\varepsilon)\in\N$ and $\kappa=\kappa(q,\varepsilon)>0$ such that for every $\lambda=re^{i\theta}$ with $1<r\le1+\kappa$ and $\dist(e^{i\theta},E_q)\ge\varepsilon$ there is $n\le n_0$ with $|\lambda^n-1|\ge a$.
\end{lemma}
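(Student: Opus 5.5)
Let $K_\varepsilon:=\{\xi\in\T:\dist(\xi,E_q)\ge\varepsilon\}$, a compact subset of $\T$ disjoint from $E_q$. The plan is a compactness argument on the circle followed by a perturbation from $|\lambda|=1$ to $|\lambda|$ slightly larger than $1$.

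\emph{Step 1 (pointwise escape).} Fix $\xi\in K_\varepsilon$. Since $\xi\notin E_q$, we have $\xi\ne1$ and $\od(\xi)>q$. If $\xi$ is not a root of unity, or has even order, Lemma~\ref{lem:orbit} gives $\od(\xi)=2>q$. If $\xi$ has odd order $m$, then $\od(\xi)=2\cos\frac{\pi}{2m}>q$ by the definition \eqref{eq:Eq}. In every case there is $n_\xi\ge1$ with
\[
|\xi^{n_\xi}-1|>q.
\]
To get a uniform margin, I set $a_\xi:=\tfrac12\big(q+|\xi^{n_\xi}-1|\big)$, so that $q<a_\xi<|\xi^{n_\xi}-1|$.

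\emph{Step 2 (compactness).} The map $\eta\mapsto|\eta^{n_\xi}-1|$ is continuous on $\T$. Hence there is an open arc $U_\xi\ni\xi$ on which $|\eta^{n_\xi}-1|>a'_\xi$, where $a'_\xi:=\tfrac12(a_\xi+|\xi^{n_\xi}-1|)>a_\xi$. The arcs $U_\xi$ cover the compact set $K_\varepsilon$, so finitely many of them, $U_{\xi_1},\dots,U_{\xi_J}$, suffice. I then define
\[
n_0:=\max_j n_{\xi_j},\qquad a:=\min_j a_{\xi_j}>q,\qquad \rho:=\min_j\,(a'_{\xi_j}-a_{\xi_j})>0.
\]
Every $\eta\in K_\varepsilon$ then has some $n\le n_0$ with $|\eta^n-1|\ge a+\rho$. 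If $K_\varepsilon=\emptyset$ the statement is vacuous and any choice works.

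\emph{Step 3 (radial perturbation).} Let $\lambda=re^{i\theta}$ with $e^{i\theta}\in K_\varepsilon$ and $1<r\le1+\kappa$. Take the $n\le n_0$ from Step 2 for $\eta=e^{i\theta}$. Then
\[
|\lambda^n-e^{in\theta}|=r^n-1\le(1+\kappa)^{n_0}-1,
\]
so by the triangle inequality $|\lambda^n-1|\ge a+\rho-\big((1+\kappa)^{n_0}-1\big)$. Choosing $\kappa>0$ with $(1+\kappa)^{n_0}-1\le\rho$ gives $|\lambda^n-1|\ge a$.

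The only genuinely non-routine point is Step 1, namely that every point at positive distance from $E_q$ has orbit diameter strictly exceeding $q$. This is exactly the content of Lemma~\ref{lem:orbit} together with the description \eqref{eq:Eq} of $E_q$. After that, compactness and the bound $r^n-1\le(1+\kappa)^{n_0}-1$ are routine. The constants obtained this way are not explicit, which suffices for the existence statement.
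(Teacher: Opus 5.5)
Your proof is correct and takes essentially the same route as the paper: a finite subcover of the compact set $\{\eta\in\T:\dist(\eta,E_q)\ge\varepsilon\}$ by open sets on which some power satisfies $|\eta^n-1|$ above a threshold exceeding $q$, then a choice of $\kappa$ with $(1+\kappa)^{n_0}-1$ below the margin. The only differences are cosmetic. The paper fixes one threshold in advance, using lower semicontinuity of $\od$ to get $\min\od>q$ on that set, whereas you build margins pointwise and take minima over the finite subcover. Also, your Step~1 follows directly from the definition of $E_q$, so the case analysis via Lemma~\ref{lem:orbit} is not needed.
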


\begin{proof}
The set $F:=\{\eta\in\T:\dist(\eta,E_q)\ge\varepsilon\}$ is compact. If $F$ is empty there is nothing to prove. Otherwise, note that $\od=\sup_n|\eta^n-1|$ is lower semicontinuous on $\T$ as a supremum of continuous functions. Since $F\cap E_q=\emptyset$, $\od>q$ on $F$, so $a':=\min_F\od>q$. Put $a_1:=(q+a')/2$. For each $\eta\in F$ choose $n(\eta)$ with $|\eta^{n(\eta)}-1|>a_1$; the sets $\{\eta':|\eta'^{\,n(\eta)}-1|>a_1\}$ are open and cover $F$, so finitely many suffice, and we let $n_0$ be the largest of the corresponding $n(\eta)$. Choose $\kappa>0$ with $(1+\kappa)^{n_0}-1\le(a_1-q)/2$ and put $a:=(a_1+q)/2$. If $\lambda=re^{i\theta}$ with $1<r\le1+\kappa$ and $e^{i\theta}\in F$, pick $n\le n_0$ with $|e^{in\theta}-1|>a_1$; then $|\lambda^n-e^{in\theta}|=r^n-1\le(a_1-q)/2$, so $|\lambda^n-1|\ge a$.
\end{proof}

\begin{theorem}\label{thm:oddN}
Let $T$ satisfy \eqref{eq:q}, $1\le q<2$, and let $N=N(q)$. Then $T$ is a $\Ritt{E_q}$ operator and $T^N$ is a Ritt operator, with
\[
   K_{E_q}(T)\le\Phi(q),\qquad \CR(T^N)\le N\Phi(q),
\]
where $\Phi(q)$ depends only on $q$. In particular, if $\sup_n\|\Id-T^n\|<2$, then $T^N$ is a Ritt operator for some odd $N$; equivalently, $T$ is a $\Ritt{E}$ operator for a finite nonempty set $E$ of roots of unity of odd order.
\end{theorem}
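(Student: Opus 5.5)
We prove the estimate $\dist(\lambda,E_q)\,\|\Res(\lambda,T)\|\le\Phi(q)$ for all $|\lambda|>1$ by splitting the exterior of the disc into three regions. Throughout we use $M(T)\le 1+q$ from \eqref{eq:q}. The power statement then follows from Proposition~\ref{prop:TN}. Indeed, $E_q\subset\mu_N$ gives $\dist(\lambda,\mu_N)\le\dist(\lambda,E_q)$, so $K_N(T)\le K_{E_q}(T)\le\Phi(q)$ and hence $\CR(T^N)\le NK_N(T)\le N\Phi(q)$. The final sentence is the special case $q:=\sup_n\|\Id-T^n\|$, once one notes that for $q<1$ we have $T=\Id$.

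\emph{Far region} $|\lambda|\ge 1+\kappa$. Here $\kappa=\kappa(q,\varepsilon)$ comes from Lemma~\ref{lem:uniformescape}, with $\varepsilon$ fixed below. The Neumann series gives $\|\Res(\lambda,T)\|\le(1+q)/(|\lambda|-1)$. We also have $\dist(\lambda,E_q)\le|\lambda|+1\le (2/\kappa+1)(|\lambda|-1)$, so the product is bounded by a constant depending only on $q$.

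\emph{Near $E_q$.} Let $\xi\in E_q$, say of odd order $m$, and suppose $|\lambda-\xi|<\varepsilon$. Apply Theorem~\ref{thm:local}(ii) to the rotated operator $S:=\bar\xi T$. If $\xi\ne1$, then $S$ does \emph{not} satisfy \eqref{eq:q}, so we need a replacement. The natural one is to note that $\bar\xi^{\,n}=1$ for $m\mid n$, which gives $\|\Id-S^{n}\|\le q$ along $n\in m\N$. Rerun the proof of Theorem~\ref{thm:local}(ii) with $\mathcal N=m\N$ in place of $\N$. By Lemma~\ref{lem:allpowers} applied to the arc $\arc_m$ rescaled, $(m\N,\arc)$ still has the sampling property: for small $\theta$, pick the least multiple $n=mk$ with $n\theta\ge t_q$. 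This gives $n|\theta|\le 2\pi+m\theta_*$, with constants depending only on $m\le N(q)$, provided $\theta_*$ is shrunk so that $m\theta_*<2\pi-2t_q$. Lemma~\ref{lem:master}, with $K=2/(2-q)$ and $M(S)=M(T)\le1+q$, then yields
\[
|\lambda-\xi|\,\|\Res(\lambda,T)\|=|\bar\xi\lambda-1|\,\|\Res(\bar\xi\lambda,S)\|\le C'(q)
\]
for $|\lambda-\xi|<\delta'(q)$. Choose $\varepsilon\le\delta'(q)/2$, small compared with the minimal spacing of $E_q$. Then $\dist(\lambda,E_q)=|\lambda-\xi|$ in this region.

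\emph{Intermediate region} $1<|\lambda|<1+\kappa$ with $\dist(e^{i\theta},E_q)\ge\varepsilon$, where $\lambda=re^{i\theta}$. By Lemma~\ref{lem:uniformescape} there is $n\le n_0$ with $|\lambda^n-1|\ge a>q$. Writing $\lambda^n-T^n=(\lambda^n-1)\Id+(\Id-T^n)$, the Neumann series gives $\|\Res(\lambda^n,T^n)\|\le (a-q)^{-1}$. Lemma~\ref{lem:factor} then gives $\|\Res(\lambda,T)\|\le(1+q)n_0(1+\kappa)^{n_0}/(a-q)$, while $\dist(\lambda,E_q)\le 3$. Taking $\Phi(q)$ as the maximum of the three bounds finishes the proof.

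The main obstacle is the second step: checking that the sampling lemma works along $m\N$ with uniform constants, including negative $\theta$ via the conjugate arc. One also has to check that the regions cover everything. A point with $|\lambda|<1+\kappa$ and $\dist(e^{i\theta},E_q)<\varepsilon$ has $|\lambda-\xi|<\varepsilon+\kappa$, so $\kappa$ must be shrunk below $\delta'/2$ as well. This is harmless, since Lemma~\ref{lem:uniformescape} allows decreasing $\kappa$.
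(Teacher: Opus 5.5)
Your proof is correct, but near the nontrivial points of $E_q$ it takes a different route from the paper.

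The paper passes to $S=T^N$, which again satisfies \eqref{eq:q}, and quotes Theorem~\ref{thm:local}(ii) for $S$. It then pulls the estimate back to $T$ near $\xi\in E_q$ in two steps. First it uses Lemma~\ref{lem:factor}, writing $\lambda=\xi w$ so that $\lambda^N=w^N$. Second it uses the lower bound $|\lambda^N-1|\ge c_N|\lambda-\xi|$ of Lemma~\ref{lem:scalarroots}. This forces $\delta_1\le\min\{\delta(q)/(N2^{N-1}),\sin(\pi/N)\}$, so that $\xi$ is the nearest point of $\mu_N$. It also produces the constant $MN2^{N-1}C(q)/c_N$, which is exponentially large in $N$.

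You rotate instead. Since $(\bar\xi T)^n=T^n$ for $n\in m\N$, the bound $2/(2-q)$ for the resolvents on $\{|\eta-1|\ge(q+2)/2\}$ holds along $m\N$. The pair $(m\N,\arc)$ then has the sampling property with constants $((2\pi-2t_q)/m,2\pi)$, simply by applying Lemma~\ref{lem:allpowers} to $m\theta$. Lemma~\ref{lem:master} needs no peripheral spectral hypothesis, so it gives $|\lambda-\xi|\,\|\Res(\lambda,T)\|\le C(q)$ on a disc of radius $\min\{(2-2t_q/\pi)/m,\tfrac12\}$. With $B=2\pi$ this is exactly the constant of \eqref{eq:localconstants}.

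What each route buys:
\begin{itemize}
\item Your route removes the factor $N2^{N-1}/c_N$ from the near-$E_q$ part of $\Phi(q)$. It also dispenses with Lemma~\ref{lem:scalarroots} and the nearest-point argument.
\item The paper's route quotes Theorem~\ref{thm:local} as a black box, whereas you must re-enter the sampling lemma.
\end{itemize}
The far and intermediate regions, the use of Lemma~\ref{lem:uniformescape} (still the only nonexplicit ingredient), and the final appeal to Proposition~\ref{prop:TN} are as in the paper.

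Minor points:
\begin{itemize}
\item $\arc_m$ is the paper's notation for the arc symmetric about $-1$, not the arc you actually use.
\item Only $\dist(\lambda,E_q)\le|\lambda-\xi|$ is needed, so the spacing condition on $\varepsilon$ is superfluous.
\item For the last sentence it is simpler to apply the theorem with $\max\{1,\sup_n\|\Id-T^n\|\}$ in place of $q$.
\end{itemize}
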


\begin{proof}
Put $M:=1+q\ge M(T)$ and $S:=T^N$. Since $E_q\subset\mu_N$, Theorem~\ref{thm:local}(i) and the spectral mapping theorem give $\sigma(S)\cap\T=\{\lambda^N:\lambda\in\sigma(T)\cap\T\}\subset\{1\}$, and $\sup_n\|\Id-S^n\|\le q$. Let $\delta_0=\delta(q)$, $C_0=C(q)$ be the constants of Theorem~\ref{thm:local}(ii) applied to $S$.

\emph{Step 1: estimates near $E_q$.} Put $\delta_1:=\min\{\delta_0/(N2^{N-1}),\ \sin(\pi/N)\}$ if $N\ge2$ and $\delta_1:=\min\{\delta_0/(N2^{N-1}),\ 1\}$ if $N=1$. Let $\xi\in E_q$ and $|\lambda|>1$ with $|\lambda-\xi|<\delta_1$; write $\lambda=\xi w$, so $|w|>1$, $|w-1|<\delta_1\le1$, $|w|<2$ and $\lambda^N=w^N$. Since $|w^N-1|=|w-1|\,|\sum_{j<N}w^j|\le N2^{N-1}|w-1|<\delta_0$, Theorem~\ref{thm:local}(ii) for $S$ gives $\|\Res(\lambda^N,S)\|\le C_0/|\lambda^N-1|$. By Lemma~\ref{lem:factor}, $\|\Res(\lambda,T)\|\le MN2^{N-1}C_0/|\lambda^N-1|$. For $N\ge2$, the inequalities $|\lambda-\xi|<\sin(\pi/N)$ and $|\xi-\eta|\ge2\sin(\pi/N)$ for distinct $\xi,\eta\in\mu_N$ show that $\xi$ is the nearest point of $\mu_N$ to $\lambda$; for $N=1$ this is automatic. Lemma~\ref{lem:scalarroots} therefore gives $|\lambda^N-1|\ge c_N|\lambda-\xi|$. Hence
\[
   \dist(\lambda,E_q)\,\|\Res(\lambda,T)\|\le|\lambda-\xi|\,\|\Res(\lambda,T)\|\le C_1:=\frac{MN2^{N-1}C_0}{c_N}.
\]

\emph{Step 2: estimates away from $E_q$.} Let $a,n_0,\kappa$ be given by Lemma~\ref{lem:uniformescape} with $\varepsilon=\delta_1/2$, and assume also $\kappa\le\delta_1/2$. Let $\lambda=re^{i\theta}$, $|\lambda|>1$, $\dist(\lambda,E_q)\ge\delta_1$. If $r\ge1+\kappa$, the Neumann series gives $\|\Res(\lambda,T)\|\le M/(r-1)$ and $\dist(\lambda,E_q)\le r+1\le\frac{2+\kappa}{\kappa}(r-1)$. If $1<r<1+\kappa$, then $\dist(e^{i\theta},E_q)\ge\delta_1-(r-1)\ge\delta_1/2$, so there is $n\le n_0$ with $|\lambda^n-1|\ge a>q$. Since $\|\Id-T^n\|\le q$, the operator $\lambda^n-T^n=(\lambda^n-1)\Id+(\Id-T^n)$ is invertible with $\|(\lambda^n-T^n)^{-1}\|\le(a-q)^{-1}$, and Lemma~\ref{lem:factor} gives $\|\Res(\lambda,T)\|\le Mn_02^{n_0}/(a-q)$, while $\dist(\lambda,E_q)\le3$.

Together, $K_{E_q}(T)\le\Phi(q):=\max\{C_1,\ M(2+\kappa)/\kappa,\ 3Mn_02^{n_0}/(a-q)\}$, which depends only on $q$. Since $\dist(\lambda,\mu_N)\le\dist(\lambda,E_q)$, also $K_N(T)\le\Phi(q)$, and Proposition~\ref{prop:TN} shows that $T^N$ is Ritt with $\CR(T^N)\le N\Phi(q)$.

For the last assertion: if $T^N$ is Ritt with $N$ odd, then $T$ is $\Ritt{\mu_N}$ by Proposition~\ref{prop:TN}, and $\mu_N$ consists of roots of unity of odd order; conversely, if $T$ is $\Ritt{E}$ with $E$ a finite nonempty set of roots of unity of odd order and $N$ is the least common multiple of their orders, then $N$ is odd, $E\subset\mu_N$, $\dist(\lambda,\mu_N)\le\dist(\lambda,E)$, so $T$ is $\Ritt{\mu_N}$ and $T^N$ is Ritt by Proposition~\ref{prop:TN}.
\end{proof}

\begin{remarks}
(a) Theorem~\ref{thm:oddN} shows that the spectral hypothesis $\sigma(T)\cap\T\subset\{1\}$ in the discrete zero--two law \cite[Corollary~6.1]{BGT} is exactly the hypothesis that excludes the finitely many odd-order roots of unity in $E_q$; for $q<\sqrt3$ it is automatic (Theorem~\ref{thm:sqrt3}).

(b) The only non-explicit step in the proof is the compactness argument of Lemma~\ref{lem:uniformescape}; all the other constants are explicit. For $q<\sqrt3$ the compactness can be replaced by an explicit hitting argument, which gives a reciprocal bound (Theorem~\ref{thm:sqrt3}); the sharp leading constant is determined in Theorem~\ref{thm:sharp-asymptotic}.

(c) The parity restriction concerns the converse (Theorem~\ref{thm:UCchar}) only: $T=-\Id$ has $T^2=\Id$ Ritt while $\|\Id-T\|=2$. In the forward implication every multiple of $N(q)$, even ones included, is an admissible exponent, as the next proposition shows.  
\end{remarks}

\begin{proposition}[the exact universal exponent]\label{prop:minimalexponent}
Fix $1\le q<2$ and $L\ge1$. The implication
\[
 \sup_{n\ge1}\|\Id-T^n\|\le q\quad\Longrightarrow\quad T^L\text{ is Ritt}
\]
holds for every bounded operator on every complex Banach space if and only if
$N(q)$ divides $L$.
\end{proposition}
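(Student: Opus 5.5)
The proof splits into the two directions.

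\emph{Sufficiency.} Suppose $N(q)\mid L$, say $L=mN$ with $N=N(q)$. By Theorem~\ref{thm:oddN}, $S:=T^N$ is a Ritt operator. Theorem~\ref{thm:BGTpowers}, or its quantitative form Proposition~\ref{prop:universalPhi}, then shows that every power $S^m=T^L$ is Ritt, with $\CR(T^L)\le F(N\Phi(q))$. This direction is immediate.

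\emph{Necessity.} Suppose $N(q)\nmid L$. We exhibit a single bounded operator satisfying \eqref{eq:q} for which $T^L$ is not Ritt. Since $N(q)$ is the least common multiple of the orders of the elements of $E_q$, and $L$ is not a multiple of it, some $\xi\in E_q$ has odd order $m$ with $m\nmid L$. Consider the scalar operator $T=\xi\Id$ on $\C$.
\begin{itemize}
\item By definition of $E_q$, $\sup_n|1-\xi^n|=\od(\xi)\le q$, so \eqref{eq:q} holds.
\item $T^L=\xi^L$ with $\xi^L\in\T\setminus\{1\}$ because $m\nmid L$.
\item A scalar $\mu\in\T\setminus\{1\}$ is not Ritt, since $\sigma(\mu)\cap\T\not\subset\{1\}$. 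Directly, $|\lambda-1|\,|\lambda-\mu|^{-1}\to\infty$ as $\lambda\to\mu$ from outside the disc.
\end{itemize}
Thus the implication fails on the Banach space $\C$.

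We must also treat the edge case $E_q=\{1\}$ (for example $q<\sqrt3$). There $N(q)=1$ divides every $L$, so only sufficiency applies and no counterexample is needed.

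There is no serious obstacle. The only point needing care is the existence of $\xi\in E_q$ whose order does not divide $L$. This follows from the definition of the least common multiple: if every order divided $L$, then their lcm $N(q)$ would divide $L$.
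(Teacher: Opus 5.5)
Your proposal is correct and follows the paper's proof: sufficiency from Theorem~\ref{thm:oddN} together with stability of the Ritt property under powers (Theorem~\ref{thm:BGTpowers}), and necessity by applying the implication to the scalar operators $T=\xi$ with $\xi\in E_q$. The paper states the necessity as ``every order in $E_q$ divides $L$, hence so does their lcm'', which is the same argument as your contrapositive.
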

\begin{proof}
If $L$ is a multiple of $N(q)$, apply Theorem~\ref{thm:oddN} and the stability
of the Ritt property under positive integer powers (Theorem~\ref{thm:BGTpowers}). Conversely, for each
$\xi\in E_q$, apply the asserted implication to the scalar operator $T=\xi$, which satisfies \eqref{eq:q} by definition of $E_q$.
A unimodular scalar is Ritt only when it is $1$, so $\xi^L=1$. Thus every
order occurring in $E_q$ divides $L$, and so does their least common multiple.
\end{proof}

\begin{corollary}\label{cor:cubic}
Let $\sqrt3\le q<2\cos\frac{\pi}{10}\approx1.902$ and $\sup_n\|\Id-T^n\|\le q$. Then $T^3$ is a Ritt operator, $\sigma(T)\cap\T\subset\mu_3$, and $\|\Res(\lambda,T)\|\le\Phi(q)/\dist(\lambda,\mu_3)$ for $|\lambda|>1$.
\end{corollary}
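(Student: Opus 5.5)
The corollary is a direct specialisation of Theorem~\ref{thm:oddN}, so the plan is to identify the data $E_q$ and $N(q)$ in this range and read off the conclusions.

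\emph{Step 1: identify $E_q$ and $N(q)$.} By Lemma~\ref{lem:orbit} and \eqref{eq:Eq}, $E_q$ consists of $1$ together with the roots of unity of odd order $m$ satisfying $2\cos\frac{\pi}{2m}\le q$. The map $m\mapsto2\cos\frac{\pi}{2m}$ is increasing, and the odd values $m=3,5$ give $\sqrt3$ and $2\cos\frac{\pi}{10}$. Hence for $\sqrt3\le q<2\cos\frac\pi{10}$ only $m=3$ qualifies. So $E_q=\{1,e^{\pm2\pi i/3}\}=\mu_3$ and $N(q)=3$, as already recorded after \eqref{eq:Eq}.

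\emph{Step 2: apply Theorem~\ref{thm:oddN}.} With $N=3$, that theorem gives that $T^3$ is Ritt and that $T$ is $\Ritt{E_q}=\Ritt{\mu_3}$ with $K_{\mu_3}(T)\le\Phi(q)$. By the definition of $K_E$, the bound on $K_{\mu_3}(T)$ is exactly the inequality
\[
\|\Res(\lambda,T)\|\le\Phi(q)/\dist(\lambda,\mu_3)\qquad(|\lambda|>1).
\]

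\emph{Step 3: the spectral inclusion.} The inclusion $\sigma(T)\cap\T\subset\mu_3$ is Theorem~\ref{thm:local}(i) with $E_q=\mu_3$. Alternatively, it follows from the $\Ritt{\mu_3}$ property as noted in Section~\ref{sec:rootsofunity}.

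There is no real obstacle. The only point requiring care is the monotonicity check in Step 1 confirming that $m=5$ is excluded exactly when $q<2\cos\frac\pi{10}$. The constant $\Phi(q)$ is the one produced in Theorem~\ref{thm:oddN}.
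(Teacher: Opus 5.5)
Your proposal is correct and is essentially the paper's argument. The paper gives no separate proof: the corollary is Theorem~\ref{thm:oddN}, with Theorem~\ref{thm:local}(i) for the spectral inclusion, specialised to the range where $E_q=\mu_3$ and $N(q)=3$, as recorded after \eqref{eq:Eq}.
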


\subsection{The threshold \texorpdfstring{$\sqrt3$}{sqrt3} and the optimal asymptotic constant}
For $q<\sqrt3$, Lemma~\ref{lem:orbit} gives $E_q=\{1\}$ and $N(q)=1$, so Theorem~\ref{thm:oddN} implies that $T$ itself is Ritt. Recall that $\Phi_*(q)$ is the optimal universal bound defined in \eqref{eq:optimalPhi}. We first give bounds valid throughout $1\le q<\sqrt3$, and then determine the leading asymptotic constant.

\begin{theorem}\label{thm:sqrt3}
Let $1\le q<\sqrt3$ and $\sup_{n\ge1}\|\Id-T^n\|\le q$. Then $T$ is a Ritt operator and
\begin{equation}\label{eq:rate}
 \CR(T)\le\frac{2\sqrt3 M(T)}{\sqrt3-q}
          \le\frac{6+2\sqrt3}{\sqrt3-q}.
\end{equation}
For $0\le r<1$, put
\[
 a_r=r\exp\bigl(i\arccos(-r/2)\bigr).
\]
Then
\begin{equation}\label{eq:qrho}
 q_r:=\sup_{n\ge1}|1-a_r^n|=\sqrt{1+2r^2},
 \qquad
 \CR(a_r)=\frac{2q_r}{1-r^2}=\frac{4q_r}{3-q_r^2}.
\end{equation}
Consequently,
\begin{equation}\label{eq:sharp-scalar-lower}
 \frac{4q}{3-q^2}\le\Phi_*(q)
             \le\frac{6+2\sqrt3}{\sqrt3-q}
 \qquad(1\le q<\sqrt3).
\end{equation}
The threshold $\sqrt3$ is optimal: $T=e^{2\pi i/3}\Id$ satisfies
$\sup_n\|\Id-T^n\|=\sqrt3$ and is not Ritt. For every $q\in[\sqrt3,2)$, the Ritt constants of operators satisfying $\sup_n\|\Id-T^n\|\le q$ are unbounded, even for scalar operators with spectrum in $\D$.
\end{theorem}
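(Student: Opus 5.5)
By Theorem~\ref{thm:local}(i) and Lemma~\ref{lem:orbit}, $E_q=\{1\}$ for $q<\sqrt3$, so $\sigma(T)\cap\T\subset\{1\}$ and $T$ is power bounded with $M(T)\le1+q$. By Proposition~\ref{prop:boundary} it therefore suffices to bound $|\xi-1|\,\|\Res(\xi,T)\|$ for $\xi=e^{i\theta}\in\T\setminus\{1\}$. By conjugation symmetry of the argument we may take $0<\theta\le\pi$.

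The tool is an explicit hitting argument replacing the compactness of Lemma~\ref{lem:uniformescape}. Suppose $n\ge1$ satisfies $|\xi^n-1|\ge\sqrt3>q$. The Neumann series for $\xi^n-T^n=(\xi^n-1)\Id+(\Id-T^n)$ gives $\|\Res(\xi^n,T^n)\|\le(|\xi^n-1|-q)^{-1}\le(\sqrt3-q)^{-1}$. Lemma~\ref{lem:factor} on $\T$ gives $\|Q_n(\xi,T)\|\le nM(T)$. Hence
\[
|\xi-1|\,\|\Res(\xi,T)\|\le\frac{n|\xi-1|\,M(T)}{|\xi^n-1|-q}.
\]
The choice of $n$ depends on $\theta$.
\begin{itemize}
\item If $\theta\ge2\pi/3$, take $n=1$. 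Then $|\xi-1|\ge\sqrt3$, and since $x\mapsto x/(x-q)$ decreases, the bound is at most $\sqrt3 M/(\sqrt3-q)$.
\item If $\theta<2\pi/3$, let $n\ge2$ be least with $n\theta\ge2\pi/3$. Then $n\theta<2\pi/3+\theta<4\pi/3$, so $|\xi^n-1|\ge\sqrt3$.
\item For $n=2$ we have $\theta\in[\pi/3,2\pi/3)$, so $n|\xi-1|=4\sin(\theta/2)\le2\sqrt3$.
\item For $n\ge3$ we have $(n-1)\theta<2\pi/3$, so $n|\xi-1|\le n\theta<\frac{n}{n-1}\cdot\frac{2\pi}3\le\pi<2\sqrt3$.
\end{itemize}
In all cases $\CR(T)\le2\sqrt3M(T)/(\sqrt3-q)$. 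With $M\le1+q\le1+\sqrt3$ this gives $6+2\sqrt3$ in the numerator, which is \eqref{eq:rate}.

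For the scalar family, write $a=re^{i\varphi}$ with $\cos\varphi=-r/2$. Then $|1-a|^2=1+r^2+r^2=1+2r^2$. Also $\cos2\varphi=r^2/2-1$, so $|1-a^2|^2=1+r^4-2r^2\cos2\varphi=1+2r^2$ as well. The main obstacle is showing $|1-a^n|^2=1+r^{2n}-2r^n\cos n\varphi\le1+2r^2$ for $n\ge3$.
\begin{itemize}
\item For $r\le2/3$ the crude bound $r^n(2+r^n)\le3r^3\le2r^2$ suffices.
\item For $r$ near $1$, $\varphi$ is near $2\pi/3$, so $\cos n\varphi$ is close to $-1$ only when $n\equiv0\pmod3$ fails badly. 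Here I would write $\varphi=2\pi/3+\epsilon$ and treat $n$ modulo $3$, using $\cos3\varphi=4\cos^3\varphi-3\cos\varphi=\frac{3r}{2}-\frac{r^3}{2}$ and the decay $r^n\le r^3$. I expect this to need a short but careful inequality argument.
\end{itemize}
For the Ritt constant, the scalar case of Proposition~\ref{prop:boundary} reduces to maximising $|\xi-1|/|\xi-a|$ over $\xi\in\T$. The square of this ratio is a quotient of affine functions of $\cos$ and $\sin$, so the maximum is computable in closed form. I would verify that it equals $2q_r/(1-r^2)$ and rewrite this as $4q_r/(3-q_r^2)$ using $r^2=(q_r^2-1)/2$. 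Since $q_r$ ranges over $[1,\sqrt3)$, this yields the lower bound in \eqref{eq:sharp-scalar-lower}; the upper bound is \eqref{eq:rate}.

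Finally, for sharpness, $T=\omega\Id$ with $\omega=e^{2\pi i/3}$ has $\{|1-\omega^n|\}=\{0,\sqrt3\}$ and unimodular spectrum $\ne1$, so it is not Ritt. For $q\ge\sqrt3$, use $T_\rho=\rho\omega$. It satisfies $|1-\rho^n\omega^n|\le\sqrt3$ for $n\ge1$. To see this, note that for $n\equiv0\pmod 3$ the value is $1-\rho^n<1$. Otherwise $|1-s\omega^{\pm1}|^2=1+s+s^2$, which is maximal at $s=1$. On the other hand $\CR(T_\rho)\ge\sqrt3/(1-\rho)\to\infty$ by Remark~\ref{rem:dependT}, and these operators have spectrum in $\D$.
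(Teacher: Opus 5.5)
Your proof of \eqref{eq:rate} is the paper's argument: the explicit escape with $n=1$ or the least $n$ with $n|\theta|\ge2\pi/3$, the Neumann series, Lemma~\ref{lem:factor} and Proposition~\ref{prop:boundary}. Your unboundedness example $\rho\omega$ for $q\ge\sqrt3$ is a valid alternative to the paper's use of $a_r$ as $r\uparrow1$, and it does not depend on the value of $q_r$.

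\textbf{The gap: the identity $q_r=\sqrt{1+2r^2}$.} You verify it for $n=1,2$, and for $n\ge3$ only when $r\le2/3$. For $2/3<r<1$ and $n\ge3$ you only announce ``a short but careful inequality argument''. That is exactly the range that matters. The lower bound $4q/(3-q^2)\le\Phi_*(q)$ needs $|1-a_r^n|\le\sqrt{1+2r^2}$ for \emph{every} $n$, with $r$ close to $1$. A residue-mod-$3$ analysis does not obviously close this case. Since $\varphi-2\pi/3\approx-(1-r)/\sqrt3$, the angular drift accumulates with $n$ and must be balanced against the decay of $r^n$, uniformly in $r$. (Your heuristic about when $\cos n\varphi$ is near $-1$ is also garbled: for moderate $n\not\equiv0\pmod3$ it is near $-\tfrac12$.)

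\textbf{The missing idea is algebraic.} Since $|a_r|=r$ and $2\operatorname{Re}a_r=-r^2$, the number $a=a_r$ satisfies $a^2+r^2a+r^2=0$. Multiplying by $a-1$ gives $a^3=(1-r^2)a^2+r^2$, hence $a^{n+3}=(1-r^2)a^{n+2}+r^2a^n$ for $n\ge0$, a convex combination. By induction every power of $a$ lies in the triangle $\operatorname{conv}\{1,a,a^2\}$. Since $z\mapsto|1-z|$ is convex, its maximum on that triangle is attained at a vertex, so $\sup_n|1-a^n|=\max\{|1-a|,|1-a^2|\}=\sqrt{1+2r^2}$.

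\textbf{A smaller omission: the value of $\CR(a_r)$.} You assert it but do not compute it. It follows because $w\mapsto(1-w)/(1-aw)$ maps $\D$ onto a disc of radius $|1-a|/(1-|a|^2)$ whose boundary passes through $0$, the image of $w=1$. The supremum of the modulus is therefore twice the radius, so $\CR(a)=2|1-a|/(1-|a|^2)$ for every $a\in\D$.
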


\begin{lemma}[explicit escape]\label{lem:explicitescape}
For every $\xi\in\T\setminus\{1\}$ there is an integer $n\ge1$ such that
\[
 |\xi^n-1|\ge\sqrt3,
 \qquad n|\xi-1|\le2\sqrt3.
\]
\end{lemma}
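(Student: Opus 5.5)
By conjugation symmetry ($|\bar\xi^n-1|=|\xi^n-1|$ and $|\bar\xi-1|=|\xi-1|$) we may write $\xi=e^{i\theta}$ with $0<\theta\le\pi$. The condition $|\xi^n-1|\ge\sqrt3$ says that $n\theta$ lies, modulo $2\pi$, in the closed arc $[2\pi/3,4\pi/3]$, since $|e^{it}-1|=2\sin(t/2)\ge\sqrt3$ exactly when $t\in[2\pi/3,4\pi/3]$ mod $2\pi$. The plan is to take the smallest admissible $n$, namely the least positive integer with $n\theta\ge2\pi/3$.

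The first step is the case $\theta\ge2\pi/3$. Here $n=1$ works, and $n|\xi-1|=|\xi-1|\le2\le2\sqrt3$.

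The second step is the case $0<\theta<2\pi/3$. Let $n$ be minimal with $n\theta\ge2\pi/3$. Then $n\ge2$ and $(n-1)\theta<2\pi/3$, so
\[
 n\theta<\tfrac{2\pi}{3}+\theta<\tfrac{4\pi}{3}.
\]
Hence $n\theta\in[2\pi/3,4\pi/3)$ and $|\xi^n-1|\ge\sqrt3$. For the second inequality we use $|\xi-1|=2\sin(\theta/2)\le\theta$, which gives
\[
 n|\xi-1|\le n\theta<\tfrac{2\pi}{3}+\theta .
\]

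The only delicate point is getting the constant $2\sqrt3$ rather than the crude $4\pi/3\approx4.19$, which exceeds $2\sqrt3\approx3.46$. To do this, split further.
\begin{itemize}
\item If $\theta\le2\pi/3-\ldots$ is small, say $n\ge3$, then $\theta\le\pi/3$ and $n\theta<2\pi/3+\pi/3=\pi$. This is below $2\sqrt3$ only marginally: $\pi\approx3.14<3.46$, so it suffices.
\item If $n=2$, then $\theta\in[\pi/3,2\pi/3)$. Bound directly:
\[
 2|\xi-1|=4\sin(\theta/2)<4\sin(\pi/3)=2\sqrt3 .
\]
\end{itemize}
So the main obstacle is just this bookkeeping. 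Using $|\xi-1|\le\theta$ when $n\ge3$ and the exact chord length when $n=2$ yields $n|\xi-1|\le2\sqrt3$ in all cases, with equality approached only as $\theta\uparrow2\pi/3$.
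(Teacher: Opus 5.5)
Your proof is correct and is essentially the paper's argument: you take the least $n$ with $n\theta\ge2\pi/3$, treat $n=1$ via $|\xi-1|\le2$, $n=2$ via the exact chord $4\sin(\theta/2)\le2\sqrt3$, and $n\ge3$ via $n\theta<\pi$; reducing to $\theta>0$ by conjugation instead of working with $|\theta|$ is only cosmetic. The one step to state explicitly is the bound $\theta<\pi/3$ when $n\ge3$, which follows from minimality ($2\theta\le(n-1)\theta<2\pi/3$); it should replace the placeholder ``$\theta\le2\pi/3-\ldots$ is small''.
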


\begin{proof}
Write $\xi=e^{i\theta}$ with $0<|\theta|\le\pi$. If $|\theta|\ge2\pi/3$, take $n=1$. Otherwise take $n=\lceil2\pi/(3|\theta|)\rceil$. Then
\[
 \frac{2\pi}3\le n|\theta|<\frac{2\pi}3+|\theta|<\frac{4\pi}3,
\]
so $|\xi^n-1|\ge\sqrt3$. If $n=2$, then
$n|\xi-1|=4\sin(|\theta|/2)\le2\sqrt3$. If $n\ge3$, the minimality of $n$ gives
\[
 n|\xi-1|\le n|\theta|<\frac{2\pi n}{3(n-1)}\le\pi<2\sqrt3.
\]
For $n=1$, use $|\xi-1|\le2$.
\end{proof}

\begin{proof}[Proof of Theorem~\ref{thm:sqrt3}]
The spectral mapping theorem and Lemma~\ref{lem:orbit} give
$\sigma(T)\cap\T\subset\{1\}$, while $M(T)\le1+q$. For
$\xi\in\T\setminus\{1\}$, choose $n$ as in Lemma~\ref{lem:explicitescape}. The identity
\[
 \xi^n-T^n=(\xi^n-1)\Id+(\Id-T^n)
\]
and the Neumann series give
$\|\Res(\xi^n,T^n)\|\le(\sqrt3-q)^{-1}$. Hence Lemma~\ref{lem:factor} yields
\[
 |\xi-1|\,\|\Res(\xi,T)\|
 \le\frac{M(T)n|\xi-1|}{\sqrt3-q}
 \le\frac{2\sqrt3 M(T)}{\sqrt3-q}.
\]
Proposition~\ref{prop:boundary} proves \eqref{eq:rate}.

For the scalar assertions, note that $|a_r|=r$ and
$a_r+\bar a_r=-r^2$. Thus
\[
 |1+a_r|=1,
 \qquad a_r^2+r^2a_r+r^2=0,
 \qquad a_r^{n+3}=r^2a_r^n+(1-r^2)a_r^{n+2}\quad(n\ge0).
\]
The last identity follows by multiplying the quadratic identity by
$a_r-1$ and then by $a_r^n$. Its coefficients are nonnegative and sum to one. Induction therefore shows that every power of $a_r$ belongs to
$\operatorname{conv}\{1,a_r,a_r^2\}$. Moreover,
\[
 |1-a_r|^2=1+2r^2,
 \qquad |1-a_r^2|=|1-a_r|\,|1+a_r|=|1-a_r|.
\]
Convexity of $z\mapsto|1-z|$ proves the first identity in \eqref{eq:qrho}.

For every $a\in\D$, the substitution $w=1/\lambda$ gives
\begin{equation}\label{eq:scalar-ritt-exact}
 \CR(a)=\sup_{|w|<1}\left|\frac{1-w}{1-aw}\right|
        =\frac{2|1-a|}{1-|a|^2}.
\end{equation}
Indeed, the displayed M\"obius map has image the disc with centre
$(1-\bar a)/(1-|a|^2)$ and radius $|1-a|/(1-|a|^2)$; the modulus of the centre equals the radius. Applying this formula to $a_r$ proves the remaining identities in \eqref{eq:qrho}. Since $r\mapsto q_r$ maps $[0,1)$ onto $[1,\sqrt3)$, it also proves \eqref{eq:sharp-scalar-lower}. As $r\uparrow1$, the operators $a_r$ have unbounded Ritt constants and displacement bounds below $\sqrt3$. Finally, the orbit of $e^{2\pi i/3}$ has displacement $\sqrt3$, and this scalar operator has a peripheral spectral point different from $1$.
\end{proof}

\begin{theorem}\label{thm:sharp-asymptotic}
As $q\uparrow\sqrt3$,
\begin{equation}\label{eq:sharp-asymptotic}
 \Phi_*(q)=\frac{2}{\sqrt3-q}+O(1).
\end{equation}
In particular,
\[
 \lim_{q\uparrow\sqrt3}(\sqrt3-q)\Phi_*(q)=2.
\]
\end{theorem}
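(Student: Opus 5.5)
The lower bound comes from the scalar family of Theorem~\ref{thm:sqrt3}; the upper bound needs a sharper boundary estimate, and the work is concentrated near the two primitive cube roots of unity.

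\emph{Lower bound.} By \eqref{eq:sharp-scalar-lower}, $\Phi_*(q)\ge 4q/(3-q^2)$. Since $3-q^2=(\sqrt3-q)(\sqrt3+q)$,
\[
\frac{4q}{3-q^2}-\frac{2}{\sqrt3-q}=\frac{4q-2(\sqrt3+q)}{(\sqrt3-q)(\sqrt3+q)}=\frac{2(q-\sqrt3)}{(\sqrt3-q)(\sqrt3+q)}=-\frac{2}{\sqrt3+q}=O(1).
\]
So $\Phi_*(q)\ge 2/(\sqrt3-q)-O(1)$, and everything reduces to the upper bound $\CR(T)\le 2/(\sqrt3-q)+O(1)$, uniformly over all $T$ with $\sup_n\|\Id-T^n\|\le q$.

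\emph{Upper bound: reduction to the circle.} By Proposition~\ref{prop:boundary} it suffices to bound $|\xi-1|\,\|\Res(\xi,T)\|$ for $\xi=e^{i\theta}\in\T\setminus\{1\}$. The constant $2\sqrt3M(T)/(\sqrt3-q)$ of Theorem~\ref{thm:sqrt3} loses on two counts: the power bound $M(T)$, and the factor $n|\xi-1|$ from Lemma~\ref{lem:factor}. I split the circle according to the distance of $\xi$ from $\mu_3\setminus\{1\}$. Fix a small $\varepsilon>0$, independent of $q$.

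\emph{Away from $\mu_3\setminus\{1\}$.} Suppose $\dist(\xi,\{\omega,\bar\omega\})\ge\varepsilon$, where $\omega=e^{2\pi i/3}$. I claim there is $n$ with $|\xi^n-1|\ge\sqrt3+c(\varepsilon)$ and $n|\xi-1|\le C$, with $c(\varepsilon)>0$ and $C$ absolute.
\begin{itemize}
\item For $|\theta|$ small, the points $n\theta$ have spacing $|\theta|$, so one can pick the first $n$ with $n|\theta|\ge\pi-\tfrac{\pi}{6}$. Then $\xi^n$ lies near $-1$, $|\xi^n-1|\ge 2\cos(\pi/12)>\sqrt3$, and $n|\theta|\le\pi$.
\item For $|\theta|$ bounded below but away from $\pm2\pi/3$, a compactness argument as in Lemma~\ref{lem:uniformescape} gives the same with bounded $n$. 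This works because $\od(\xi)=2$ unless $\xi$ is a root of unity of odd order, and every odd-order root other than $1,\omega,\bar\omega$ has $\od\ge2\cos(\pi/10)$.
\end{itemize}
The Neumann argument and Lemma~\ref{lem:factor} then give $|\xi-1|\,\|\Res(\xi,T)\|\le (1+q)C/c(\varepsilon)=O(1)$ as $q\uparrow\sqrt3$.

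\emph{Near $\omega$ (and symmetrically $\bar\omega$).} Here I take the quadratic balancing seriously. Take $p(z)=\alpha(1-z)+\beta(1-z^2)$, with $\alpha,\beta$ depending on $\xi$. The divided difference gives
\[
p(\xi)-p(T)=(\xi-T)h_\xi(T),\qquad h_\xi(T)=-\bigl(\alpha+\beta(\xi+T)\bigr)=-\bigl(\alpha+\beta(1+\xi)\bigr)\Id+\beta(\Id-T).
\]
Hence $\|h_\xi(T)\|\le|\alpha+\beta(1+\xi)|+|\beta|q$. This removes $M(T)$, since only $\|\Id-T\|\le q$ is used. Also $\|p(T)\|\le(|\alpha|+|\beta|)q$, so
\[
|\xi-1|\,\|\Res(\xi,T)\|\le\frac{|\xi-1|\bigl(|\alpha+\beta(1+\xi)|+|\beta|q\bigr)}{|p(\xi)|-(|\alpha|+|\beta|)q}.
\]
Write $u=1-\xi$ and $v=1-\xi^2$. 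At $\xi=\omega$ one has $|u|=|v|=\sqrt3$. Choose the phases of $\alpha,\beta$ to align $\alpha u$ and $\beta v$, so that $|p(\xi)|=|\alpha||u|+|\beta||v|$. Then the denominator is about $(|\alpha|+|\beta|)(\sqrt3-q)-O(|\xi-\omega|)$. With $\alpha=\bar u/|u|$, $\beta=t\,\bar v/|v|$ and $t\ge0$, the numerator at $\omega$ becomes $\sqrt3\bigl|\bar u/|u|+t(\bar v/|v|)(1+\omega)\bigr|+\sqrt3\,tq$, and the target is that the whole ratio tends to $2/(\sqrt3-q)$. At $t=0$ the ratio is $\sqrt3/(\sqrt3-q)$, which already beats $2/(\sqrt3-q)$ at $\omega$ itself. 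So the real issue is uniformity for $\xi$ near but off $\omega$, where $|\xi^2-1|$ or $|\xi-1|$ drops below $\sqrt3$ linearly in $|\xi-\omega|$. There I would optimize $t=t(\xi)$ so that the linear loss cancels to first order: one of $|u|,|v|$ increases while the other decreases. This should yield the bound
\[
\frac{2+O(|\xi-\omega|)}{\sqrt3-q-\kappa|\xi-\omega|^2}.
\]
When $|\xi-\omega|^2\gg\sqrt3-q$, use instead the escape bound of the previous step with $\varepsilon$ replaced by $|\xi-\omega|$, which gives the same order.

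\emph{Main obstacle.} The hardest step is this last balancing near $\omega$: choosing $t(\xi)$ so that the first-order losses cancel, and matching the quadratic regime to the escape regime without losing more than $O(1)$. I expect the constant $2$ to emerge exactly from this optimization. It is consistent with the extremal $a_r$, whose Ritt constant blows up at boundary points near $\omega$.
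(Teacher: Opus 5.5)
Your lower bound, the reduction to the circle and the estimates away from $\omega,\bar\omega$ are fine. Your first-order balancing $t=1/2$ also reproduces exactly the paper's polynomial $p(z)=\frac23e^{i\pi/6}(1-z)+\frac13e^{-i\pi/6}(1-z^2)$. The gap is in how you turn $p$ into a resolvent bound near $\omega$, and it destroys the constant $2$.

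Since $p(1)=0$, one has $h_\xi(1)=p(\xi)/(\xi-1)$, so your numerator is exactly $|p(\xi)|+|\beta|\,q\,|\xi-1|$. At $\xi=\omega$ with $|\alpha|=\frac23$, $|\beta|=\frac13$ this is $\sqrt3+q/\sqrt3\to1+\sqrt3$. Your estimate therefore yields $(1+\sqrt3)/(\sqrt3-q)$ there, and the anticipated bound $(2+O(|\xi-\omega|))/(\sqrt3-q-\kappa|\xi-\omega|^2)$ does not follow.

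No choice of $t(\xi)$ repairs this. Put $\varepsilon=\sqrt3-q$, $\xi=\omega e^{-is\varepsilon}$ and $b=|\beta|/(|\alpha|+|\beta|)$. Aligning the phases is optimal, because $P\mapsto(P+c)/(P-q)$ is decreasing. Your bound is then, to leading order, $(\sqrt3+3b)/\bigl(\varepsilon(1+s(3b-1)/2)\bigr)$. At $s=\sqrt3-1$ this equals $(1+\sqrt3)/\varepsilon$ for every $b\in[0,1]$. Thus your method gives at best $(1+\sqrt3)/(\sqrt3-q)+O(1)$: the right order, but the wrong constant. The loss comes from the piece $\beta(\Id-T)$, which has norm of order one and which your estimate multiplies by $(p(\xi)\Id-p(T))^{-1}$, of norm of order $1/\varepsilon$.

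The missing idea is to split the divided difference at $\xi$ rather than at $1$: $h_\xi(T)=p'(\xi)\Id+\beta(\xi-T)$. Since $p(\xi)-p(z)=\beta(\xi-z)(\eta-z)$ with $\eta=-\alpha/\beta-\xi$, the second piece gives exactly $\beta(\xi-T)(p(\xi)\Id-p(T))^{-1}=\Res(\eta,T)$. This is the paper's identity $\Res(\xi,T)=p'(\xi)(p(\xi)\Id-p(T))^{-1}+\Res(\eta,T)$. For the balanced $p$, $\eta=-2e^{i\pi/3}-\xi$ has modulus $\sqrt7$ at $\omega$, so $\|\Res(\eta,T)\|\le M(T)/(|\eta|-1)=O(1)$. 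The power bound does re-enter, but only in a bounded term, while the singular coefficient becomes $|\omega-1|\,|p'(\omega)|=\sqrt3\cdot2/\sqrt3=2$. Trying to eliminate $M(T)$ altogether is exactly what costs you the constant.

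Your matching scale is also off. The bound $(2+O(\delta))/(\varepsilon-\kappa\delta^2)$ equals $2/\varepsilon+O(1)$ only when $|\delta|=O(\varepsilon)$. Switching at $|\xi-\omega|^2\gg\varepsilon$ therefore leaves errors of order $|\delta|/\varepsilon+\delta^2/\varepsilon^2$ in the intermediate range. The paper switches at $|\delta|=K\varepsilon$ to the linear escape of $|1-\xi|$ (for $\delta>0$) or of $|1-\xi^2|$ (for $\delta<0$). This gives $4M_0/(\varepsilon+c|\delta|)\le1/\varepsilon$ once $K$ is chosen with $4M_0/(1+cK)\le1$.
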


\begin{proof}
The lower bound follows from \eqref{eq:sharp-scalar-lower}, since
\begin{equation}\label{eq:lower-asymptotic}
 \frac{4q}{3-q^2}
 =\frac{2}{\sqrt3-q}-\frac1{\sqrt3}+O(\sqrt3-q).
\end{equation}
For the upper bound, let $\varepsilon=\sqrt3-q$,
$M_0=1+\sqrt3$, and suppose
$\sup_n\|\Id-T^n\|\le q$. Then $M(T)\le M_0$. All constants below are independent of $T$, $X$ and $\varepsilon$.

Put $\omega=e^{2\pi i/3}$ and define
\[
 p(z)=\frac23e^{i\pi/6}(1-z)
             +\frac13e^{-i\pi/6}(1-z^2).
\]
The displacement bounds for the first two powers give $\|p(T)\|\le q$. Direct calculation yields
\[
 p(\omega)=\sqrt3,
 \qquad p'(\omega)=-\frac1{\sqrt3}-i,
 \qquad |p'(\omega)|=\frac2{\sqrt3},
 \qquad \operatorname{Re}\bigl(i\omega p'(\omega)\bigr)=0.
\]
Write $\xi(\delta)=\omega e^{i\delta}$. The last identity says that the derivative of $|p(\xi(\delta))|$ at $\delta=0$ vanishes. Taylor's theorem therefore gives constants $A>0$ and $\delta_0>0$ such that
\begin{equation}\label{eq:quadratic-contact}
 |p(\xi(\delta))|\ge\sqrt3-A\delta^2,
 \qquad
 |\xi(\delta)-1|\,|p'(\xi(\delta))|\le2+A|\delta|
 \quad(|\delta|\le\delta_0).
\end{equation}
The two roots of $p(z)=p(\xi)$ are $\xi$ and
\[
 \eta=-2e^{i\pi/3}-\xi.
\]
At $\xi=\omega$ the second root has modulus $\sqrt7$.
By decreasing $\delta_0$, we may assume $|\eta|\ge2$ whenever
$|\delta|\le\delta_0$. Thus $\eta\in\rho(T)$ and
\begin{equation}\label{eq:second-root-bound}
 \|\Res(\eta,T)\|\le\frac{M(T)}{|\eta|-1}\le M_0.
\end{equation}
Whenever $|p(\xi)|>q$, the partial fraction decomposition give the exact identity
\begin{equation}\label{eq:quadratic-resolvent}
 \Res(\xi,T)
 =p'(\xi)\bigl(p(\xi)\Id-p(T)\bigr)^{-1}
       +\Res(\eta,T).
\end{equation}
To verify it, write $p(z)=az^2+bz+c$, where
$a=-e^{-i\pi/6}/3$. Then
$p(\xi)-p(z)=-a(\xi-z)(\eta-z)$ and
$p'(\xi)=a(\xi-\eta)$, which give
$p'(\xi)/(p(\xi)-p(z))=(\xi-z)^{-1}-(\eta-z)^{-1}$.

Fix $K\ge1$ for the moment. For $|\delta|\le K\varepsilon$ and sufficiently small $\varepsilon$, depending only on $K$, equations
\eqref{eq:quadratic-contact}--\eqref{eq:quadratic-resolvent} imply
\begin{align*}
 |\xi-1|\,\|\Res(\xi,T)\|
 &\le\frac{2+AK\varepsilon}
              {\varepsilon-AK^2\varepsilon^2}+2M_0\\
 &\le\frac2\varepsilon+C_K.
\end{align*}
Here $C_K$ is finite and independent of $T$ and $\varepsilon$; the second inequality follows, for example, by requiring
$AK^2\varepsilon\le1/2$ and expanding $(1-AK^2\varepsilon)^{-1}$.

We next control the rest of the fixed arc around $\omega$. After decreasing $\delta_0$ again, there is $c>0$ such that
\[
 |1-\xi(\delta)|\ge\sqrt3+c\delta
       \quad(0\le\delta\le\delta_0),
 \qquad
 |1-\xi(\delta)^2|\ge\sqrt3+c|\delta|
       \quad(-\delta_0\le\delta\le0).
\]
Indeed, the relevant functions are
$2\sin(\pi/3+\delta/2)$ and $2\sin(2\pi/3+\delta)$, whose derivatives at zero are $1/2$ and $-1$, respectively. Applying the Neumann series to the first power in the first case and to the second power in the second case, and then using Lemma~\ref{lem:factor}, gives
\[
 |\xi-1|\,\|\Res(\xi,T)\|
 \le\frac{4M_0}{\varepsilon+c|\delta|}
       \quad(|\delta|\le\delta_0).
\]
Choose $K$ once and for all so that $4M_0/(1+cK)\le1$. Then the last bound is at most $1/\varepsilon$ whenever
$K\varepsilon\le|\delta|\le\delta_0$. Together with the previous estimate, this proves
\begin{equation}\label{eq:critical-arc-bound}
 |\xi-1|\,\|\Res(\xi,T)\|\le\frac2\varepsilon+C
\end{equation}
on a fixed arc about $\omega$, with an absolute $C$. The polynomial
$\widetilde p(z)=\overline{p(\bar z)}$ gives the same bound on a fixed arc about $\bar\omega$.

To complete the proof, observe that the remaining part of the circle has a uniform bound independent of $\varepsilon$. Apply Theorem~\ref{thm:local}(ii) with displacement bound $\sqrt3$. Taking radial limits in that estimate gives
\[
 |\xi-1|\,\|\Res(\xi,T)\|\le C(\sqrt3)
 \quad\bigl(0<|\xi-1|<\delta(\sqrt3)/2\bigr).
\]
Choose disjoint open arcs about $1$, $\omega$ and $\bar\omega$, with closures contained in the regions already controlled, and let $H$ be their complement in $\T$. Every $\zeta\in H$ has
$\od(\zeta)>\sqrt3$ by Lemma~\ref{lem:orbit}. For each such $\zeta$, choose an integer $n(\zeta)$ and an open neighbourhood on which
$|1-\xi^{n(\zeta)}|>\sqrt3$. Finitely many of these neighbourhoods cover the compact set $H$. If $N$ is the largest of the corresponding exponents, continuity gives
\[
 g:=\min_{\xi\in H}\max_{1\le n\le N}|1-\xi^n|>\sqrt3.
\]
The Neumann series and Lemma~\ref{lem:factor} now give
\[
 |\xi-1|\,\|\Res(\xi,T)\|
 \le\frac{2M_0N}{g-\sqrt3}
 \qquad(\xi\in H).
\]
Combining this estimate with \eqref{eq:critical-arc-bound} and the estimate near $1$, and taking $\varepsilon$ sufficiently small, proves
\[
 \sup_{\xi\in\T\setminus\{1\}}
       |\xi-1|\,\|\Res(\xi,T)\|
 \le\frac2\varepsilon+C'
\]
with an absolute $C'$. Proposition~\ref{prop:boundary} and
\eqref{eq:lower-asymptotic} complete the proof.
\end{proof}

\subsection{Resolvent growth at the higher displacement thresholds}
For an odd integer $m\ge3$, put
\[
 Q_m=2\cos\frac{\pi}{2m},
 \qquad
 E_m^-:=\{\xi\in\T:\ \xi\text{ has odd order strictly less than }m\}.
\]
If $m=3$, set $Q_m^-=1$; if $m\ge5$, set $Q_m^-=Q_{m-2}$. Then
$E_q=E_m^-$ for $Q_m^-\le q<Q_m$, whereas
$E_{Q_m}\setminus E_m^-$ consists of the roots of exact order $m$.

\begin{theorem}\label{thm:higher-thresholds}
Let $m\ge3$ be odd, $Q=Q_m$, and $Q_m^-\le q<Q$. Every operator satisfying
$\sup_{n\ge1}\|\Id-T^n\|\le q$ obeys
\begin{equation}\label{eq:higher-threshold-bound}
 K_{E_q}(T)\le\frac{6m(1+Q)\Phi(Q)}{Q-q},
\end{equation}
where $\Phi$ is the uniform bound in Theorem~\ref{thm:oddN}. Moreover, there are constants $c_m>0$ and $\varepsilon_m>0$ such that
\begin{equation}\label{eq:higher-threshold-lower}
 \sup\left\{K_{E_q}(a):\ a\in\D,\
                  \sup_{n\ge1}|1-a^n|\le q\right\}
 \ge\frac{c_m}{Q-q}
 \qquad(Q-\varepsilon_m<q<Q).
\end{equation}
Thus the reciprocal order in \eqref{eq:higher-threshold-bound} is sharp at every threshold.
\end{theorem}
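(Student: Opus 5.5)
The upper bound \eqref{eq:higher-threshold-bound} will follow from Theorem~\ref{thm:oddN} applied at the threshold $Q$ itself, together with a local Neumann argument near the new peripheral points. The lower bound \eqref{eq:higher-threshold-lower} will follow from a radial scalar family approaching a root of order $m$.

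\emph{Upper bound.} Let $F:=E_Q\setminus E_q$, the set of roots of exact order $m$, and let $M:=1+q\le 1+Q$. Since $q<Q$, the operator $T$ also satisfies the displacement bound with $Q$. Theorem~\ref{thm:oddN} therefore gives $\dist(\lambda,E_Q)\|\Res(\lambda,T)\|\le\Phi(Q)$ for $|\lambda|>1$. It remains to compare $\dist(\lambda,E_q)$ with $\dist(\lambda,E_Q)$, which can only differ near $F$. Fix $\rho\asymp(Q-q)/m$, to be chosen, and split into two cases.

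\begin{enumerate}[label=\textup{(\alph*)}]
\item Suppose $\dist(\lambda,F)\ge\rho$ and the nearest point of $E_Q$ to $\lambda$ is some $\xi'\in F$. Then $\dist(\lambda,E_q)\le|\lambda-\xi'|+2\le(1+2/\rho)\dist(\lambda,E_Q)$. Hence $\dist(\lambda,E_q)\|\Res(\lambda,T)\|\lesssim\Phi(Q)/\rho$. If instead the nearest point of $E_Q$ lies in $E_q$, the two distances coincide.
\item Suppose $|\lambda-\xi|<\rho$ for some $\xi\in F$, with $|\lambda|\le 2$; this range suffices because for $|\lambda|\ge2$ the Neumann series already gives a bound independent of $q$. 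By Lemma~\ref{lem:orbit} there is $n\le m$ with $|\xi^n-1|=Q$. Then $|\lambda^n-\xi^n|\le n2^{n-1}|\lambda-\xi|$, so for $\rho$ small enough that $m2^{m-1}\rho\le(Q-q)/2$ we get $|\lambda^n-1|\ge(Q+q)/2$. The Neumann series applied to $\lambda^n-T^n=(\lambda^n-1)\Id+(\Id-T^n)$ gives $\|\Res(\lambda^n,T^n)\|\le2/(Q-q)$. Lemma~\ref{lem:factor} then gives $\|\Res(\lambda,T)\|\lesssim_m M/(Q-q)$, while $\dist(\lambda,E_q)\le3$.
\end{enumerate}

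Combining the two cases gives a bound $C_m(1+Q)\Phi(Q)/(Q-q)$, using $\Phi(Q)\ge1$. Obtaining exactly the stated constant $6m$ requires more care in case (b). The cleaner route is to factor through the point $\lambda/\xi$ near $1$ and use the Ritt estimate for $T^{N}$ instead of the crude $2^{m}$ bound. I would first prove the statement with some constant depending only on $m$ and then optimise; this constant bookkeeping is the main obstacle, and it is not conceptual.

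\emph{Lower bound.} Fix a root $\xi$ of exact order $m$ and take $a=r\xi$ with $r<1$. The powers are $a^n=r^n\eta$ with $\eta=\xi^n\in\mu_m$.
\begin{itemize}
\item For each $\eta\in\mu_m$ with $\operatorname{Re}\eta\le0$, the function $s\mapsto|1-s\eta|$ is increasing on $[0,1]$, and its derivative at $s=1$ is $(1-\operatorname{Re}\eta)/|1-\eta|\ge1/2$.
\item For $\eta$ with $\operatorname{Re}\eta>0$, one has $|1-s\eta|\le\max(1,|1-\eta|)$, which is strictly less than $Q$ by a margin depending on $m$.
\item Every $\eta\in\mu_m$ is attained for the first time at some $n\le m$, and later occurrences have smaller $r^n$.
\end{itemize}
Together these give $\sup_n|1-a^n|\le Q-c'_m(1-r^m)\le Q-c'_m(1-r)$. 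Given $q$ close to $Q$, choose $1-r=(Q-q)/c'_m$, so that $a$ is admissible.

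Then take $\lambda=s\xi$ with $s\downarrow1$. We get $K_{E_q}(a)\ge\dist(\xi,E_q)/(1-r)$. Since $\xi\notin E_q$ and $E_q$ is a fixed finite set, $\dist(\xi,E_q)\ge d_m>0$. This yields \eqref{eq:higher-threshold-lower} with $c_m=d_mc'_m$, and $\varepsilon_m$ chosen so that $r\in(0,1)$.
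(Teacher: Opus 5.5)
The gap is in the upper bound: as written, it does not prove \eqref{eq:higher-threshold-bound}. In case (b) you get $\|\Res(\lambda,T)\|\le m2^mM/(Q-q)$, so you must take $\rho\sim(Q-q)/(m2^m)$. The argument therefore yields $K_{E_q}(T)\le C_m(1+Q)\Phi(Q)/(Q-q)$ with $C_m$ of order $m2^m$, not the stated $6m$.

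The loss comes from working at $\lambda$. You pay the Lipschitz constant $n2^{n-1}$ of $z\mapsto z^n$ and the factor $\max\{1,|\lambda|\}^{n-1}$ in Lemma~\ref{lem:factor}. Your proposed repair does not address this. A Ritt estimate for $T^N$ with $\xi^N=1$ only reproduces $\dist(\lambda,E_Q)\|\Res(\lambda,T)\|\le\Phi(Q)$, which degenerates at $\xi$ and never uses $q<Q$.

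The missing step is to apply the Neumann argument at the unimodular point $\xi$ itself, and then perturb the resolvent of $T$, not of $T^n$. Since $|\xi^n-1|=Q$, one has $\|\Res(\xi^n,T^n)\|\le1/(Q-q)$. Because $|\xi|=1$, Lemma~\ref{lem:factor} gives $\|\Res(\xi,T)\|\le nM/(Q-q)\le mM/(Q-q)$, with no exponential factor. For $|\lambda-\xi|\le h:=(Q-q)/(2mM)$, the resolvent Neumann series gives $\|\Res(\lambda,T)\|\le2mM/(Q-q)$. Also $\dist(\lambda,E_q)\le h+2\le3$, so the expression is at most $6mM/(Q-q)$ on these discs. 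Taking $\rho=h$ in your case (a) gives the factor $1+2/h=1+4mM/(Q-q)\le6mM/(Q-q)$, because $Q-q<1\le mM$. Together with $\Phi(Q)\ge1$ (let $|\lambda|\to\infty$) and $M\le1+Q$, this gives exactly the stated constant. This is the paper's argument.

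Your lower bound is correct, and it takes a slightly more economical route than the paper. The paper computes $\sup_n|1-a_r^n|=\max\{1,f_1(r),\dots,f_m(r)\}$ exactly and derives $Q-q(r)=\frac{j_*Q}2(1-r)+O((1-r)^2)$. It then uses monotonicity to realise each $q$ near $Q$ as some $q(r)$. You only need the one-sided bound $\sup_n|1-a^n|\le Q-c'_m(1-r)$, since admissibility only asks for $\le q$.

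Two details should be made explicit. First, the derivative bound at $s=1$ in your first bullet is only local. Either restrict to $r$ near $1$, or use convexity of $s\mapsto|1-s\eta|$: its secant slopes on $[s,1]$ are then at least $|1-\eta|-1\ge\sqrt2-1$ when $\operatorname{Re}\eta\le0$. Second, the intermediate form with $1-r^m$ is unnecessary and would need $c'_m$ to absorb a factor $1/m$; you can go straight to $Q-c'(1-r)$ using $1-r^n\ge1-r$. Finally, $\xi\notin E_q\subseteq E_m^-$ for every $q<Q$, which supplies $d_m>0$.
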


\begin{proof}
Put $E=E_m^-$, $F=E_Q$, $\varepsilon=Q-q$ and $M=1+q$. Theorem~\ref{thm:oddN}, applied at the bound $Q$, gives
$K_F(T)\le\Phi(Q)$. For each $\xi\in F\setminus E$, the orbit of $\xi$ contains a point at distance $Q$ from $1$, so there is $1\le n\le m$ with $|1-\xi^n|=Q$. The Neumann series and Lemma~\ref{lem:factor} imply
\[
 \xi\in\rho(T),\qquad
 \|\Res(\xi,T)\|\le\frac{nM}{\varepsilon}
                         \le\frac{mM}{\varepsilon}.
\]
Let $h=\varepsilon/(2mM)$. If $|\lambda-\xi|\le h$, resolvent perturbation gives
\[
 \|\Res(\lambda,T)\|\le\frac{2mM}{\varepsilon}.
\]
Since $E$ is nonempty, $\dist(\lambda,E)\le h+2\le3$ on each of these balls. Hence
\[
 \dist(\lambda,E)\,\|\Res(\lambda,T)\|
 \le\frac{6mM}{\varepsilon}
\]
there. Outside their union, if a nearest point of $F$ to $\lambda$ belongs to $E$, then
$\dist(\lambda,E)=\dist(\lambda,F)$. Otherwise,
$\dist(\lambda,F)\ge h$, and comparison with any point of $E$ gives
\[
 \dist(\lambda,E)\le\dist(\lambda,F)+2
      \le(1+2/h)\dist(\lambda,F).
\]
Because $\varepsilon<1$ and $mM\ge1$,
$1+2/h\le6mM/\varepsilon$. Thus, for all $|\lambda|>1$ outside the balls,
\[
 \dist(\lambda,E)\,\|\Res(\lambda,T)\|
 \le\frac{6mM\Phi(Q)}{\varepsilon}.
\]
The constant $\Phi(Q)$ is at least $1$, as follows by letting $|\lambda|\to\infty$ in the defining resolvent bound. Using $M\le1+Q$ proves \eqref{eq:higher-threshold-bound}.

For the lower bound, fix a root $\xi$ of exact order $m$ and let $a_r=r\xi$, where $r<1$ tends to $1$. For $1\le j\le m$, set
$f_j(r)=|1-r^j\xi^j|$. If $n\equiv j\pmod m$ and $n\ge j$, then $r^n\xi^j$ lies on the segment joining $0$ to $r^j\xi^j$; hence
$|1-r^n\xi^j|\le\max\{1,f_j(r)\}$. Since $a_r^n\to0$ as $n\to\infty$, this proves
\[
 q(r):=\sup_{n\ge1}|1-a_r^n|
      =\max\{1,f_1(r),\ldots,f_m(r)\}.
\]
At $r=1$, precisely two indices $j_1,j_2\in\{1,\ldots,m-1\}$ satisfy $f_j(1)=Q$; they obey $j_1+j_2=m$. All other values are strictly smaller than $Q$. For either of these two indices,
\[
 f_j'(1)=\frac{j(1-\operatorname{Re}\xi^j)}{|1-\xi^j|}
             =\frac{jQ}{2}.
\]
Consequently, with $j_*:=\min\{j_1,j_2\}$, for all $r<1$ sufficiently close to $1$ one has $q(r)=f_{j_*}(r)$ and
\begin{equation}\label{eq:higher-scalar-asymptotic}
 Q-q(r)=\frac{j_*Q}{2}(1-r)+O((1-r)^2).
\end{equation}
In particular, $q(r)<Q$ and $q(r)$ is strictly increasing on an interval ending at $1$, so every $q$ sufficiently close to $Q$ from below equals some $q(r)$.
Put $d=\dist(\xi,E)>0$. By approaching $\xi$ radially from outside the unit disc,
\[
 K_E(a_r)\ge\frac{d}{1-r}.
\]
Together with \eqref{eq:higher-scalar-asymptotic}, this gives
\[
 \liminf_{r\uparrow1}(Q-q(r))K_E(a_r)
       \ge\frac{dj_*Q}{2}>0,
\]
which proves \eqref{eq:higher-threshold-lower}.
\end{proof}

\subsection{General symbols and finite peripheral spectrum}\label{sec:generalsymbol}

The displacement symbol $w(z)=1-z$ forces the peripheral spectral points
to have odd order. For general symbols the same power-reduction argument
gives a finite set of roots of unity, without a parity restriction.

\begin{theorem}[structure under a general norm gap]\label{thm:generalsymbolstructure}
Let $T\in\B(X)$ be power bounded, let $w\in A^{1,1}(\D)$, and suppose that
\[
   \sup_{n\ge1}\|w(T^n)\|\le q<\|w\|_\infty.
\]
Define
\[
   E=E(w,q):=\Big\{\lambda\in\T:
                  \sup_{n\ge1}|w(\lambda^n)|\le q\Big\}.
\]
Then $E$ is a finite set of roots of unity and
$\sigma(T)\cap\T\subset E$. More precisely:
\begin{enumerate}[label=\textup{(\roman*)}]
\item If $E=\emptyset$, then $r(T)<1$.
\item If $E\ne\emptyset$, then $1\in E$. If $N$ is the least common
multiple of the orders of the elements of $E$, then $T^N$ is a Ritt
operator and $T$ is a $\Ritt{E}$ operator.
\end{enumerate}
\end{theorem}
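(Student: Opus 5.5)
We follow the scheme of Theorems~\ref{thm:local} and~\ref{thm:oddN}, with $1-z$ replaced by $w$ and Neumann series replaced by Lemma~\ref{lem:divdiff}. Throughout, $M=M(T)$ and $\|w(T^n)\|\le q$ for all $n$.

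\emph{Step 0: structure of $E$ and the peripheral spectrum.} We prove the following in order.
\begin{enumerate}[label=\textup{(\alph*)}]
\item \emph{Spectral inclusion.} If $\lambda\in\sigma(T)\cap\T$, then $w(\lambda^n)\in\sigma(w(T^n))$ by the spectral mapping theorem for the absolutely convergent functional calculus of a power bounded operator. Hence $|w(\lambda^n)|\le q$ for all $n$, that is, $\lambda\in E$.
\item \emph{Irrational points are excluded.} If $\lambda$ is not a root of unity, its orbit is dense, so $\sup_n|w(\lambda^n)|=\|w\|_\infty>q$ by continuity of $w$ on $\T$. Thus $E$ consists of roots of unity.
\item \emph{$E$ is finite.} This is the genuinely new point. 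The set $U=\{\eta\in\T:|w(\eta)|>q\}$ is a nonempty open set and contains an arc of length $\ell>0$. If $\lambda$ has order $m>2\pi/\ell$, then $\mu_m=\{\lambda^n\}$ meets that arc, so $\lambda\notin E$. Hence orders are bounded and $E$ is finite.
\item \emph{The point $1$.} If $\lambda\in E$ has order $m$, then $\lambda^m=1$ and so $|w(1)|\le q$. Hence $1\in E$ whenever $E\ne\emptyset$.
\item \emph{Case (i).} If $E=\emptyset$, then $\sigma(T)\cap\T=\emptyset$ by (a), and $\sigma(T)\subset\ol\D$ by power boundedness, so $r(T)<1$.
\end{enumerate}

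\emph{Step 1: $S:=T^N$ is Ritt.} Now let $E\ne\emptyset$ and let $N$ be the lcm of the orders. Then $\sigma(S)\cap\T\subset\{1\}$ by spectral mapping and (a), and $\|w(S^n)\|=\|w(T^{Nn})\|\le q$. Choose a closed arc $\arc\subset\T\setminus\{1\}$ on which $|w|\ge a>q$. This is possible since $U$ is open and nonempty; we need $U\ne\{1\}$-adjacent only, and $1\notin\overline U$ need not hold, so we shrink the arc to avoid $1$ if necessary. Lemma~\ref{lem:divdiff} applied to $S^n$ gives $\sup_n\sup_{\eta\in\arc}\|(\eta-S^n)^{-1}\|\le M\|w\|_{A^{1,1}}/(a-q)$. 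Lemma~\ref{lem:allpowers} shows that $(\N,\arc)$ has the sampling property, so Proposition~\ref{thm:general} shows that $S$ is Ritt.

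\emph{Step 2: $T$ is $\Ritt{E}$.} Proposition~\ref{prop:TN} gives that $T$ is $\Ritt{\mu_N}$, which only controls $\dist(\lambda,\mu_N)\|\Res(\lambda,T)\|$. To upgrade to $E$ we copy Step~2 of Theorem~\ref{thm:oddN}.
\begin{enumerate}[label=\textup{(\alph*)}]
\item \emph{Near $E$.} Since $E\subset\mu_N$, for $\lambda$ near a point $\xi\in E$ the nearest point of $\mu_N$ is $\xi$, so $\dist(\lambda,E)=\dist(\lambda,\mu_N)$ and the $\Ritt{\mu_N}$ bound suffices.
\item \emph{Near points of $\mu_N\setminus E$.} Each $\zeta\in\mu_N\setminus E$ satisfies $|w(\zeta^n)|>q$ for some $n$. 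Then $\zeta^n\in\rho(T^n)$ by Lemma~\ref{lem:divdiff}, since $\|w(T^n)\|\le q<|w(\zeta^n)|$ on a small arc around $\zeta^n$. Lemma~\ref{lem:factor} then gives $\zeta\in\rho(T)$ with a uniform resolvent bound on a neighbourhood.
\item \emph{Away from $\mu_N$.} For $\lambda$ with $|\lambda|>1$ and $\dist(\lambda,E)\ge\delta_1$, either $|\lambda|$ is large and the Neumann series applies, or we are in a compact region near $\T\setminus E$. There we use the uniform escape argument of Lemma~\ref{lem:uniformescape} with $|w|$ in place of $|\cdot-1|$: the function $\sup_n|w(\eta^n)|$ is lower semicontinuous and exceeds $q$ on the compact set $\{\dist(\eta,E)\ge\varepsilon\}$, so finitely many exponents $n\le n_0$ suffice. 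The perturbation $|w(\lambda^n)-w(\eta^n)|\to0$ as $r\downarrow1$, uniformly, by continuity of $w$ on $\ol\D$.
\end{enumerate}
Combining Lemma~\ref{lem:divdiff} (applied in its disc version, or via the resolvent identity from the circle) with Lemma~\ref{lem:factor} bounds $\|\Res(\lambda,T)\|$ there, which yields $K_E(T)<\infty$.

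\emph{Main obstacle.} The main obstacle is the finiteness of $E$ in Step~0(c), together with making Lemma~\ref{lem:divdiff} work at points slightly outside the circle in Step~2(c). For the latter, the first thing to try is to apply it to $w(\lambda^n)$ with $|\lambda|>1$ by writing $\lambda^n$ near $\eta^n\in\T$ and using a Neumann perturbation of $(\eta^n-T^n)^{-1}$, exactly as in Lemma~\ref{lem:master}.
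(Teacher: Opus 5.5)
Your proposal is correct and follows the paper's skeleton. Density of non-periodic orbits, and the spacing of $\mu_m$ against an arc on which $|w|>q$, give finiteness of $E$ and $1\in E$. The peripheral inclusion gives (i). $T^N$ is Ritt by Lemma~\ref{lem:divdiff}, Lemma~\ref{lem:allpowers} and Proposition~\ref{thm:general} (that is, Corollary~\ref{cor:defect}), and Proposition~\ref{prop:TN} then makes $T$ a $\Ritt{\mu_N}$ operator.

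For the inclusion $\sigma(T)\cap\T\subset E$, the paper does not use a spectral mapping theorem. It reads $w(\lambda^n)\in\sigma(w(T^n))$ off the commuting factorisation $w(\eta)\Id-w(S)=(\eta\Id-S)h_\eta(S)$ from the proof of Lemma~\ref{lem:divdiff}. Your citation is legitimate (Gelfand theory in a maximal commutative subalgebra containing $T^n$), but the factorisation is self-contained.

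The real difference is at the end, where your route is longer than necessary: Steps~2(b) and (c) are not needed. Once $\sigma(T)\cap\T\subset E$ and $\sigma(T)\subset\ol\D$, the set $\{1\le|\lambda|\le2,\ \dist(\lambda,E)\ge\delta\}$ is a compact subset of $\rho(T)$. Hence $\|\Res(\lambda,T)\|$ is bounded there by continuity, and the Neumann series covers $|\lambda|\ge2$. This is how the paper concludes, so there is no obstacle at this point.

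Your uniform-escape argument can be made to work, but only in the form you mention at the very end. That is, use a Neumann perturbation of $(\eta^n-T^n)^{-1}$ with $\eta=\lambda/|\lambda|$, bounded by the one-point version of Lemma~\ref{lem:divdiff}, as in Lemma~\ref{lem:master}. The expression $w(\lambda^n)$ with $|\lambda|>1$ in Step~2(c) is undefined for a general $w\in A^{1,1}(\D)$. No ``disc version'' of Lemma~\ref{lem:divdiff} reaches points outside $\ol\D$.

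The extra work buys a bound on the region away from $E$ that depends only on $M(T)$, $w$ and $q$, in the spirit of Theorem~\ref{thm:symbol-constructive}. It is not needed for the qualitative conclusion $K_E(T)<\infty$.
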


\begin{proof}
If $\lambda\in\T$ is not a root of unity, its positive powers are dense
in $\T$, so continuity gives
$\sup_{n\ge1}|w(\lambda^n)|=\|w\|_\infty>q$. If $\lambda$ has order $m$,
then
\[
   \sup_{n\ge1}|w(\lambda^n)|=\max_{\eta\in\mu_m}|w(\eta)|.
\]
Uniform continuity of $w$ on $\T$ shows that the right-hand side tends
to $\|w\|_\infty$ as $m\to\infty$. Thus the orders occurring in $E$
are bounded, and $E$ is finite. If $E$ contains an element of order $m$,
then $|w(1)|=|w(\lambda^m)|\le q$, whence $1\in E$.

For the peripheral spectrum inclusion, let $\lambda\in\sigma(T)\cap\T$
and fix $n\ge1$. The spectral mapping theorem gives
$\eta:=\lambda^n\in\sigma(T^n)\cap\T$. The divided-difference identity
from Lemma~\ref{lem:divdiff}, with $S=T^n$, is
\[
   w(\eta)\Id-w(S)=(\eta\Id-S)h_\eta(S).
\]
The series defining $h_\eta(S)$ converges in operator norm because
$w\in A^{1,1}(\D)$ and $S$ is power bounded. The two factors on the
right commute. If their product were invertible, then
$\eta\Id-S$ would also be invertible, a contradiction. Consequently
\[
   w(\lambda^n)\in\sigma(w(T^n)),\qquad
   |w(\lambda^n)|\le\|w(T^n)\|\le q.
\]
Since $n$ was arbitrary, $\lambda\in E$. If $E=\emptyset$, compactness
of $\sigma(T)\subset\ol\D$ gives $r(T)<1$.

Suppose that $E\ne\emptyset$, and put $S=T^N$. Then $S$ is
power bounded and 
\[
   \sigma(S)\cap\T
      =\{\lambda^N:\lambda\in\sigma(T)\cap\T\}
      \subset\{1\},
   \qquad
   \sup_{n\ge1}\|w(S^n)\|\le q.
\]
Since $|w(1)|\le q<\|w\|_\infty$, there is a closed arc
$\arc\subset\T\setminus\{1\}$ of positive length such that
$m_\arc(w)>q$. By Lemma~\ref{lem:allpowers}, $(\N,\arc)$ has the
sampling property. Corollary~\ref{cor:defect} shows that $S$
is Ritt, and Proposition~\ref{prop:TN} shows that $T$ is
$\Ritt{\mu_N}$.

To remove the points of $\mu_N\setminus E$, choose $0<\delta<1$
smaller than half the distance between distinct points of $\mu_N$
when $N>1$; for $N=1$ choose any $\delta\in(0,1)$. If
$|\lambda|>1$ and $\dist(\lambda,E)<\delta$, a nearest point of
$\mu_N$ belongs to $E$, and hence
\[
   \dist(\lambda,E)\|\Res(\lambda,T)\|
      =\dist(\lambda,\mu_N)\|\Res(\lambda,T)\|
      \le K_N(T).
\]
The compact set
\[
   \{\lambda:1\le|\lambda|\le2,
                   \ \dist(\lambda,E)\ge\delta\}
\]
is disjoint from $\sigma(T)$, so the resolvent is bounded there.
For $|\lambda|\ge2$, using the power-boundedness of $T$ the Neumann series gives
\[
   \dist(\lambda,E)\|\Res(\lambda,T)\|
      \le\frac{(|\lambda|+1)M(T)}{|\lambda|-1}
      \le3M(T).
\]
Thus $K_E(T)<\infty$.
\end{proof}

The uniform estimate can be expressed through finitely many scalar
escape inequalities. We use angular distance on $\T$, taking values
in $[0,\pi]$, and write
\[
   L_w:=\sum_{k\ge1}k|c_k|
   \quad\text{when}\quad w(z)=\sum_{k\ge0}c_kz^k.
\]

\begin{theorem}[a finite scalar bound]\label{thm:symbol-constructive}
Under the hypotheses of Theorem~\ref{thm:generalsymbolstructure}, suppose
that $E=E(w,q)\ne\emptyset$, and put $M=M(T)$ and
$D=\max_{\xi\in E}\operatorname{ord}(\xi)$. Choose a closed arc
$\Gamma\subset\T$ of angular length $\ell>0$ and a number $a>q$ such that
\[
   \min_{\eta\in\Gamma}|w(\eta)|\ge a.
\]
Let $U$ be the union of the open angular neighbourhoods of radius
$\ell/(2D)$ about the points of $E$, and set $F_0=\T\setminus U$.
If $F_0\ne\emptyset$, there exists $N\ge1$ such that
\[
   g:=\min_{\eta\in F_0}\max_{1\le n\le N}|w(\eta^n)|-q>0.
\]
For any such $N$ and $g$,
\begin{equation}\label{eq:symbol-constructive}
   K_E(T)\le M+(1+M)ML_w
      \max\left\{\frac{2\pi}{a-q},\frac{2N}{g}\right\}.
\end{equation}
When $F_0=\emptyset$, the second term in the maximum is omitted.
\end{theorem}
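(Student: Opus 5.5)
The plan is to prove a boundary estimate of the form
\[
 \dist(\xi,E)\,\|\Res(\xi,T)\|\le ML_w\max\Big\{\frac{2\pi}{a-q},\frac{2N}{g}\Big\}=:B\qquad(\xi\in\T\setminus E),
\]
and then pass to the exterior of the disc by the radial argument from the first half of the proof of Proposition~\ref{prop:boundary}.

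\emph{Step 1: the key identity.} Instead of combining Lemma~\ref{lem:divdiff} with Lemma~\ref{lem:factor}, which would cost a factor $M^2$, apply the divided difference directly to the symbol $z\mapsto w(z^n)$. For $\xi\in\T$ and $n\ge1$,
\[
 w(\xi^n)\Id-w(T^n)=(\xi-T)H_{n,\xi},\qquad H_{n,\xi}=\sum_{k\ge1}c_k\sum_{r=0}^{nk-1}\xi^{nk-1-r}T^r,
\]
and the two factors commute. Since $\|H_{n,\xi}\|\le M\sum_k nk|c_k|=nML_w$, the following holds whenever $|w(\xi^n)|\ge q+\gamma$ with $\gamma>0$: the Neumann series gives $\|(w(\xi^n)-w(T^n))^{-1}\|\le1/\gamma$, so $\xi\in\rho(T)$ and
\[
 \|\Res(\xi,T)\|\le\frac{nML_w}{\gamma}.
\]

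\emph{Step 2: $\xi\in F_0$.} By the choice of $N$ and $g$ there is $n\le N$ with $|w(\xi^n)|\ge q+g$. Step 1 then gives $\|\Res(\xi,T)\|\le NML_w/g$, and $\dist(\xi,E)\le2$, so $\dist(\xi,E)\|\Res(\xi,T)\|\le 2NML_w/g$.

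\emph{Step 3: $\xi\in U\setminus E$.} Write $\xi=e\,e^{i\varphi}$ with $e\in E$ of order $d\le D$ and $0<|\varphi|<\ell/(2D)$. Sample only the multiples $n=dj$; since $e^d=1$, we have $\xi^{dj}=e^{ijd\varphi}$. The angular step $d|\varphi|<\ell/2$ is smaller than the length of $\Gamma$. Moreover $\Gamma\subset\T\setminus\{1\}$, because $|w(1)|\le q<a$. So, exactly as in Lemma~\ref{lem:allpowers}, take the first $j$ at which $jd\varphi$ reaches the near endpoint of $\Gamma$ (going in the direction of $\varphi$). That $j$ gives $\xi^{dj}\in\Gamma$ with $dj|\varphi|<2\pi$. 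Then $|w(\xi^{dj})|\ge a$, and Step 1 with $\gamma=a-q$ gives $\|\Res(\xi,T)\|\le dj\,ML_w/(a-q)$. Since $\dist(\xi,E)\le|e^{i\varphi}-1|\le|\varphi|$, we get $\dist(\xi,E)\|\Res(\xi,T)\|\le 2\pi ML_w/(a-q)$. When $F_0=\emptyset$, this case covers all of $\T\setminus E$, so only this term appears in the maximum.

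\emph{Step 4: the exterior.} By Theorem~\ref{thm:generalsymbolstructure}, $\sigma(T)\cap\T\subset E$. Let $z=r\xi$ with $r>1$. If $\xi\in E$, the Neumann series gives $\dist(z,E)\|\Res(z,T)\|\le(r-1)M/(r-1)=M$. If $\xi\notin E$, the resolvent identity gives $\|\Res(z,T)\|\le(1+M)\|\Res(\xi,T)\|$, as in Proposition~\ref{prop:boundary}. Combining this with $\dist(z,E)\le(r-1)+\dist(\xi,E)$ and $(r-1)\|\Res(z,T)\|\le M$ yields
\[
 \dist(z,E)\|\Res(z,T)\|\le M+(1+M)B,
\]
which is \eqref{eq:symbol-constructive}.

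The existence of $N$ with $g>0$ when $F_0\ne\emptyset$ follows from the compactness argument of Lemma~\ref{lem:uniformescape}. Indeed, $F_0\cap E=\emptyset$, so $\sup_n|w(\eta^n)|>q$ at each $\eta\in F_0$; this supremum is lower semicontinuous, and an open cover of $F_0$ by sets where some $|w(\eta^{n})|$ exceeds $q$ gives a finite $N$. The step I expect to need most care is Step 3: checking that the hitting argument works for an arbitrary position of $\Gamma$ and for both signs of $\varphi$, with the bound $dj|\varphi|<2\pi$.
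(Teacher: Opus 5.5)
Your proposal is correct and matches the paper's proof step for step. The steps are the divided difference applied directly to $z\mapsto w(z^n)$, which avoids the extra factor $M$ and gives $\|\Res(\eta,T)\|\le MnL_w/(|w(\eta^n)|-q)$, and the compactness choice of $N$ on $F_0$. The remaining steps are the hitting argument along multiples of $\operatorname{ord}(\xi)$ on $U\setminus E$ (the paper takes $k=\lceil\alpha/u\rceil$, and uses the reflected arc for negative angles, exactly as in your two cases), and the radial resolvent-identity passage to the exterior, which yields $M+(1+M)B$.
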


\begin{proof}
Since $1\in E$, we have $|w(1)|\le q$, so $1\notin\Gamma$ and
$0<\ell<2\pi$. In particular the angular neighbourhoods used above
have radius less than $\pi$. Every point $\eta\in F_0$ lies outside
$E$, and therefore $|w(\eta^n)|>q$ for some $n\ge1$. The corresponding
open sets cover $F_0$. Compactness gives a finite subcover and hence
an integer $N$ for which the continuous function
$\max_{1\le n\le N}|w(\eta^n)|$ is everywhere greater than $q$ on
$F_0$. Its minimum exceeds $q$, proving the existence of $g$.

For any $n\ge1$, apply the divided-difference argument directly to
$v_n(z)=w(z^n)$. Its weighted coefficient sum is $nL_w$, and
$v_n(T)=w(T^n)$. Consequently, whenever $\eta\in\T$ and
$|w(\eta^n)|>q$,
\begin{equation}\label{eq:symbol-direct-divdiff}
   \|\Res(\eta,T)\|
      \le\frac{MnL_w}{|w(\eta^n)|-q}.
\end{equation}
For $\eta\in F_0$, choose a witnessing $n\le N$. Since
$\dist(\eta,E)\le2$, this gives
\[
   \dist(\eta,E)\|\Res(\eta,T)\|
      \le\frac{2MNL_w}{g}.
\]

Now let $\eta\in U\setminus E$. Choose $\xi\in E$ and write
$\eta=\xi e^{it}$ with $0<|t|<\ell/(2D)$. Put
$d=\operatorname{ord}(\xi)$ and $u=d|t|$, so $0<u<\ell/2$.
If $t>0$, write $\Gamma=\{e^{is}:\alpha\le s\le\beta\}$ with
$0<\alpha<\beta<2\pi$ and $\beta-\alpha=\ell$. The integer
$k=\lceil\alpha/u\rceil$ satisfies
\[
   \alpha\le ku<\alpha+u<\beta,
   \qquad ku<2\pi.
\]
If $t<0$, use the reflected arc $\ol\Gamma$ in the same construction.
In both cases, with $n=dk$ we obtain
\[
   \eta^n=e^{int}\in\Gamma,
   \qquad n\dist(\eta,E)\le n|\eta-\xi|
      \le n|t|=ku<2\pi.
\]
Using \eqref{eq:symbol-direct-divdiff} yields
\[
   \dist(\eta,E)\|\Res(\eta,T)\|
      \le\frac{2\pi ML_w}{a-q}.
\]
Thus the supremum of the boundary resolvent expression on
$\T\setminus E$ is at most
\[
   B:=ML_w\max\left\{\frac{2\pi}{a-q},\frac{2N}{g}\right\},
\]
with the stated convention when $F_0=\emptyset$.

To pass to the exterior, let $\lambda=r\eta$, where $r>1$ and
$\eta\in\T$. If $\eta\notin E$, the resolvent identity and
$\|\Res(\lambda,T)\|\le M/(r-1)$ give
\[
   \|\Res(\lambda,T)\|
      \le\bigl(1+(r-1)\|\Res(\lambda,T)\|\bigr)
             \|\Res(\eta,T)\|
      \le(1+M)\|\Res(\eta,T)\|.
\]
Since $\dist(\lambda,E)\le(r-1)+\dist(\eta,E)$, it follows that
\[
   \dist(\lambda,E)\|\Res(\lambda,T)\|\le M+(1+M)B.
\]
If $\eta\in E$, then $\dist(\lambda,E)=r-1$ and the Neumann estimate
gives the smaller bound $M$. This proves \eqref{eq:symbol-constructive}.
\end{proof}

\begin{remark}\label{rem:symbol-certificate}
The scalar data in Theorem~\ref{thm:symbol-constructive} involve only
finitely many root orders and powers. Indeed, any root of order
$m>2\pi/\ell$ has a power in $\Gamma$, and hence cannot belong to
$E(w,q)$. Thus $E(w,q)$ can be determined by testing the finitely many
orders $m\le\lfloor2\pi/\ell\rfloor$. Having determined $E$, the
search over $N$ in the theorem terminates. For a polynomial symbol,
the remaining minima are minima of explicitly given continuous
functions of one angular variable.
\end{remark}

For a root of unity $\xi$, put
\[
   b_w(\xi):=\max_{1\le n\le\operatorname{ord}(\xi)}|w(\xi^n)|.
\]
We call $Q\in(0,\|w\|_\infty)$ a \emph{peripheral threshold} if
$b_w(\xi)=Q$ for some root of unity $\xi$. Such thresholds are locally
finite below $\|w\|_\infty$, because $E(w,Q')$ is finite for every
$Q'<\|w\|_\infty$.

\begin{theorem}[growth at a peripheral threshold]\label{thm:symbol-threshold}
Let $w\in A^{1,1}(\D)$, let $Q\in(0,\|w\|_\infty)$ be a peripheral
threshold, and fix $M_0\ge1$. Set
\[
   E_+=E(w,Q),\qquad
   E_-:=\{\xi\in E_+:b_w(\xi)<Q\},\qquad
   A=M_0 L_w \max_{\xi\in E_+}\operatorname{ord}(\xi).
\]
Let $C_+=C(w,Q,M_0)$ be the right-hand side of
\eqref{eq:symbol-constructive} obtained at level $Q$, with $M$ replaced
by $M_0$ and any admissible scalar data. There exists $q_0<Q$ such that
$E(w,q)=E_-$ for $q_0\le q<Q$. For every power bounded $T$ satisfying
\[
   M(T)\le M_0,
   \qquad \sup_{n\ge1}\|w(T^n)\|\le q,
   \qquad q_0\le q<Q,
\]
the following estimates hold, after increasing $q_0$ if necessary:
\begin{enumerate}[label=\textup{(\roman*)}]
\item If $E_-\ne\emptyset$, then
\begin{equation}\label{eq:symbol-threshold-bound}
   K_{E_-}(T)\le\frac{6AC_+}{Q-q}.
\end{equation}
\item If $E_-=\emptyset$, then
\begin{equation}\label{eq:symbol-empty-threshold}
   \sup_{|\lambda|\ge1}\|\Res(\lambda,T)\|
       \le\frac{2AC_+}{Q-q},
   \qquad r(T)\le1-\frac{Q-q}{2AC_+}.
\end{equation}
\end{enumerate}
\end{theorem}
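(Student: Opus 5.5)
The plan is to mimic the proof of Theorem~\ref{thm:higher-thresholds}, with the symbol $w$ replacing $1-z$ and Theorem~\ref{thm:symbol-constructive} replacing Theorem~\ref{thm:oddN}.

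\emph{The set $E(w,q)$ for $q$ near $Q$.} Since $E(w,Q)=E_+$ is finite, $E(w,q)\subset E_+$ for $q\le Q$, and $\xi\in E(w,q)$ if and only if $b_w(\xi)\le q$. Each $\xi\in E_+\setminus E_-$ has $b_w(\xi)=Q>q$, so it drops out. Each $\xi\in E_-$ has $b_w(\xi)<Q$. Hence $q_0:=\max\{b_w(\xi):\xi\in E_-\}$ works, with the convention $q_0<Q$ arbitrary if $E_-=\emptyset$. We may increase $q_0$ so that $Q-q_0\le1$ and, in (i), so that the final constant comparisons below hold.

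\emph{Outer estimate.} Since $q<Q$, the hypotheses of Theorem~\ref{thm:symbol-constructive} hold at level $Q$. It gives
\[
K_{E_+}(T)\le C_+,
\]
because $M(T)\le M_0$ and the right-hand side of \eqref{eq:symbol-constructive} is nondecreasing in $M$.

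\emph{Local estimate at the new points.} For $\xi\in E_+\setminus E_-$ with order $d\le D:=\max_{E_+}\operatorname{ord}$, there is $n\le d$ with $|w(\xi^n)|=Q$. Estimate \eqref{eq:symbol-direct-divdiff} then gives
\[
\|\Res(\xi,T)\|\le\frac{M_0 nL_w}{Q-q}\le\frac{A}{Q-q}.
\]
Resolvent perturbation on the disc $|\lambda-\xi|\le h:=(Q-q)/(2A)$ yields $\|\Res(\lambda,T)\|\le 2A/(Q-q)$ there. This also covers $|\lambda|\le1$ near $\xi$.

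\emph{Case (i), $E_-\neq\emptyset$.} Argue exactly as in Theorem~\ref{thm:higher-thresholds}. On the $h$-balls, $\dist(\lambda,E_-)\le3$, so
\[
\dist(\lambda,E_-)\,\|\Res(\lambda,T)\|\le\frac{6A}{Q-q}\le\frac{6AC_+}{Q-q},
\]
using $C_+\ge1$ (let $|\lambda|\to\infty$). Off the balls, either the nearest point of $E_+$ lies in $E_-$, in which case the two distances agree and the bound is $C_+$. Or else $\dist(\lambda,E_+)\ge h$, in which case
\[
\dist(\lambda,E_-)\le(1+2/h)\dist(\lambda,E_+)\le\frac{6A}{Q-q}\dist(\lambda,E_+),
\]
using $A\ge1$ and $Q-q\le1$.

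\emph{Case (ii), $E_-=\emptyset$.} Then $E_+$ consists only of new points. For $|\lambda|\ge1$ inside some $h$-ball, the bound $2A/(Q-q)$ is already in hand. Outside the balls, $\dist(\lambda,E_+)\ge h$, so
\[
\|\Res(\lambda,T)\|\le\frac{C_+}{h}=\frac{2AC_+}{Q-q}.
\]
For $|\lambda|=1$ this requires $K_{E_+}$ on the closed circle. That follows by radial limits, since $\T\setminus E_+\subset\rho(T)$, which is already known here because the $h$-balls cover $E_+$.

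\emph{Spectral radius.} From $\|\Res(\lambda,T)\|\le R:=2AC_+/(Q-q)$ on $|\lambda|=1$, the standard fact $\dist(\lambda,\sigma(T))\ge1/\|\Res(\lambda,T)\|$ gives that $\sigma(T)$ avoids the $1/R$-neighbourhood of $\T$. Hence $r(T)\le1-1/R$.

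The main obstacle is bookkeeping rather than ideas. One must check that $C_+$ may legitimately be used with $M_0$ in place of $M(T)$, and that the constant comparisons ($C_+\ge1$, $A\ge1$, $Q-q\le1$) hold; the "increase $q_0$" clause handles the last of these. The handling of $|\lambda|=1$ in (ii) also needs care.
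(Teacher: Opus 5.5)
Your argument follows the paper's proof step for step. It uses the same choice of $q_0$, and the same bound $K_{E_+}(T)\le C_+$ from Theorem~\ref{thm:symbol-constructive} at level $Q$, which is monotone in $M$. It likewise uses \eqref{eq:symbol-direct-divdiff} at the removed roots, followed by a Neumann perturbation on discs of radius $h=(Q-q)/(2A)$, and then the same nearest-point case split. In case (ii) it extends to $\T$ by continuity and uses the disc of radius $1/\|\Res(\eta,T)\|$ about each $\eta\in\T$.

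The one incorrect step is the assertion $A\ge1$. The quantity $L_w=\sum_{k\ge1}k|c_k|$ can be arbitrarily small. For example, $w(z)=\epsilon(1-z)$ has the peripheral threshold $Q=\epsilon\sqrt3$, with $E_+=\mu_3$ and $A=3M_0\epsilon<1$ for small $\epsilon$. Increasing $q_0$ cannot change $A$, so your remark that the ``increase $q_0$'' clause handles the constant comparisons does not cover this.

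What case (i) actually needs is $Q-q\le2A$. This gives $h\le1$, so that $|\lambda|\le2$ and $\dist(\lambda,E_-)\le3$ on the discs. It also gives $1+4A/(Q-q)\le6A/(Q-q)$, and $C_+\le6AC_+/(Q-q)$ in the subcase where the nearest point lies in $E_-$.

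The paper secures $Q-q\le2A$ by increasing $q_0$ so that $Q-q_0\le2A$. This is possible because a threshold strictly below $\|w\|_\infty$ forces $w$ to be nonconstant, hence $A>0$.

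In case (i) the inequality is in fact automatic. Since $1\in E_-$ whenever $E_-\ne\emptyset$ (Theorem~\ref{thm:generalsymbolstructure}), one has $q\ge q_0\ge|w(1)|$. For a removed root $\xi$ there is $n$ with $Q=|w(\xi^n)|\le|w(1)|+2\sum_{k\ge1}|c_k|\le|w(1)|+2L_w$. Hence $Q-q\le2L_w\le2A$.

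With ``$Q-q\le2A$'' replacing ``$A\ge1$ and $Q-q\le1$'', your proof is complete. Case (ii) never used $A\ge1$, so it stands as written.
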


\begin{proof}
The finiteness of $E_+$ implies that the numbers $b_w(\xi)<Q$ for
$\xi\in E_+$ have a common upper bound strictly below $Q$. Choosing
$q_0\in[0,Q)$ above that bound gives $E(w,q)=E_-$ for
$q_0\le q<Q$; when $E_-$ is empty there is no lower bound to impose.
Since $Q$ is a threshold strictly below $\|w\|_\infty$, the symbol
$w$ is nonconstant, so $L_w>0$ and $A>0$. Increase $q_0$ so that
$Q-q_0\le2A$.

Put $\varepsilon=Q-q$ and $\delta=\varepsilon/(2A)\le1$.
Theorem~\ref{thm:symbol-constructive}, applied at level $Q$, gives
$K_{E_+}(T)\le C_+$; the displayed formula for $C_+$ also shows
$C_+\ge M_0\ge1$. For every $\xi\in E_+\setminus E_-$, some
$n\le\operatorname{ord}(\xi)$ satisfies $|w(\xi^n)|=Q$.
Equation~\eqref{eq:symbol-direct-divdiff} gives
\[
   \|\Res(\xi,T)\|\le\frac{M(T)L_wn}{\varepsilon}
      \le\frac A\varepsilon.
\]
The resolvent Neumann series therefore implies
\begin{equation}\label{eq:symbol-root-removal}
   \|\Res(\lambda,T)\|\le\frac{2A}{\varepsilon}
   \quad\text{if}\quad
   |\lambda-\xi|\le\delta,
   \quad \xi\in E_+\setminus E_-.
\end{equation}

Suppose first that $E_-\ne\emptyset$. Inside any of these discs,
$|\lambda|\le2$ and hence $\dist(\lambda,E_-)\le3$, so
\eqref{eq:symbol-root-removal} gives
\[
   \dist(\lambda,E_-)\|\Res(\lambda,T)\|
      \le\frac{6A}{\varepsilon}.
\]
Outside all the discs, let $|\lambda|>1$ and choose a nearest point
of $E_+$ to $\lambda$. If this point belongs to $E_-$, the required
resolvent expression is at most $C_+$. Otherwise
$d:=\dist(\lambda,E_+)\ge\delta$ and
$\dist(\lambda,E_-)\le d+2$, so the expression is at most
\[
   C_+\left(1+\frac2d\right)
      \le C_+\left(1+\frac{4A}{\varepsilon}\right)
      \le\frac{6AC_+}{\varepsilon}.
\]
This proves \eqref{eq:symbol-threshold-bound}.

If $E_-=\emptyset$, every point of $E_+$ has been removed. Outside
the discs we have $\dist(\lambda,E_+)\ge\delta$, giving
$\|\Res(\lambda,T)\|\le2AC_+/\varepsilon$ for $|\lambda|>1$.
Inside the discs use \eqref{eq:symbol-root-removal}. The structure
theorem gives $r(T)<1$, so continuity extends the bound to $|\lambda|=1$.
For each $\eta\in\T$, the disc centred at $\eta$ of radius
$1/\|\Res(\eta,T)\|$ is contained in the resolvent set. Consequently
every spectral point $z$ satisfies
$1-|z|\ge\varepsilon/(2AC_+)$, by choosing $\eta=z/|z|$ when $z\ne0$.
The same inequality for $z=0$ follows from
$\varepsilon/(2AC_+)\le1$. This proves
\eqref{eq:symbol-empty-threshold}.
\end{proof}

\begin{remark}[the power-boundedness is essential]\label{rem:generalsymbolnecessity}
Let $V\ne0$ satisfy $V^2=0$, put $T=\Id+V$, and take
$w(z)=(1-z)^2$. Then $T^n=\Id+nV$ and $w(T^n)=0$ for every $n\ge1$,
so the norm-gap condition holds with $q=0<\|w\|_\infty=4$ and
$E(w,0)=\{1\}$. Nevertheless $T$ is not power bounded, and therefore
is not Ritt. Thus the power-boundedness hypothesis in
Theorem~\ref{thm:generalsymbolstructure} cannot be omitted.

The peripheral set for a general symbol may contain roots of even
order. For instance, $T=-\Id$ and $w(z)=1-z^2$ satisfy $w(T^n)=0$
for every $n$; here $E(w,0)=\{1,-1\}$ and $T^2=\Id$ is Ritt.
\end{remark}

\section{Sparse sequences of powers}\label{sec:sparse}

Throughout this section $\bn=(n_k)_{k\ge0}$ is a strictly increasing sequence of positive integers with $n_0=1$; $c(\bn)$, $c_\infty(\bn)$, the arcs $\arc_c$ and the Jamison constant $J(\bn)$ are as in the introduction. The sequence $\bn$ is a \emph{Jamison sequence} if $J(\bn)>0$; equivalently, every bounded operator $T$ on a separable Banach space with $\sup_k\|T^{n_k}\|<\infty$ has countable unimodular point spectrum \cite{BadeaGrivaux07,RansfordRoginskaya,EisnerGrivaux}. Note that $m_{\arc_c}(1-z)=2\sin\frac{\pi}{c+1}$, the value of $|1-z|$ at the endpoints of $\arc_c$.

\subsection{Jamison constants}
The survey \cite{BGSurvey} and the paper \cite{BDG} contain extensive information on Jamison sequences and Jamison constants. We shall need the following result.

\begin{proposition}\label{prop:jamison}
\begin{enumerate}[label=\textup{(\roman*)}]
\item If $c(\bn)\le c<\infty$, then $J(\bn)\ge2\sin\frac{\pi}{c+1}$.
\item For $n_k=b^k$ with an integer $b\ge2$, $J(\bn)=2\sin\frac{\pi}{b+1}$; for $\bn=\N$, $J(\bn)=\sqrt3$.
\end{enumerate}
\end{proposition}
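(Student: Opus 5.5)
For (i) I would use the angular argument behind Lemma~\ref{lem:subseq}. Let $\lambda=e^{i\theta}\in\T\setminus\{1\}$ with $0<\theta<2\pi$. By conjugation symmetry ($|\bar\lambda^{n}-1|=|\lambda^n-1|$) it suffices to treat $0<\theta\le\pi$. Put $\alpha=2\pi/(c+1)$ and $\gamma=2\pi c/(c+1)$, so that $\gamma/\alpha=c$ and $\arc_c=\{e^{it}:\alpha\le t\le\gamma\}$.

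If $\theta\ge\alpha$, then $n_0=1$ already gives $e^{i\theta}\in\arc_c$, because $\alpha\le\theta\le\pi\le\gamma$.

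If $\theta<\alpha$, let $k$ be the largest index with $n_k\theta<\alpha$. It exists because $n_0\theta<\alpha$ and $n_k\to\infty$. Then
\[
\alpha\le n_{k+1}\theta\le c\,n_k\theta<c\alpha=\gamma,
\]
so $e^{in_{k+1}\theta}\in\arc_c$. Since $|1-\eta|\ge 2\sin\frac{\pi}{c+1}$ on $\arc_c$ (the minimum $m_{\arc_c}(1-z)$ is attained at the endpoints), we get $\sup_k|\lambda^{n_k}-1|\ge2\sin\frac{\pi}{c+1}$ for every $\lambda\ne1$. This is the lower bound in (i). Here $c(\bn)\le c$ is used globally, which is why no $k_0$ is needed, unlike in Lemma~\ref{lem:subseq}.

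For (ii), the lower bound for $n_k=b^k$ follows from (i) with $c=b$. For the matching upper bound I would exhibit $\lambda=e^{2\pi i/(b+1)}$. Since $b\equiv-1\pmod{b+1}$, we have $\lambda^{b^k}=\lambda^{(-1)^k}\in\{\lambda,\bar\lambda\}$. Hence $|\lambda^{b^k}-1|=2\sin\frac{\pi}{b+1}$ for all $k$, and $J(\bn)\le2\sin\frac{\pi}{b+1}$.

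For $\bn=\N$, Lemma~\ref{lem:orbit} gives $\sup_n|\lambda^n-1|=\od(\lambda)\ge\sqrt3$ with equality at $e^{2\pi i/3}$, so $J(\N)=\sqrt3$. This also agrees with the case $b=2$, since $2\sin(\pi/3)=\sqrt3$.

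The only delicate point is the bookkeeping in the case split of (i): the arc must be symmetric so that both signs of $\theta$ are covered, and the step $n_{k+1}\le cn_k$ must stay inside $[\alpha,\gamma]$. Everything else is a direct computation.
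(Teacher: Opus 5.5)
Your proof is correct and complete. The paper gives no argument here; it only cites \cite[Proposition~1.4]{BDG}.

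You instead give a self-contained proof built from the paper's own ingredients:
\begin{enumerate}
\item For (i), the argument taking the first index at which the rescaled angle $n_k\theta$ reaches $\alpha=2\pi/(c+1)$. This is the argument of Lemma~\ref{lem:subseq} and of the proof of Proposition~\ref{samp:quotients}. In your version the global bound $n_{k+1}\le cn_k$ and the normalisation $n_0=1$ replace the index $k_0$ used there.
\item For the upper bound when $n_k=b^k$, the congruence $b^k\equiv(-1)^k\pmod{b+1}$, already used in Theorem~\ref{thm:sharp}(iii).
\item For $\bn=\N$, Lemma~\ref{lem:orbit}.
\end{enumerate}

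What this buys is a proof internal to the paper. It also shows exactly where $n_0=1$ enters: in the case $\theta\ge\alpha$, and in the existence of a last index with $n_k\theta<\alpha$. This matches Remark~\ref{rem:n0}(a).

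One small correction to your closing comment. After you reduce to $0<\theta\le\pi$ by conjugation, the symmetry of $\arc_c$ plays no role. What you actually use is $\gamma=c\alpha$ together with $\alpha\le\pi\le\gamma$. The latter holds because $c\ge c(\bn)\ge n_1/n_0\ge2$.
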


\begin{proof}
See Proposition~1.4 in \cite{BDG}.
\end{proof}

\subsection{Sampling the discrete Kato criterion} The following result is an extension of \cite[Theorem~5.5]{BGT} and \cite[Theorem~5.8]{BGT}.

\begin{theorem}\label{thm:arcsampling}
Let $c_\infty(\bn)<\infty$, let $c'\ge c_\infty(\bn)$, and let $T\in\B(X)$ be power bounded with $\sigma(T)\cap\T\subset\{1\}$. Suppose that one of the following holds:
\begin{enumerate}[label=\textup{(\roman*)}]
\item $\displaystyle\sup_{k\ge0}\ \sup_{\eta\in\arc_{c'}}\|(\eta-T^{n_k})^{-1}\|<\infty$;
\item $\displaystyle\sup_{k\ge0}\|w(T^{n_k})\|<m_{\arc_{c'}}(w)$ for some $w\in A^{1,1}(\D)$.
\end{enumerate}
Then $T$ is a Ritt operator. Here $\arc_1=\{-1\}$. The arc $\arc_{c'}$ may be replaced by any closed arc $\{e^{it}:\alpha\le t\le\gamma\}\subset\T\setminus\{1\}$ satisfying \eqref{eq:arccond} with $c'$.
\end{theorem}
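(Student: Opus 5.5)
The plan is to reduce both hypotheses to the general criterion, Proposition~\ref{thm:general}, with $\mathcal N=\{n_k\}$. The sampling property will come from Lemma~\ref{lem:subseq}. One small issue must be dealt with first. Lemma~\ref{lem:subseq} requires a \emph{strict} inequality $c''>c_\infty(\bn)$, whereas the statement only assumes $c'\ge c_\infty(\bn)$. Moreover $\arc_1=\{-1\}$ is a degenerate arc.

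\emph{Step 1: reduction of (ii) to (i).} Put $S=T^{n_k}$. Then $M(S)\le M(T)$, and Lemma~\ref{lem:divdiff} applies with the uniform gap $m_{\arc_{c'}}(w)-\sup_k\|w(T^{n_k})\|>0$. It gives $\arc_{c'}\subset\rho(T^{n_k})$ and
\[
 \sup_k\sup_{\eta\in\arc_{c'}}\|(\eta-T^{n_k})^{-1}\|\le\frac{M(T)\|w\|_{A^{1,1}}}{m_{\arc_{c'}}(w)-\sup_k\|w(T^{n_k})\|}<\infty .
\]
This is exactly hypothesis (i). The argument is verbatim the same for an admissible arc other than $\arc_{c'}$.

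\emph{Step 2: enlarging the arc.} Let $K$ be the supremum in (i). For $\eta\in\arc_{c'}$, $\eta'\in\T$ with $|\eta'-\eta|\le1/(2K)$, and every $k$, the Neumann series applied to
\[
 \eta'-T^{n_k}=(\eta-T^{n_k})\bigl(\Id+(\eta'-\eta)(\eta-T^{n_k})^{-1}\bigr)
\]
gives $\eta'\in\rho(T^{n_k})$ and $\|(\eta'-T^{n_k})^{-1}\|\le2K$. Hence the resolvent bound $2K$ holds uniformly in $k$ on the closed arc $\{e^{it}:\alpha-\tau\le t\le\gamma+\tau\}$. Here $\arc_{c'}=\{e^{it}:\alpha\le t\le\gamma\}$ (with $\alpha=\gamma=\pi$ when $c'=1$), and $\tau>0$ is small so that the arc stays in $\T\setminus\{1\}$.

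The endpoints of $\arc_{c''}$ vary continuously in $c''$ and satisfy $2\pi/(c''+1)<\alpha$ for $c''>c'$. So for some $c''>c'\ge c_\infty(\bn)$ the arc $\arc_{c''}$ lies inside this widened arc. For a general admissible arc $\{\alpha\le t\le\gamma\}$ satisfying \eqref{eq:arccond} with $c'$, the widened arc $\{\alpha-\tau\le t\le\gamma+\tau\}$ satisfies both inequalities in \eqref{eq:arccond} strictly with some $c''>c'$. This is because lowering $\alpha$ and raising $\gamma$ strictly increases both $\gamma/\alpha$ and $(2\pi-\alpha)/(2\pi-\gamma)$.

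\emph{Step 3: conclusion.} Since $c''>c_\infty(\bn)=\limsup_k n_{k+1}/n_k$, there is $k_0$ with $n_{k+1}\le c''n_k$ for $k\ge k_0$. Lemma~\ref{lem:subseq} then shows that $(\{n_k\},\arc'')$ has the sampling property, where $\arc''$ is the enlarged arc. Its resolvents are bounded by $2K$ uniformly in $k$, so Proposition~\ref{thm:general}, which combines Lemma~\ref{lem:master} with Lemma~\ref{lem:reduction}, shows that $T$ is Ritt.

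The only delicate point is Step 2, namely the passage from the non-strict $c'\ge c_\infty$ to a strict inequality. I would handle it exactly as above, by the Neumann-series thickening, with care in the degenerate case $\arc_1=\{-1\}$. Everything else is an application of lemmas already proved.
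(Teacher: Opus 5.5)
Your proposal is correct and follows the paper's proof essentially step for step. You reduce (ii) to (i) via Lemma~\ref{lem:divdiff}, then thicken the arc by a Neumann-series argument so that \eqref{eq:arccond} holds with some $c''>c'\ge c_\infty(\bn)$; this handles both the strict inequality required in Lemma~\ref{lem:subseq} and the degenerate case $\arc_1=\{-1\}$. You then conclude with Proposition~\ref{thm:general}. The only cosmetic difference is that, for $\arc_{c'}$, you fit $\arc_{c''}$ inside the thickened arc instead of checking the two ratios of the thickened arc directly, and these are equivalent.
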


\begin{proof}
In case (ii), Lemma~\ref{lem:divdiff} applied to $S=T^{n_k}$ gives (i). Let $K$ be the uniform bound in (i). For $\eta'\in\T$ with $\dist(\eta',\arc_{c'})\le\frac1{2K}$, the Neumann series gives $\eta'\in\rho(T^{n_k})$ and $\|(\eta'-T^{n_k})^{-1}\|\le2K$ for all $k$. Choose $\varepsilon\in(0,\frac1{2K}]$ so small that the closed $\varepsilon$-neighbourhood $\arc'$ of $\arc_{c'}$ in $\T$ does not contain $1$; then $\arc'$ is a closed arc $\{e^{it}:\alpha'\le t\le\gamma'\}$ with $\alpha'<\frac{2\pi}{c'+1}$ and $\gamma'>\frac{2\pi c'}{c'+1}$, so both ratios in \eqref{eq:arccond} computed for $\arc'$ are strictly larger than $c'$. Choose $c''$ with
\[
   c'<c''\le\min\Big\{\frac{\gamma'}{\alpha'},\ \frac{2\pi-\alpha'}{2\pi-\gamma'}\Big\};
\]
then $\arc'$ satisfies \eqref{eq:arccond} with $c''$, and $c''>c'\ge c_\infty(\bn)$. Lemma~\ref{lem:subseq} shows that $(\{n_k\},\arc')$ has the sampling property, and Proposition~\ref{thm:general} applies. The same enlargement increases both ratios of a general arc, and the argument includes $c'=1$, where $\arc_1=\{-1\}$ and $\arc'$ is a small arc around $-1$.
\end{proof}

For the full sequence we have $c_\infty(\N)=1$, and every nondegenerate closed arc $\{e^{it}:\alpha\le t\le\gamma\}\subset\T\setminus\{1\}$ satisfies \eqref{eq:arccond} for some $c'>1$; thus Theorem~\ref{thm:arcsampling} contains the arc-resolvent criterion \cite[Theorem~5.5]{BGT} and the defect criterion \cite[Theorem~5.8]{BGT}.

\begin{corollary}\label{cor:sparse}
Let $T\in\B(X)$ be power bounded.
\begin{enumerate}[label=\textup{(\alph*)}]
\item If $n_{k+1}/n_k\to1$, $\sigma(T)\cap\T\subset\{1\}$, and $\sup_k\|(\zeta-T^{n_k})^{-1}\|<\infty$ for one point $\zeta\in\T\setminus\{1\}$, then $T$ is a Ritt operator.
\item Suppose $n_0=1$. If $c(\bn)\le c<\infty$ and $\sup_k\|\Id-T^{n_k}\|<2\sin\frac{\pi}{c+1}$, then $T$ is a Ritt operator.
\item If $c_\infty(\bn)<\infty$, $\sigma(T)\cap\T\subset\{1\}$ and $\sup_k\|\Id-T^{n_k}\|<2\sin\frac{\pi}{c_\infty(\bn)+1}$, then $T$ is a Ritt operator.
\end{enumerate}
\end{corollary}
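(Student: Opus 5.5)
Each of the three parts reduces to Theorem~\ref{thm:arcsampling}, or directly to Lemma~\ref{lem:subseq} and the sampling lemma, with the symbol $w(z)=1-z$ or a small-arc neighbourhood.

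For (a), let $K_\zeta:=\sup_k\|(\zeta-T^{n_k})^{-1}\|$. As in Corollary~\ref{cor:kato}, the Neumann series gives a uniform bound $2K_\zeta$ on a closed arc $\arc$ of positive length about $\zeta$, contained in $\T\setminus\{1\}$. Write $\arc=\{e^{it}:\alpha\le t\le\gamma\}$ with $\alpha<\gamma$. Then both ratios $\gamma/\alpha$ and $(2\pi-\alpha)/(2\pi-\gamma)$ are strictly greater than $1$, so \eqref{eq:arccond} holds for some $c'>1=c_\infty(\bn)$. Theorem~\ref{thm:arcsampling}(i), in its general-arc form, then gives the Ritt property.

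For (c), apply Theorem~\ref{thm:arcsampling}(ii) with $c'=c_\infty(\bn)$ and $w(z)=1-z$. Here $\|w\|_{A^{1,1}}=3$ and $m_{\arc_{c'}}(1-z)=2\sin\frac{\pi}{c'+1}$, which is exactly the hypothesis.

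Part (b) is the one needing care, because no peripheral spectral assumption is made. I would proceed in three steps.

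\textbf{Step 1: peripheral spectrum.} If $\lambda\in\sigma(T)\cap\T$, spectral mapping gives $|1-\lambda^{n_k}|\le q:=\sup_k\|\Id-T^{n_k}\|$ for all $k$. By Proposition~\ref{prop:jamison}(i), $J(\bn)\ge2\sin\frac\pi{c+1}>q$. Since $J(\bn)$ is an infimum over $\lambda\ne1$ of $\sup_k|\lambda^{n_k}-1|$, this forces $\lambda=1$, so $\sigma(T)\cap\T\subset\{1\}$. (The power bound is given, so $\sigma(T)\subset\ol\D$.)

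\textbf{Step 2: apply the arc criterion.} Since $c_\infty(\bn)\le c(\bn)\le c$, Theorem~\ref{thm:arcsampling}(ii) with $c'=c$ and $w=1-z$ applies: $q<m_{\arc_c}(1-z)$, so $T$ is Ritt.

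\textbf{Step 3: the case $c=1$.} Here $\bn=\N$ (as $n_0=1$) and $m_{\arc_1}=2\sin(\pi/2)=2$. In this case, I would instead note that $q<2\le$ the Jamison constant is not available, and use Theorem~\ref{thm:local}(iii) together with $J(\N)=\sqrt3$. Actually for $c=1$ one has $n_{k+1}\le n_k$, which is impossible for a strictly increasing sequence, so $c\ge2$ automatically since the quotients are rationals $>1$ with $n_0=1$ ($n_1\ge2$). Thus only Steps 1--2 are needed.

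The main obstacle is Step 1: ensuring the Jamison bound really excludes all unimodular spectrum. This relies on Proposition~\ref{prop:jamison}(i) using $c(\bn)$, which is a supremum including early indices, together with $n_0=1$. Without $n_0=1$, a sequence like $3\N$ would admit cube roots of unity, which is why (b) needs that hypothesis while (c) assumes the spectral condition instead.
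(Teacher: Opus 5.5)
Your proposal is correct and follows the paper's proof. In (a) you enlarge to an arc about $\zeta$ via the Neumann series and apply the general-arc form of Theorem~\ref{thm:arcsampling}(i). In (b) you exclude peripheral spectrum with Proposition~\ref{prop:jamison}(i) and then apply Theorem~\ref{thm:arcsampling}(ii) with $w(z)=1-z$, and (c) follows directly from Theorem~\ref{thm:arcsampling}(ii).

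The only differences are harmless choices of $c'$: you take $c'>1$ in (a) and $c'=c$ in (b), whereas the paper uses $c'=c_\infty(\bn)$. Your Step~3 is unnecessary, since $c(\bn)\ge n_1/n_0\ge2$, and Theorem~\ref{thm:arcsampling} covers $c'=1$ in any case.
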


\begin{proof}
(a) Let $K=\sup_k\|(\zeta-T^{n_k})^{-1}\|$, $\varepsilon=\frac12\min\{1/(2K),|\zeta-1|\}$, and let $\arc=\{e^{it}:\alpha\le t\le\gamma\}$ be the arc $\{\eta:|\eta-\zeta|\le\varepsilon\}$; by the Neumann series $\|(\eta-T^{n_k})^{-1}\|\le2K$ for $\eta\in\arc$. Both ratios in \eqref{eq:arccond} are $>1$, so \eqref{eq:arccond} holds with $c'=1=c_\infty(\bn)$, and Theorem~\ref{thm:arcsampling}(i) applies.

(b) Let $q=\sup_k\|\Id-T^{n_k}\|<2\sin\frac{\pi}{c+1}\le J(\bn)$ (Proposition~\ref{prop:jamison}). If $\lambda\in\sigma(T)\cap\T$, then $|\lambda^{n_k}-1|\le\|\Id-T^{n_k}\|\le q$ for all $k$, so $\lambda=1$ by the definition of $J(\bn)$. Since $c_\infty(\bn)\le c$, we have $q<2\sin\frac{\pi}{c+1}\le2\sin\frac{\pi}{c_\infty(\bn)+1}=m_{\arc_{c_\infty(\bn)}}(1-z)$, and Theorem~\ref{thm:arcsampling}(ii) applies with $w(z)=1-z$ and $c'=c_\infty(\bn)$. (c) is proved in a similar way.
\end{proof}

\begin{remark}[normalisation and sampling]\label{rem:n0}
(a) The normalisation $n_0=1$ is essential in (b), through the Jamison constant: for $n_k=2\cdot3^k$ one has $c(\bn)=3$, but $T=-\Id$ satisfies $T^{n_k}=\Id$ for all $k$, so that $\sup_k\|\Id-T^{n_k}\|=0$ while $T$ is not Ritt; here $J(\bn)=0$. Theorem~\ref{thm:arcsampling} and parts (a), (c), which carry the spectral hypothesis, do not use $n_0=1$.

(b) In (b) the power-boundedness assumption is superfluous when the gaps of $\bn$ are bounded: if $G:=\sup_k(n_{k+1}-n_k)<\infty$ and $A:=\sup_k\|T^{n_k}\|<\infty$, then $M(T)\le\max\{1,A\max_{0\le r<G}\|T^r\|\}$, since every $n\ge1$ can be written as $n=n_k+r$ with $0\le r<G$; and a sampled displacement bound $q$ gives $A\le1+q$.

(c) A one-point resolvent bound along $(T^{n_k})$ detects the Ritt property whenever $n_{k+1}/n_k\to1$ (for instance for $n_k=(k+1)^2$, or along the primes after adjoining $1$), while for geometric sequences an arc of prescribed logarithmic length is needed. The next result shows that for $n_k=b^k$ the arc $\arc_b$ is optimal among arcs symmetric about $-1$, even under the spectral hypothesis $\sigma(T)\subset\D\cup\{1\}$, and determines the sharp norm thresholds.
\end{remark}

\subsection{Geometric sequences}
For an integer $b\ge2$ put $Q_b:=\max\{1,2\sin\frac{\pi}{b+1}\}$; thus $Q_2=\sqrt3$, $Q_3=\sqrt2$, $Q_4=2\sin\frac\pi5\approx1.176$ and $Q_b=1$ for $b\ge5$.

\begin{theorem}[geometric sequences]\label{thm:sharp}
Let $b\ge2$ be an integer and $n_k=b^k$.
\begin{enumerate}[label=\textup{(\roman*)}]
\item Let $\arc$ be a closed arc contained in the interior of $\arc_b$, i.e.\ $\arc\subset\{e^{it}:\frac{2\pi}{b+1}<t<\frac{2\pi b}{b+1}\}$. There is a normal contraction $T$ on $\ell^2$ with $\sigma(T)\subset\D\cup\{1\}$, which is not a Ritt operator, such that
\[
   \sup_{k\ge0}\ \sup_{\eta\in\arc}\|(\eta-T^{b^k})^{-1}\|<\infty
   \qquad\text{and}\qquad
   \sup_{k\ge0}\|\Id-T^{b^k}\|\le Q_b .
\]
\item Among power bounded operators with $\sigma(T)\cap\T\subset\{1\}$, the implication
$\sup_k\|\Id-T^{b^k}\|<Q\ \Rightarrow\ T$ Ritt holds for $Q=Q_b$, and it fails for every $Q>Q_b$; it also fails if $<Q_b$ is replaced by $\le Q_b$, even for normal contractions on $\ell^2$.
\item Without the spectral assumption the sharp threshold is $2\sin\frac{\pi}{b+1}$: the implication $\sup_k\|\Id-T^{b^k}\|<2\sin\frac\pi{b+1}\Rightarrow T$ Ritt holds for every power bounded $T$, and the scalar operator $T=e^{2\pi i/(b+1)}$ has $\sup_k|1-T^{b^k}|=2\sin\frac{\pi}{b+1}$ and is not Ritt.
\end{enumerate}
\end{theorem}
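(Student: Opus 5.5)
We take the three parts in the order (iii), (ii), (i). Part (iii) is the easiest. The sufficiency follows from Corollary~\ref{cor:sparse}(b): here $c(\bn)=b$, so the sampled bound $\sup_k\|\Id-T^{b^k}\|<2\sin\frac{\pi}{b+1}$ forces the Ritt property. For sharpness, put $\lambda=e^{2\pi i/(b+1)}$. Since $b\equiv-1\pmod{b+1}$, we have $\lambda^{b^k}\in\{\lambda,\bar\lambda\}$. Hence $|1-\lambda^{b^k}|=2\sin\frac{\pi}{b+1}$ for every $k$, and this unimodular scalar $\ne1$ is not Ritt.

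For (ii), the positive direction is handled in two cases. If $2\sin\frac{\pi}{b+1}\ge1$, we apply Theorem~\ref{thm:arcsampling}(ii) with $w(z)=1-z$ and $c'=b$, using $m_{\arc_b}(1-z)=2\sin\frac\pi{b+1}$. If $Q_b=1$, then a bound $q<1$ along the sequence forces $T=\Id$ by the sampled sub-one lemma (Lemma~\ref{lem:subone}), which is cited in Section~\ref{sec:allpowers}; alternatively, $q<1\le 2\sin$ is already covered whenever that inequality holds. For $b\ge5$ and $q<1$, $\Id$ is Ritt in any case. The negative directions, for $Q>Q_b$ and for the non-strict bound $\le Q_b$, should both come from the construction of (i). That construction gives normal contractions with $\sigma\subset\D\cup\{1\}$ that are not Ritt and satisfy $\sup_k\|\Id-T^{b^k}\|\le Q_b$.

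The core is (i). We take $T=\bigoplus_j a_j$, a diagonal operator on $\ell^2$ with eigenvalues $a_j=r_je^{i\theta_j}$, where $\theta_j=\frac{2\pi}{(b+1)b^{j}}$ (we may shift the index) and $1-r_j=o(\theta_j)$. The eigenvalues then approach $1$ tangentially, so $|a_j-1|/(1-|a_j|)\to\infty$; this kills the Ritt property, since for a normal operator $\CR\ge\sup|1-a|/(1-|a|)$. With $r_j<1$, the spectrum lies in $\D\cup\{1\}$. The key step is to show that the angles $b^k\theta_j$ modulo $2\pi$ avoid the interior arc $\arc$, uniformly in $j,k$. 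For $k\le j$ we get $b^k\theta_j=\frac{2\pi}{b+1}b^{k-j}\le\frac{2\pi}{b+1}$. For $k>j$ we get $\frac{2\pi b^{m}}{b+1}\equiv\pm\frac{2\pi}{b+1}\pmod{2\pi}$, because $b\equiv-1\pmod{b+1}$. So every sampled angle lies in $[-\frac{2\pi}{b+1},\frac{2\pi}{b+1}]$ modulo $2\pi$, outside the interior of $\arc_b$, and at positive distance $d$ from the compact arc $\arc$. The moduli also matter: $|a_j^{b^k}|=r_j^{b^k}$ can be small, so each point $a_j^{b^k}$ lies on the segment from $0$ to a point of the cone $\{|\arg|\le\frac{2\pi}{b+1}\}$. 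The distance from $\arc$ to the convex region $\{\rho e^{is}:0\le\rho\le1,|s|\le\frac{2\pi}{b+1}\}$ is positive, because $\arc$ lies strictly beyond that angle and $\arc\subset\T$. This gives a uniform bound on the resolvents $(\eta-T^{b^k})^{-1}$ for normal $T$. The displacement satisfies $|1-\rho e^{is}|\le\max\{1,|1-e^{is}|\}\le Q_b$ for $\rho\le1$ and $|s|\le\frac{2\pi}{b+1}$, by convexity on the segment from $0$ to $e^{is}$.

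We expect the main obstacle to be the bookkeeping in (ii) when $Q_b=1>2\sin\frac{\pi}{b+1}$, that is, $b\ge5$. We plan to check that the sub-one lemma covers all $q<1$ there, and that the construction above indeed shows failure for $Q>1$ together with failure for the non-strict bound.
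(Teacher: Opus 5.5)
Your proposal is correct and follows the paper's proof essentially step by step. Part (i) uses the same diagonal operator, with eigenvalues $r_je^{i\theta_j}$, $\theta_j=2\pi/((b+1)b^j)$ and $1-r_j=o(\theta_j)$; the congruence $b^m\equiv(-1)^m\pmod{b+1}$ locates the sampled spectra, and radial convexity gives the displacement bound. Part (ii) uses Theorem~\ref{thm:arcsampling}(ii), equivalently Corollary~\ref{cor:sparse}(c), together with Lemma~\ref{lem:subone}, and part (iii) uses Corollary~\ref{cor:sparse}(b). The only difference is that you enclose $\sigma(T^{b^k})$ in the full sector $\{\rho e^{is}:0\le\rho\le1,\ |s|\le\frac{2\pi}{b+1}\}$ rather than in the paper's two radii plus a thin sector. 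This works equally well, but note that the sector is not convex when $b=2$; only its compactness and its disjointness from $\arc$ are actually used.
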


\begin{proof}
(i) Put $\alpha=2\pi/(b+1)$, choose $c_m\in(0,1]$ with $c_m\to0$, and let
\[
   \theta_m=\alpha b^{-m},\qquad \rho_m=e^{-c_m\theta_m},\qquad \mu_m=\rho_me^{i\theta_m}\qquad(m\ge1).
\]
Let $T$ be the diagonal operator on $\ell^2$ with eigenvalues $\mu_m$. Then $T$ is a normal contraction, $\mu_m\to1$, and $\sigma(T)=\{\mu_m:m\ge1\}\cup\{1\}\subset\D\cup\{1\}$. Since $1-|\mu_m|\le c_m\theta_m$ while $|1-\mu_m|\ge\rho_m\sin\theta_m\ge\theta_m/\pi$ for large $m$, we have $|1-\mu_m|/(1-|\mu_m|)\to\infty$, so $\sigma(T)$ is contained in no Stolz domain $\{|1-\lambda|\le C(1-|\lambda|)\}\cup\{1\}$, and $T$ is not Ritt \cite{LeMerdy}.

Fix $k\ge0$. If $k\ge m$, then $b^k\theta_m=\alpha b^{k-m}$ and $b^{k-m}\equiv(-1)^{k-m}\pmod{b+1}$, so $\mu_m^{b^k}=\rho_m^{b^k}e^{\pm i\alpha}$ lies on one of the radii $R_\pm=\{\rho e^{\pm i\alpha}:0\le\rho\le1\}$. If $k<m$, then $b^k\theta_m=\alpha b^{-(m-k)}\in(0,\alpha/b]$, so $\mu_m^{b^k}$ lies in the closed sector $S=\{\rho e^{is}:0\le\rho\le1,\ 0\le s\le\alpha/b\}$. Hence $\sigma(T^{b^k})\subset R_+\cup R_-\cup S\cup\{1\}=:F$ for every $k$. The compact set $F$ meets $\T$ exactly in $\{e^{\pm i\alpha}\}\cup\{e^{is}:0\le s\le\alpha/b\}$, which is disjoint from $\arc$; since $\arc\subset\T$, the compact sets $\arc$ and $F$ are disjoint and $d:=\dist(\arc,F)>0$. As $T^{b^k}$ is normal, $\|(\eta-T^{b^k})^{-1}\|=1/\dist(\eta,\sigma(T^{b^k}))\le1/d$ for all $\eta\in\arc$ and all $k$.

For $0\le\rho\le1$ and $z\in\T$, writing $1-\rho z=(1-\rho)\cdot1+\rho(1-z)$ gives
$|1-\rho z|\le(1-\rho)+\rho|1-z|\le\max\{1,|1-z|\}$.
All the unimodular directions just encountered satisfy $|1-z|\le2\sin(\alpha/2)=2\sin\frac\pi{b+1}$. Therefore every sampled eigenvalue satisfies $|1-\mu_m^{b^k}|\le Q_b$, and $\|\Id-T^{b^k}\|=\sup_m|1-\mu_m^{b^k}|\le Q_b$ by normality. In fact the supremum over $k$ equals $Q_b$: taking $k=m$ gives $\mu_m^{b^m}=e^{-\alpha c_m}e^{i\alpha}\to e^{i\alpha}$, while fixing $m$ and letting $k\to\infty$ gives $\mu_m^{b^k}\to0$. These yield the lower bounds $2\sin(\alpha/2)$ and $1$, respectively.

(ii) When $Q_b>1$, i.e.\ $b\le4$, sufficiency for $Q=Q_b$ is Corollary~\ref{cor:sparse}(c) with $c_\infty(\bn)=b$; when $Q_b=1$ it is Lemma~\ref{lem:subone} below. The operator of (i) has $\sup_k\|\Id-T^{b^k}\|\le Q_b<Q$ for every $Q>Q_b$ and is not Ritt, which proves both failure assertions.

(iii) Sufficiency is Corollary~\ref{cor:sparse}(b) with $c=b$; the scalar example follows from $b^k\equiv(-1)^k\pmod{b+1}$.
\end{proof}

\begin{remark}[an off-centre dyadic arc]\label{rem:offcentre}
The position of the arc matters in the optimality statement for $\arc_2$.
The shorter arc
\[
 A=\{e^{it}:\pi/2\le t\le\pi\}
\]
also gives a dyadic resolvent criterion: if $T$ is power bounded,
$\sigma(T)\cap\T\subset\{1\}$ and
\[
 \sup_k\sup_{\eta\in A}\|\Res(\eta,T^{2^k})\|<\infty,
\]
then $T$ is Ritt.
Indeed, a Neumann series argument extends this uniform bound to
\[
 A_\delta=\{e^{2\pi it}:1/4-\delta\le t\le1/2+\delta\}
\]
for some $0<\delta<1/8$. We verify the sampling property for this enlarged arc.
Given $0<|\theta|<\pi/2$, put $u=|\theta|/(2\pi)$ and choose $m\ge0$
with $x=2^m u\in[1/4,1/2)$. For $\theta>0$ the time $2^m$ hits $A$.
For $\theta<0$, if $x\ge1/2-\delta$, then the phase $1-x$ belongs to
$[1/2,1/2+\delta]$. Otherwise the next phase is
$y=1-2x\in(2\delta,1/2]$. If $y\ge1/4$ it already belongs to the arc;
if $y<1/4$, choose the least $l\ge1$ with $2^ly\ge1/4$. Then
$2^ly<1/2$ and $2^l<1/(2y)<1/(4\delta)$.
Thus a chosen dyadic time $n$ always satisfies
\[
 e^{in\theta}\in A_\delta,\qquad n|\theta|\le\frac\pi{2\delta}.
\]
For the last case use $n|\theta|=4\pi 2^l x$; the earlier cases have
$n|\theta|<2\pi$. Proposition~\ref{thm:general} now applies.
Since the lengths of $A$ and $\arc_2$ are $\pi/2$ and $2\pi/3$,
respectively, Theorem~\ref{thm:sharp}(i) cannot be interpreted as
minimality among all arcs of arbitrary position.
\end{remark}
The following lemma is implicit in \cite{BDG}; we give a simple proof below following \cite{BDG}. 

\begin{lemma}[a sampled gap below one]\label{lem:subone}
Let $T$ be a bounded operator on a complex Banach space, let
$n_k\to\infty$, and suppose that $\sigma(T)\cap\T\subset\{1\}$.
If $\sup_k\|\Id-T^{n_k}\|<1$, then $T=\Id$.
\end{lemma}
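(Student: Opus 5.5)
Put $q:=\sup_k\|\Id-T^{n_k}\|<1$. No power bound is assumed, so the argument will go through spectral mapping and Gelfand's theorem (as in the case $q<1$ of Section~\ref{sec:allpowers}), with the sampled inverses controlled by Neumann series.

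\emph{Step 1: the spectrum is $\{1\}$.} For $\lambda\in\sigma(T)$ the spectral mapping theorem gives $|1-\lambda^{n_k}|\le q<1$ for every $k$.
\begin{itemize}
\item If $|\lambda|<1$, then $\lambda^{n_k}\to0$ because $n_k\to\infty$, so $|1-\lambda^{n_k}|\to1>q$, a contradiction.
\item If $|\lambda|>1$, then $|1-\lambda^{n_k}|\ge|\lambda|^{n_k}-1\to\infty$, again a contradiction.
\end{itemize}
Hence $\sigma(T)\subset\T$, and the hypothesis $\sigma(T)\cap\T\subset\{1\}$ gives $\sigma(T)=\{1\}$.

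\emph{Step 2: uniform bounds on sampled powers and their inverses.} Since $\|\Id-T^{n_k}\|\le q<1$, the Neumann series shows that each $T^{n_k}$ is invertible, with
\[
\|T^{n_k}\|\le1+q,\qquad \|T^{-n_k}\|\le(1-q)^{-1}.
\]
In particular $T$ is invertible. These bounds hold only along the sampled sequence, and we will not need any power bound for $T$ itself.

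\emph{Step 3: $T=\Id$ via Gelfand-type estimates.} Write $T=\Id+V$. Step~1 shows $V$ is quasinilpotent, and the goal is $V=0$.

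To pass from $\sigma(T)=\{1\}$ to $T=\Id$, use the holomorphic logarithm. Since $\sigma(T)=\{1\}$, the operator $A:=\log T$ is defined by the Riesz--Dunford calculus on a disc about $1$, via the series $A=\sum_{j\ge1}(-1)^{j+1}V^j/j$. This series converges because $V$ is quasinilpotent. Moreover $A$ is quasinilpotent and $T^n=e^{nA}$ for every integer $n$.

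Along the sample, $\|e^{\pm n_kA}\|\le(1-q)^{-1}+2$. The analogue of Gelfand's theorem for sampled group bounds should then force $A=0$. To see this, consider the entire function $f(z)=\langle e^{zA}x,x^*\rangle$. Because $A$ is quasinilpotent, $f$ has exponential type $0$. It is bounded at the points $\pm n_k$, but bounded values on the sequence $\pm n_k$ alone do not obviously give boundedness on all of $\R$; this is the main obstacle.

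I would first try to avoid it with a cleaner route used in \cite{BDG}. The estimate $\|\Id-T^{n_k}\|\le q<1$ implies that $\log(T^{n_k})=n_kA$ is given by the convergent series
\[
\log(\Id-(\Id-T^{n_k}))=-\sum_{j\ge1}(\Id-T^{n_k})^j/j,
\]
whose norm is at most $\log\frac1{1-q}$. The identification $\log(T^{n_k})=n_kA$ holds because both sides are holomorphic functions of $T$ agreeing near $\sigma(T)=\{1\}$, so the functional calculus identifies them. Hence
\[
n_k\|A\|\le\log\frac1{1-q}\qquad\text{for all }k.
\]
Letting $n_k\to\infty$ gives $A=0$, and therefore $T=e^A=\Id$.

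The delicate point is the justification that the norm-convergent series for $\log(T^{n_k})$ coincides with $n_k\log T$. This requires checking that $\sigma(T^{n_k})=\{1\}$ lies in the disc $|z-1|<1$, where the principal logarithm is holomorphic and $\log(z^{n})=n\log z$ holds near $z=1$. The composition rule of the holomorphic functional calculus then applies.
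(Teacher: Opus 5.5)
Your proof is correct and, once you take the ``cleaner route'', it is essentially the paper's proof. Spectral mapping gives $\sigma(T)=\{1\}$, the composition rule identifies $n_k\log T$ with the norm-convergent series $-\sum_{j\ge1}(\Id-T^{n_k})^j/j$, and the resulting bound $n_k\|\log T\|\le-\log(1-q)$ forces $\log T=0$, hence $T=\Id$. Step~2 and the entire-function detour in Step~3 are not needed and can be dropped.
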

\begin{proof}
Put $q=\sup_k\|\Id-T^{n_k}\|<1$.
For every $\lambda\in\sigma(T)$, spectral mapping gives
$|1-\lambda^{n_k}|\le q$ for all $k$.
If $|\lambda|<1$, the left-hand side tends to $1$; if
$|\lambda|>1$, it tends to infinity. Thus $\sigma(T)\subset\T$,
and the peripheral spectral assumption gives $\sigma(T)=\{1\}$.

The principal logarithm $L=\log T$ is therefore defined by the
holomorphic functional calculus. For each positive integer $n$, the
identity $\log(z^n)=n\log z$ holds on a sufficiently small
neighbourhood of $1$. Since $\sigma(T)=\sigma(T^n)=\{1\}$, the
composition rule for this calculus consequently gives
$\log(T^n)=nL$. In particular,
\[
 n_k L=\log(T^{n_k})
 =-\sum_{j\ge1}\frac{(\Id-T^{n_k})^j}{j},
\]
where the series converges in norm and represents the principal
logarithm because $\|\Id-T^{n_k}\|\le q<1$. Hence
\[
 n_k\|L\|\le\sum_{j\ge1}\frac{q^j}{j}
 =-\log(1-q).
\]
Letting $k\to\infty$ yields $L=0$, and the functional calculus
identity $T=e^L$ gives $T=\Id$.
\end{proof}

Thus the dyadic threshold is $\sqrt3$ even with the spectral assumption,
whereas the full-sequence threshold is $2$ under that assumption; for $b\ge5$ the threshold with the spectral assumption is $1$. Below $1$ we obtain $T=I$. 
\subsection{Unbounded quotients}
For unbounded quotients no fixed arc criterion survives. Part (i) below is the arc version of \cite[Proposition~6.8]{BGT}, whose construction we follow; part (ii) shows that every norm-gap threshold greater than $1$ fails. Lemma~\ref{lem:subone} explains the role of $1$ under the peripheral spectral assumption.

\begin{theorem}\label{thm:unbounded}
Let $c(\bn)=\infty$.
\begin{enumerate}[label=\textup{(\roman*)}]
\item For every closed arc $\arc\subset\T\setminus\{1\}$ there is a normal contraction $T$ on $\ell^2$ with $\sigma(T)\subset\D\cup\{1\}$ and $\sup_k\sup_{\eta\in\arc}\|(\eta-T^{n_k})^{-1}\|<\infty$ which is not Ritt.
\item For every $q>1$ there is a normal contraction $T$ on $\ell^2$ with $\sigma(T)\subset\D\cup\{1\}$ and $\sup_k\|\Id-T^{n_k}\|\le q$ which is not Ritt.
\end{enumerate}
\end{theorem}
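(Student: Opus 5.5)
We will build $T$ as a diagonal operator on $\ell^2$, adapting the construction of Theorem~\ref{thm:sharp}(i). The eigenvalues will be $\mu_m=\rho_m e^{i\theta_m}$, with $\theta_m\downarrow0$ placed in the gaps of the sequence and $1-\rho_m=o(\theta_m)$. Then $\sigma(T)\subset\D\cup\{1\}$, and $\sigma(T)$ approaches $1$ tangentially, so $T$ is not Ritt. Since $T^{n_k}$ is normal, we have $\|(\eta-T^{n_k})^{-1}\|=1/\dist(\eta,\sigma(T^{n_k}))$ and $\|\Id-T^{n_k}\|=\sup_m|1-\mu_m^{n_k}|$. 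Both assertions therefore reduce to scalar estimates on the points $\mu_m^{n_k}$.

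\textbf{Choice of parameters.} Because $c(\bn)=\infty$, and each individual ratio $n_{k+1}/n_k$ is finite, we can choose indices $k_1<k_2<\cdots$ with $R_m:=n_{k_m+1}/n_{k_m}\to\infty$. We set $\theta_m:=\tau\,(n_{k_m}n_{k_m+1})^{-1/2}$, a geometric mean, with a small constant $\tau>0$. We take $\rho_m:=e^{-c_m\theta_m}$ with $c_m\to0$ slowly; for instance $c_m\le R_m^{-1/2}$ will do. Passing to a sparser subsequence, we may assume $\theta_m\to0$.

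\textbf{The two regimes.} Fix $k$ and $m$.
\begin{itemize}
\item If $k\le k_m$, then $n_k\theta_m\le \tau R_m^{-1/2}$ is small, so $\mu_m^{n_k}$ lies in a thin sector $\{\rho e^{is}:|s|\le s_0\}$ around the positive axis.
\item If $k\ge k_m+1$, then $n_k\theta_m\ge\tau R_m^{1/2}$. This gives $|\mu_m^{n_k}|=e^{-c_mn_k\theta_m}\le e^{-c_m\tau R_m^{1/2}}$. To make this small we need $c_mR_m^{1/2}\to\infty$, so the choice $c_m\le R_m^{-1/2}$ is too small. We instead take $c_m=R_m^{-1/4}$. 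Then $c_m\to0$, so the spectrum is still tangential, while $c_mR_m^{1/2}=R_m^{1/4}\to\infty$. Hence $\mu_m^{n_k}$ lies in a small disc $\{|z|\le r_0\}$, with $r_0\to0$ as $m$ grows.
\end{itemize}
Discarding finitely many $m$, we conclude that every $\sigma(T^{n_k})$ lies in
\[
F:=\{\rho e^{is}:0\le\rho\le1,\ |s|\le s_0\}\cup\{|z|\le r_0\}\cup\{1\},
\]
where $s_0$ and $r_0$ can be made as small as we like.

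\textbf{Conclusion of (i) and (ii).} For (i), we choose $s_0$ so small that the unimodular part of $F$, which is an arc around $1$, misses $\arc$. Then $\dist(\arc,F)>0$, which gives the uniform resolvent bound. For (ii), the inequality $|1-\rho z|\le\max\{1,|1-z|\}$ from the proof of Theorem~\ref{thm:sharp} bounds the sector part by $\max\{1,2\sin(s_0/2)\}$. The disc part satisfies $|1-z|\le1+r_0$. So we choose $s_0$ and $r_0$ small enough to make both quantities at most $q$; this is possible because $q>1$.

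The main obstacle is the simultaneous choice of $c_m$. It must tend to $0$, so that $T$ fails to be Ritt, yet decay slowly enough that $c_m n_k\theta_m\to\infty$ uniformly over all $k>k_m$. The geometric-mean placement of $\theta_m$ in the gap makes this possible. A second point to check is that $\theta_m\to0$ holds along the chosen subsequence, which passing to a sparser subsequence guarantees.
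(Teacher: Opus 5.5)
Your proof is correct and uses the same strategy as the paper. You build a diagonal normal contraction whose eigenvalues are tuned to large gaps $n_{k_m}\ll n_{k_m+1}$, so that sampling times before a gap see only a small rotation and times after it see strong damping; normality and the radial convexity bound $|1-\rho z|\le\max\{1,|1-z|\}$ then reduce both assertions to scalar estimates.

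The only difference is the calibration. You place the angle at the geometric mean of the gap with damping ratio $c_m=R_m^{-1/4}$, the same device as in the paper's proof of Theorem~\ref{thm:rittsampling}. This makes both the sector angle and the disc radius tend to $0$, so one construction serves (i) and (ii) at once. The paper instead uses two separate endpoint calibrations with explicit constants: $\varepsilon_j=\log2/n_{K(j)}$ and $\theta_j=j\varepsilon_j$ with gap ratios $\ge16j\log2/d$ for (i), and $\theta_m=\tau/n_m$, $\rho_m=e^{-L/n_{m+1}}$ over all gaps with $\tau=(q-1)/2$ for (ii).
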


\begin{proof}
(i) Let $d=\dist(1,\arc)>0$. Since the quotients are unbounded, we may choose indices $K(1)<K(2)<\cdots$ with $K(j)\ge1$ and $n_{K(j)}/n_{K(j)-1}\ge16j\log2/d$; then $K(j)\ge j$ and $n_{K(j)-1}\ge n_{j-1}\ge j$. Put $\varepsilon_j=\log2/n_{K(j)}$, $\theta_j=j\varepsilon_j$, $\mu_j=(1-\varepsilon_j)e^{i\theta_j}$, and let $T$ be diagonal with eigenvalues $\mu_j$. We have
\[
   \theta_j=\frac{j\log2}{n_{K(j)}}\le\frac{j\log2\,d}{16j\log2\,n_{K(j)-1}}=\frac{d}{16n_{K(j)-1}}\le\frac d{16j}\to0
\]
and $\varepsilon_j\to0$, so $\mu_j\to1$ and $\sigma(T)\subset\D\cup\{1\}$; moreover $|1-\mu_j|/(1-|\mu_j|)\ge(1-\varepsilon_j)\sin(\theta_j)/\varepsilon_j\gtrsim j\to\infty$, so $T$ is not Ritt. Fix $j$ and $k$. If $k\ge K(j)$, then $|\mu_j^{n_k}|\le(1-\varepsilon_j)^{n_{K(j)}}\le e^{-\log2}=\frac12$, so $|\eta-\mu_j^{n_k}|\ge\frac12$ for $\eta\in\T$. If $k<K(j)$, then $n_k\varepsilon_j\le n_{K(j)-1}\varepsilon_j\le d/(16j)$ and $n_k\theta_j\le d/16$, so $|\mu_j^{n_k}-1|\le|(1-\varepsilon_j)^{n_k}-1|+|e^{in_k\theta_j}-1|\le n_k\varepsilon_j+n_k\theta_j\le d/8$ and $|\eta-\mu_j^{n_k}|\ge d-d/8$ for $\eta\in\arc$. Together with $\dist(\eta,1)\ge d$, this gives $\dist(\eta,\sigma(T^{n_k}))\ge\min\{\frac12,\frac{7d}8\}$ for all $\eta\in\arc$ and $k$, and normality gives the uniform resolvent bound.

(ii) We may assume $1<q<2$. Put $\tau=(q-1)/2\in(0,\frac12)$, $L=\log(1/\tau)$, $\rho_m=e^{-L/n_{m+1}}$, $\theta_m=\tau/n_m$ and $\lambda_m=\rho_me^{i\theta_m}$, $m\ge0$, and let $T$ be diagonal with eigenvalues $\lambda_m$; then $T$ is a normal contraction with $\lambda_m\to1$, so $\sigma(T)\subset\D\cup\{1\}$. Fix $k$ and $m$. If $k\le m$, then $n_k\le n_m$, $\rho_m^{n_k}\ge\rho_m^{n_m}=e^{-Ln_m/n_{m+1}}\ge e^{-L}=\tau$, and
\[
   |1-\lambda_m^{n_k}|\le(1-\rho_m^{n_k})+|1-e^{in_k\theta_m}|\le(1-\tau)+n_m\theta_m=(1-\tau)+\tau=1 .
\]
If $k\ge m+1$, then $|\lambda_m|^{n_k}\le\rho_m^{n_{m+1}}=\tau$ and $|1-\lambda_m^{n_k}|\le1+\tau<q$. Hence $\sup_k\|\Id-T^{n_k}\|\le q$ by normality. Finally, using $1-e^{-x}\le x$, $|1-\lambda_m|/(1-|\lambda_m|)\ge\rho_m\sin(\theta_m)\,n_{m+1}/L\ge(\tau/(\pi L))\,n_{m+1}/n_m$ for large $m$, which is unbounded since $c(\bn)=\infty$; so $T$ is not Ritt.
\end{proof}

The endpoint $q=1$ depends on the sampling set and is considered in
Section~\ref{sec:endpoint}.

\subsection{Sampling the Ritt condition}
The Ritt property is characterised by $\sup_{n\ge1}n\|T^n-T^{n-1}\|<\infty$ for power bounded $T$. Recall that $\bn$ is a \emph{Ritt sampling sequence} if, for every Banach space $X$ and every power bounded $T\in\B(X)$,
\[
   \sup_{k\ge0}n_k\|T^{n_k}-T^{n_k-1}\|<\infty\quad\Longrightarrow\quad T\ \text{is a Ritt operator}.
\]
The following result can be viewed as an operator-theoretical characterisation of sequences with bounded quotients. 
\begin{theorem}\label{thm:rittsampling}
$\bn$ is a Ritt sampling sequence if and only if $c(\bn)<\infty$. If $c(\bn)<\infty$, then for every power bounded $T$,
\[
   \sup_{n\ge1}n\|T^n-T^{n-1}\|\le M(T)\,c(\bn)\,\sup_{k\ge0}n_k\|T^{n_k}-T^{n_k-1}\| .
\]
\end{theorem}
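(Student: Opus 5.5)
The proof has two parts: the quantitative inequality when $c(\bn)<\infty$, and a counterexample when $c(\bn)=\infty$.

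\emph{Sufficiency.} Set $c=c(\bn)$ and $D_{\bn}=\sup_k n_k\|T^{n_k}-T^{n_k-1}\|$. Fix $n\ge1$ and let $k$ be the largest index with $n_k\le n$; it exists because $n_0=1$. Write $n=n_k+r$ with $r\ge0$. Then
\[
T^n-T^{n-1}=T^{r}\bigl(T^{n_k}-T^{n_k-1}\bigr),
\]
so
\[
\|T^n-T^{n-1}\|\le M(T)\,D_{\bn}/n_k .
\]
The maximality of $k$ gives $n<n_{k+1}\le c\,n_k$, hence $n/n_k\le c$. Multiplying by $n$ yields $n\|T^n-T^{n-1}\|\le M(T)\,c\,D_{\bn}$, which is the stated inequality. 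For power bounded $T$, finiteness of $D(T)$ means $T$ is Ritt, by the standard characterisation recalled in Section~\ref{sec:prelim}. So sequences with bounded quotients are Ritt sampling sequences.

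\emph{Necessity.} Assume $c(\bn)=\infty$. I need a power bounded, non-Ritt $T$ with $\sup_k n_k\|T^{n_k}-T^{n_k-1}\|<\infty$. I would adapt the diagonal construction of Theorem~\ref{thm:unbounded}, taking $T$ to be a normal contraction on $\ell^2$ with eigenvalues $\mu_j=\rho_je^{i\theta_j}$. For a normal operator,
\[
\|T^{n}-T^{n-1}\|=\sup_j|\mu_j|^{n-1}|1-\mu_j| ,
\]
so everything reduces to a scalar estimate.

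Choose indices $K(j)$ with $n_{K(j)}/n_{K(j)-1}\ge j^2$. Set $\theta_j=1/(j\,n_{K(j)-1})$ and $1-\rho_j=\varepsilon_j=j/n_{K(j)}$. Then
\[
\varepsilon_j\le\frac{1}{j\,n_{K(j)-1}}=\theta_j ,
\]
and the ratio $|1-\mu_j|/(1-|\mu_j|)\gtrsim\theta_j/\varepsilon_j=n_{K(j)}/(j^2n_{K(j)-1})$. This is at least $1$ but need not tend to infinity, so I would strengthen the gap requirement to $n_{K(j)}/n_{K(j)-1}\ge j^3$. Then the ratio tends to infinity, the spectrum leaves every Stolz domain, and $T$ is not Ritt.

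It remains to check the sampled quantity. Note $|1-\mu_j|\le\varepsilon_j+\theta_j\le2\theta_j$.
\begin{itemize}
\item For $k\le K(j)-1$: $n_k|\mu_j|^{n_k-1}|1-\mu_j|\le2n_{K(j)-1}\theta_j=2/j$.
\item For $k\ge K(j)$: $n_k\rho_j^{n_k-1}|1-\mu_j|\le2\theta_j\,n_ke^{-\varepsilon_j(n_k-1)}$. Since $xe^{-\varepsilon x}\le1/(e\varepsilon)$, this is bounded by about $\theta_j/\varepsilon_j$, which is unbounded.
\end{itemize}
This second case is the main obstacle. The tail decay must be fast enough at the sampled times, yet $\rho_j$ cannot be too small or the Stolz ratio stays bounded. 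The fix is to use the decay along the sampling times themselves: for $k\ge K(j)$ we have $n_k\ge n_{K(j)}$, so
\[
n_ke^{-\varepsilon_jn_k}\le n_{K(j)}e^{-j}\qquad\text{once }\varepsilon_jn_{K(j)}=j\ge1,
\]
because $xe^{-\varepsilon x}$ is decreasing for $x\ge1/\varepsilon$. The term is then at most about $2\theta_jn_{K(j)}e^{-j}=2e^{-j}n_{K(j)}/(j\,n_{K(j)-1})$.

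To keep this bounded, I would choose $K(j)$ so that the quotient $n_{K(j)}/n_{K(j)-1}$ lies between $j^3$ and $e^{j}$. If the quotients jump too fast for that, I would replace $\varepsilon_j$ by $L_j/n_{K(j)}$ with $L_j=\log\bigl(n_{K(j)}/n_{K(j)-1}\bigr)$. This gives the bound $\theta_jn_{K(j)}e^{-L_j}=1/j$, while the Stolz ratio becomes
\[
\frac{\theta_j}{\varepsilon_j}=\frac{n_{K(j)}/n_{K(j)-1}}{j\,L_j},
\]
which is unbounded as $j\to\infty$. The first case is unaffected, since $\varepsilon_jn_{K(j)-1}\le L_jn_{K(j)-1}/n_{K(j)}$ is small. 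The result is $\sup_k n_k\|T^{n_k}-T^{n_k-1}\|<\infty$, while $T$ is a normal contraction that is not Ritt.
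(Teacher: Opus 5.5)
Your sufficiency argument coincides with the paper's. For necessity you also use a diagonal contraction on $\ell^2$ with one eigenvalue per large gap, but with a different parametrisation and a different non-Ritt test.

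The paper centres each eigenvalue at the geometric mean $N_m=\sqrt{p_mq_m}$ of the gap endpoints, taking $\lambda_m=\exp(-1/N_m+iR_m^{1/4}/N_m)$. With $x=n/N_m$, all estimates then reduce to the single profile $xe^{-x}$. Non-Ritt is proved directly through $D(T)$, at the unsampled time $n=\lfloor N_m\rfloor$.

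You instead tie the angle to the lower endpoint, $\theta_j=1/(jn_{K(j)-1})$, and the damping to the upper one, $\varepsilon_j=\log R_j/n_{K(j)}$ with $R_j=n_{K(j)}/n_{K(j)-1}$. This gives the bounds $2/j$ below the gap and $2e^{\varepsilon_j}/j$ above it. Non-Ritt then follows because the spectrum of a Ritt operator lies in a Stolz domain at $1$, a consequence of $\|\Res(z,T)\|\ge1/\dist(z,\sigma(T))$. Here $\theta_j/\varepsilon_j=R_j/(j\log R_j)\to\infty$ once $R_j\ge j^3$.

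Comparing the two: your route is slightly more elementary on the non-Ritt side. The paper's choice leaves a super-polynomial margin $e^{-\sqrt{R_m}}$ above the gap. That is why the same operator also handles the fractional differences $n^s\|T^{n-1}B^s\|$ for every $s>0$ in Proposition~\ref{prop:fractional}. Your damping is tuned exactly to $s=1$: for $s>1$ your bound at $n=n_{K(j)}$ becomes of order $R_j^{s-1}j^{-s}$.

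In a write-up, drop the exploratory steps, namely the choice $\varepsilon_j=j/n_{K(j)}$ and the window $j^3\le R_j\le e^j$, which need not exist. Simply take $K(j)$ with $R_j\ge\max\{j^3,e\}$. Then state explicitly the two facts your final choice uses:
\begin{enumerate}
\item $\log R_j\ge1$, so that $xe^{-\varepsilon_jx}$ decreases on $[n_{K(j)},\infty)$;
\item $j\log R_j\le R_j$, so that $\varepsilon_j\le\theta_j$ and hence $|1-\mu_j|\le2\theta_j$.
\end{enumerate}
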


\begin{proof}
Suppose $c(\bn)<\infty$ and let $A=\sup_kn_k\|T^{n_k-1}(\Id-T)\|$. For $n\ge1$ choose $k$ with $n_k\le n<n_{k+1}$; then $T^{n-1}(\Id-T)=T^{n-n_k}T^{n_k-1}(\Id-T)$ and $n\|T^{n-1}(\Id-T)\|\le M(T)\frac{n}{n_k}\,n_k\|T^{n_k-1}(\Id-T)\|\le M(T)c(\bn)A$.

Conversely, suppose $c(\bn)=\infty$ and choose indices $k_m$ with $R_m:=n_{k_m+1}/n_{k_m}\to\infty$; put $p_m=n_{k_m}$, $q_m=n_{k_m+1}$, $N_m=\sqrt{p_mq_m}=p_m\sqrt{R_m}$, $L_m=R_m^{1/4}$ and $\lambda_m=\exp(-1/N_m+iL_m/N_m)$. Since $L_m/N_m=R_m^{-1/4}/p_m\to0$, $\lambda_m\to1$, and $|\lambda_m|<1$. Let $T$ be the diagonal contraction on $\ell^2$ with eigenvalues $\lambda_m$. For $n\ge1$ and $x=n/N_m$,
\begin{equation}\label{eq:diagbound}
   n|\lambda_m|^{n-1}|1-\lambda_m|\le n\,e^{-(n-1)/N_m}\Big(\frac1{N_m}+\frac{L_m}{N_m}\Big)\le2e\,L_m\,xe^{-x},
\end{equation}
using $|1-\lambda_m|\le|1-e^{-1/N_m}|+|1-e^{iL_m/N_m}|$, $L_m\ge1$ and $e^{1/N_m}\le e$. If $n=n_k\le p_m$, then $x\le p_m/N_m=R_m^{-1/2}$ and $n|\lambda_m|^{n-1}|1-\lambda_m|\le2eL_mR_m^{-1/2}=2eR_m^{-1/4}\le2e$. If $n=n_k\ge q_m$, then $x\ge R_m^{1/2}\ge1$, $xe^{-x}$ is decreasing on $[1,\infty)$, and $n|\lambda_m|^{n-1}|1-\lambda_m|\le2eR_m^{3/4}e^{-R_m^{1/2}}$, which is bounded. As no $n_k$ lies strictly between $p_m$ and $q_m$, we get $\sup_k\sup_mn_k|\lambda_m|^{n_k-1}|1-\lambda_m|<\infty$, i.e.\ $\sup_kn_k\|T^{n_k}-T^{n_k-1}\|<\infty$. On the other hand, for $n=\lfloor N_m\rfloor$ we have $x\in(1/2,1]$ for large $m$, $|1-\lambda_m|\ge|\operatorname{Im}\lambda_m|\ge e^{-1/N_m}\frac2\pi\frac{L_m}{N_m}$, and $n|\lambda_m|^{n-1}|1-\lambda_m|\ge\frac{N_m}{2}e^{-1}\cdot\frac{2L_m}{e\pi N_m}=\frac{L_m}{e^2\pi}\to\infty$. Hence $\sup_nn\|T^n-T^{n-1}\|=\infty$ and $T$ is not Ritt.
\end{proof}

The same dichotomy holds for fractional differences. For a power bounded $T$ put $B=\Id-T$; since $e^{-tB}=e^{-t}\sum_{j\ge0}t^jT^j/j!$ satisfies $\|e^{-tB}\|\le M(T)$, the operator $B$ is sectorial, its fractional powers $B^s$, $s>0$, are defined by the usual functional calculus (and are bounded operators, $B$ being bounded), and they satisfy the moment inequality
\begin{equation}\label{eq:moment}
   \|B^{s_1}y\|\le C_{s_1,s_2}\|y\|^{1-s_1/s_2}\|B^{s_2}y\|^{s_1/s_2}\qquad(0<s_1<s_2,\ y\in X),
\end{equation}
with $C_{s_1,s_2}$ depending only on $s_1,s_2$ and $M(T)$; see \cite[Chapter~6]{Haase}. Ritt operators satisfy $\sup_nn^s\|T^{n-1}B^s\|<\infty$ for every $s>0$ \cite{LeMerdy}.

\begin{proposition}[fractional differences]\label{prop:fractional}
Let $s>0$.
\begin{enumerate}[label=\textup{(\roman*)}]
\item If $c(\bn)<\infty$, then every power bounded $T$ satisfies
\[
 \sup_{n\ge1}n^s\|T^{n-1}B^s\|\le M(T)c(\bn)^s\sup_kn_k^s\|T^{n_k-1}B^s\|,
\]
and if the right-hand side is finite, then $T$ is a Ritt operator.
\item If $c(\bn)=\infty$, the diagonal contraction $T$ constructed in the proof of
Theorem~\ref{thm:rittsampling} satisfies $\sup_kn_k^s\|T^{n_k-1}B^s\|<\infty$, and it is
not a Ritt operator.
\end{enumerate}
\end{proposition}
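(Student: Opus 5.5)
For part (i) I would follow the proof of Theorem~\ref{thm:rittsampling}, now for fractional differences. Fix $n\ge1$ and pick $k$ with $n_k\le n<n_{k+1}$; such $k$ exists since $n_0=1$. The operators $T$ and $B^s$ commute because $B^s$ is a holomorphic function of $T$. Hence
\[
 T^{n-1}B^s=T^{n-n_k}\,T^{n_k-1}B^s,
\]
and therefore
\[
 n^s\|T^{n-1}B^s\|\le M(T)\Big(\frac n{n_k}\Big)^s n_k^s\|T^{n_k-1}B^s\|\le M(T)c(\bn)^s\sup_k n_k^s\|T^{n_k-1}B^s\|.
\]
This is the displayed inequality.

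For the Ritt conclusion I would reduce to the case $s=1$, i.e.\ to $\sup_n n\|T^{n-1}B\|<\infty$, and then quote the characterisation of Ritt operators.

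If $s\le1$, write $T^{2n}B=T^{n}B^{1-s}\cdot T^{n}B^s$. The claim is that $\|T^nB^{1-s}\|\lesssim n^{-(1-s)}$ whenever $\|T^nB^s\|\lesssim n^{-s}$. I would get this from the moment inequality \eqref{eq:moment}, applied with $y=T^{n}x$ and exponents $1-s$ and $s$; this needs $1-s<s$. In general I would instead iterate: $\|T^{jn}B^{js}\|\le(Cn^{-s})^j$ by commutativity. Choosing $j$ with $js\ge1$ and applying \eqref{eq:moment} with $s_1=1$, $s_2=js$, $y=T^{jn}x$ gives
\[
 \|T^{jn}Bx\|\le C\,M^{1-1/(js)}\,n^{-1}\|x\|.
\]
Passing from times $jn$ to all times uses power boundedness and monotonicity up to a factor $M(T)$. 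The case $s\ge1$ is simpler: apply \eqref{eq:moment} with $s_1=1$, $s_2=s$ directly. Since the first step already gives the estimate for all $n$, not only the sampled ones, this yields $\sup_n n\|T^nB\|<\infty$, and $T$ is Ritt by the standard characterisation.

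The main obstacle is this fractional-to-integer step. Its correctness relies on the moment inequality constant depending only on $M(T)$, and on $B^{js}=(B^s)^j$, which holds by the multiplicativity of the functional calculus.

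For part (ii) I would use the diagonal contraction $T$ with eigenvalues $\lambda_m=\exp(-1/N_m+iL_m/N_m)$. Normality gives
\[
 \|T^{n-1}B^s\|=\sup_m|\lambda_m|^{n-1}|1-\lambda_m|^s.
\]
As in \eqref{eq:diagbound}, with $x=n/N_m$,
\[
 n^s|\lambda_m|^{n-1}|1-\lambda_m|^s\le(2eL_m)^s\,x^se^{-x}.
\]
\begin{itemize}
\item If $n_k\le p_m$, then $x\le R_m^{-1/2}$, and the bound is at most $(2e)^sR_m^{s/4}R_m^{-s/2}\le(2e)^s$.
\item If $n_k\ge q_m$, the bound is at most $(2e)^sR_m^{s/4}R_m^{s/2}e^{-R_m^{1/2}}$, which is bounded in $m$.
\end{itemize}
The finitely many $m$ with $R_m$ small are harmless, because each such term is bounded by a constant times $\sup_n n^se^{-n/N_m}\cdot N_m^{-s}$, which is finite.

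Finally, $T$ is not Ritt by Theorem~\ref{thm:rittsampling}. So the sampled fractional condition holds while the Ritt property fails.
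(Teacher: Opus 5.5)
Your proposal is correct and follows the paper's proof essentially step for step. The common steps are the transfer $T^{n-1}B^s=T^{n-n_k}T^{n_k-1}B^s$, then iterating $(T^nB^s)^j=T^{jn}B^{js}$ up to an exponent $js\ge1$ and applying the moment inequality with $s_1=1$, $s_2=js$, and, for (ii), the same two-case estimate on the diagonal operator with finitely many coordinates treated separately. The paper writes $n-1=l(k-1)+r$ where you use multiples of $j$ plus monotonicity, which is equivalent. Two small points: your preliminary $s\le1$ route via exponents $1-s$ and $s$ is unnecessary, and the constant in your diagonal bound should be $e\,(2L_m)^s$ rather than $(2eL_m)^s$. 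The latter is harmless, because your version can fail only for the finitely many $m$ with $N_m<1/s$, which you already set aside.
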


\begin{proof}
(i) The interpolation inequality is proved exactly as in Theorem~\ref{thm:rittsampling}, writing $T^{n-1}B^s=T^{n-n_k}T^{n_k-1}B^s$ for $n_k\le n<n_{k+1}$. Suppose now that $A:=\sup_nn^s\|T^{n-1}B^s\|<\infty$ and choose an integer $l$ with $ls>1$. For $n\ge2$ write $n-1=l(k-1)+r$ with $0\le r<l$, so that $k\ge1$ and $k\ge(n-1)/l\ge n/(2l)$. Commutativity gives
\[
 T^{n-1}B^{ls}=T^r\big(T^{k-1}B^s\big)^l,\qquad\text{hence}\qquad
 \|T^{n-1}B^{ls}\|\le M(T)\Big(\frac{A}{k^s}\Big)^l\le M(T)A^l(2l)^{ls}n^{-ls}.
\]
The moment inequality \eqref{eq:moment} with $s_1=1$, $s_2=ls$, applied to $y=T^{n-1}x$, yields
\[
 \|T^{n-1}Bx\|\le C_{1,ls}\,(M(T)\|x\|)^{1-1/(ls)}\big(M(T)A^l(2l)^{ls}n^{-ls}\|x\|\big)^{1/(ls)}\lesssim n^{-1}\|x\|,
\]
so $\sup_nn\|T^{n-1}(\Id-T)\|<\infty$ and $T$ is Ritt.

(ii) For the diagonal operator $T$ with eigenvalues $\lambda_m$, the operator $B^s$ is diagonal with eigenvalues $(1-\lambda_m)^s$ (principal branch, $\operatorname{Re}(1-\lambda_m)>0$), so $\|T^{n-1}B^s\|=\sup_m|\lambda_m|^{n-1}|1-\lambda_m|^s$. As in \eqref{eq:diagbound}, with $x=n/N_m$,
\[
 n^s|\lambda_m|^{n-1}|1-\lambda_m|^s\le n^se^{-(n-1)/N_m}\Big(\frac{2L_m}{N_m}\Big)^s\le2^se\,L_m^s\,x^se^{-x}.
\]
For $n=n_k\le p_m$ this is at most $2^seR_m^{s/4}R_m^{-s/2}\le2^se$; for $n=n_k\ge q_m$ we have $x\ge R_m^{1/2}$, and once $R_m^{1/2}\ge s$ the function $x^se^{-x}$ is decreasing on $[R_m^{1/2},\infty)$, so the bound is at most $2^seR_m^{3s/4}e^{-R_m^{1/2}}$, which is bounded in $m$. For each of the finitely many remaining coordinates, the supremum over all $n\ge1$ of $n^s|\lambda_m|^{n-1}|1-\lambda_m|^s$ is finite because $|\lambda_m|<1$. Since $T$ is not Ritt (Theorem~\ref{thm:rittsampling}), the claim follows.
\end{proof}

\section{Exact displacement thresholds}\label{sec:exact-sampling}
\label{samp:subsection}
In this section we allow $\bn=(n_k)_{k\ge0}$ to be any
strictly increasing sequence of positive integers; the normalisation
$n_0=1$ is not required. We write $\mathcal N=\{n_k:k\ge0\}$.
All operators in this section are assumed to be power bounded. 

We characterise the sequences for which a sampled displacement bound,
together with the peripheral spectral condition, forces the Ritt property.
Recall that $\mathcal N=\{n_k:k\ge0\}$ and define, for $B,t>0$,
\begin{equation}\label{samp:profile}
 h_{\mathcal N}(B,t)
 :=\max\bigl\{|1-e^{int}|:n\in\mathcal N,\ nt\le B\bigr\},
 \qquad
 \alpha(\mathcal N)
 :=\lim_{B\to\infty}\liminf_{t\downarrow0}h_{\mathcal N}(B,t),
\end{equation}
where the maximum of the empty set is $0$. The outer limit exists by
monotonicity in $B$, and $0\le\alpha(\mathcal N)\le2$. In particular, if
$a<\alpha(\mathcal N)$, there are $B,\delta>0$ such that every
$t\in(0,\delta)$ admits $n\in\mathcal N$ with
\begin{equation}\label{samp:escape}
 nt\le B,\qquad |1-e^{int}|>a.
\end{equation}
Conversely, \eqref{samp:escape} implies $\alpha(\mathcal N)\ge a$.
Thus the definition records angular escape at small angles, with a uniform
bound on the rescaled sampling time $nt$.

The quantity $\alpha(\mathcal N)$ is unchanged by adding or removing
finitely many sampling times. Indeed, for a nonempty finite set $F$ of
positive integers,
\[
 0\le h_{\mathcal N\cup F}(B,t)-h_{\mathcal N}(B,t)
 \le\max_{n\in F}|1-e^{int}|
 \le t\max F,
\]
and one then takes the two limits in \eqref{samp:profile}. It is also
monotone under inclusion of sampling sets. Unlike the Jamison constant,
it depends only on small angles and on sampling times of order $1/t$.
For example, $\alpha(\N)=2$, as shown below, whereas
$J(\N)=\sqrt3$ by Proposition~\ref{prop:jamison}.

\begin{theorem}[exact displacement criterion]\label{samp:exact}
Fix $1<q<2$. The following assertions are equivalent.
\begin{enumerate}[label=\textup{(\roman*)}]
\item For every complex Banach space $X$ and every power bounded
operator $T\in\B(X)$,
\begin{equation}\label{samp:hypotheses}
 \sup_{n\in\mathcal N}\|\Id-T^n\|\le q,
 \qquad \sigma(T)\cap\T\subset\{1\}
\end{equation}
imply that $T$ is a Ritt operator.
\item The same implication holds for every normal contraction on $\ell^2$.
\item $q<\alpha(\mathcal N)$.
\end{enumerate}
In particular, when $1<\alpha(\mathcal N)<2$, the implication fails
at $q=\alpha(\mathcal N)$, already for a diagonal contraction on $\ell^2$.
\end{theorem}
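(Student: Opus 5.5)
The proof runs (iii)$\Rightarrow$(i)$\Rightarrow$(ii)$\Rightarrow$(iii). The middle implication is trivial, since normal contractions on $\ell^2$ are power bounded.

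\emph{(iii)$\Rightarrow$(i).} Choose $a$ with $q<a<\alpha(\mathcal N)$. By \eqref{samp:escape} there are $B,\delta>0$ such that every $t\in(0,\delta)$ admits $n\in\mathcal N$ with $nt\le B$ and $|1-e^{int}|>a$. Fix $a'\in(q,a)$ and take the arc $\arc:=\{\eta\in\T:|\eta-1|\ge a'\}$; it has positive length because $a'<2$. For $\eta\in\arc$ the Neumann series applied to $\eta-T^n=(\eta-1)\Id+(\Id-T^n)$ gives $\|(\eta-T^n)^{-1}\|\le(a'-q)^{-1}$ uniformly in $n\in\mathcal N$. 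For negative angles we use $|1-e^{-int}|=|1-e^{int}|$, so the same $n$ works for $-t$ and $\arc$ is conjugation invariant. Hence $(\mathcal N,\arc)$ has the sampling property with constants $(\delta,B)$. Proposition~\ref{thm:general} then gives the Ritt property, because the localisation Lemma~\ref{lem:reduction} handles the region away from $1$ under the hypothesis $\sigma(T)\cap\T\subset\{1\}$.

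\emph{(ii)$\Rightarrow$(iii): the main step.} Suppose $q\ge\alpha(\mathcal N)$. We build a diagonal contraction $T$ with eigenvalues $\lambda_j=\rho_je^{it_j}$, where $t_j\downarrow0$ and $1-\rho_j=t_j/B_j$ with $B_j\to\infty$. Such eigenvalues leave every Stolz domain, so $T$ is not Ritt, while $\sigma(T)\subset\D\cup\{1\}$.

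By definition of $\alpha$, for each $B$ there are arbitrarily small $t$ with $h_{\mathcal N}(B,t)<\alpha+\epsilon_B$, where $\epsilon_B\to0$. Choose $t_j$ with $h_{\mathcal N}(B_j,t_j)\le\alpha(\mathcal N)+\epsilon_j$, taking $B_j$ and $\epsilon_j$ so that the bounds below are at most $q$. We then check $|1-\lambda_j^n|\le q$ for all $n\in\mathcal N$, in three ranges.
\begin{itemize}
\item For $nt_j\le B_j$, write $1-\rho^ne^{int}=(1-\rho^n)+\rho^n(1-e^{int})$. Here the radial loss $1-\rho_j^n\le nt_j/B_j$ is at most $1$, but it may not be small. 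I expect this to be the main obstacle: the triangle inequality yields $\max\{1,|1-e^{int}|\}$ only when the radial factor and the angle are compatible. So take a smaller window instead: use the bound $h\le\alpha+\epsilon$ for $nt_j\le B_j$, but set $1-\rho_j=t_j/\sqrt{B_j}$ or smaller. Then for $nt_j\le B_j^{1/4}$ the radial loss is negligible, while for $nt_j\in[B_j^{1/4},B_j]$ the inequality $|1-\rho^nz|\le(1-\rho^n)+\rho^n|1-z|\le\max\{1,|1-z|\}$, valid for $\rho\le1$, gives the bound $\max\{1,\alpha+\epsilon\}$. If necessary, replace $\alpha+\epsilon$ by $q$ through an explicit margin, using that the outer limit is monotone in $B$.
\item For $nt_j>B_j$, we have $|\lambda_j^n|\le e^{-nt_j/\sqrt{B_j}}$, which is small once $nt_j\gg\sqrt{B_j}$. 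The overlap with the previous range needs care, and I would simply choose the radial rate so that the window $nt_j\le B_j$ dominates.
\item Points with $|1-\lambda_j^n|\le1+\text{small}$ are handled because $q>1$.
\end{itemize}
Normality then gives $\sup_{n\in\mathcal N}\|\Id-T^n\|=\sup_{j,n}|1-\lambda_j^n|\le q$. This contradicts (ii), so $q<\alpha(\mathcal N)$.

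The final claim is the construction above at $q=\alpha(\mathcal N)$ with $\epsilon_j\to0$, which needs $\alpha>1$. The delicate points are matching the radial decay to the window $nt\le B$ and making the $\liminf$ choices uniform along a sequence $t_j\to0$.
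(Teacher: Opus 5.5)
Your proof of (iii)$\Rightarrow$(i) is correct. The arc $\{\eta\in\T:|\eta-1|\ge a'\}$ is conjugation invariant, so \eqref{samp:escape} gives the sampling property with constants $(\delta,B)$, and Proposition~\ref{thm:general} applies. The paper argues the same way but more directly: it bounds $|\xi-1|\,\|\Res(\xi,T)\|\le MB/(q_1-q)$ on $0<|\arg\xi|<\delta$ via Lemma~\ref{lem:factor} and then invokes Proposition~\ref{prop:boundary}.

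Your construction for (ii)$\Rightarrow$(iii) works when $q>\alpha(\mathcal N)$. Your damping $1-\rho_j=t_j/\sqrt{B_j}$ is essentially the paper's $\varepsilon_j=B_j^{-1/2}$. The ``main obstacle'' you name is not one: $|1-\rho z|\le(1-\rho)+\rho|1-z|\le\max\{1,|1-z|\}$ holds for \emph{every} $\rho\in[0,1]$. So $h_{\mathcal N}(B_j,t_j)\le q$ handles all $n$ with $nt_j\le B_j$ at once, with no split at $B_j^{1/4}$. For $nt_j>B_j$ you have $|\lambda_j^n|\le e^{-\sqrt{B_j}}\le q-1$.

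The genuine gap is the endpoint $q=\alpha(\mathcal N)$. Your contrapositive assumes $q\ge\alpha(\mathcal N)$, so it must cover this case, and it is exactly the final claim of the theorem. The definition only gives $\liminf_{t\downarrow0}h_{\mathcal N}(B,t)\le\alpha(\mathcal N)$. That yields angles with $h_{\mathcal N}(B_j,t_j)\le q+\epsilon_j$ for some $\epsilon_j>0$, but in general no angle with $h_{\mathcal N}(B_j,t_j)\le q$.

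For example, take $\mathcal N=\{2^k+k:k\ge0\}$. Proposition~\ref{prop:samp-asymptotic-geometric} gives $\alpha(\mathcal N)=\sqrt3$. But $n_{k+1}<2n_k$ for $k\ge2$, so for every small $t$ some $n\in\mathcal N$ has $nt\in(2\pi/3,4\pi/3)$. Hence $h_{\mathcal N}(B,t)>\sqrt3$ for all $B\ge2\pi$. Monotonicity in $B$ does not remove this excess, so your bound $\max\{1,\alpha+\epsilon_j\}$ exceeds $q$. Letting $\epsilon_j\to0$ does not help, because each coordinate must satisfy $|1-\lambda_j^n|\le q$ individually.

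The missing idea is that the damping absorbs the excess. Suppose $\tau=nt_j\le B_j$ and $|1-e^{i\tau}|>q$. Then $\tau\ge|1-e^{i\tau}|>q>1$, so $r=|\lambda_j^n|\le r_*:=e^{-q\varepsilon_j}$. Thus in exactly the bad window the radial loss is of order $\varepsilon_j$, not negligible.

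Now choose the angular excess of smaller order, $h_{\mathcal N}(B_j,t_j)\le q+\varepsilon_j^2$. The identity $|1-re^{i\tau}|^2=r|1-e^{i\tau}|^2+(1-r)^2$ is convex in $r$. It gives $|1-re^{i\tau}|^2\le\max\{1,\,r_*(q+\varepsilon_j^2)^2+(1-r_*)^2\}\le\max\{1,\,q^2-\varepsilon_j/4\}$ once $\varepsilon_j\le1/36$. Your triangle inequality also suffices here: $1+r_*(q-1+\varepsilon_j^2)\le q$ for small $\varepsilon_j$, because $q>1$. So the radial loss in the window is what handles $q=\alpha(\mathcal N)$, not something to be made negligible.
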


\begin{proof}
Assume (iii), and choose $q_1$ with
$q<q_1<\alpha(\mathcal N)$. By \eqref{samp:escape}, there are
$B>0$ and $\delta\in(0,\pi)$ such that for each $0<t<\delta$ one can
choose $n\in\mathcal N$ satisfying
$nt\le B$ and $|1-e^{int}|>q_1$.
Let $T$ satisfy \eqref{samp:hypotheses}, and set $M=M(T)$.
For $\xi=e^{it}$ or $e^{-it}$, the decomposition
\[
 \xi^n-T^n=(\xi^n-1)\Id+(\Id-T^n)
\]
and the Neumann series give
\[
 \|\Res(\xi^n,T^n)\|\le\frac1{q_1-q}.
\]
Lemma~\ref{lem:factor} consequently yields
\begin{equation}\label{samp:local-resolvent}
 |\xi-1|\,\|\Res(\xi,T)\|
 \le\frac{Mn|\xi-1|}{q_1-q}
 \le\frac{MB}{q_1-q}
 \qquad(0<|\arg\xi|<\delta).
\end{equation}
The peripheral spectral assumption ensures that the resolvent is bounded
on the remaining compact arc
$\{\xi\in\T:|\arg\xi|\ge\delta\}$. Proposition~\ref{prop:boundary}
now shows that $T$ is Ritt. This proves (iii)$\Rightarrow$(i), and
(i)$\Rightarrow$(ii) is immediate.

To prove (ii)$\Rightarrow$(iii), suppose that
$\alpha(\mathcal N)\le q$. Choose an increasing sequence $B_j\to\infty$
so large that, with $\varepsilon_j=B_j^{-1/2}$,
\begin{equation}\label{samp:damping-choice}
 \varepsilon_j\le\frac1{36},\qquad
 1+e^{-\sqrt{B_j}}\le q \qquad(j\ge1).
\end{equation}
For each $j$,
$\liminf_{t\downarrow0}h_{\mathcal N}(B_j,t)\le\alpha(\mathcal N)\le q$.
We may therefore choose $1>\theta_1>\theta_2>\cdots\downarrow0$ such that
\begin{equation}\label{samp:bad-angles}
 h_{\mathcal N}(B_j,\theta_j)
 \le q+B_j^{-1}=q+\varepsilon_j^2.
\end{equation}
Define a diagonal contraction on $\ell^2$ by
\begin{equation}\label{samp:diagonal}
 T e_j=z_j e_j,\qquad
 z_j=\exp(-\varepsilon_j\theta_j+i\theta_j).
\end{equation}

We first verify the displacement bound for every coordinate and every
sampling time. Fix $j$ and $n\in\mathcal N$, and abbreviate
\[
 B=B_j,\quad \varepsilon=\varepsilon_j,\quad
 t=n\theta_j,\quad r=e^{-\varepsilon t}.
\]
If $t>B$, then
\[
 |1-z_j^n|\le1+r\le1+e^{-\varepsilon B}
 =1+e^{-\sqrt B}\le q.
\]
Suppose next that $t\le B$, so that $|1-e^{it}|\le q+\varepsilon^2$ by
\eqref{samp:bad-angles}. If $|1-e^{it}|\le q$, radial convexity gives
\[
 |1-re^{it}|\le(1-r)+r|1-e^{it}|\le q,
\]
since $q>1$. It remains to consider $|1-e^{it}|>q$. Since $|1-e^{it}|\le t$, we have
$t>q$ and hence $0\le r\le r_*:=e^{-q\varepsilon}$.
The identity
\[
 |1-re^{it}|^2=r|1-e^{it}|^2+(1-r)^2
\]
and convexity of the right-hand side as a function of $r$ show that
\begin{equation}\label{samp:convex-bound}
 |1-re^{it}|^2
 \le\max\bigl\{1,\ r_*(q+\varepsilon^2)^2+(1-r_*)^2\bigr\}.
\end{equation}
Put $a=1-r_*$. Since $q\varepsilon\le1$, the elementary inequalities
$x/2\le1-e^{-x}\le x$ for $0\le x\le1$ give
$q\varepsilon/2\le a\le q\varepsilon$. Therefore
\begin{align*}
 r_*(q+\varepsilon^2)^2+(1-r_*)^2-q^2
 &=-aq^2+r_*(2q\varepsilon^2+\varepsilon^4)+a^2\\
 &\le-\frac{q^3\varepsilon}{2}
       +(q^2+2q)\varepsilon^2+\varepsilon^4\\
 &\le-\frac{\varepsilon}{2}+9\varepsilon^2
 \le-\frac{\varepsilon}{4}<0.
\end{align*}
Here we used $1<q<2$ and $\varepsilon\le1/36$.
Together with \eqref{samp:convex-bound}, this proves
$|1-z_j^n|\le q$ in the remaining case. Taking suprema over $j$ and $n$
gives $\sup_{n\in\mathcal N}\|\Id-T^n\|\le q$.

Finally, $|z_j|<1$ and $z_j\to1$, so
$\sigma(T)=\{z_j:j\ge1\}\cup\{1\}$ and
$\sigma(T)\cap\T=\{1\}$. Nevertheless,
\begin{align*}
 |e^{i\theta_j}-1|\,\|\Res(e^{i\theta_j},T)\|
 &\ge\frac{|e^{i\theta_j}-1|}{|e^{i\theta_j}-z_j|}
 =\frac{|e^{i\theta_j}-1|}{1-e^{-\varepsilon_j\theta_j}}\\
 &\ge\frac{2\theta_j/\pi}{\varepsilon_j\theta_j}
 =\frac{2}{\pi\varepsilon_j}\longrightarrow\infty.
\end{align*}
Proposition~\ref{prop:boundary} shows that $T$ is not Ritt, contradicting
(ii). This completes the proof.
\end{proof}

The case of the full threshold $2$ admits a purely arithmetic formulation.
The bound on the odd multiplier in the next statement is uniform in $x$;
it may depend on the prescribed accuracy $\varepsilon$.

\begin{corollary}[the full threshold]\label{samp:full-threshold}
The following assertions are equivalent.
\begin{enumerate}[label=\textup{(\roman*)}]
\item Every power bounded operator $T$ on every complex Banach space
satisfying
\[
 \sup_{n\in\mathcal N}\|\Id-T^n\|<2,
 \qquad \sigma(T)\cap\T\subset\{1\},
\]
is a Ritt operator.
\item $\alpha(\mathcal N)=2$.
\item For every $\varepsilon>0$ there are $L\in\N$ and $x_0>0$ such
that for every real $x\ge x_0$ there exist $n\in\mathcal N$ and a
positive odd integer $m\le L$ satisfying
\begin{equation}\label{samp:odd-approximation}
 |n-mx|<\varepsilon x.
\end{equation}
\end{enumerate}
Assertion \textup{(i)} is unchanged if it is restricted to normal
contractions on $\ell^2$.
\end{corollary}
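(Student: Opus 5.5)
We deduce the equivalence (i)$\Leftrightarrow$(ii) from Theorem~\ref{samp:exact}, and then prove (ii)$\Leftrightarrow$(iii) by an elementary translation of $\alpha(\mathcal N)=2$ into the odd-multiplier condition.

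\emph{(i)$\Leftrightarrow$(ii).} Assertion (i) says that for every $q\in(1,2)$ the implication of Theorem~\ref{samp:exact}(i) holds. The case $q\le1$ is then automatic, since the bound by $q\le1$ implies the bound by any $q'\in(1,2)$. By Theorem~\ref{samp:exact}, (i) is therefore equivalent to $q<\alpha(\mathcal N)$ for all $q<2$, that is, to $\alpha(\mathcal N)=2$, because $\alpha(\mathcal N)\le2$. The same theorem shows that (ii) of the theorem (normal contractions) is equivalent to (iii) for each $q$. Hence the restriction of (i) to normal contractions on $\ell^2$ is also equivalent to $\alpha(\mathcal N)=2$, which gives the final sentence.

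\emph{(ii)$\Leftrightarrow$(iii).} Write $t=1/x$. The quantity $|1-e^{int}|=2|\sin(nt/2)|$ is close to $2$ exactly when $nt$ is close to an odd multiple $m\pi$ of $\pi$. To get integer odd multipliers, I rescale: put $x=\pi/t$, so that $nt=\pi n/x$. Then $|1-e^{int}|\ge2\cos(\pi\eta/2)$ whenever $|n/x-m|\le\eta$ for some odd $m$. Conversely, $|1-e^{int}|>2\cos(\pi\eta/2)$ forces $n/x$ to lie within $\eta$ of an odd integer.

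For (iii)$\Rightarrow$(ii), fix $\eta>0$ and apply (iii) with $\varepsilon=\eta$. For $t<\pi/x_0$ this gives $n\in\mathcal N$ and an odd $m\le L$ with $|n/x-m|<\eta$. Hence $nt=\pi n/x<\pi(L+\eta)=:B$ and $|1-e^{int}|\ge2\cos(\pi\eta/2)$. By \eqref{samp:escape}, $\alpha(\mathcal N)\ge2\cos(\pi\eta/2)$ for every $\eta>0$, and letting $\eta\to0$ gives $\alpha(\mathcal N)=2$.

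For (ii)$\Rightarrow$(iii), fix $\varepsilon\in(0,1)$ and apply \eqref{samp:escape} with $a=2\cos(\pi\varepsilon/2)<2=\alpha(\mathcal N)$. This yields $B,\delta$ such that every $t\in(0,\delta)$ admits $n\in\mathcal N$ with $nt\le B$ and $|1-e^{int}|>a$. With $x=\pi/t\ge x_0:=\pi/\delta$, the second condition gives $|n/x-m|<\varepsilon$ for some odd integer $m$. Since $n/x=nt/\pi\le B/\pi$, we get $m\le B/\pi+1=:L$, and $m\ge1$ because $n/x>0$ and $\varepsilon<1$. Multiplying by $x$ gives \eqref{samp:odd-approximation}. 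For $\varepsilon\ge1$ one uses the result for a smaller $\varepsilon$.

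\emph{Main obstacle.} The only delicate point is the bookkeeping between $\liminf_{t\downarrow0}$ in $\alpha$ and ``for every $x\ge x_0$'' in (iii), which is handled by \eqref{samp:escape} and its converse as stated before Theorem~\ref{samp:exact}. One must also make sure that $m$ is positive and bounded uniformly in $x$; this comes from the bound $nt\le B$.
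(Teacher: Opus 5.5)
Your proof is correct and follows the paper's argument essentially step for step. Both you and the paper obtain (i)$\Leftrightarrow$(ii) and the final sentence by applying Theorem~\ref{samp:exact} for each $q\in(1,2)$, and (ii)$\Leftrightarrow$(iii) by the substitution $t=\pi/x$, using that $|1-e^{int}|>2\cos(\pi\varepsilon/2)$ means $nt$ lies within $\pi\varepsilon$ of an odd multiple of $\pi$. Three small fixes are needed. Take $L$ to be an integer at least $B/\pi+1$, since (iii) requires $L\in\N$. Take $x_0>\pi/\delta$ strictly, so that $t<\delta$. Restrict $\eta$ to $(0,1)$, since the cosine comparison fails for large $\eta$.
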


\begin{proof}
If (ii) holds and $Q:=\sup_{n\in\mathcal N}\|\Id-T^n\|<2$, choose
$\max\{1,Q\}<q<2$. Theorem~\ref{samp:exact} applies with this $q$.
Conversely, if $\alpha(\mathcal N)<2$, choose
$\max\{1,\alpha(\mathcal N)\}<q<2$. The same theorem supplies a normal
contraction violating (i). This also proves the last assertion.

For (ii)$\Rightarrow$(iii), it suffices to consider $0<\varepsilon<1$.
Choose $B,\delta>0$ so that for every $0<t<\delta$ there is
$n\in\mathcal N$ with
\[
 nt\le B,\qquad |1-e^{int}|>2\cos(\pi\varepsilon/2).
\]
The second inequality means that $nt$ lies within $\pi\varepsilon$ of
an odd multiple $m\pi$ of $\pi$. Since $nt>0$ and $\varepsilon<1$,
this odd integer $m$ is positive; moreover $m<B/\pi+\varepsilon$.
Choose an integer $L\ge B/\pi+1$ and take $t=\pi/x$, with $x$ large
enough that $t<\delta$. Dividing $|nt-m\pi|<\pi\varepsilon$ by $t$
gives \eqref{samp:odd-approximation}.

Conversely, suppose (iii), fix $0<\varepsilon<1$, and put $t=\pi/x$.
For all sufficiently small $t>0$, \eqref{samp:odd-approximation} yields
\[
 |nt-m\pi|<\pi\varepsilon,\qquad
 nt<\pi(L+\varepsilon).
\]
Consequently
\[
 h_{\mathcal N}\bigl(\pi(L+\varepsilon),t\bigr)
 >2\cos(\pi\varepsilon/2).
\]
It follows that $\alpha(\mathcal N)\ge2\cos(\pi\varepsilon/2)$.
Letting $\varepsilon\downarrow0$ proves (ii).
\end{proof}

\begin{proposition}[quotient bounds]\label{samp:quotients}
If $c_\infty(\bn)<\infty$, then
\begin{equation}\label{samp:quotient-lower}
 \alpha(\mathcal N)\ge
 2\sin\frac{\pi}{c_\infty(\bn)+1}.
\end{equation}
If $c_\infty(\bn)=\infty$, then $\alpha(\mathcal N)=0$.
In particular,
\[
 \alpha(\mathcal N)>0
 \quad\Longleftrightarrow\quad c_\infty(\bn)<\infty.
\]
\end{proposition}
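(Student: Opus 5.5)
Two parts: a lower bound when $c:=c_\infty(\bn)<\infty$, obtained from the subsequence sampling mechanism, and the vanishing of $\alpha$ when the quotients are unbounded, obtained from scale gaps. The equivalence then follows, because $2\sin\frac{\pi}{c+1}>0$ for finite $c$.

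\emph{Lower bound.} Fix $c'>c$ and choose $k_0$ with $n_{k+1}\le c'n_k$ for $k\ge k_0$. Put $\alpha_0=\frac{2\pi}{c'+1}$ and $\gamma_0=\frac{2\pi c'}{c'+1}$, so that $\arc_{c'}=\{e^{it}:\alpha_0\le t\le\gamma_0\}$. For $0<t<\alpha_0/n_{k_0}$, let $k$ be the largest index with $n_kt<\alpha_0$; then $k\ge k_0$. With $n=n_{k+1}$ we get
\[
 \alpha_0\le nt<c'\alpha_0=\gamma_0 .
\]
This is the argument of Lemma~\ref{lem:subseq}, and it does not use $n_0=1$. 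It gives $nt\le\gamma_0=:B$ and $e^{int}\in\arc_{c'}$, hence
\[
 |1-e^{int}|\ge m_{\arc_{c'}}(1-z)=2\sin\frac{\pi}{c'+1}.
\]
Therefore $h_{\mathcal N}(B,t)\ge2\sin\frac{\pi}{c'+1}$ for all small $t$. Since $h_{\mathcal N}$ is monotone in $B$, we get $\alpha(\mathcal N)\ge2\sin\frac{\pi}{c'+1}$. Letting $c'\downarrow c$ gives \eqref{samp:quotient-lower}. The degenerate case $c=1$ is included and gives the bound $2$.

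\emph{Unbounded quotients.} Now assume $c_\infty(\bn)=\infty$. Fix $B>0$ and $\eta>0$; the aim is $\liminf_{t\downarrow0}h_{\mathcal N}(B,t)\le\eta$. Choose infinitely many indices $k$ with $n_{k+1}/n_k\ge R$, where $R$ is large, and take $t_k=B/(n_{k+1}-1/2)$, or simply any $t_k$ with $n_{k+1}t_k>B$ and $n_kt_k\le B/R$. A natural choice is $t_k=\sqrt{B\,\eta}\,/\,n_k$ together with $R>\sqrt{B/\eta}$; more directly, take $t_k=\eta/n_k$ and require $n_{k+1}\eta/n_k>B$, i.e.\ $R>B/\eta$. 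The sampling times admitted in $h_{\mathcal N}(B,t_k)$ satisfy $nt_k\le B$, so $n<n_{k+1}$ and hence $n\le n_k$. For these, $|1-e^{int_k}|\le nt_k\le\eta$. Since $t_k\to0$ along these indices, $\liminf_{t\downarrow0}h_{\mathcal N}(B,t)\le\eta$. As $\eta$ and then $B$ are arbitrary, $\alpha(\mathcal N)=0$.

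The only points needing care are two bookkeeping issues. First, $\mathcal N$ need not start at $1$, so the index $k$ in the lower-bound step must be taken after $k_0$, which the choice $t<\alpha_0/n_{k_0}$ guarantees. Second, in the second part the empty-max convention is harmless, since it gives $0\le\eta$. I expect no genuine obstacle; the main step is correctly transcribing Lemma~\ref{lem:subseq} to $t>0$ with the explicit bound $B=\gamma_0$.
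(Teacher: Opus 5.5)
Your proof is correct and follows the paper's argument. The lower bound is the same first-hitting-time argument as in Lemma~\ref{lem:subseq}, followed by letting $c'\downarrow c_\infty(\bn)$. The vanishing of $\alpha(\mathcal N)$ comes from placing $t$ in the scale gap between two consecutive sampling times with large quotient, so that every admissible $n$ is at most the smaller one.

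The only difference is cosmetic. The paper uses the single choice $t_j=(a_jb_j)^{-1/2}$, which works for every $B$ at once, whereas your final choice $t_k=\eta/n_k$ depends on $B$ and $\eta$. You should also drop the listed alternatives: for example, $t_k=\sqrt{B\eta}/n_k$ only gives the bound $\sqrt{B\eta}$, which is harmless but is not the stated $\eta$.
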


\begin{proof}
Suppose first that $c:=c_\infty(\bn)<\infty$. Fix $C>c$ and choose
$k_0$ so that $n_{k+1}\le Cn_k$ for all $k\ge k_0$.
Set $a=2\pi/(C+1)$. For $0<t<a/n_{k_0}$, let $k>k_0$ be the first
index with $n_kt\ge a$. Then
\[
 a\le n_kt\le Cn_{k-1}t<Ca=2\pi-a.
\]
Thus $n_kt<Ca$ and $|1-e^{in_kt}|\ge2\sin(a/2)$, whence
$\alpha(\mathcal N)\ge2\sin(\pi/(C+1))$. Letting $C\downarrow c$
gives \eqref{samp:quotient-lower}.

Suppose next that $c_\infty(\bn)=\infty$. Choose consecutive sampling
times $a_j=n_{k_j}$ and $b_j=n_{k_j+1}$, with $k_j\to\infty$, such
that $b_j/a_j\to\infty$, and put $t_j=(a_jb_j)^{-1/2}$.
Then $t_j\to0$, while
\[
 a_jt_j=\sqrt{a_j/b_j}\longrightarrow0,
 \qquad b_jt_j=\sqrt{b_j/a_j}\longrightarrow\infty.
\]
Fix $B>0$. For all sufficiently large $j$, $b_jt_j>B$.
Because $a_j$ and $b_j$ are consecutive in $\mathcal N$, every
$n\in\mathcal N$ with $nt_j\le B$ then satisfies $n\le a_j$. Hence
\[
 h_{\mathcal N}(B,t_j)\le a_jt_j\longrightarrow0.
\]
The inner lower limit in \eqref{samp:profile} is therefore $0$ for
every $B$, proving $\alpha(\mathcal N)=0$.
\end{proof}

The lower bound \eqref{samp:quotient-lower} explains the threshold in
Corollary~\ref{cor:sparse}(c). The exact value of $\alpha(\mathcal N)$
can, however, be strictly larger.

\begin{example}[quotients tending to one]\label{samp:ratio-one}
If $n_{k+1}/n_k\to1$, then $\alpha(\mathcal N)=2$.
Indeed, \eqref{samp:quotient-lower} gives
$\alpha(\mathcal N)\ge2\sin(\pi/2)=2$, and the reverse inequality is
part of the definition. Such sequences therefore satisfy the implication
in Corollary~\ref{samp:full-threshold}(i).
This includes the full sequence of positive integers and, for every
integer $d\ge1$, the sequence $n_k=(k+1)^d$.
In particular, sampling along the squares preserves the full threshold $2$.
\end{example}

\begin{proposition}[asymptotically geometric sampling]
\label{prop:samp-asymptotic-geometric}
Suppose that $n_{k+1}/n_k\to b$, where $b\ge2$ is an integer.
Then
\begin{equation}\label{samp:geometric-alpha}
 \alpha(\mathcal N)=a_b:=2\sin\frac{\pi}{b+1}.
\end{equation}
\end{proposition}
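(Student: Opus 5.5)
The lower bound $\alpha(\mathcal N)\ge a_b$ is immediate from Proposition~\ref{samp:quotients}, since here $c_\infty(\bn)=b$. The work lies in the upper bound $\alpha(\mathcal N)\le a_b$. For each fixed $B$ we will produce angles $t\downarrow0$ with
\[
 \limsup h_{\mathcal N}(B,t)\le a_b .
\]
The model case $n_k=b^k$ suggests the choice: take $t$ so that the sampled phases $n_kt$ sit on the points $\alpha b^{j}$ modulo $2\pi$, where $\alpha=2\pi/(b+1)$. These points are congruent to $\pm\alpha$ for $j\ge0$ and lie in $(0,\alpha/b]$ for $j<0$. So all such phases satisfy $|1-e^{i\cdot}|\le a_b$.

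Concretely, we fix $B$ and a large index $K$, and put $t_K:=\alpha/n_K$, so that $n_Kt_K=\alpha$. Write $n_{K+j}=n_K\prod_{i=0}^{j-1}r_{K+i}$ with $r_k:=n_{k+1}/n_k\to b$. Only indices with $n_{K+j}t_K\le B$ matter, that is, $0\le j\le J$ where $J$ depends only on $B$ (for $K$ large, the ratios exceed $3/2$, say).

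For $j\ge0$,
\[
 n_{K+j}t_K=\alpha\prod_{i<j}r_{K+i}=\alpha b^j(1+o(1))\qquad (K\to\infty),
\]
uniformly for $j\le J$. Since $\alpha b^j\equiv\pm\alpha\pmod{2\pi}$ and the phases stay bounded by $B$, this gives
\[
 \bigl|1-e^{in_{K+j}t_K}\bigr|\le a_b+o(1).
\]
For $j<0$ we have $n_{K+j}t_K\le n_{K-1}t_K\to\alpha/b<\alpha$. Because all $n<n_K$ in $\mathcal N$ give phases in $(0,\alpha/b+o(1))\subset(0,\alpha)$, these contribute at most $2\sin(\alpha/2)=a_b$ as well. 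Hence $h_{\mathcal N}(B,t_K)\le a_b+o(1)$. Since $t_K\to0$, we get $\liminf_{t\downarrow0}h_{\mathcal N}(B,t)\le a_b$ for every $B$, and so $\alpha(\mathcal N)\le a_b$.

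The main point of care is the uniformity of the $o(1)$ error over $0\le j\le J$. The relative error is $\prod_{i<j}(r_{K+i}/b)-1$, with at most $J(B)$ factors each tending to $1$, so it is controlled once $B$ is fixed before letting $K\to\infty$. We must also make sure that $|1-e^{is}|$ is continuous at the points $s=\alpha b^j$, which have absolute size at most $B$; this holds trivially. No spectral or operator input is needed.
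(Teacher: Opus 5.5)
Your proposal is correct and follows essentially the same route as the paper. The lower bound comes from Proposition~\ref{samp:quotients}. For the upper bound you use the same test angles $t_K=2\pi/((b+1)n_K)$: earlier sampling times give phases in $(0,2\pi/(b+1)]$, and the boundedly many later ones (the ratios eventually exceed some $c>1$) give phases tending to $2\pi b^j/(b+1)\equiv\pm2\pi/(b+1)$ modulo $2\pi$, with uniformity over the finitely many offsets handled as in the paper.
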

\begin{proof}
Proposition~\ref{samp:quotients} gives
$\alpha(\mathcal N)\ge a_b$. For the reverse inequality, put
$\beta=2\pi/(b+1)$ and $t_j=\beta/n_j$, so that $t_j\downarrow0$.
If $k\le j$, then $0<n_kt_j\le\beta<\pi$ and therefore
\[
 |1-e^{in_kt_j}|\le2\sin(\beta/2)=a_b.
\]
Fix $B>0$, and choose a number $c$ with $1<c<b$.
For all sufficiently large $j$, we have $n_{k+1}\ge cn_k$ whenever
$k\ge j$. Consequently, if $k=j+\ell>j$ and $n_kt_j\le B$, then
\[
 \beta c^\ell\le\beta\frac{n_{j+\ell}}{n_j}
 =n_{j+\ell}t_j\le B.
\]
Thus only finitely many positive offsets $\ell$ can contribute to
$h_{\mathcal N}(B,t_j)$, with their number bounded independently of
$j$. For each fixed such $\ell$,
\[
 n_{j+\ell}t_j
 =\beta\prod_{r=0}^{\ell-1}\frac{n_{j+r+1}}{n_{j+r}}
 \longrightarrow\beta b^\ell.
\]
The congruence $b^\ell\equiv(-1)^\ell\pmod{b+1}$ implies
\[
 |1-e^{i\beta b^\ell}|=2\sin\frac{\pi}{b+1}=a_b.
\]
Convergence over this finite list of offsets is uniform. Together
with the estimate for $k\le j$, this proves
\[
 \limsup_{j\to\infty}h_{\mathcal N}(B,t_j)\le a_b.
\]
In particular,
$\liminf_{t\downarrow0}h_{\mathcal N}(B,t)\le a_b$.
Since $B>0$ was arbitrary, the definition of $\alpha$ gives the
required upper bound.
\end{proof}

\begin{example}[geometric sequences and their asymptotic perturbations]
\label{samp:geometric}
The preceding proposition applies to $n_k=b^k$, to $n_k=b^k+k$,
and, after omission of an initial segment, to
$n_k=\lfloor cb^k\rfloor$ for every $c>0$.
The initial segment is omitted only to ensure that all terms are
positive and strictly increasing. 
Adding or deleting finitely many terms does not change $\alpha$.
For $b=2,3,4$, the critical values $a_b$ are, respectively,
$\sqrt3$, $\sqrt2$, and $2\sin(\pi/5)$.
Theorem~\ref{samp:exact} shows that, for each of these sequences and
each $1<q<2$, the sampled displacement bound $\le q$ detects the
Ritt property under the peripheral spectral assumption precisely
when $q<a_b$. The implication fails at $q=a_b$.

For $b\ge5$, one has $a_b\le1$, so no bound with $1<q<2$ has
this implication. The value of $\alpha$ alone does not determine
what happens at $q=1$. For the exact geometric sequence, the
normal contraction in Theorem~\ref{thm:sharp}(i) has sampled
displacement supremum $1$ and is not Ritt. 
\end{example}

\begin{example}[unbounded quotients]\label{samp:unbounded}
If $\sup_k n_{k+1}/n_k=\infty$, then
$c_\infty(\bn)=\infty$, since each finite initial list of quotients
is bounded. Proposition~\ref{samp:quotients} gives
$\alpha(\mathcal N)=0$. Thus for every $1<q<2$ the construction
\eqref{samp:damping-choice}--\eqref{samp:diagonal} yields a normal
contraction $T$ with $\sigma(T)\cap\T=\{1\}$ and
$\sup_{n\in\mathcal N}\|\Id-T^n\|\le q$ which is not Ritt.
This recovers the displacement assertion of Theorem~\ref{thm:unbounded}.
Examples include $n_k=(k+1)!$, whose successive quotients are $k+2$,
and the sequence obtained by adjoining $1$ to $\{2^{2^k}:k\ge0\}$,
whose successive quotients tend to infinity.
\end{example}

\begin{example}[the full threshold without quotients tending to one]
\label{samp:blocks}
Let $\bn$ be the increasing enumeration of
\begin{equation}\label{samp:block-sequence}
 \mathcal N=\bigcup_{j\ge0}
 \bigl([4^j,2\cdot4^j]\cap\N\bigr).
\end{equation}
Then $c_\infty(\bn)=2$, but $\alpha(\mathcal N)=2$.
Within each block successive quotients are $(n+1)/n$ and tend to $1$
as the blocks move to infinity. Every jump between blocks has quotient
$4^{j+1}/(2\cdot4^j)=2$, proving the assertion about $c_\infty$.

To compute $\alpha$, fix $0<t<1$ and choose $j\ge0$ so that
\[
 2\pi\le a:=4^jt<8\pi.
\]
The interval $[a,2a]$ has length at least $2\pi$, so it contains an
odd multiple $u$ of $\pi$. Choose an integer $n$ nearest to $u/t$.
Since both endpoints of $[4^j,2\cdot4^j]$ are integers, this choice
may be made within that interval. Thus $n\in\mathcal N$ and
\[
 |nt-u|\le t/2,\qquad nt\le2a<16\pi.
\]
It follows that
\[
 h_{\mathcal N}(16\pi,t)\ge|1-e^{int}|
 \ge2\cos(t/4)\longrightarrow2
 \qquad(t\downarrow0).
\]
Hence $\alpha(\mathcal N)=2$, and
Corollary~\ref{samp:full-threshold} applies.
In particular, $n_{k+1}/n_k\to1$ is sufficient but not necessary for
the full threshold. Moreover, the value $c_\infty(\bn)=2$ occurs both
here and for the dyadic sequence, although their values of $\alpha$
are $2$ and $\sqrt3$, respectively. The asymptotic quotient bound alone
therefore does not determine the exact displacement threshold.
\end{example}

\begin{remark}[the endpoint one and comparison with difference sampling]
\label{samp:endpoints}
For $q<1$, Lemma~\ref{lem:subone} gives $T=\Id$ for every sampling
sequence under the hypotheses considered here. At $q=1$, the sufficiency
proof of Theorem~\ref{samp:exact} still applies whenever
$\alpha(\mathcal N)>1$. Its necessity proof uses $q>1$ in
\eqref{samp:damping-choice}, so the endpoint requires a separate analysis. Section~\ref{sec:endpoint}
gives sufficient conditions on Banach spaces, contrasting examples, and
an exact criterion for normal contractions. Example~\ref{samp:geometric} does
provide endpoint counterexamples for every $b\ge5$.

Finally, Proposition~\ref{samp:quotients} shows that positivity of
$\alpha(\mathcal N)$ is equivalent to sequences with bounded quotients, which is the same as the condition
in Theorem~\ref{thm:rittsampling} for detecting the Ritt property through
sampled discrete differences. For displacement bounds with $1<q<2$,
the stronger, exact requirement is $q<\alpha(\mathcal N)$.
\end{remark}

\section{Endpoint displacement bounds}\label{sec:endpoint}
The endpoint bound $1$ can impose rigidity even when the quotients of
the sampling sequence are unbounded. We first give a Banach-space
criterion, then characterise the endpoint for normal contractions.

\begin{theorem}[Sectoriality from sampled displacement bounds]
\label{thm:endpoint-sectoriality}
Let $\mathcal N$ be an infinite set of positive integers containing $1$,
and define
\[
 \Theta_{\mathcal N}
 :=
 \bigl\{\theta\in[-\pi,\pi]:
       \cos(n\theta)\ge0 \text{ for every }n\in\mathcal N\bigr\},
 \qquad
 \beta_{\mathcal N}
 :=
 \max_{\theta\in\Theta_{\mathcal N}}|\theta|.
\]
Let $X$ be a complex Banach space and let $T\in\B(X)$.
Suppose that
\begin{equation}\label{eq:endpoint-sampled-bound}
 \sup_{n\in\mathcal N}\|\Id-T^n\|\le1.
\end{equation}
Then the following assertions hold.
\begin{enumerate}
\item
The operator $T$ is sectorial of angle $\beta_{\mathcal N}$.
More precisely,
\[
 \sigma(T)\subset
 \{0\}\cup
 \bigl\{re^{i\theta}:0<r\le1,\ \theta\in\Theta_{\mathcal N}\bigr\},
\]
and, writing
\[
 \Sigma_\omega
 :=
 \{z\in\C\setminus\{0\}:|\arg z|<\omega\},
 \qquad 0<\omega<\pi,
\]
we have
\[
 \sup_{\substack{z\ne0\\z\notin\Sigma_\omega}}
 |z|\,\|(z\Id-T)^{-1}\|<\infty
 \qquad(\beta_{\mathcal N}<\omega<\pi).
\]

\item
Suppose, in addition, that
\begin{equation}\label{eq:endpoint-phase-condition}
 \lambda\in\T,\qquad
 \sup_{n\in\mathcal N}|1-\lambda^n|\le\sqrt2
 \quad\Longrightarrow\quad \lambda=1.
\end{equation}
Then $T$ is sectorial of angle zero,
$\sigma(T)\subset[0,1]$, and
\begin{equation}\label{eq:endpoint-all-powers}
 \sup_{m\ge1}\|\Id-T^m\|\le1.
\end{equation}
In particular, $T$ is a Ritt operator.

\item
Condition~\eqref{eq:endpoint-phase-condition} is compatible with
unbounded quotients. For example, put
\[
 A_j=2^{2^j}\quad(j\ge1),
 \qquad
 \mathcal N
 =
 \{1\}\cup
 \bigcup_{j\ge1}
 \{A_j,A_j+1,\ldots,A_j+j\}.
\]
Then $\mathcal N$ satisfies
\eqref{eq:endpoint-phase-condition}, whereas its increasing
enumeration $(n_k)_{k\ge0}$ satisfies
\[
 \sup_{k\ge0}\frac{n_{k+1}}{n_k}=\infty.
\]
Consequently, there is a sequence $\mathcal N$ with unbounded quotients such that every operator $T$ satisfying \eqref{eq:endpoint-sampled-bound} is Ritt.
\end{enumerate}
\end{theorem}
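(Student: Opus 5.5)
The plan is to read \eqref{eq:endpoint-sampled-bound} as a family of semigroup conditions. For $n\in\mathcal N$ write $T^n=\Id-A_n$ with $\|A_n\|\le1$. Then $e^{-tT^n}=e^{-t}e^{tA_n}$ is a contraction for every $t\ge0$, so $-T^n$ generates a contraction semigroup. The Hille--Yosida estimate then gives
\[
 \|\Res(w,T^n)\|\le\frac1{-\operatorname{Re}w}\qquad(\operatorname{Re}w<0,\ n\in\mathcal N).
\]

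\emph{Part (1).} The spectral inclusion is scalar. If $\lambda=re^{i\theta}\in\sigma(T)$, spectral mapping gives $|1-\lambda^n|\le1$ for $n\in\mathcal N$, that is $2r^n\cos(n\theta)\ge r^{2n}$. Hence $r\le1$ (from $n=1$), and $\cos(n\theta)\ge0$ for all $n\in\mathcal N$ when $r>0$.

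For the resolvent bound, fix $\beta_{\mathcal N}<\omega<\pi$ and consider the compact set of directions $\{\varphi:\omega\le|\varphi|\le\pi\}$. This set is disjoint from the closed set $\Theta_{\mathcal N}$. By compactness there are finitely many $n_1,\dots,n_p\in\mathcal N$ and $c>0$ such that every such $\varphi$ has some $n_i$ with $\cos(n_i\varphi)\le-c$.

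For $z=\rho e^{i\varphi}$ with $\rho\ge3$ the Neumann series already gives $|z|\|\Res(z,T)\|\le3$, since $\|T\|\le2$. For $\rho<3$ I would use Lemma~\ref{lem:factor}, $\Res(z,T)=Q_n(z,T)\Res(z^n,T^n)$, together with $\|\Res(z^n,T^n)\|\le1/(c\rho^n)$.

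The main obstacle is the factor $Q_n(z,T)$ as $\rho\to0$. The crude bound $\sum_j\rho^{n-1-j}\|T^j\|$ gives only $\|\Res(z,T)\|\lesssim\rho^{-n}$, whereas sectoriality requires $\rho^{-1}$. My first attempt is to avoid $Q_n$ near $0$. From the $n=1$ case, $T$ is sectorial of angle $\pi/2$, and for every $n\in\mathcal N$, $T^n$ is sectorial of angle $\pi/2$ as well. Within the sectorial functional calculus, $T$ is then the $n$-th root of $T^n$, once the spectral localisation of part (1) is used to select the principal branch. The scaling rule for fractional powers (Haase, Ch.~3) would give that $T=(T^n)^{1/n}$ is sectorial of angle $\pi/(2n)$ in the directions where $\cos(n\varphi)<0$.

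Intersecting these finitely many angular estimates over $n_1,\dots,n_p$ gives the bound on $\{z\ne0:z\notin\Sigma_\omega\}$ near $0$. On the bounded region away from $0$, i.e.\ $\delta\le\rho\le3$, the $Q_n$ factor is harmless, so the estimate above applies there.

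\emph{Part (2).} Condition \eqref{eq:endpoint-phase-condition} forces $\Theta_{\mathcal N}=\{0\}$. Indeed, $\cos(n\theta)\ge0$ for all $n$ is equivalent to $|1-e^{in\theta}|\le\sqrt2$ for all $n$, which forces $e^{i\theta}=1$. So $\beta_{\mathcal N}=0$, and part (1) gives $\sigma(T)\subset[0,1]$ and sectoriality of every positive angle.

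To obtain \eqref{eq:endpoint-all-powers}, I would show that $-T^m$ generates a contraction semigroup for every $m$, in the form $\|e^{t(\Id-T^m)}\|\le e^{t}$. Equivalently, $\|\Id-sT^m\|\le1+o(s)$ is upgraded to $\|\Id-T^m\|\le1$ via a Lumer--Phillips type argument. This needs an additional idea, which I expect to be the second real difficulty. The Ritt property then follows from Theorem~\ref{thm:local}(iii) applied with $q=1$, since $\sigma(T)\cap\T\subset\{1\}$.

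\emph{Part (3).} This part is elementary. Let $\theta\ne0$ with $\cos(n\theta)\ge0$ for all $n\in\mathcal N$. Taking $n=1$ gives $|\theta|\le\pi/2$. Now take $j$ with $j|\theta|>\pi$. The points $(A_j+i)\theta$, $0\le i\le j$, advance in steps of size $|\theta|\le\pi/2<\pi$ and span more than $\pi$. Hence one of them lies in an open arc on which the cosine is negative, which is a contradiction. For the quotients, $A_{j+1}/(A_j+j)=2^{2^{j+1}}/(2^{2^j}+j)\to\infty$. The final sentence of part (3) follows from part (2).
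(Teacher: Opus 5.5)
\textbf{Part (1): the estimate near $0$.} Your architecture for part (1) is the paper's: the Hille--Yosida bound for $\Res(z^n,T^n)$, the factorisation through $Q_n(z,T)$, and finitely many exponents chosen by compactness. You also locate the difficulty correctly at $z\to0$, but your way around it fails. The step ``$T=(T^n)^{1/n}$, with the spectral localisation selecting the principal branch'' is not available. The composition rule gives $(T^n)^{1/n}=T$ only when $T$ is already sectorial of angle $<\pi/n$, which is more than you are trying to prove. The spectrum cannot select the branch either, because $\Theta_{\mathcal N}$ need not lie in $(-\pi/n,\pi/n)$. For example, take $\mathcal N=\{4^k:k\ge0\}$ and $T=\tfrac12e^{2\pi i/5}$ on $\C$. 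The congruence $4^k\equiv(-1)^k\pmod 5$ gives $\cos(4^k\cdot 2\pi/5)=\cos(2\pi/5)>\tfrac14\ge\tfrac12|T|^{4^k}$, so the hypothesis holds. Yet $(T^4)^{1/4}=\tfrac12e^{-i\pi/10}\ne T$. If the scaling rule applied, it would give angle $\pi/(2n)$ in every direction, hence angle $0$ for every $\mathcal N$; this contradicts $\sigma(T)\ni\tfrac12e^{2\pi i/5}$.

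The missing idea is a moment inequality that absorbs $Q_n(z,T)$. From $\|e^{-tT}\|\le1$ and Taylor's formula, $\|Tx\|\le\frac2h\|x\|+\frac h2\|T^2x\|$, so $\|Tx\|^2\le4\|x\|\,\|T^2x\|$. Log-convexity of $j\mapsto2^{j^2}\|T^jy\|$ then gives $\|T^jy\|\le2^{j(n-j)}\|y\|^{1-j/n}\|T^ny\|^{j/n}$; the general moment inequality for the sectorial operator $T$ would serve equally. Suppose $\operatorname{Re}(z^n)\le-\delta|z|^n$. Take $y=\Res(z^n,T^n)x$ and use $\|\Res(z^n,T^n)\|\le\delta^{-1}|z|^{-n}$ and $\|T^n\Res(z^n,T^n)\|\le1+\delta^{-1}$. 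This yields $\|T^j\Res(z^n,T^n)\|\lesssim|z|^{j-n}$, so every term $z^{n-1-j}T^j\Res(z^n,T^n)$ is $O(|z|^{-1})$ uniformly in $|z|$. There is also a smaller slip: $n=1$ gives only $r\le2\cos\theta\le2$. The bound $r\le1$ comes from $r^n\le2$ for arbitrarily large $n\in\mathcal N$.

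\textbf{Part (2): all powers.} You leave $\sup_m\|\Id-T^m\|\le1$ open, and the suggested semigroup route cannot close it. Contractivity of $e^{-tT^m}$ is strictly weaker than $\|\Id-T^m\|\le1$, even with spectrum in $[0,1]$ and angle zero. For instance, $B=\bigl(\begin{smallmatrix}1/2&1\\0&1/2\end{smallmatrix}\bigr)$ on Euclidean $\C^2$ is accretive with $\sigma(B)=\{1/2\}$, but $\|\Id-B\|=(1+\sqrt2)/2$.

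The paper's argument is where your fractional-power idea becomes legitimate. Once angle zero is established, fix $m$ and choose $n\in\mathcal N$ with $n>m$. The composition rule lets you write $T^m=A^\alpha$ with $A=T^n=\Id-S$, $\|S\|\le1$ and $\alpha=m/n\in(0,1)$. The Balakrishnan formula, combined with the Neumann series for $A(t+A)^{-1}$, gives $A^\alpha=\Id-\sum_{j\ge1}a_j^{(\alpha)}S^j$ with $a_j^{(\alpha)}=(-1)^{j+1}\binom{\alpha}{j}>0$ and $\sum_ja_j^{(\alpha)}=1$. Hence $\|\Id-T^m\|\le1$.

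Your part (3) is correct. Your arithmetic-progression argument is a more elementary alternative to the paper's comparison of $\sum_\ell c_\ell$ with $\sum_\ell c_\ell^2$: steps of size $|\theta|\le\pi/2$ spanning more than $\pi$ must enter an open arc where the cosine is negative.
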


\begin{proof}
For $z\in\rho(T)$, write $R(z,T)=(z\Id-T)^{-1}$.

We first prove the spectral inclusion in~(1).
Let $z=re^{i\theta}\in\sigma(T)\setminus\{0\}$.
The sampled estimate implies $\|T^n\|\le2$ for $n\in\mathcal N$.
Consequently $r(T)\le2^{1/n}$ for arbitrarily large $n$, and hence
$r(T)\le1$. Thus $r\le1$, and spectral mapping together with
\eqref{eq:endpoint-sampled-bound} gives
\[
 |1-z^n|^2
 =
 1+r^{2n}-2r^n\cos(n\theta)
 \le1
 \qquad(n\in\mathcal N).
\]
Thus
\begin{equation}\label{eq:endpoint-spectral-angle}
 \cos(n\theta)\ge\frac{r^n}{2}>0
 \qquad(n\in\mathcal N),
\end{equation}
so $\theta\in\Theta_{\mathcal N}$.
Notice also that $1\in\mathcal N$ implies
$\Theta_{\mathcal N}\subset[-\pi/2,\pi/2]$, and hence
$0\le\beta_{\mathcal N}\le\pi/2$.

To obtain the sectorial resolvent bound, we use an elementary moment
inequality. Since $\|\Id-T\|\le1$,
\[
 \|e^{-tT}\|
 =
 e^{-t}\|e^{t(\Id-T)}\|
 \le1
 \qquad(t\ge0).
\]
Taylor's formula yields
\[
 e^{-hT}x
 =
 x-hTx+\int_0^h(h-s)e^{-sT}T^2x\,ds
 \qquad(h>0),
\]
and therefore
\[
 \|Tx\|
 \le\frac{2}{h}\|x\|+\frac h2\|T^2x\|.
\]
Optimising over $h>0$ gives the following moment inequality
\begin{equation}\label{eq:endpoint-kallman-rota}
 \|Tx\|^2\le4\|x\|\,\|T^2x\|.
\end{equation}
For a fixed $y\in X$, apply this inequality to $T^{j-1}y$.
Writing $a_j=\|T^jy\|$, we obtain
\[
 a_j^2\le4a_{j-1}a_{j+1}.
\]
Hence the sequence $b_j=2^{j^2}a_j$ is log-convex. Interpolation
between its zeroth and $n$th terms gives
\begin{equation}\label{eq:endpoint-moment}
 \|T^jy\|
 \le
 2^{j(n-j)}
 \|y\|^{1-j/n}\|T^ny\|^{j/n},
 \qquad 1\le j<n.
\end{equation}
If $\|T^ny\|=0$, inequality
\eqref{eq:endpoint-kallman-rota} propagates this vanishing backwards,
so \eqref{eq:endpoint-moment} remains valid.

Fix $n\in\mathcal N$. Again,
\[
 \|e^{-tT^n}\|\le1\qquad(t\ge0).
\]
The Laplace representation of the resolvent therefore gives
\[
 \|R(w,T^n)\|\le\frac1{-\Re w}
 \qquad(\Re w<0).
\]
Suppose that $z\ne0$ and
\[
 \Re(z^n)\le-\delta|z|^n
\]
for some $\delta>0$. Then
\begin{equation}\label{eq:endpoint-power-resolvent}
 \|R(z^n,T^n)\|\le\delta^{-1}|z|^{-n},
 \qquad
 \|T^nR(z^n,T^n)\|\le1+\delta^{-1}.
\end{equation}
Applying \eqref{eq:endpoint-moment} to
$y=R(z^n,T^n)x$ gives
\[
 \|T^jR(z^n,T^n)\|
 \le
 2^{j(n-j)}
 \delta^{-1}(1+\delta)^{j/n}|z|^{j-n},
 \qquad 1\le j<n.
\]
The same estimate for $j=0$ is simply the first inequality in
\eqref{eq:endpoint-power-resolvent}.

Since $z^n\in\rho(T^n)$, spectral mapping gives $z\in\rho(T)$.
The polynomial factorisation of $z^n\Id-T^n$ yields
\[
 R(z,T)
 =
 \sum_{j=0}^{n-1}
 z^{n-1-j}T^jR(z^n,T^n).
\]
Consequently,
\begin{equation}\label{eq:endpoint-resolvent-transfer}
 |z|\,\|R(z,T)\|
 \le
 K_{n,\delta}
 :=
 \delta^{-1}
 \sum_{j=0}^{n-1}
 2^{j(n-j)}(1+\delta)^{j/n}.
\end{equation}
This estimate is uniform in $|z|$.

Let $\beta_{\mathcal N}<\omega<\pi$. For every
$\theta$ in the compact set
\[
 K_\omega
 :=
 \{\theta\in[-\pi,\pi]:|\theta|\ge\omega\},
\]
the definition of $\Theta_{\mathcal N}$ provides an exponent
$n\in\mathcal N$ such that $\cos(n\theta)<0$.
By continuity and compactness, there are finitely many exponents
$p_1,\ldots,p_s\in\mathcal N$ and a number $\delta>0$ such that
\[
 \min_{1\le i\le s}\cos(p_i\theta)\le-\delta
 \qquad(\theta\in K_\omega).
\]
For every non-zero $z\notin\Sigma_\omega$, we may therefore apply
\eqref{eq:endpoint-resolvent-transfer} with one of these exponents.
It follows that
\[
 \sup_{\substack{z\ne0\\z\notin\Sigma_\omega}}
 |z|\,\|R(z,T)\|
 \le
 \max_{1\le i\le s}K_{p_i,\delta}
 <\infty.
\]
This proves~(1).

Suppose now that \eqref{eq:endpoint-phase-condition} holds.
The identity
\[
 |1-e^{in\theta}|^2=2-2\cos(n\theta)
\]
shows that this condition is equivalent to
$\Theta_{\mathcal N}=\{0\}$. Assertion~(1) therefore implies that
$T$ is sectorial of angle zero and $\sigma(T)\subset[0,1]$.

Since $T$ is sectorial of angle zero, every positive integer power $T^n$
is also sectorial of angle zero. The composition rule for sectorial
fractional powers, see \cite[Chapter~3]{Haase}, therefore gives, for all
positive integers $m,n$,
\begin{equation}\label{eq:endpoint-power-composition}
 (T^n)^{m/n}=T^m.
\end{equation}
Here all fractional powers are defined by the sectorial functional
calculus, and the identity remains valid when $0\in\sigma(T)$. 
We note that the same idea of using fractional powers occurred in the proof of \cite[Theorem 2.11]{BDG}.

For completeness, we verify the displacement estimate for these
fractional powers at the endpoint of the norm condition.
Fix $m\ge1$, choose $n\in\mathcal N$ with $n>m$, and put
\[
 \alpha=\frac mn,\qquad A=T^n,\qquad S=\Id-A.
\]
Thus $0<\alpha<1$ and $\|S\|\le1$. The Balakrishnan formula gives
\[
 A^\alpha
 =
 \frac{\sin(\pi\alpha)}{\pi}
 \int_0^\infty
 t^{\alpha-1}A(t\Id+A)^{-1}\,dt.
\]
For $t>0$, the Neumann series yields
\[
 A(t\Id+A)^{-1}
 =
 \frac1{1+t}\Id
 -
 \sum_{j\ge1}\frac{t}{(1+t)^{j+1}}S^j.
\]
Termwise integration is justified by
\[
 \sum_{j\ge1}\frac{t}{(1+t)^{j+1}}
 =
 \frac1{1+t},
 \qquad
 \int_0^\infty\frac{t^{\alpha-1}}{1+t}\,dt<\infty.
\]
We obtain the norm-convergent series
\[
 A^\alpha
 =
 \Id-\sum_{j\ge1}a_j^{(\alpha)}S^j,
\]
where
\[
 a_j^{(\alpha)}
 =
 \frac{\sin(\pi\alpha)}{\pi}
 \int_0^\infty\frac{t^\alpha}{(1+t)^{j+1}}\,dt
 =
 (-1)^{j+1}\binom{\alpha}{j}>0.
\]
Moreover,
\[
 \sum_{j\ge1}a_j^{(\alpha)}
 =
 \frac{\sin(\pi\alpha)}{\pi}
 \int_0^\infty\frac{t^{\alpha-1}}{1+t}\,dt
 =1.
\]
It follows that
\[
 \|\Id-A^\alpha\|
 \le
 \sum_{j\ge1}a_j^{(\alpha)}\|S^j\|
 \le1.
\]
Using \eqref{eq:endpoint-power-composition}, we conclude that
$\|\Id-T^m\|\le1$. Since $m$ was arbitrary,
\eqref{eq:endpoint-all-powers} follows.
Theorem~\ref{thm:sqrt3} now implies that $T$ is Ritt, proving~(2).

Finally, consider the set $\mathcal N$ in~(3).
The successive blocks are disjoint, and the quotient between the
first element of one block and the last element of the preceding
block is
\[
 \frac{A_{j+1}}{A_j+j}
 =
 \frac{A_j^2}{A_j+j}
 \longrightarrow\infty.
\]
Thus its increasing enumeration has unbounded quotients.

To verify \eqref{eq:endpoint-phase-condition}, suppose that
$\lambda\in\T$ satisfies
\[
 \Re(\lambda^n)\ge0\qquad(n\in\mathcal N).
\]
Since $1\in\mathcal N$, we have $\lambda\ne-1$.
Suppose that $\lambda\ne1$. Then $\lambda^2\ne1$.
For each $j\ge1$, put
\[
 c_\ell=\Re(\lambda^{A_j+\ell}),
 \qquad 0\le\ell\le j.
\]
The definition of $\mathcal N$ gives $0\le c_\ell\le1$, and hence
\[
 \sum_{\ell=0}^j c_\ell^2
 \le
 \sum_{\ell=0}^j c_\ell.
\]
The geometric-series formula gives
\[
 \sum_{\ell=0}^j c_\ell
 \le\frac2{|1-\lambda|}.
\]
On the other hand,
\[
 \sum_{\ell=0}^j c_\ell^2
 =
 \frac{j+1}{2}
 +
 \frac12\Re\left(
   \lambda^{2A_j}
   \sum_{\ell=0}^j\lambda^{2\ell}
 \right)
 \ge
 \frac{j+1}{2}-\frac1{|1-\lambda^2|}.
\]
These inequalities contradict one another as $j\to\infty$.
Therefore $\lambda=1$, proving
\eqref{eq:endpoint-phase-condition}.

Assertion~(2) applies to this set $\mathcal N$, so every bounded
operator satisfying \eqref{eq:endpoint-sampled-bound} is Ritt.
This proves~(3).
\end{proof}

\begin{remark}[the Hilbert-space conclusion]
\label{rem:endpoint-positive}
Under \eqref{eq:endpoint-phase-condition}, a stronger conclusion is
available on Hilbert space. Indeed, \cite[Corollary~6.1]{BDG} states
that every Hilbert-space operator $T$ satisfying
\begin{equation}\label{eq:BDG-gap}
 \sup_{n\in\mathcal N}\|\Id-T^n\|\le1
\end{equation}
is a positive contraction. It is therefore Ritt; for example, the
spectral theorem gives
\[
 m\|T^{m-1}(\Id-T)\|
 \le\max_{0\le s\le1}m s^{m-1}(1-s)\le1
 \qquad(m\ge1).
\]
Theorem~\ref{thm:endpoint-sectoriality} provides a Banach-space
conclusion under the same arithmetic condition, without any power
boundedness assumption.
\end{remark}

\begin{proposition}[divisibility chains at the endpoint]
\label{prop:endpoint-divisibility}
Let $(n_k)$ be a strictly increasing sequence of positive integers
such that $n_k$ divides $n_{k+1}$ for every $k$.
Suppose that $n_j/n_{j-1}\ge6$ for infinitely many indices $j$.
Then there is a normal contraction $T$ on $\ell^2$ such that
\[
 \sigma(T)\cap\T=\{1\},\qquad
 \sup_k\|\Id-T^{n_k}\|=1,
\]
but $T$ is not Ritt. In particular, this conclusion holds for every
divisibility chain with unbounded successive quotients.
\end{proposition}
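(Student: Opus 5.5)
The plan is to imitate the diagonal construction of Theorem~\ref{thm:sharp}(i), with the geometric sequence replaced by the divisibility chain. We will choose the rotation angles so that along sampling times at or beyond a large jump the phases become exactly trivial, while before the jump they remain inside the arc where $|1-e^{is}|\le1$.

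\emph{Construction.} Let $j_1<j_2<\cdots$ be indices with $n_{j_m}/n_{j_m-1}\ge6$; since $n_k\to\infty$, also $n_{j_m}\to\infty$. Put
\[
 \theta_m=\frac{2\pi}{n_{j_m}},\qquad \rho_m=e^{-c_m\theta_m},\qquad \mu_m=\rho_me^{i\theta_m},
\]
with $c_m\in(0,1]$ and $c_m\to0$. Let $T$ be the diagonal operator on $\ell^2$ with eigenvalues $\mu_m$. Then $T$ is a normal contraction, $\mu_m\to1$, and $\sigma(T)=\{\mu_m\}\cup\{1\}\subset\D\cup\{1\}$.

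\emph{The displacement bound.} By normality, $\|\Id-T^{n_k}\|=\sup_m|1-\mu_m^{n_k}|$, so it suffices to bound each $|1-\mu_m^{n_k}|$ by $1$. We use the radial convexity inequality from the proof of Theorem~\ref{thm:sharp}: $|1-\rho z|\le\max\{1,|1-z|\}$ for $0\le\rho\le1$ and $z\in\T$. We then split into two cases.
\begin{enumerate}[label=\textup{(\alph*)}]
\item If $k\ge j_m$, divisibility gives $n_{j_m}\mid n_k$, hence $n_k\theta_m\in2\pi\Z$. Thus $\mu_m^{n_k}=\rho_m^{n_k}\in(0,1)$ and $|1-\mu_m^{n_k}|<1$.
\item If $k<j_m$, then $n_k\le n_{j_m-1}\le n_{j_m}/6$, so $0<n_k\theta_m\le\pi/3$. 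Hence $|1-e^{in_k\theta_m}|\le2\sin(\pi/6)=1$, and radial convexity gives $|1-\mu_m^{n_k}|\le1$.
\end{enumerate}
This gives $\sup_k\|\Id-T^{n_k}\|\le1$. For equality, fix $m$ and let $k\to\infty$: then $\mu_m^{n_k}\to0$, so $|1-\mu_m^{n_k}|\to1$.

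\emph{Failure of the Ritt property.} We have $1-|\mu_m|\le c_m\theta_m$, while $|1-\mu_m|\ge\rho_m\sin\theta_m\ge\theta_m/\pi$ for large $m$. Hence $|1-\mu_m|/(1-|\mu_m|)\to\infty$. So $\sigma(T)$ lies in no Stolz domain at $1$, and $T$ is not Ritt; alternatively, Proposition~\ref{prop:boundary} applied at the points $e^{i\theta_m}$ gives the same conclusion. For the final sentence of the statement: if the quotients of a divisibility chain are unbounded, then infinitely many of them are at least $6$, so the hypothesis holds.

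The only delicate point is case (b). The factor $6$ is exactly what places the pre-jump phases $n_k\theta_m$ in $(0,\pi/3]$, where $|1-e^{is}|\le1$. Divisibility is exactly what kills the post-jump phases. No estimate at intermediate indices is needed, because every $k$ falls into case (a) or case (b).
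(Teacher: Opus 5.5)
Your proposal is correct and follows essentially the same route as the paper. The paper uses the same diagonal operator with angles $2\pi/n_{j_m}$ taken at the large jumps and the same two-case split: divisibility kills the phases for $k\ge j_m$, and the factor $6$ confines them to $(0,\pi/3]$ for $k<j_m$. It then uses the same radial convexity and the same tangential approach to $1$ to show that $T$ is not Ritt. The only difference is cosmetic: the paper takes the radial damping $e^{-t_l^2}$, which amounts to your construction with $c_m=\theta_m$.
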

\begin{proof}
Choose strictly increasing indices $j_l$ such that
$n_{j_l}/n_{j_l-1}\ge6$, and put
\[
 t_l=\frac{2\pi}{n_{j_l}},\qquad
 z_l=e^{-t_l^2+it_l},\qquad Te_l=z_le_l.
\]
Then $t_l\downarrow0$, $|z_l|<1$, and $z_l\to1$, so $T$ is a
normal contraction with
$\sigma(T)=\{z_l:l\ge1\}\cup\{1\}$.
For $k<j_l$ we have
\[
 0<n_kt_l\le2\pi\frac{n_{j_l-1}}{n_{j_l}}\le\frac\pi3,
\]
and hence $|1-e^{in_kt_l}|\le1$.
For $k\ge j_l$, divisibility gives $e^{in_kt_l}=1$.
Thus, in both cases, writing $r=e^{-n_kt_l^2}$ and using radial
convexity, we obtain
\[
 |1-z_l^{n_k}|
 \le(1-r)+r|1-e^{in_kt_l}|\le1.
\]
Normality gives $\sup_k\|\Id-T^{n_k}\|\le1$.
For any fixed $l$, $z_l^{n_k}\to0$ as $k\to\infty$, proving
the reverse inequality.

Finally, for all sufficiently large $l$, $0<t_l<\pi$ and
\[
 |e^{it_l}-1|\,\|\Res(e^{it_l},T)\|
 \ge\frac{|e^{it_l}-1|}{1-e^{-t_l^2}}
 \ge\frac{2}{\pi t_l}\longrightarrow\infty.
\]
Here $e^{it_l}\in\rho(T)$ because the only unimodular point of
$\sigma(T)$ is $1$. Proposition~\ref{prop:boundary} therefore
shows that $T$ is not Ritt.
\end{proof}

\begin{example}[factorial sampling]\label{ex:endpoint-factorial}
For $\mathcal N=\{k!:k\ge1\}$, an explicit choice in the preceding
proof is
\[
 t_j=\frac{2\pi}{j!},\qquad z_j=e^{-t_j^2+it_j}
 \quad(j\ge6).
\]
Indeed, if $k<j$, then $k!t_j\le2\pi/j\le\pi/3$; if $k\ge j$,
then $k!t_j$ is an integer multiple of $2\pi$.
The diagonal operator with entries $z_j$ consequently satisfies
the endpoint bound with equality and is not Ritt.
The same conclusion holds for
$\{1\}\cup\{2^{2^j}:j\ge1\}$, by
Proposition~\ref{prop:endpoint-divisibility}.
In contrast, adjoining the blocks of consecutive integers in
Theorem~\ref{thm:endpoint-sectoriality}(3) makes the endpoint
bound imply the Ritt property for every bounded operator.
\end{example}

\subsection{An exact endpoint criterion for normal contractions}
\label{subsec:normal-endpoint}
For an infinite set $\mathcal N$ of positive integers, define
the extended nonnegative function
\begin{equation}\label{eq:endpoint-kappa}
 \kappa_{\mathcal N}(t)
 :=\sup_{n\in\mathcal N}
 \frac{[-\log(2\cos nt)]_+}{n}
 \qquad(t\in\mathbb R).
\end{equation}
Here $[x]_+=\max\{x,0\}$, and the expression is defined to be
$+\infty$ if $\cos(nt)\le0$ for at least one $n\in\mathcal N$.
It may also equal $+\infty$ when every cosine is positive but
the supremum diverges. The function is even and $2\pi$-periodic,
and $\kappa_{\mathcal N}(0)=0$.

\begin{theorem}[the normal endpoint criterion]
\label{thm:normal-endpoint}
The following assertions are equivalent.
\begin{enumerate}[label=\textup{(\roman*)}]
\item Every normal contraction $T$ on a complex Hilbert space
such that
\[
 \sigma(T)\cap\T\subset\{1\},\qquad
 \sup_{n\in\mathcal N}\|\Id-T^n\|\le1
\]
is a Ritt operator.
\item The same assertion holds for diagonal contractions on $\ell^2$.
\item The scalar function $\kappa_{\mathcal N}$ satisfies
\begin{equation}\label{eq:endpoint-kappa-condition}
 \liminf_{t\downarrow0}\frac{\kappa_{\mathcal N}(t)}{t}>0.
\end{equation}
\end{enumerate}
The value $+\infty$ is allowed in \eqref{eq:endpoint-kappa-condition}.
If this condition fails, the diagonal operator in \textup{(ii)} can
be chosen with sampled displacement supremum exactly $1$.
\end{theorem}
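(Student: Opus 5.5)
The whole argument rests on one scalar computation, which translates the endpoint displacement bound into a condition on the spectrum. For $z=e^{-s+it}$ with $s\ge0$ and $n\ge1$,
\[
 |1-z^n|^2=1+e^{-2ns}-2e^{-ns}\cos(nt),
\]
so $|1-z^n|\le1$ holds if and only if $e^{-ns}\le2\cos(nt)$. This forces $\cos(nt)>0$ and then reads $s\ge-\log(2\cos nt)/n$. Taking the supremum over $n\in\mathcal N$ shows that
\[
 \sup_{n\in\mathcal N}|1-z^n|\le1\iff s\ge\kappa_{\mathcal N}(t),
\]
where $\kappa_{\mathcal N}(t)=+\infty$ excludes the point. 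The point $z=0$ is always admissible. The admissible region is monotone in $s$: if $z=e^{-s+it}$ is admissible, so is $e^{-s'+it}$ for every $s'\ge s$. For a normal operator, $\|\Id-T^n\|=\sup_{z\in\sigma(T)}|1-z^n|$. Hence the hypothesis $\sup_{n\in\mathcal N}\|\Id-T^n\|\le1$ is exactly the statement that $\sigma(T)\setminus\{0\}$ lies in $\{e^{-s+it}: s\ge\kappa_{\mathcal N}(t)\}$. The implication (i)$\Rightarrow$(ii) is trivial.

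\emph{(iii)$\Rightarrow$(i).} Suppose $\kappa_{\mathcal N}(t)\ge ct$ for $0<t<t_0$. Since $\kappa_{\mathcal N}$ is even, $\kappa_{\mathcal N}(t)\ge c|t|$ for $0<|t|<t_0$. Let $T$ be a normal contraction with $\sigma(T)\cap\T\subset\{1\}$ and the endpoint bound. The spectrum is compact and meets $\T$ only at $1$, so there is $\rho<1$ such that every spectral point with $|z|>\rho$ has small argument, say $|\arg z|<t_0$. For such points $z=e^{-s+it}$ we get $s\ge c|t|$. Consequently $|1-z|\lesssim s+|t|\lesssim s\lesssim1-|z|$, so $\sigma(T)$ lies in a Stolz domain at $1$. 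For normal $T$ we have $\|\Res(\xi,T)\|=1/\dist(\xi,\sigma(T))$. An elementary geometric estimate gives $\dist(\xi,\Sigma)\gtrsim|\xi-1|$ for $\xi\in\T$ and $\Sigma$ a Stolz domain union a disc of radius $\rho<1$. Proposition~\ref{prop:boundary} (a normal contraction is power bounded) then shows that $T$ is Ritt.

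\emph{(ii)$\Rightarrow$(iii), by contraposition.} If the $\liminf$ in \eqref{eq:endpoint-kappa-condition} is $0$, choose $t_j\downarrow0$ with $\kappa_{\mathcal N}(t_j)\le t_j/j$; in particular these values are finite. Set $s_j:=\kappa_{\mathcal N}(t_j)+t_j^2>0$, so that $s_j/t_j\to0$. Let $z_j=e^{-s_j+it_j}$ and let $T$ be the diagonal operator $Te_j=z_je_j$. By the monotonicity in $s$, each $z_j$ is admissible, so $\sup_{n\in\mathcal N}\|\Id-T^n\|\le1$. Moreover $|z_j|<1$ and $z_j\to1$, so $\sigma(T)\cap\T=\{1\}$. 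Since $\mathcal N$ is infinite, $z_1^n\to0$ along $\mathcal N$, which gives $|1-z_1^n|\to1$; hence the sampled displacement supremum equals $1$. Finally,
\[
 |e^{it_j}-1|\,\|\Res(e^{it_j},T)\|\ge\frac{|e^{it_j}-1|}{1-e^{-s_j}}\ge\frac{2t_j/\pi}{s_j}\to\infty,
\]
so $T$ is not Ritt by Proposition~\ref{prop:boundary}. This also establishes the final sentence of the theorem.

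The main point needing care is the bookkeeping in the scalar equivalence: cases where $\cos(nt)\le0$ (the point is excluded), where $\kappa_{\mathcal N}=+\infty$ through a divergent supremum, and the point $z=0$. The other step requiring care is the Stolz-domain-to-boundary-resolvent estimate in (iii)$\Rightarrow$(i). There I would bound $\dist(e^{i\tau},z)$ from below by $\max\{1-|z|,\ \tfrac{2}{\pi}|\tau-t|\}$, up to constants, and then use $|\tau|\le|\tau-t|+|t|$ together with $|t|\lesssim1-|z|$.
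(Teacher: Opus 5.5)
Your proposal is correct and follows the paper's proof essentially step for step. It uses the same scalar equivalence $\sup_{n\in\mathcal N}|1-e^{-sn+int}|\le1\iff s\ge\kappa_{\mathcal N}(t)$, the same Stolz-domain estimate for (iii)$\Rightarrow$(i), and the same diagonal counterexample with $s_j=\kappa_{\mathcal N}(t_j)+t_j^2$ for (ii)$\Rightarrow$(iii).

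The only difference is cosmetic. The paper bounds $|z-1|\,\|\Res(z,T)\|\le1+K$ directly for $|z|>1$ via $|z-1|\le|z-\lambda|+K(1-|\lambda|)\le(1+K)|z-\lambda|$, using $1-|\lambda|\le|z-\lambda|$. This avoids Proposition~\ref{prop:boundary} and the angular distance estimate you sketch on $\T$.
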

\begin{proof}
We first record the scalar meaning of $\kappa_{\mathcal N}$.
For $a\ge0$, $t\in\mathbb R$, and $n\ge1$,
\[
 |1-e^{-an+int}|^2
 =1+e^{-2an}-2e^{-an}\cos(nt).
\]
Since $e^{-an}>0$, the inequality $|1-e^{-an+int}|\le1$ is
equivalent to
\[
 e^{-an}\le2\cos(nt).
\]
It is impossible when $\cos(nt)\le0$; otherwise it is equivalent,
under the condition $a\ge0$, to
$a\ge[-\log(2\cos nt)]_+/n$.
Taking all $n\in\mathcal N$ gives
\begin{equation}\label{eq:endpoint-scalar-kappa}
 \sup_{n\in\mathcal N}|1-e^{-an+int}|\le1
 \quad\Longleftrightarrow\quad
 a\ge\kappa_{\mathcal N}(t).
\end{equation}
The point $0$ separately satisfies all the scalar displacement
inequalities.

Assume (iii). There exist $c>0$ and $0<\delta<\pi$ such that
\[
 \kappa_{\mathcal N}(t)\ge c|t|
 \qquad(0<|t|<\delta),
\]
where evenness handles negative $t$.
Let $T$ satisfy the assumptions in (i).
The spectral theorem and \eqref{eq:endpoint-scalar-kappa} imply
that every nonzero $\lambda=e^{-a+it}\in\sigma(T)$, with
$a\ge0$ and $-\pi\le t\le\pi$, satisfies
$a\ge\kappa_{\mathcal N}(t)$.
If $0<|t|<\delta$, then $a\ge c|t|>0$, and
\begin{align*}
 \frac{|1-\lambda|}{1-|\lambda|}
 &\le 1+\frac{e^{-a}|1-e^{it}|}{1-e^{-a}}\\
 &\le1+\frac{|t|}{e^a-1}
 \le1+\frac{|t|}{a}
 \le1+\frac1c.
\end{align*}
For $t=0$ and $0\le\lambda<1$, the same ratio equals $1$.

It remains to consider spectral points whose principal arguments
satisfy $|t|\ge\delta$. Their closure, with $0$ included if
necessary, is a compact subset of $\sigma(T)$ disjoint from
$\T$: the peripheral spectral assumption excludes every
unimodular point in this angular region. Its moduli therefore
have a maximum $r_0<1$, unless the region is empty.
On this set,
\[
 \frac{|1-\lambda|}{1-|\lambda|}\le\frac2{1-r_0}.
\]
We have proved that, for some finite $K$, the following Stolz condition
\begin{equation}\label{eq:endpoint-stolz-spectrum}
 |1-\lambda|\le K(1-|\lambda|)
 \qquad(\lambda\in\sigma(T)).
\end{equation}
This includes $\lambda=1$, where both sides vanish.
For $|z|>1$ and $\lambda\in\sigma(T)$, the triangle inequality
and \eqref{eq:endpoint-stolz-spectrum} give
\[
 |z-1|\le|z-\lambda|+K(1-|\lambda|)
 \le(1+K)|z-\lambda|,
\]
since $1-|\lambda|\le|z-\lambda|$.
Normality now yields
\[
 |z-1|\,\|\Res(z,T)\|
 =\sup_{\lambda\in\sigma(T)}\frac{|z-1|}{|z-\lambda|}
 \le1+K.
\]
Thus $T$ is Ritt, proving (iii)$\Rightarrow$(i).
The implication (i)$\Rightarrow$(ii) is immediate.

Suppose that (iii) fails. Nonnegativity implies that the lower
limit in \eqref{eq:endpoint-kappa-condition} is zero.
Choose $0<t_j\downarrow0$ with finite $\kappa_{\mathcal N}(t_j)$
such that
\[
 \frac{\kappa_{\mathcal N}(t_j)}{t_j}\longrightarrow0,
 \qquad
 a_j:=\kappa_{\mathcal N}(t_j)+t_j^2.
\]
Then $a_j>0$, $a_j/t_j\to0$, and $a_j\to0$.
Define $Te_j=z_je_j$ on $\ell^2$, where $z_j=e^{-a_j+it_j}$.
This is a normal contraction with
$\sigma(T)=\{z_j:j\ge1\}\cup\{1\}$ and
$\sigma(T)\cap\T=\{1\}$.
By \eqref{eq:endpoint-scalar-kappa}, every coordinate satisfies
all sampled displacement bounds, so
$\sup_{n\in\mathcal N}\|\Id-T^n\|\le1$.
For a fixed $j$, $z_j^n\to0$ as $n\to\infty$ in $\mathcal N$;
therefore this supremum is exactly $1$.
For large $j$,
\[
 |e^{it_j}-1|\,\|\Res(e^{it_j},T)\|
 \ge\frac{|e^{it_j}-1|}{1-e^{-a_j}}
 \ge\frac{2t_j}{\pi a_j}\longrightarrow\infty.
\]
Proposition~\ref{prop:boundary} shows that $T$ is not Ritt.
This contradicts (ii), proving (ii)$\Rightarrow$(iii).
\end{proof}

\begin{remark}[phase separation and the normal criterion]
\label{rem:phase-kappa}
Under \eqref{eq:endpoint-phase-condition},
$\Theta_{\mathcal N}=\{0\}$.
For every $0<|t|\le\pi$, some $n\in\mathcal N$ then satisfies
$\cos(nt)<0$, and hence $\kappa_{\mathcal N}(t)=+\infty$.
Thus the arithmetic hypothesis of
Theorem~\ref{thm:endpoint-sectoriality} implies the normal
criterion. Theorem~\ref{thm:normal-endpoint} requires only a
positive lower bound for $\kappa_{\mathcal N}(t)/|t|$ near zero.
\end{remark}

\section{Converses on uniformly convex spaces}\label{sec:UC}

By Theorem~\ref{thm:BGTpowers} and Proposition~\ref{prop:boundary}, a Ritt operator satisfies $\sup_n\|(\zeta-T^n)^{-1}\|\le C'(T)/|\zeta-1|$ for $\zeta\in\T\setminus\{1\}$, where $C'(T)=\sup_m\CR(T^m)$ \cite[Corollary~7.7]{BGT}. Applied with $\zeta=-1$, this controls the distance from $T^n$ to $-\Id$; on a uniformly convex space this in turn controls the distance from $T^n$ to $\Id$ through the modulus of convexity
\[
   \delta_X(\varepsilon):=\inf\Big\{1-\Big\|\frac{x+y}2\Big\|:\ \|x\|\le1,\ \|y\|\le1,\ \|x-y\|\ge\varepsilon\Big\},\qquad0<\varepsilon\le2,
\]
which is positive for all $\varepsilon$ exactly when $X$ is uniformly convex.

\begin{theorem}\label{thm:UC}
Let $X$ be uniformly convex, let $T\in\B(X)$ be a contraction, and suppose that $\sup_{n\ge1}\|(\Id+T^n)^{-1}\|\le K$. Then
\[
   \sup_{n\ge1}\|\Id-T^n\|\le2\big(1-\delta_X(1/K)\big)<2 .
\]
\end{theorem}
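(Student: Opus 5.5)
Fix $n\ge1$ and a unit vector $x\in X$, and put $u:=x$, $v:=T^nx$. Since $T$ is a contraction, $\|u\|\le1$ and $\|v\|\le1$. The idea is to use the resolvent bound at $-1$ to force $u$ and $v$ apart, and then let uniform convexity shrink $\|u-v\|$ below $2$. Write $-v=-T^nx$. The pair $(u,-v)$ lies in the unit ball, and its midpoint is $\frac12(u-v)=\frac12(\Id-T^n)x$. Uniform convexity will therefore give
\[
\Big\|\frac{(\Id-T^n)x}{2}\Big\|\le1-\delta_X\big(\|u-(-v)\|\big)=1-\delta_X\big(\|(\Id+T^n)x\|\big),
\]
provided $\|(\Id+T^n)x\|>0$. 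This follows directly from the definition of $\delta_X$ as an infimum, applied with $\varepsilon=\|x+T^nx\|$.

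The hypothesis $\|(\Id+T^n)^{-1}\|\le K$ gives the lower bound $\|(\Id+T^n)x\|\ge\|x\|/K=1/K$. The modulus $\delta_X$ is nondecreasing in $\varepsilon$: the defining set shrinks as $\varepsilon$ grows, so the infimum can only increase. Hence $\delta_X(\|(\Id+T^n)x\|)\ge\delta_X(1/K)$, and we obtain $\|(\Id-T^n)x\|\le2(1-\delta_X(1/K))$. Taking the supremum over unit vectors $x$ and over $n$ gives the stated inequality. Strict inequality $<2$ follows because $X$ is uniformly convex, so $\delta_X(\varepsilon)>0$ for every $\varepsilon\in(0,2]$.

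One bookkeeping point has to be checked: $1/K\le2$, so that $\delta_X(1/K)$ is defined. This is automatic, since $\|(\Id+T^n)x\|\le2$ for unit $x$ forces $1/K\le\|(\Id+T^n)x\|\le2$. The same observation covers $\varepsilon=\|x+T^nx\|\in[1/K,2]$. There is no real obstacle. The only subtle step is the monotonicity of $\delta_X$ together with applying the infimum at the actual value $\varepsilon=\|x+T^nx\|$, and both are immediate from the definition given above.
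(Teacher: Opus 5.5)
Your proposal is correct and is essentially the paper's proof: pair the unit vector $x$ with $-T^nx$, use $\|x+T^nx\|\ge1/K$, and apply the definition of $\delta_X$ to the midpoint $\tfrac12(x-T^nx)$. The detour through $\varepsilon=\|x+T^nx\|$ and the monotonicity of $\delta_X$ is harmless but unnecessary: the defining set for $\delta_X(1/K)$ requires only $\|x-y\|\ge 1/K$, so you can apply the definition at $\varepsilon=1/K$ directly.
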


\begin{proof}
For $\|x\|=1$ put $y=-T^nx$; then $\|y\|\le1$ and $\|x-y\|=\|x+T^nx\|\ge1/K$, so $\|(x-T^nx)/2\|=\|(x+y)/2\|\le1-\delta_X(1/K)$.
\end{proof}

Together with the uniform resolvent bounds for powers of Ritt operators,
this gives the zero--two conclusion of \cite[Theorem~7.9]{BGT} with an
explicit dependence on the modulus of convexity. On Hilbert space,
$\delta_H(\varepsilon)=1-\sqrt{1-\varepsilon^2/4}$ and
Theorem~\ref{thm:rootsofzeta} give an explicit gap. A direct use of the
Fourier estimate yields the bound in Corollary~\ref{cor:hilbertdisp}.

We can now characterise the contractions on uniformly convex spaces whose iterates have displacement uniformly less than $2$.

\begin{theorem}\label{thm:UCchar}
Let $X$ be uniformly convex and let $T\in\B(X)$ be a contraction. The following assertions are equivalent:
\begin{enumerate}[label=\textup{(\roman*)}]
\item $\sup_{n\ge1}\|\Id-T^n\|<2$;
\item $T^N$ is a Ritt operator for some odd positive integer $N$;
\item $T$ is a $\Ritt{E}$ operator for some finite nonempty set $E\subset\T$ of roots of unity of odd order.
\end{enumerate}
The implication \textup{(i)}$\Rightarrow$\textup{(ii)},\textup{(iii)} holds for every bounded operator on every Banach space, with $N=N(q)$ and $E=E_q$ when $q=\sup_n\|\Id-T^n\|\ge1$; if $q<1$, then $T=\Id$, and one takes $N=1$, $E=\{1\}$. In \textup{(ii)}$\Rightarrow$\textup{(i)},
\[
   \sup_{n\ge1}\|\Id-T^n\|\le2\big(1-\delta_X(1/(NK_+))\big),\qquad K_+:=\sup_{n\ge1}\|(\Id+T^{nN})^{-1}\|<\infty .
\]
\end{theorem}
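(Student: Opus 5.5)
The plan is to prove the cycle (i)$\Rightarrow$(ii)$\Rightarrow$(iii)$\Rightarrow$(ii)$\Rightarrow$(i). Only the last step uses uniform convexity and contractivity.

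\emph{Forward direction.} For (i)$\Rightarrow$(ii),(iii), put $q=\sup_n\|\Id-T^n\|$. If $q<1$, the discussion at the start of Section~\ref{sec:allpowers} gives $T=\Id$, which is trivially Ritt with $N=1$, $E=\{1\}$. If $1\le q<2$, Theorem~\ref{thm:oddN} shows that $T$ is $\Ritt{E_q}$ and $T^{N(q)}$ is Ritt. Here $E_q$ consists of roots of unity of odd order and $N(q)$ is odd. No hypothesis on $X$ or on $\|T\|$ enters.

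\emph{Equivalence of (ii) and (iii).} This is the last paragraph of the proof of Theorem~\ref{thm:oddN}. Given (ii), Proposition~\ref{prop:TN} makes $T$ $\Ritt{\mu_N}$, and $\mu_N$ contains only roots of odd order because $N$ is odd. Conversely, given (iii), let $N$ be the lcm of the orders in $E$. Then $N$ is odd, $E\subset\mu_N$, and $\dist(\lambda,\mu_N)\le\dist(\lambda,E)$. Hence $T$ is $\Ritt{\mu_N}$, and Proposition~\ref{prop:TN} makes $T^N$ Ritt.

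\emph{The main step, (ii)$\Rightarrow$(i).} Let $N$ be odd and $S=T^N$ Ritt; $S$ is a contraction.
\begin{enumerate}[label=(\alph*)]
\item By Theorem~\ref{thm:BGTpowers}, $C':=\sup_m\CR(S^m)<\infty$. Proposition~\ref{prop:boundary} at $\xi=-1$ then gives $K_+=\sup_n\|(\Id+S^n)^{-1}\|\le C'/2<\infty$.
\item To handle every power $T^n$, not only multiples of $N$, I would use the factorisation $\Id+T^{nN}=(\Id+T^n)\sum_{j=0}^{N-1}(-1)^jT^{nj}$. This is valid because $N$ is odd, so $(-1)^N=-1$, and it is exactly where parity enters.
\item The two factors commute, and the second has norm at most $N$ since $T$ is a contraction. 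Lemma~\ref{lem:factor}-style reasoning then shows $\Id+T^n$ is invertible with $\|(\Id+T^n)^{-1}\|\le N\|(\Id+T^{nN})^{-1}\|\le NK_+$ for every $n\ge1$.
\item Theorem~\ref{thm:UC}, applied with $K=NK_+$, yields $\sup_n\|\Id-T^n\|\le2(1-\delta_X(1/(NK_+)))<2$, since $\delta_X>0$ on a uniformly convex space. This is exactly the stated quantitative bound.
\end{enumerate}

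I expect the only delicate point to be (b)--(c): getting from the resolvent bound at $-1$ for $S^n$ to a bound for $T^n$ with a constant that does not blow up. The odd-$N$ factorisation settles this cleanly. For even $N$ it fails, as $T=-\Id$ shows.
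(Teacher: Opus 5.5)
Your proposal is correct and follows the paper's proof essentially step by step. The forward direction and (ii)$\Leftrightarrow$(iii) come from Theorem~\ref{thm:oddN} and Proposition~\ref{prop:TN}, and (ii)$\Rightarrow$(i) combines the bound on $K_+$ from Theorem~\ref{thm:BGTpowers} and Proposition~\ref{prop:boundary} at $-1$, the odd-$N$ factorisation $\Id+T^{nN}=(\Id+T^n)\sum_{j<N}(-T^n)^j$ giving $\|(\Id+T^n)^{-1}\|\le NK_+$, and Theorem~\ref{thm:UC}.
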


\begin{proof}
(i)$\Rightarrow$(ii),(iii) and (iii)$\Rightarrow$(ii) are contained in Theorem~\ref{thm:oddN} and its last assertion.

(ii)$\Rightarrow$(i). Put $S=T^N$. By Theorem~\ref{thm:BGTpowers} and Proposition~\ref{prop:boundary} applied to the Ritt operators $S^n$, $K_+=\sup_n\|(\Id+S^n)^{-1}\|\le\frac12\sup_n\CR(S^n)<\infty$. Since $N$ is odd,
\[
 \Id+T^{nN}=(\Id+T^n)\sum_{j=0}^{N-1}(-T^n)^j,
\]
with commuting factors, so $\Id+T^n$ is invertible with $(\Id+T^n)^{-1}=\sum_{j<N}(-T^n)^j(\Id+T^{nN})^{-1}$ and
$\|(\Id+T^n)^{-1}\|\le NK_+$, because $T$ is a contraction.
Theorem~\ref{thm:UC} gives the explicit estimate.
\end{proof}

\begin{remarks}
(a) On Hilbert space all constants are explicit. If $T$ is a contraction and $T^N$ is Ritt with $N$ odd, Theorem~\ref{thm:rootsofzeta} applied to $S=T^N$ gives $K_+\le1+\CR(T^N)/2$, hence $\sup_n\|(\Id+T^n)^{-1}\|\le N(2+\CR(T^N))/2$ and, by the parallelogram law (see Corollary~\ref{cor:hilbertdisp}),
\[
   \sup_n\|\Id-T^n\|\le(4-\theta^2)^{1/2},\qquad\theta=\frac{2}{N(2+\CR(T^N))}.
\]
Another bound follows from Proposition~\ref{prop:rootsN}: the same displacement estimate holds with $\theta=2/(2+NK_N(T))$. The constants $K_N(T)$ and $\CR(T^N)$ are related by Proposition~\ref{prop:TN}.

(b) Neither hypothesis of Theorem~\ref{thm:UCchar} can be dropped. An oblique projection $P$ on a Hilbert space is Ritt ($P^n=P$), but $\|\Id-P\|=\|P\|$ is arbitrary; and on $\C^2$ with the norm $\|(x,y)\|=\max\{|x|,|x+y|\}$ the contractive projection $P(x,y)=(x,0)$ has $\|\Id-P\|=2$ \cite[Example~7.10]{BGT}.

(c) The equivalence (i)$\iff$(ii) fails for even $N$ ($T=-\Id$), and the Ritt property of $T$ itself is not implied by (i) unless $\sigma(T)\cap\T\subset\{1\}$ (Theorem~\ref{thm:local}).
\end{remarks}

\section{Quantitative estimates on Hilbert and \texorpdfstring{$L^p$}{Lp} spaces}\label{sec:hilbert}

\subsection{Roots of \texorpdfstring{$\zeta$}{zeta}}
We shall use the classical identity
\begin{equation}\label{eq:csc}
   \sum_{k=0}^{n-1}\csc^2\Big(t+\frac{k\pi}{n}\Big)=n^2\csc^2(nt),\qquad nt\notin\pi\Z.
\end{equation}
To prove this identity, start from $\sin(nt)=2^{n-1}\prod_{k=0}^{n-1}\sin(t+k\pi/n)$ and then take the logarithmic derivative. One obtains $\sum_{k=0}^{n-1}\cot(t+k\pi/n)=n\cot(nt)$. Now differentiate. 

The Mittag--Leffler expansion gives a second proof. 

\begin{remark}[a second proof of \eqref{eq:csc}]\label{rem:cscML}
Identity \eqref{eq:csc} also follows directly from the Mittag--Leffler
expansion
\[
   \csc^2u=\sum_{m\in\Z}\frac1{(u-m\pi)^2}\qquad(u\notin\pi\Z),
\]
which converges absolutely and locally uniformly off $\pi\Z$. Indeed,
\[
   \sum_{k=0}^{n-1}\csc^2\Big(t+\frac{k\pi}{n}\Big)
   =\sum_{k=0}^{n-1}\sum_{m\in\Z}\frac1{\big(t+\frac{(k-mn)\pi}{n}\big)^2}
   =\sum_{j\in\Z}\frac1{\big(t+\frac{j\pi}{n}\big)^2}
   \]
   \[
   =n^2\sum_{j\in\Z}\frac1{(nt+j\pi)^2}
   =n^2\csc^2(nt),
\]
the middle step being the bijection $(k,m)\mapsto k-mn$ from
$\{0,\dots,n-1\}\times\Z$ onto $\Z$ (division with remainder), and absolute
convergence justifying the rearrangement. This proof also explains the
identity: the left-hand side has a double pole with principal part $(t-s)^{-2}$
at every $s\in\frac{\pi}{n}\Z$ and no others, and so does the right-hand side;
their difference is therefore entire. It is $\pi/n$-periodic and tends
uniformly to $0$ as $|\operatorname{Im}t|\to\infty$ in a vertical strip
of width $\pi/n$, so it is bounded on the plane. Liouville's theorem
then makes it constant, and its limit at imaginary infinity shows that
the constant is $0$.
\end{remark}

\begin{lemma}\label{lem:rootsum}
Let $n\ge1$, $\zeta\in\T$, and let $\mu_0,\dots,\mu_{n-1}$ be the $n$-th roots of $\zeta$. For every $\xi\in\T$ with $\xi^n\ne\zeta$,
\[
   \sum_{j=0}^{n-1}\frac{1}{|\mu_j-\xi|^2}=\frac{n^2}{|\zeta-\xi^n|^2}.
\]
\end{lemma}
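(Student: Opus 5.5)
The plan is to reduce the lemma to the trigonometric identity \eqref{eq:csc} by writing every point of $\T$ as an exponential. Put $\zeta=e^{i\varphi}$ and $\xi=e^{is}$. The $n$-th roots of $\zeta$ are $\mu_j=e^{i(\varphi+2\pi j)/n}$ for $0\le j<n$. For unimodular numbers $e^{ia}$ and $e^{ib}$ one has the elementary formula $|e^{ia}-e^{ib}|=2|\sin\frac{a-b}{2}|$. Applying it with $a=(\varphi+2\pi j)/n$ and $b=s$ gives
\[
 \frac1{|\mu_j-\xi|^2}=\frac14\csc^2\Big(t+\frac{j\pi}{n}\Big),
 \qquad t:=\frac{\varphi-ns}{2n}.
\]
Since $\sin^2$ is even, the order of subtraction does not matter.

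Next I would check the hypothesis of \eqref{eq:csc}. The condition $\xi^n\ne\zeta$ says exactly that $ns-\varphi\notin2\pi\Z$. This is the same as $nt=(\varphi-ns)/2\notin\pi\Z$. It also ensures that no $\mu_j$ equals $\xi$, so every term on the left is finite.

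Summing over $j$ and using \eqref{eq:csc}, the left-hand side becomes $\frac14 n^2\csc^2(nt)$. The same chord formula, now applied to $\zeta$ and $\xi^n=e^{ins}$, gives
\[
 |\zeta-\xi^n|^2=4\sin^2\Big(\frac{\varphi-ns}{2}\Big)=4\sin^2(nt).
\]
Hence $n^2/|\zeta-\xi^n|^2=\frac14n^2\csc^2(nt)$, which is the claimed identity.

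There is no real obstacle here. The only point needing care is the bookkeeping of angles: the factor $\frac12$ from the chord formula must produce the shifts $j\pi/n$ appearing in \eqref{eq:csc}, and the condition $nt\notin\pi\Z$ must be confirmed. As an alternative route, I could sum the partial fractions of $nz^{n-1}/(z^n-\zeta)$ and take real parts, but the trigonometric reduction is shorter.
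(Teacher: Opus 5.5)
Your proof is correct and is essentially the paper's argument. The paper first rotates to reduce to $\xi=1$ and then applies \eqref{eq:csc} with $t=\beta/(2n)$; you instead carry the angle $s$ of $\xi$ through directly with $t=(\varphi-ns)/(2n)$, which is the same computation without the preliminary normalisation.
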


\begin{proof}
Replacing $\mu_j$ by $\bar\xi\mu_j$ and $\zeta$ by $\bar\xi^n\zeta$ we may assume $\xi=1$. Write $\zeta=e^{i\beta}$, $0<\beta<2\pi$, and $\mu_j=e^{i(\beta+2\pi j)/n}$. Then $|\mu_j-1|^2=4\sin^2\big(\frac{\beta}{2n}+\frac{j\pi}{n}\big)$, and \eqref{eq:csc} with $t=\beta/(2n)$ gives $\sum_j|\mu_j-1|^{-2}=\frac14n^2\csc^2(\beta/2)=n^2/|\zeta-1|^2$.
\end{proof}

\begin{theorem}\label{thm:rootsofzeta}
Let $H$ be a Hilbert space, let $T\in\B(H)$ be a Ritt operator, $C=\CR(T)$, $M=M(T)$, and let $\zeta\in\T\setminus\{1\}$. Then for all $n\ge1$ and $x\in H$,
\begin{equation}\label{eq:zetabound}
   \|(\zeta-T^n)x\|\ge\frac{|\zeta-1|}{|\zeta-1|+CM}\,\|x\|,
   \qquad\text{hence}\qquad
   \sup_{n\ge1}\|(\zeta-T^n)^{-1}\|\le1+\frac{CM}{|\zeta-1|}.
\end{equation}
\end{theorem}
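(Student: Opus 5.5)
The plan is to use the Fourier vectors mentioned in the introduction. Fix $n\ge1$, $x\in H$, and put $y=(\zeta-T^n)x$. Let $\mu_0,\dots,\mu_{n-1}$ be the $n$-th roots of $\zeta$; all of them differ from $1$ because $\zeta\ne1$. Define
\[
 v_j:=\sum_{k=0}^{n-1}\mu_j^{-k}T^kx\qquad(0\le j<n).
\]

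\emph{Step 1 (algebra).} A telescoping computation gives
\[
 (\mu_j-T)v_j=\mu_jx-\mu_j^{1-n}T^nx=\frac{\mu_j}{\zeta}\,y,
 \qquad\text{so}\qquad v_j=\frac{\mu_j}{\zeta}\,\Res(\mu_j,T)y .
\]
This is the operator form of the partial fraction decomposition of $1/(\zeta-z^n)$. Moreover, $\sum_j\mu_j^{-k}=0$ for $0<k<n$, hence
\[
 x=\frac1n\sum_{j=0}^{n-1}v_j .
\]

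\emph{Step 2 (boundary resolvent bound).} Proposition~\ref{prop:boundary} gives $\|\Res(\mu_j,T)\|\le C/|\mu_j-1|$. The $\mu_j$ lie on $\T\setminus\{1\}$, so this applies to each of them.

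\emph{Step 3 (square summation).} A termwise triangle inequality only yields $\sum_j|\mu_j-1|^{-1}$, which can be of size $n^{3/2}/|\zeta-1|$. This is too large, so the Hilbert structure must be used. Lemma~\ref{lem:rootsum} at $\xi=1$ gives the exact identity $\sum_j|\mu_j-1|^{-2}=n^2/|\zeta-1|^2$. Accordingly I will work by duality. For a unit vector $z$,
\[
 \langle x,z\rangle=\frac1{n\zeta}\sum_j\mu_j\,\langle y,\Res(\mu_j,T)^*z\rangle ,
\]
and Cauchy--Schwarz with weights $|\mu_j-1|^{\mp1}$ together with the identity above reduces matters to the square-function estimate
\[
 \sum_j\|(\mu_j-1)\Res(\mu_j,T)^*z\|^2\le n\,(\text{const})^2 .
\]
To handle it, split $(\mu_j-1)\Res(\mu_j,T)=\Id+(T-\Id)\Res(\mu_j,T)$. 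The identity term should produce the additive $1$ in the final constant $1+CM/|\zeta-1|$, that is, the $|\zeta-1|$ in the numerator of~\eqref{eq:zetabound}.

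\emph{Step 4 (the remaining term).} For the second term, use that the vectors $v_j$, and their adjoint analogues, are a discrete Fourier transform of the finite orbit $(\mu_0^{-k}T^kx)_{k<n}$. Parseval on $\mu_n$ gives
\[
 \frac1n\sum_j\|v_j\|^2=\sum_{k<n}\|T^kx\|^2\le nM^2\|x\|^2 .
\]
This is where the factor $M$ enters, next to $C$ coming from the Ritt bound of Step 2.

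Combining the two pieces should give $\|x\|\le\|y\|\bigl(1+CM/|\zeta-1|\bigr)$, which is equivalent to the stated lower bound for $\|(\zeta-T^n)x\|$. Invertibility of $\zeta-T^n$ already follows from $\zeta\in\rho(T^n)$ by spectral mapping, since $\sigma(T)\subset\D\cup\{1\}$. The lower bound then gives the resolvent estimate in~\eqref{eq:zetabound}.

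The main obstacle is Step 3. The difficulty is to arrange the duality and Cauchy--Schwarz so that the square sum $\sum_j|\mu_j-1|^{-2}$ appears exactly, rather than its square root times $\sqrt n$, while the remaining orbit term is controlled only by $M$ through Parseval and not by another factor of $C$. If the adjoint route does not close, I would first try the variant of estimating $\|x\|^2=n^{-2}\sum_{j,j'}\langle v_j,v_{j'}\rangle$ directly, exploiting orthogonality of the characters on $\mu_n$.
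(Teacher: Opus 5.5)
There is a genuine gap. Your Steps~1 and~2 are correct, and your $v_j$ are exactly the Fourier vectors the paper uses, but the argument does not close as planned. After the weighted Cauchy--Schwarz in Step~3 you have
\[
 |\langle x,z\rangle|\le\frac{\|y\|}{n}\Bigl(\sum_j|\mu_j-1|^{-2}\Bigr)^{1/2}\Bigl(\sum_j\|(\mu_j-1)\Res(\mu_j,T)^*z\|^2\Bigr)^{1/2}
 =\frac{\|y\|}{|\zeta-1|}\Bigl(\sum_j\|(\mu_j-1)\Res(\mu_j,T)^*z\|^2\Bigr)^{1/2}.
\]
So you would need the square sum to be bounded independently of $n$. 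The bound $n(\mathrm{const})^2$ you aim for already follows trivially from Step~2, and it only gives $\|x\|\le\sqrt n\,C\|y\|/|\zeta-1|$. A uniform bound is false: for the Ritt operator $T=0$ the square sum equals $\sum_j|\mu_j-1|^2=2n$ for $n\ge2$.

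The loss comes from the identity part of your splitting, which contributes $n\|z\|^2$ to the square sum. That part does not produce the additive $1$: in $x=\frac1{n\zeta}\sum_j\mu_j\Res(\mu_j,T)y$ it is exactly $y/(\zeta-1)$. Your fallback $\|x\|^2\le n^{-1}\sum_j\|v_j\|^2$ loses the same $\sqrt n$. Step~4 also uses Parseval in the wrong direction: an upper bound for $\sum_j\|v_j\|^2$ in terms of $\|x\|$ cannot bound $\|x\|$ by $\|y\|$.

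The missing idea is to stop trying to recover $x$, which is only the $k=0$ Fourier coefficient, and instead to estimate $\sum_j\|v_j\|^2$ from both sides. From above, Step~2 and Lemma~\ref{lem:rootsum} give $\sum_j\|v_j\|^2\le C^2n^2\|y\|^2/|\zeta-1|^2$. From below, use Parseval, the bound $\|T^nx\|\le M\|T^kx\|$ for $k<n$, and $\|T^nx\|\ge\|x\|-\|y\|$. When $\|y\|<\|x\|$ this gives
\[
 \sum_j\|v_j\|^2=n\sum_{k<n}\|T^kx\|^2\ge\frac{n^2}{M^2}\|T^nx\|^2\ge\frac{n^2}{M^2}\bigl(\|x\|-\|y\|\bigr)^2 .
\]
Both bounds scale like $n^2$ because all $n$ orbit vectors are used, each bounded below through power boundedness. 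Dividing yields $\|x\|-\|y\|\le CM\|y\|/|\zeta-1|$, which is \eqref{eq:zetabound}. So $M$ enters through this lower bound, and the additive $1$ comes from the triangle inequality $\|x\|\le\|T^nx\|+\|y\|$.
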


\begin{proof}
Fix $n\ge1$ and $x\in H$ with $\|x\|=1$, and put $d:=\|(\zeta-T^n)x\|$; we may assume $d<1$. Let $\mu_0,\dots,\mu_{n-1}$ be the $n$-th roots of $\zeta$ and define
\[
   y_j:=\sum_{k=0}^{n-1}\mu_j^{-k}T^kx,\qquad j=0,\dots,n-1 .
\]
A telescoping computation gives
\[
   (T-\mu_j)y_j=\sum_{k=1}^{n}\mu_j^{-(k-1)}T^kx-\sum_{k=0}^{n-1}\mu_j^{1-k}T^kx
   =\mu_j^{-(n-1)}T^nx-\mu_jx=\mu_j\bar\zeta\,(T^n-\zeta)x,
\]
since $\mu_j^{-n}=\bar\zeta$. As $\zeta\ne1$ we have $\mu_j\ne1$, so Proposition~\ref{prop:boundary} yields
\[
   \|y_j\|\le\frac{C}{|\mu_j-1|}\,d .
\]
The matrix $(\mu_j^{-k})_{j,k}$ is $e^{-i\beta k/n}$ times the discrete Fourier matrix $(e^{-2\pi ijk/n})_{j,k}$, where $\zeta=e^{i\beta}$; its columns are orthogonal with squared norm $n$, so
\[
   \sum_{j=0}^{n-1}\|y_j\|^2=n\sum_{k=0}^{n-1}\|T^kx\|^2\ge n^2\min_{0\le k<n}\|T^kx\|^2\ge\frac{n^2}{M^2}\|T^nx\|^2\ge\frac{n^2(1-d)^2}{M^2},
\]
using $\|T^nx\|\le M\|T^kx\|$ for $k\le n$ and $\|T^nx\|\ge\|\zeta x\|-\|(\zeta-T^n)x\|=1-d$. On the other hand, Lemma~\ref{lem:rootsum} with $\xi=1$ gives
\[
   \sum_{j=0}^{n-1}\|y_j\|^2\le C^2d^2\sum_{j=0}^{n-1}\frac1{|\mu_j-1|^2}=\frac{C^2d^2n^2}{|\zeta-1|^2}.
\]
Hence $(1-d)/M\le Cd/|\zeta-1|$, which is \eqref{eq:zetabound}. Since $\sigma(T^n)\cap\T\subset\{1\}$, $\zeta\in\rho(T^n)$ and the inverse bound follows.
\end{proof}

The same argument applies to operators with peripheral spectrum included in the roots of unity.

\begin{proposition}\label{prop:rootsN}
Let $N\ge1$ and let $T$ be a $\Ritt{\mu_N}$ operator on a Hilbert space, with $K=K_N(T)$ and $M=M(T)$. Let $\zeta\in\T\setminus\mu_N$, $n\ge1$, $g=\gcd(n,N)$ and $d=N/g$. Then $\zeta^d\ne1$ and
\[
 \|(\zeta-T^n)^{-1}\|
 \le1+\frac{KMN}{\sqrt{g}\,|\zeta^d-1|}
 \le1+\frac{KMN}{\min_{e\mid N}|\zeta^e-1|}.
\]
In particular, if $N$ is odd, then for every $n\ge1$
\[
 \|(\Id+T^n)^{-1}\|
 \le1+\frac{KMN}{2\sqrt{\gcd(n,N)}}
 \le1+\frac{KMN}{2}.
\]
\end{proposition}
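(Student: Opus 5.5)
The plan is to repeat the Fourier-vector argument of Theorem~\ref{thm:rootsofzeta}, with the Ritt resolvent bound at $1$ replaced by the $\Ritt{\mu_N}$ bound. The only new ingredient is a root-sum computation that produces the factors $g$ and $\zeta^d$.

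\emph{Preliminaries.} First, $\zeta^d\ne1$: since $d\mid N$, $\zeta^d=1$ would give $\zeta\in\mu_N$.

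Next, the peripheral spectrum of $T$ lies in $\mu_N$, so by spectral mapping that of $T^n$ also lies in $\mu_N$. Hence $\zeta\in\rho(T^n)$, and it suffices to prove a lower bound $\|(\zeta-T^n)x\|\ge\|x\|/(1+A)$ with $A=KMN/(\sqrt g\,|\zeta^d-1|)$.

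Let $\mu_0,\dots,\mu_{n-1}$ be the $n$-th roots of $\zeta$. No $\mu_j$ lies in $\mu_N$: if $\mu_j^N=1$, then together with $\mu_j^n=\zeta$ we get $\mu_j^g=1$, hence $\zeta=(\mu_j^g)^{n/g}=1$, which is impossible. Letting $r\downarrow1$ in the definition of $K_N(T)$ at $r\mu_j$ (resolvent continuity on $\rho(T)$) gives the boundary bound
\[\|\Res(\mu_j,T)\|\le K/\dist(\mu_j,\mu_N).\]

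\emph{Fourier vectors.} Fix $\|x\|=1$ and put $\delta=\|(\zeta-T^n)x\|<1$. Define $y_j=\sum_{k<n}\mu_j^{-k}T^kx$. The same telescoping identity as in Theorem~\ref{thm:rootsofzeta} gives $(T-\mu_j)y_j=\mu_j\bar\zeta(T^n-\zeta)x$, so
\[\|y_j\|\le K\delta/\dist(\mu_j,\mu_N).\]
The orthogonality of the discrete Fourier matrix gives, exactly as before,
\[\sum_j\|y_j\|^2\ge n^2(1-\delta)^2/M^2.\]

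\emph{Root sum.} This is the step needing care. Use $|\mu^N-1|\le N\,\dist(\mu,\mu_N)$ for $\mu\in\T$, which follows by factoring $\mu^N-\xi^N$ as in Proposition~\ref{prop:TN}. This yields
\[\sum_j\dist(\mu_j,\mu_N)^{-2}\le N^2\sum_j|\mu_j^N-1|^{-2}.\]
Write $\mu_j=\mu_0\omega^j$ with $\omega=e^{2\pi i/n}$. Since $\omega^N$ has order $n/g$, the values $\mu_j^N$ run through the $(n/g)$-th roots of $\mu_0^{Nn/g}=\zeta^{d}$, each exactly $g$ times. Lemma~\ref{lem:rootsum}, applied with $n/g$ in place of $n$ and $\xi=1$, then gives
\[\sum_j|\mu_j^N-1|^{-2}=g\,(n/g)^2/|\zeta^d-1|^2=n^2/(g|\zeta^d-1|^2).\]

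\emph{Conclusion.} Combining the two estimates gives $(1-\delta)/M\le KN\delta/(\sqrt g\,|\zeta^d-1|)$, that is, $\delta\ge 1/(1+A)$, which is the first inequality. The second inequality holds because $\sqrt g\ge1$ and $d\mid N$.

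For $N$ odd and $\zeta=-1$: every divisor $d$ of $N$ is odd, so $|\zeta^d-1|=2$, and this gives the last displayed bound.
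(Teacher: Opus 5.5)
Your argument is correct and is essentially the paper's proof. You use the same Fourier vectors and the same lower bound, and only organise the root sum differently. You bound $\dist(\mu_j,\mu_N)^{-1}\le N/|\mu_j^N-1|$ and group the $\mu_j^N$ as $g$ copies of the $(n/g)$-th roots of $\zeta^d$. The paper instead uses $\dist(\mu_j,\mu_N)^{-2}\le\sum_{\xi\in\mu_N}|\mu_j-\xi|^{-2}$, sums over $j$ first, and groups the $\xi^n$ as $g$ copies of $\mu_d$. Both routes give the same constant $n^2N^2/(g|\zeta^d-1|^2)$.

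One step needs repair. Your reason for $\mu_j\notin\mu_N$ is wrong as written: from $\mu_j^N=1$ and $\mu_j^n=\zeta$, B\'ezout only gives $\mu_j^g=\zeta^a$ for some integer $a$, not $\mu_j^g=1$. The claim itself follows at once, since $\mu_j^N=1$ would give $\zeta^N=(\mu_j^N)^n=1$, contradicting $\zeta\notin\mu_N$.
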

\begin{proof}
Since $d\mid N$ and $\zeta\notin\mu_N$, $\zeta^d\ne1$. Let $\mu_j$ be the $n$-th roots of $\zeta$; then $\mu_j\notin\mu_N$, for otherwise $\zeta=\mu_j^n\in\mu_N$. Use the vectors $y_j$ of the proof of Theorem~\ref{thm:rootsofzeta}, with $d_0:=\|(\zeta-T^n)x\|<1$ in place of $d$ there. The lower bound $\sum_j\|y_j\|^2\ge n^2(1-d_0)^2/M^2$ is unchanged. For the upper bound, $\|y_j\|\le Kd_0/\dist(\mu_j,\mu_N)$ and $\dist(\mu_j,\mu_N)^{-2}\le\sum_{\xi\in\mu_N}|\mu_j-\xi|^{-2}$, so by Lemma~\ref{lem:rootsum} (applied with the point $\xi\in\mu_N$, for which $\xi^n\ne\zeta$)
\[
 \sum_j\|y_j\|^2
 \le K^2d_0^2\sum_{\xi\in\mu_N}\sum_j|\mu_j-\xi|^{-2}
 =K^2d_0^2n^2\sum_{\xi\in\mu_N}|\zeta-\xi^n|^{-2}.
\]
The map $\xi\mapsto\xi^n$ sends $\mu_N$ onto $\mu_d$, with $g$ preimages per point, and Lemma~\ref{lem:rootsum} applied to the $d$-th roots of unity and the point $\zeta$ gives $\sum_{\omega\in\mu_d}|\zeta-\omega|^{-2}=d^2/|\zeta^d-1|^2$. Hence
\[
 \sum_{\xi\in\mu_N}|\zeta-\xi^n|^{-2}=\frac{gd^2}{|\zeta^d-1|^2}=\frac{N^2}{g\,|\zeta^d-1|^2}.
\]
Comparison of the two bounds gives $(1-d_0)/M\le Kd_0N/(\sqrt g|\zeta^d-1|)$, which proves the first estimate; the second follows since $g\ge1$ and $d\mid N$. Invertibility of $\zeta-T^n$ follows from $\sigma(T^n)\cap\T\subset\mu_N$ and $\zeta\notin\mu_N$. For $N$ odd and $\zeta=-1$ we have $-1\notin\mu_N$ and $|(-1)^e-1|=2$ for every divisor $e$ of $N$.
\end{proof}

\subsection{Displacement bounds}
The lower bound \eqref{eq:zetabound} for $\zeta=-1$ controls the distance from $T^n$ to $\Id$ through the parallelogram law.

\begin{corollary}\label{cor:hilbertdisp}
Let $T$ be a Ritt contraction on a nonzero Hilbert space and let
$C=\CR(T)\ge1$. Then
\[
   \inf_{n\ge1}\inf_{\|x\|=1}\|x+T^nx\|\ge\frac{2}{C+2}
\]
and
\begin{equation}\label{eq:hilbertdispsharper}
 \sup_{n\ge1}\|\Id-T^n\|\le D(C):=
 \begin{cases}
 \displaystyle\frac{2(C+1)}{C+2},&1\le C\le\sqrt2,\\[6pt]
 \displaystyle2\sqrt{1-C^{-2}},&C\ge\sqrt2.
 \end{cases}
\end{equation}
In particular, $D(C)<2$ for every finite $C$.
\end{corollary}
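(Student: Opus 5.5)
Apply Theorem~\ref{thm:rootsofzeta} with $\zeta=-1$ and $M=1$, since $T$ is a contraction. For every $n\ge1$ and unit $x$ this gives
\[
\|x+T^nx\|\ge\frac{2}{2+C},
\]
because $|\zeta-1|=2$. That is the first claim.

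For the displacement, fix $n$ and a unit vector $x$, and put $u=x$ and $v=T^nx$, so that $\|v\|\le1$. The parallelogram law gives
\[
\|u-v\|^2=2\|u\|^2+2\|v\|^2-\|u+v\|^2\le4-\|x+T^nx\|^2.
\]
With $\theta=2/(C+2)$ this yields $\|\Id-T^n\|\le\sqrt{4-\theta^2}$, which is already strictly less than $2$. This crude bound is not the stated $D(C)$, so the two regimes have to be obtained more carefully.

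For $C\ge\sqrt2$ I would aim to improve the lower bound to $\|x+T^nx\|\ge2/C$, which would give $\sqrt{4-4/C^2}=2\sqrt{1-C^{-2}}$. To get it I would redo the Fourier-vector argument more sharply for contractions. Since $\|T^kx\|\ge\|T^nx\|$ for $k\le n$, we have $\sum_j\|y_j\|^2\ge n^2\|T^nx\|^2$, and with $d=\|x+T^nx\|$ the same argument gives $\|T^nx\|\le Cd/2$. The parallelogram law with the actual value of $\|v\|$ then reads
\[
\|x-T^nx\|^2=2+2\|T^nx\|^2-d^2\le2+\Bigl(\frac{C^2}{2}-1\Bigr)d^2 .
\]
Combining this with the trivial bound $\|x-T^nx\|\le 1+\|T^nx\|\le1+Cd/2$ and maximising over $d$ should produce the two regimes. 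For $C\ge\sqrt2$ the coefficient $C^2/2-1$ is nonnegative, so the quadratic bound is increasing in $d$; the crossover with the linear bound is what I expect to give $2\sqrt{1-C^{-2}}$. For $C\le\sqrt2$ I would use $\|T^nx\|\le1$ together with $d\ge1-\|T^nx\|$, or equivalently $\|x-T^nx\|\le2-d$ when combined with $d\ge 2/(C+2)$, to get $2-2/(C+2)=2(C+1)/(C+2)$.

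The main obstacle is the optimisation, that is, verifying that maximising the minimum of these two constraints over $d\in[0,2]$ and $\|T^nx\|\in[0,1]$ gives exactly $D(C)$ with the breakpoint at $C=\sqrt2$. My first attempt would be to parametrise by $s=\|T^nx\|$ and use both $d\ge 2s/C$ and $d\ge 1-s$, then compute the maximum of $2+2s^2-d^2$ at the corners. Finally, $D(C)<2$ is immediate from the formulas.
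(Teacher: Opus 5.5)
Your ingredients are the paper's own: contractivity in the Fourier-vector argument gives $d\ge 2s/C$ (with $s=\|T^nx\|$ and $d=\|x+T^nx\|$), the reverse triangle inequality gives $d\ge 1-s$, and the parallelogram law gives $\|x-T^nx\|^2=2+2s^2-d^2$. The plan in your last paragraph is exactly the paper's proof. Your first claim, taken directly from Theorem~\ref{thm:rootsofzeta} with $M=1$ and $|\zeta-1|=2$, is also fine; the paper obtains it as $\max\{1-s,2s/C\}\ge 2/(C+2)$. Two of your intermediate heuristics are wrong, however, and should be discarded.

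\emph{The shortcut $\|x-T^nx\|\le 2-d$ is false.} The inequality $d\ge 1-s$ gives $s\ge 1-d$, not $s\le 1-d$, so it does not combine with $\|x-T^nx\|\le 1+s$. For a counterexample, take the scalar Ritt contraction $a=\tfrac12e^{0.2i}$. Then $\CR(a)=2|1-a|/(1-|a|^2)\approx1.39<\sqrt2$, yet $|1-a|+|1+a|>2$ because $a\notin\mathbb R$. Had the shortcut been valid, it would give $2(C+1)/(C+2)$ for every $C$. That value is strictly smaller than $2\sqrt{1-C^{-2}}$ when $C>\sqrt2$, so the second regime would never be needed.

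\emph{You have the regimes reversed.} The bound $\|x-T^nx\|\le 1+s$ adds nothing: $(1+s)^2\ge 2+2s^2-d^2$ is equivalent to $d\ge 1-s$. It is the crossing of the two lower bounds $1-s$ and $2s/C$, at $s_0=C/(C+2)$, that produces $2(C+1)/(C+2)$. The value $2\sqrt{1-C^{-2}}$ comes instead from the cap $s\le 1$, which corresponds to $d=2/C$.

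Concretely, maximise $2+2s^2-\max\{(1-s)^2,4s^2/C^2\}$ over $s\in[0,1]$:
\begin{itemize}
\item On $[0,s_0]$ the expression equals $(1+s)^2$, which is maximal at $s_0$ with value $4(C+1)^2/(C+2)^2$.
\item On $[s_0,1]$ it equals $2+(2-4/C^2)s^2$. This is nonincreasing when $C\le\sqrt2$, so the maximum is again at $s_0$. It is nondecreasing when $C\ge\sqrt2$, so the maximum is $4-4/C^2$, attained at $s=1$.
\end{itemize}
The breakpoint $\sqrt2$ is thus the sign change of $2-4/C^2$. With this corner analysis, your final plan is complete.
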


\begin{proof}
Fix $n\ge1$ and a unit vector $x$, and set
\[
 r=\|T^nx\|\in[0,1],\qquad d=\|x+T^nx\|.
\]
Apply the Fourier calculation in the proof of
Theorem~\ref{thm:rootsofzeta} with $\zeta=-1$. Since $T$ is a
contraction, $\|T^kx\|\ge r$ for $0\le k<n$, and hence that
calculation gives directly
\[
 n^2r^2\le\sum_{j=0}^{n-1}\|y_j\|^2
       \le\frac{C^2d^2n^2}{4}.
\]
Thus $d\ge2r/C$. The reverse triangle inequality gives
$d\ge1-r$, and therefore
\begin{equation}\label{eq:hilbert-rd}
 d\ge\max\{1-r,2r/C\}\ge\frac{2}{C+2}.
\end{equation}
This proves the first assertion without any restriction on $d$.
By the parallelogram law,
\begin{equation}\label{eq:hilbert-parallelogram-optimisation}
 \|x-T^nx\|^2
 =2+2r^2-d^2
 \le2+2r^2-\max\{(1-r)^2,4r^2/C^2\}.
\end{equation}
The two expressions inside the maximum are equal at
$r_0=C/(C+2)$. On $0\le r\le r_0$, the right-hand side of
\eqref{eq:hilbert-parallelogram-optimisation} is $(1+r)^2$, whose
maximum is
\[
 (1+r_0)^2=\frac{4(C+1)^2}{(C+2)^2}.
\]
On $r_0\le r\le1$, it is $2+(2-4/C^2)r^2$. If
$C\le\sqrt2$, this is nonincreasing and its maximum is the same
value at $r_0$. If $C\ge\sqrt2$, it is nondecreasing and its
maximum is $4-4/C^2$ at $r=1$; this also dominates the value at
$r_0$. Taking square roots and then the supremum over $x$ and $n$
proves \eqref{eq:hilbertdispsharper}. Finally, $C\ge1$ follows by
letting $\lambda\to\infty$ in the definition of $\CR(T)$.
\end{proof}

\subsection{\texorpdfstring{$L^p$}{Lp} spaces}
Let $(\Omega,\nu)$ be a $\sigma$-finite measure space and $1<p<\infty$. Write $\zR(p):=\sum_{k\ge1}k^{-p}$ for the Riemann zeta function, and for $\zeta\in\T\setminus\{1\}$ put
\[
   K_p(\zeta):=2^{1/p}\left[\left(\frac{\pi}{2|\zeta-1|}\right)^p+\frac{\zR(p)}{4^p}\right]^{1/p}.
\]

\begin{theorem}\label{thm:Lp}
Let $T$ be a Ritt contraction on $L^p(\Omega,\nu)$, $1<p<\infty$, with $C=\CR(T)$, and let $\zeta\in\T\setminus\{1\}$. Then
\[
   \sup_{n\ge1}\|(\zeta-T^n)^{-1}\|_{p\to p}\le1+CK_{p^*}(\zeta),\qquad p^*:=\min\{p,p'\},\ \tfrac1p+\tfrac1{p'}=1 .
\]
Consequently $\sup_n\|\Id-T^n\|_{p\to p}\le2\big(1-\delta_{L^p}(1/(1+CK_{p^*}(-1)))\big)<2$.
\end{theorem}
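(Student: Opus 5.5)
We adapt the Fourier-vector argument of Theorem~\ref{thm:rootsofzeta}, replacing orthogonality of the discrete Fourier matrix by a vector-valued Hausdorff--Young-type inequality on $L^p$. Fix $n\ge1$ and $x\in L^p$ with $\|x\|_p=1$, put $d=\|(\zeta-T^n)x\|_p$, and assume $d<1$. Let $\mu_0,\dots,\mu_{n-1}$ be the $n$-th roots of $\zeta$ and set $y_j=\sum_{k<n}\mu_j^{-k}T^kx$. As in that proof, the telescoping identity gives $(T-\mu_j)y_j=\mu_j\bar\zeta(T^n-\zeta)x$. Proposition~\ref{prop:boundary} then gives $\|y_j\|_p\le Cd/|\mu_j-1|$.

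For the lower bound we work pointwise on $\Omega$. For a.e.\ $\omega$, the numbers $y_j(\omega)$ are (up to the unimodular twist $e^{-i\beta k/n}$) the discrete Fourier transform of the sequence $(T^kx(\omega))_{k<n}$. On $\ell^p_n$ with $1<p\le2$, the normalised Hausdorff--Young inequality for the DFT, $\big(\sum_k|a_k|^{p'}\big)^{1/p'}\le n^{-1/p}\big(\sum_j|\hat a_j|^p\big)^{1/p}$ after normalisation, integrated over $\Omega$ (Minkowski, since $p'\ge p$) should give
\[
\sum_j\|y_j\|_p^p\ \gtrsim\ n^{p-1}\cdot n\min_k\|T^kx\|_p^p\ \ge\ n^p(1-d)^p,
\]
using contractivity, $\|T^kx\|\ge\|T^nx\|\ge1-d$. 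For $p\ge2$ we argue by duality: $T^*$ is a Ritt contraction on $L^{p'}$ with the same constant $C$, and $\|(\zeta-T^n)^{-1}\|=\|(\bar\zeta-T^{*n})^{-1}\|$. This is exactly why $p^*=\min\{p,p'\}$ appears, so it suffices to treat $1<p\le2$.

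For the upper bound we need $\sum_j|\mu_j-1|^{-p}\le n^pK_p(\zeta)^p/\text{const}$, an $\ell^p$ analogue of Lemma~\ref{lem:rootsum}. With $\zeta=e^{i\beta}$ we have $|\mu_j-1|=2|\sin(\tfrac{\beta}{2n}+\tfrac{j\pi}n)|$. We split off the one or two roots nearest to $1$ (on each side), for which we use $|\sin u|\ge\tfrac2\pi|u|$ and $|\mu_j-1|\ge|\zeta-1|/n$ roughly; this yields the term $(\pi n/(2|\zeta-1|))^p$. For the remaining roots, which are spaced at angular distance at least $j\pi/n$, the bound $2|\sin u|\ge\frac4\pi|u|$ gives a tail $\lesssim\sum_{k\ge1}(n/(4k))^p=n^p\zR(p)/4^p$. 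The factor $2^{1/p}$ accounts for the two sides $\pm$. Combining the two bounds gives $n(1-d)\le Cd\,nK_p(\zeta)$, that is $d\ge(1+CK_p)^{-1}$, whence $\|(\zeta-T^n)^{-1}\|\le1+CK_{p}(\zeta)$. Since $\sigma(T^n)\cap\T\subset\{1\}$, invertibility is automatic. The final displacement bound follows from Theorem~\ref{thm:UC} applied with $\zeta=-1$, because $L^p$ is uniformly convex.

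The main obstacle is the lower bound: getting the correct DFT/Hausdorff--Young normalisation constant (ideally exactly $1$) in the vector-valued setting, and checking that the constants in $K_p$ match precisely. The elementary root-sum estimate may need careful case analysis near $j=0$ and $j=n-1$.
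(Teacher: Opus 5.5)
Your proposal is correct and is essentially the paper's proof: the same Fourier vectors $y_j$ with $\|y_j\|_p\le Cd/|\mu_j-1|$, the same two-progression estimate $\bigl(\sum_j|\mu_j-1|^{-p}\bigr)^{1/p}\le nK_p(\zeta)$, and the same duality reduction for $p>2$. The only difference is the lower bound: the paper uses pointwise Parseval together with $\ell^2\le\ell^p$ and H\"older, whereas you use Hausdorff--Young, and your version loses nothing either. Riesz--Thorin gives the DFT constant exactly $n^{-1/p}$, and Minkowski (valid in your direction because $p'\ge p$) yields $\sum_j\|y_j\|_p^p\ge n\bigl(\sum_k\|T^kx\|_p^{p'}\bigr)^{p/p'}\ge n^{1+p/p'}(1-d)^p=n^p(1-d)^p$, so the normalisation you were worried about comes out with constant exactly $1$.
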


\begin{proof}
Assume first $1<p\le2$. Fix $n$, $x\in L^p$ with $\|x\|_p=1$, put $d=\|(\zeta-T^n)x\|_p$, and define $\mu_j$ and $y_j=\sum_{k<n}\mu_j^{-k}T^kx$ as in the proof of Theorem~\ref{thm:rootsofzeta}; then $\|y_j\|_p\le Cd/|\mu_j-1|$ by Proposition~\ref{prop:boundary}. For a.e.\ $t\in\Omega$ the vector $(y_j(t))_j$ is the discrete Fourier transform, up to unimodular factors, of $(T^kx(t))_k$, so
\[
   \sum_{j=0}^{n-1}|y_j(t)|^2=n\sum_{k=0}^{n-1}|T^kx(t)|^2\qquad\text{for a.e.\ }t .
\]
Since $p\le2$, pointwise $(\sum_j|y_j|^2)^{1/2}\le(\sum_j|y_j|^p)^{1/p}$, whence
\[
   \Big\|\Big(\sum_j|y_j|^2\Big)^{1/2}\Big\|_p\le\Big(\sum_j\|y_j\|_p^p\Big)^{1/p}\le Cd\Big(\sum_j|\mu_j-1|^{-p}\Big)^{1/p}.
\]
To bound the last sum write $\zeta=e^{i\beta}$, $0<\beta<2\pi$, so that $|\mu_j-1|=2\sin t_j$ with $t_j=(\beta+2\pi j)/(2n)\in(0,\pi)$, and use $\sin t\ge\frac2\pi\min\{t,\pi-t\}$. Partition the indices according to $t_j\le\pi/2$ or $t_j>\pi/2$. The numbers $\min\{t_j,\pi-t_j\}$ then lie in two truncated arithmetic progressions, possibly empty, of step $\pi/n$ and initial points $\beta/(2n)$ and $(2\pi-\beta)/(2n)$. Both initial points are at least $\delta:=|\zeta-1|/(2n)$ because $|\zeta-1|=2\sin(\beta/2)\le\min\{\beta,2\pi-\beta\}$. In each nonempty progression, the first term is at least $\delta$ and the $(i+1)$-st is at least $i\pi/n$ for $i\ge1$. Consequently,
\[
   \sum_j|\mu_j-1|^{-p}\le\Big(\frac\pi4\Big)^p\cdot2\Big(\delta^{-p}+\sum_{i\ge1}\Big(\frac{i\pi}{n}\Big)^{-p}\Big)
   =2n^p\Big(\Big(\frac{\pi}{2|\zeta-1|}\Big)^p+\frac{\zR(p)}{4^p}\Big),
\]
Taking $p$-th roots gives $(\sum_j|\mu_j-1|^{-p})^{1/p}\le nK_p(\zeta)$. On the other hand, H\"older's inequality $(\sum_{k<n}|a_k|^p)^{1/p}\le n^{1/p-1/2}(\sum_{k<n}|a_k|^2)^{1/2}$ applied pointwise gives
\[
   \Big\|\Big(\sum_k|T^kx|^2\Big)^{1/2}\Big\|_p\ge n^{1/2-1/p}\Big(\sum_k\|T^kx\|_p^p\Big)^{1/p}\ge n^{1/2}\min_{k<n}\|T^kx\|_p\ge n^{1/2}\|T^nx\|_p\ge n^{1/2}(1-d),
\]
using that $T$ is a contraction. Combining, $n(1-d)\le\|(\sum_j|y_j|^2)^{1/2}\|_p\le Cd\,nK_p(\zeta)$, i.e.\ $d\ge(1+CK_p(\zeta))^{-1}$, which gives the inverse bound since $\zeta\in\rho(T^n)$.

If $2<p<\infty$, the adjoint $T^*$ is a Ritt contraction on $L^{p'}$ with $\CR(T^*)=\CR(T)$ and $1<p'<2$, and $\|(\zeta-T^n)^{-1}\|_{p\to p}=\|(\bar\zeta-(T^*)^n)^{-1}\|_{p'\to p'}$; apply the first part to $T^*$ and $\bar\zeta$, noting that $K_{p'}(\bar\zeta)=K_{p'}(\zeta)$. The last assertion follows from Theorem~\ref{thm:UC}, $L^p$ being uniformly convex \cite{Clarkson,Hanner}.
\end{proof}

\subsection{Uniform Ritt constants for powers}
Theorem~\ref{thm:rootsofzeta} is a boundary estimate for the powers of $T$; Proposition~\ref{prop:boundary} transfers it to the exterior of the disc without loss and gives an explicit form of Theorem~\ref{thm:BGTpowers} on Hilbert space.

\begin{proposition}\label{prop:explicitpowers}
If $T$ is a Ritt operator on a Hilbert space, with $C=\CR(T)$ and $M=M(T)$, then
\[
 \sup_{m\ge1}\CR(T^m)\le CM+2 .
\]
In particular, every Ritt contraction on a Hilbert space satisfies $\sup_{m\ge1}\CR(T^m)\le\CR(T)+2$.
\end{proposition}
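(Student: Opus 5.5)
The plan is to combine the Hilbert-space boundary estimate of Theorem~\ref{thm:rootsofzeta} with the maximum principle in Proposition~\ref{prop:boundary}, applied to $S=T^m$. Fix $m\ge1$. The operator $T^m$ is power bounded with $M(T^m)\le M$. Since $\sigma(T)\subset\D\cup\{1\}$, spectral mapping gives $\sigma(T^m)\cap\T\subset\{1\}$. Proposition~\ref{prop:boundary} therefore reduces the claim to a boundary estimate:
\[
 \CR(T^m)=\sup_{\xi\in\T\setminus\{1\}}|\xi-1|\,\|\Res(\xi,T^m)\|,
\]
and it suffices to prove $|\xi-1|\,\|\Res(\xi,T^m)\|\le CM+2$ for every $\xi\in\T\setminus\{1\}$.

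For this, apply Theorem~\ref{thm:rootsofzeta} with $n=m$ and $\zeta=\xi$. It gives
\[
 \|(\xi-T^m)^{-1}\|\le1+\frac{CM}{|\xi-1|}.
\]
Multiplying by $|\xi-1|\le2$ then yields
\[
 |\xi-1|\,\|\Res(\xi,T^m)\|\le|\xi-1|+CM\le CM+2.
\]
Take the supremum over $\xi$ and then over $m$.

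For the contraction case, $M=1$, so the bound becomes $\CR(T)+2$.

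There is no real obstacle. The only point to check is that Proposition~\ref{prop:boundary} applies to $T^m$, which requires power boundedness and peripheral spectrum contained in $\{1\}$; both hold as noted above. Theorem~\ref{thm:rootsofzeta} is valid for every $n$, including $n=m$, with constants independent of $n$, and this uniformity is what makes the bound uniform in $m$.
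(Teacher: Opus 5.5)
Your proposal is correct and is essentially the paper's proof. The paper likewise applies Theorem~\ref{thm:rootsofzeta} to $S=T^m$ to get $|\xi-1|\,\|\Res(\xi,S)\|\le|\xi-1|+CM\le 2+CM$ on $\T\setminus\{1\}$. It then concludes with the boundary characterisation in Proposition~\ref{prop:boundary}, exactly as you do.
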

\begin{proof}
Fix $m\ge1$. The operator $S=T^m$ is power bounded with $\sigma(S)\cap\T\subset\{1\}$, and Theorem~\ref{thm:rootsofzeta} gives, for $\xi\in\T\setminus\{1\}$,
$|\xi-1|\|\Res(\xi,S)\|\le|\xi-1|+CM\le2+CM$. Proposition~\ref{prop:boundary} yields $\CR(S)\le2+CM$.
\end{proof}

\begin{remark}[scalar restrictions on uniform power estimates]\label{rem:powerconstant-obstructions}
For a scalar $a\in\D$, the boundary characterisation of the Ritt
constant, or the image of the disc under a M\"obius transformation,
gives
\begin{equation}\label{eq:scalarritt-powersection}
 \CR(a\Id)=\frac{2|1-a|}{1-|a|^2}.
\end{equation}
Indeed, after setting $z=\lambda^{-1}$, the relevant function is
$(1-z)/(1-az)$; its image of $\D$ is a disc whose centre and radius
both have modulus $|1-a|/(1-|a|^2)$.

Any universal additive estimate
\[
 \CR(T^m)\le\CR(T)+a_0\qquad(m\ge1)
\]
for Hilbert-space Ritt contractions must have $a_0\ge1$.
For $T=t\Id$ with $0<t<1$, formula
\eqref{eq:scalarritt-powersection} gives
\[
 \CR(T)=\frac2{1+t},\qquad
 \sup_{m\ge1}\CR(T^m)=2.
\]
Letting $t\uparrow1$ proves the assertion. We also notice that another possible bound,
$\sup_m\CR(T^m)\le\max\{\CR(T),2\}$, fails already for a
scalar contraction. Namely, for $t=(3+i)/4$,
\[
 \CR(t\Id)=\frac{4\sqrt2}{3}<2,
 \qquad
 \CR(t^2\Id)=\frac{80}{39}>2.
\]
\end{remark}

\section{Norm gaps for operator-valued functions}\label{sec:matrix}

For Ritt operators and symbols $w\in A^{1,1}(\D)$ with
$|w(1)|<\|w\|_\infty$, Borichev, Gomilko and Tomilov obtain
exponential estimates $\sup_n\|w(T^n)^N\|\le C\eta^N$ with
$\eta<\|w\|_\infty$. They also show that the ordinary gap
$\sup_n\|w(T^n)\|<\|w\|_\infty$ can fail without contractivity
\cite[Theorem~7.15 and Example~7.16]{BGT}. For Hilbert-space
contractions, ordinary norm gaps extend to disc-algebra functions
with coefficients in the operators between arbitrary Hilbert spaces.
We first give a computable estimate for finite matrix polynomials by
unitary dilation and Fej\'er--Riesz factorisation. We then prove the
operator-valued result and its converse.

\subsection{Explicit polynomial gaps}
For an $r\times d$ matrix polynomial $W(z)=\sum_{j=0}^{l}W_jz^j$ and $S\in\B(H)$, put
$W(S)=\sum_{j=0}^{l}W_j\otimes S^j:H^d\to H^r$ and
$\|W\|_\infty=\max_{|z|=1}\|W(z)\|$. Evaluation $W\mapsto W(S)$ is an algebra homomorphism from matrix polynomials (of compatible sizes) to operators, because $p\mapsto p(S)$ is one for scalar polynomials.

\begin{theorem}[matrix polynomial norm gaps]\label{thm:polydefect}
Let $T$ be a Ritt contraction on a Hilbert space and put $C=\CR(T)$.
Let $W$ be an $r\times d$ matrix polynomial with
\[
 m:=\|W\|_\infty>\|W(1)\|.
\]
Then $\sup_{n\ge1}\|W(T^n)\|<m$.
More precisely, let $A$ be a $d\times d$ outer matrix polynomial with
\begin{equation}\label{eq:FRgap}
 m^2\Id_d-W(\zeta)^*W(\zeta)=A(\zeta)^*A(\zeta)
 \qquad(\zeta\in\T),
\end{equation}
and write
\[
 \det A(z)=c\prod_{j=1}^{h}(\alpha_j-z),\qquad
 H_A=\max_{\T}\|\operatorname{adj}A(z)\|,
\]
where zeros are repeated with multiplicity and an empty product is $1$. Then $c\ne0$, $|\alpha_j|\ge1$ and $\alpha_j\ne1$ for all $j$, and with
\[
 b_C(\alpha)=
 \begin{cases}
 (|\alpha|-1)^{-1},&|\alpha|>1,\\
 1+C/|\alpha-1|,&|\alpha|=1,\ \alpha\ne1,
 \end{cases}
 \qquad
 L_A=\frac{H_A}{|c|}\prod_{j=1}^{h}b_C(\alpha_j),
\]
one has
\begin{equation}\label{eq:matrixgap}
 \sup_{n\ge1}\|W(T^n)\|
 \le\sqrt{m^2-L_A^{-2}}<m.
\end{equation}
The constant depends on $W$ and $C$ only, and not on $n$ or on the dimension of $H$.
\end{theorem}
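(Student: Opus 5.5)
Our plan is to turn the Fej\'er--Riesz identity \eqref{eq:FRgap} into an operator inequality via a unitary dilation of $S:=T^n$, and then to bound $A(S)$ from below using the Ritt structure.

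\textbf{Step 1: the operator identity.} Since $S$ is a contraction, let $U$ be its minimal unitary dilation on $K\supset H$, so that $p(S)=P_Hp(U)|_H$ for polynomials $p$, and the same holds entrywise for matrix polynomials. The spectral theorem for $U$ turns \eqref{eq:FRgap} into $m^2\Id-W(U)^*W(U)=A(U)^*A(U)$ on $K^d$. Compress this to $H^d$. Because $W$ and $A$ are analytic (they contain only nonnegative powers), $W(S)=P_HW(U)|_H$, and $\|W(S)h\|^2\le\|W(U)h\|^2$; the same holds for $A$. We therefore get the inequality
\[
 \|W(S)h\|^2\le\|W(U)h\|^2=m^2\|h\|^2-\|A(U)h\|^2 .
\]
The last term involves $U$, not $S$, so the step to watch is getting $\|A(U)h\|\ge\|A(S)h\|$. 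This holds because $A(S)h=P_HA(U)h$. Consequently
\[
 \|W(S)h\|^2\le m^2\|h\|^2-\|A(S)h\|^2 ,
\]
and it suffices to show $\|A(S)h\|\ge L_A^{-1}\|h\|$, which is implied by $\|A(S)^{-1}\|\le L_A$.

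\textbf{Step 2: invertibility of $A(S)$.} Outerness gives $\det A(z)\ne0$ on $\D$, so $c\ne0$ and $|\alpha_j|\ge1$. If $\det A(1)=0$, then $A(1)$ would be singular, so some unit vector $v$ would satisfy $\|W(1)v\|=m$, contradicting $\|W(1)\|<m$; hence $\alpha_j\ne1$. Now use $\operatorname{adj}A(z)\,A(z)=\det A(z)\Id$, which gives
\[
 A(S)^{-1}=\operatorname{adj}A(S)\,(\det A)(S)^{-1},\qquad (\det A)(S)=c\prod_j(\alpha_j-S).
\]
The factors for $|\alpha_j|>1$ are inverted by a Neumann series, with norm at most $(|\alpha_j|-1)^{-1}$ since $S$ is a contraction. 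For a unimodular $\alpha_j\ne1$, Theorem~\ref{thm:rootsofzeta} applied with $\zeta=\alpha_j$ and $M=1$ gives $\|(\alpha_j-T^n)^{-1}\|\le1+C/|\alpha_j-1|$ uniformly in $n$. Finally, von Neumann's inequality for matrix polynomials, which again follows from the dilation, gives $\|\operatorname{adj}A(S)\|\le H_A$. Multiplying these bounds yields $\|A(S)^{-1}\|\le L_A$, and hence \eqref{eq:matrixgap}.

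The one step needing care is $\|A(U)h\|\ge\|A(S)h\|$, which depends on compressing an analytic polynomial under the dilation, together with the unimodular zeros of $\det A$. Those zeros are exactly where the Ritt hypothesis enters, through Theorem~\ref{thm:rootsofzeta}. The existence of a square outer factor $A$ with polynomial entries is matrix Fej\'er--Riesz, applicable because the left side of \eqref{eq:FRgap} is a positive semidefinite trigonometric matrix polynomial. Its invertibility on $\D$ is part of what outer means here, and the argument at $z=1$ above covers the boundary.
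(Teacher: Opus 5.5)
Your proof is correct and follows the paper's argument essentially step for step: the unitary dilation gives $\|W(S)x\|^2+\|A(S)x\|^2\le m^2\|x\|^2$, and $A(S)$ is then inverted through the adjugate identity, using a Neumann series for $|\alpha_j|>1$, Theorem~\ref{thm:rootsofzeta} with $M=1$ for unimodular $\alpha_j\ne1$, and von Neumann's inequality for $\operatorname{adj}A(S)$. The one difference concerns the definition of outer. You take invertibility of $A(z)$ on $\D$ as part of that definition, while the paper uses the Dritschel--Rovnyak notion ($AH^2(\C^d)$ dense in $H^2(\mathcal M)$ for some subspace $\mathcal M$) and derives it. There, your observation that $A(1)$ is invertible is exactly what forces $\mathcal M=\C^d$, since $A(z)\C^d\subset\mathcal M$ on $\ol\D$, and invertibility on $\D$ then follows from density, so nothing is lost.
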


\begin{proof}
\emph{The outer factor.} The function $\zeta\mapsto m^2\Id_d-W(\zeta)^*W(\zeta)$ is a matrix trigonometric polynomial which is positive semidefinite on $\T$. By the matrix Fej\'er--Riesz theorem \cite[Theorem~2.1]{DR} there is a $d\times d$ matrix polynomial $A$ satisfying \eqref{eq:FRgap} which is outer, in the sense that $AH^2(\C^d)$ is dense in $H^2(\mathcal M)$ for a subspace $\mathcal M$ of $\C^d$. Since $A(z)\C^d\subset\mathcal M$ for every $z\in\ol\D$ (evaluate $Af$ at $z$ for constant $f$, and use continuity up to the boundary) and $A(1)^*A(1)=m^2\Id_d-W(1)^*W(1)$ is positive definite, $A(1)$ is invertible and $\mathcal M=\C^d$. For each $z\in\D$, evaluation at $z$ is a continuous map from $H^2(\C^d)$ onto $\C^d$. Since $AH^2(\C^d)$ is dense in $H^2(\C^d)$, the range $A(z)\C^d$ is dense in $\C^d$ and hence equals $\C^d$. Thus $A(z)$ is invertible for every $z\in\D$, so $\det A$ has no zeros in $\D$; it does not vanish at $1$ either. This proves $c\ne0$, $|\alpha_j|\ge1$ and $\alpha_j\ne1$.

\emph{The dilation inequality.} Fix $n$ and put $S=T^n$, a contraction. Let $U$ be a unitary dilation of $S$ on a Hilbert space $\mathcal K\supset H$, so that $S^j=P_HU^j|_H$ for all $j\ge0$ \cite[Chapter~I]{SzNagyFoias}. Then $W(S)$ is obtained by restricting $W(U)$ to $H^d$ and projecting its range onto $H^r$, and similarly for $A(S)$: $W(S)x=(\Id_r\otimes P_H)W(U)x$ for $x\in H^d$. By the spectral theorem for $U$, evaluation of matrix trigonometric polynomials at $U$ is a $*$-homomorphism, so \eqref{eq:FRgap} gives $W(U)^*W(U)+A(U)^*A(U)=m^2\Id$. Hence, for $x\in H^d$,
\[
 \|W(S)x\|^2+\|A(S)x\|^2
 \le\|W(U)x\|^2+\|A(U)x\|^2
 =m^2\|x\|^2.
\]

\emph{Invertibility of $A(S)$.} For every zero $\alpha_j$ of $\det A$, the operator $\alpha_j-S$ is invertible: the Neumann series applies when $|\alpha_j|>1$, and Theorem~\ref{thm:rootsofzeta} (with $M=1$) applies when $|\alpha_j|=1$, $\alpha_j\ne1$. In both cases
$\|(\alpha_j-S)^{-1}\|\le b_C(\alpha_j)$, uniformly in $n$. Hence $(\det A)(S)=c\prod_j(\alpha_j-S)$ is invertible with
\[
 \|((\det A)(S))^{-1}\|\le |c|^{-1}\prod_jb_C(\alpha_j).
\]
The adjugate identity $\operatorname{adj}(A)A=A\operatorname{adj}(A)=(\det A)\Id_d$, evaluated at $S$, shows that $A(S)$ is invertible with
\[
 A(S)^{-1}=\operatorname{adj}(A)(S)
 \bigl(\Id_d\otimes((\det A)(S))^{-1}\bigr),
\]
the scalar rational factor commuting with every entry of $\operatorname{adj}(A)(S)$. The von Neumann inequality for matrix polynomials (which follows from the dilation, since $\operatorname{adj}(A)(S)$ is the compression of $\operatorname{adj}(A)(U)$) gives
$\|\operatorname{adj}(A)(S)\|\le H_A$. Hence
$\|A(S)^{-1}\|\le L_A$ and $\|A(S)x\|\ge L_A^{-1}\|x\|$.
Substitution in the dilation inequality proves
\eqref{eq:matrixgap}; in particular its radicand is nonnegative.
\end{proof}

\begin{corollary}[scalar symbols]\label{cor:scalarsymbols}
If $w$ is a scalar polynomial with $|w(1)|<\|w\|_\infty$, then
$\sup_n\|w(T^n)\|<\|w\|_\infty$ for every Ritt contraction $T$ on a Hilbert space.
If $a(z)=c\prod_j(\alpha_j-z)$ is the scalar outer factor of
$\|w\|_\infty^2-|w|^2$, then
\[
   \sup_n\|w(T^n)\|^2\le\|w\|_\infty^2-|c|^2\prod_j b_C(\alpha_j)^{-2}.
\]
\end{corollary}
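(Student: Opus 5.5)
This is the scalar case $r=d=1$ of Theorem~\ref{thm:polydefect}, so the plan is to specialise that theorem and read off the constants.

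Set $m=\|w\|_\infty$. The function $m^2-|w(\zeta)|^2$ is a nonnegative trigonometric polynomial on $\T$. The classical scalar Fej\'er--Riesz theorem gives an outer polynomial $a$ with $m^2-|w|^2=|a|^2$ on $\T$; this is exactly \eqref{eq:FRgap} with $d=1$, $A=a$.

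With $d=1$ the theorem's data become explicit:
\begin{itemize}
\item $\det A=a$, so the factorisation $a(z)=c\prod_j(\alpha_j-z)$ is the one in the theorem, and its proof gives $c\ne0$, $|\alpha_j|\ge1$ and $\alpha_j\ne1$;
\item the adjugate of a $1\times1$ matrix is the constant $1$, so $H_A=1$;
\item consequently $L_A=|c|^{-1}\prod_j b_C(\alpha_j)$.
\end{itemize}

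Substituting into \eqref{eq:matrixgap} gives
\[
 \sup_n\|w(T^n)\|^2\le m^2-L_A^{-2}=\|w\|_\infty^2-|c|^2\prod_j b_C(\alpha_j)^{-2}.
\]
The right-hand side is strictly smaller than $\|w\|_\infty^2$, which proves the first assertion as well.

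The only point that needs care is the hypothesis $|w(1)|<m$. It ensures $a(1)\ne0$, so $1$ is not among the $\alpha_j$ and each $b_C(\alpha_j)$ is finite. This is already handled in the proof of Theorem~\ref{thm:polydefect}, so no further obstacle is expected.
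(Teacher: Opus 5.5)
Your proposal is correct and matches how the paper obtains this corollary: it is the case $r=d=1$ of Theorem~\ref{thm:polydefect}, where $\det A=a$, the $1\times1$ adjugate is $1$ so that $H_A=1$, and hence $L_A^{-2}=|c|^2\prod_j b_C(\alpha_j)^{-2}$. Your observation that $|w(1)|<\|w\|_\infty$ forces $a\ne0$ and $a(1)\ne0$ is the only point needing care; it makes every $b_C(\alpha_j)$ finite and the gap strict, and it is already handled inside the proof of the theorem.
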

As an example, consider $w(z)=1-z$. One has $4-|1-z|^2=|1+z|^2$ on $\T$, so $a(z)=1+z$, $h=1$, $\alpha_1=-1$, $|c|=1$, $b_C(-1)=1+C/2$, and the bound is $\sup_n\|\Id-T^n\|^2\le4-4/(C+2)^2$: this also follows from the first estimate in
Corollary~\ref{cor:hilbertdisp} and the parallelogram law.

\subsection{Operator-valued disc-algebra functions}

Let $E$ and $F$ be complex Hilbert spaces. Write
$A(\D;\B(E,F))$ for the Banach space of functions
$W:\ol\D\to\B(E,F)$ that are continuous in operator norm on
$\ol\D$ and holomorphic on $\D$, with norm
\[
 \|W\|_\infty=\max_{\zeta\in\T}\|W(\zeta)\|.
\]
Such functions are uniform limits of operator-valued polynomials:
first replace $W(z)$ by $W(rz)$, $r<1$, and then truncate its
norm-convergent Taylor series. For a contraction $S\in\B(H)$ and
a polynomial $W(z)=\sum_jW_jz^j$, define
\[
 W(S)=\sum_jW_j\otimes S^j:E\otimes H\longrightarrow F\otimes H,
\]
where all tensor products of Hilbert spaces are completed Hilbert
tensor products. A unitary dilation of $S$ gives
$\|W(S)\|\le\|W\|_\infty$, so evaluation extends continuously
to $A(\D;\B(E,F))$. This definition agrees with the matrix
functional calculus when $E$ and $F$ are finite dimensional.

For clarity, this use of unitary dilation does not require separable
coefficient spaces. If $U$ is unitary on $\mathcal K$ with spectral
measure $P_U$, the tensor projections
$\Id_E\otimes P_U(B)$ form a spectral measure on
$E\otimes\mathcal K$. The functional calculus for continuous
operator-valued functions can be defined by uniform approximation
by trigonometric polynomials. For $x\in E\otimes\mathcal K$, put
\[
 \nu_x(B)=\langle(\Id_E\otimes P_U(B))x,x\rangle.
\]
The spectral theorem then gives
\begin{equation}\label{eq:operator-valued-spectral-bound}
 \|W(U)x\|^2\le\int_\T\|W(\zeta)\|^2\,d\nu_x(\zeta).
\end{equation}
One can verify this inequality first for step functions on disjoint
Borel sets, whose spectral projections have orthogonal ranges, and
then pass to the uniform limit. This also proves the asserted
contractivity after compression to $E\otimes H$ and
$F\otimes H$.

\begin{theorem}[operator-valued disc-algebra norm gaps]\label{thm:disc-algebra-gap}
Let $S$ be a contraction on a Hilbert space such that
$\sigma(S)\cap\T\subset\{1\}$. Let $E,F$ be Hilbert spaces, and
let $W\in A(\D;\B(E,F))$ satisfy
\[
 m:=\|W\|_\infty>\|W(1)\|.
\]
Then $\|W(S)\|<m$.
\end{theorem}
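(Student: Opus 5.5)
The plan is to argue through a unitary dilation of $S$ and a uniform lower bound on the mass that the dilation spectral measures place near $1$.

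\emph{Setup.} Let $U$ be a unitary dilation of $S$ on $\mathcal K\supset H$. As in the discussion preceding the statement, for $x\in E\otimes H$ the compression identity together with \eqref{eq:operator-valued-spectral-bound} gives
\[
 \|W(S)x\|^2\le\|W(U)x\|^2\le\int_\T\|W(\zeta)\|^2\,d\nu_x(\zeta),\qquad \nu_x(\T)=\|x\|^2 .
\]
Since $\|W(1)\|<m$ and $W$ is norm-continuous on $\T$, there are an open arc $V\ni1$ and $m'<m$ with $\|W(\zeta)\|\le m'$ on $V$. Splitting the integral over $V$ and $\T\setminus V$, it suffices to prove the following.

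\emph{Key lemma.} There is $\eta>0$ such that $\nu_x(V)\ge\eta\|x\|^2$ for every $x\in E\otimes H$. Granting this,
\[
 \|W(S)x\|^2\le\bigl(m'^2\eta+m^2(1-\eta)\bigr)\|x\|^2,
\]
and $m'^2\eta+m^2(1-\eta)<m^2$, which proves the theorem.

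\emph{Reduction to scalar vectors.} Fix an orthonormal basis $(e_\iota)$ of $E$ and write $x=\sum_\iota e_\iota\otimes h_\iota$. Because the spectral measure is $\Id_E\otimes P_U$, we get $\nu_x=\sum_\iota\nu_{h_\iota}$, where $\nu_h(B)=\langle P_U(B)h,h\rangle$ for $h\in H$. So it is enough to prove $\nu_h(V)\ge\eta\|h\|^2$ for all $h\in H$.

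\emph{Proof of the scalar lemma via Runge.} Put $K:=\T\setminus V$, a closed arc. It is disjoint from $\sigma(S)$ by the peripheral hypothesis, so $\rho_0:=\dist(K,\sigma(S))>0$. Choose $0<\rho<\rho_0/3$ and set
\[
 L_1:=\{z\in\ol\D:\dist(z,K)\ge2\rho\},\qquad L_2:=\{z:\dist(z,K)\le\rho\}.
\]
These are disjoint compact sets, and $L_1$ contains a neighbourhood of $\sigma(S)$. The complement of $L_1\cup L_2$ is connected: the strip between them crosses $\T$ and so joins the exterior of the disc. The function equal to $1$ near $L_1$ and $0$ near $L_2$ is holomorphic on a neighbourhood of $L_1\cup L_2$, so Runge's theorem gives, for each $\varepsilon>0$, a polynomial $p$ with $|p-1|\le\varepsilon$ on a neighbourhood of $L_1$ and $|p|\le\varepsilon$ on $L_2\supset K$.

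By the Riesz--Dunford calculus, with a contour inside the interior of $L_1$ surrounding $\sigma(S)$, we have $\|p(S)-\Id\|\le c\varepsilon$, where $c$ depends only on the contour and the resolvent of $S$ on it. Hence $\|p(S)h\|\ge(1-c\varepsilon)\|h\|$. On the other hand, by the dilation,
\[
 \|p(S)h\|^2\le\|p(U)h\|^2=\int|p|^2\,d\nu_h\le\varepsilon^2\|h\|^2+\|p\|_{\T}^2\,\nu_h(V).
\]
Fixing $\varepsilon$ small, so that $(1-c\varepsilon)^2>\varepsilon^2$, yields
\[
 \eta=\frac{(1-c\varepsilon)^2-\varepsilon^2}{\|p\|_\T^2}>0 .
\]

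\emph{Main obstacle.} I expect the delicate points to be the Runge step. The topology of $L_1\cup L_2$ must be checked, in particular that the complement is connected even when $\sigma(S)$ accumulates at $1$ close to $\T$. One must also ensure the contour for the functional calculus lies where $p\approx1$; $p$ may be large on the rest of $V$, but this is harmless because only $\nu_h(V)$ enters. The transfer from $H$ to $E\otimes H$ for non-separable $E$ is routine given \eqref{eq:operator-valued-spectral-bound}.
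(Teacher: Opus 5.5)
Your strategy is the paper's mechanism in a different packaging. You combine the dilation inequality with a Runge polynomial that is close to $1$ near $\sigma(S)$ and small on the part of $\T$ where $\|W\|$ is large. The paper argues by contradiction: the measures $\nu_k$ of almost-norming vectors would concentrate on $\mathcal F=\{\zeta:\|W(\zeta)\|=m\}$. You prove instead a direct lower bound $\nu_h(V)\ge\eta\|h\|^2$ that depends only on $S$ and $V$. That packaging is fine, and it even yields $\|W(S)\|^2\le m^2-(m^2-m'^2)\eta$.

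However, the Runge step, which you rightly flagged as delicate, fails as written. Your $L_1=\{z\in\ol\D:\dist(z,K)\ge2\rho\}$ lies in $\ol\D$. When $1\in\sigma(S)$, which the hypothesis allows and is the case that matters, $L_1$ does not contain a neighbourhood of $\sigma(S)$, and its interior lies in $\D$. No closed curve in $\ol\D\setminus\{1\}$ can have nonzero winding number about $1$, since the ray $(1,\infty)$ misses $\ol\D$. So there is no Riesz--Dunford contour on which $|p-1|\le\varepsilon$ is known, and the bound $\|p(S)-\Id\|\le c\varepsilon$ is unproved. Any admissible contour must pass through points of modulus $>1$ near $1$, where your $p$ is uncontrolled.

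The fix is to make the region where $p\approx1$ bulge out of the disc at $1$. This is exactly why the paper uses $\{|z|\le r\}\cup\{|z-1|\le\delta\}$ and then checks by hand that the complement of this set together with $\mathcal F$ is connected. In your notation you can take $L_1=\{z:|z|\le1+\rho,\ \dist(z,K)\ge2\rho\}$. Then $\C\setminus(L_1\cup L_2)=\{|z|>1+\rho\}\cup\{\rho<\dist(z,K)<2\rho\}$.

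For this complement to be connected you must also take $\rho$ small compared with $\dist(1,K)$, not merely $\rho<\rho_0/3$. Otherwise take $S=0$, so $\rho_0=1$, and a short arc $V$ with $\dist(1,K)\le\rho$. Then $L_2\supset\T$ and the complement has a bounded component. By the maximum principle, no polynomial is small on $L_2$ and close to $1$ on $L_1$. Your claim that "the strip crosses $\T$" therefore needs this extra restriction.

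With these two corrections, the rest of your argument goes through: the reduction to vectors of $H$, fixing the contour before choosing $\varepsilon$, and splitting the integral over $V$ and $\T\setminus V$.
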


\begin{proof}
Let
\[
 \mathcal F=\{\zeta\in\T:\|W(\zeta)\|=m\}.
\]
Continuity in operator norm implies that $\mathcal F$ is a nonempty
compact subset of $\T\setminus\{1\}$, disjoint from $\sigma(S)$.
We first construct a scalar polynomial $p$ such that
\begin{equation}\label{eq:Runge-separation}
 \|p(S)-\Id\|<\tfrac14,
 \qquad \max_{\mathcal F}|p|<\tfrac14.
\end{equation}

Choose
$0<\delta<\min\{1/4,\dist(1,\mathcal F)/4\}$.
The compact set
$\sigma(S)\setminus\{z:|z-1|<\delta/2\}$ lies in $\D$, so there
is $r\in(1-\delta,1)$ such that
\[
 K=\{z:|z|\le r\}\cup\{z:|z-1|\le\delta\}
\]
contains $\sigma(S)$ in its interior. The sets $K$ and $\mathcal F$
are disjoint, and $\C\setminus(K\cup\mathcal F)$ is connected.
Here is a direct verification. For $r<s\le1$, the part of
$|z|=s$ outside $\{z:|z-1|\le\delta\}$ is a connected arc
containing $se^{2i\delta}$, since $\sin(2\delta)>\delta$.
Every point of the complement with modulus less than $1$ can
therefore be joined along its circle to the ray of angle $2\delta$,
and then along that ray to infinity. The ray avoids $K$ once
$|z|>r$, and its intersection with $\T$ avoids $\mathcal F$, since
\[
 |e^{2i\delta}-1|\le2\delta<\dist(1,\mathcal F).
\]
A point of the complement on or outside $\T$ can be joined radially
to infinity: its distance from $1$ is nondecreasing along the
outward radial segment, and $\mathcal F$ lies on $\T$.

The function equal to $1$ near $K$ and to $0$ near $\mathcal F$
is holomorphic on a neighbourhood of $K\cup\mathcal F$.
Runge's theorem supplies polynomials $p_j$ converging uniformly to
this function on $K\cup\mathcal F$. To obtain operator convergence,
let $\gamma=\partial K$, with positive orientation. The two discs
overlap, so $\gamma$ is a piecewise smooth Jordan curve enclosing
$\sigma(S)$ and lying in $\rho(S)$. The holomorphic functional
calculus gives
\[
 p_j(S)-\Id
 =\frac{1}{2\pi i}\int_\gamma
       (p_j(z)-1)(z-S)^{-1}\,dz.
\]
Thus $\|p_j(S)-\Id\|\to0$. Since
$\max_{\mathcal F}|p_j|\to0$, some $p_j$ satisfies
\eqref{eq:Runge-separation}.

Suppose now that $\|W(S)\|=m$. Choose unit vectors
$x_k\in E\otimes H$ such that $\|W(S)x_k\|\to m$, and let $U$
be a unitary dilation of $S$ on $\mathcal K\supset H$. The scalar
measures
\[
 \nu_k(B)=\langle(\Id_E\otimes P_U(B))x_k,x_k\rangle
\]
are probability measures on $\T$. Compression and
\eqref{eq:operator-valued-spectral-bound} give
\[
 \|W(S)x_k\|^2
 \le\|W(U)x_k\|^2
 \le\int_\T\|W(\zeta)\|^2\,d\nu_k(\zeta)
 \le m^2.
\]
For every relatively open neighbourhood $V$ of $\mathcal F$ in
$\T$, the continuous function
$m^2-\|W(\zeta)\|^2$ has a positive minimum on $\T\setminus V$.
It follows that $\nu_k(\T\setminus V)\to0$.

Apply the dilation inequality to the scalar polynomial $p$ in
\eqref{eq:Runge-separation}, amplified by $\Id_E$. The preceding
concentration property and continuity of $p$ imply
\[
 \limsup_k\|(\Id_E\otimes p(S))x_k\|^2
 \le\limsup_k\int_\T|p(\zeta)|^2\,d\nu_k(\zeta)
 \le\max_{\mathcal F}|p|^2<\tfrac1{16}.
\]
On the other hand,
$\|\Id_E\otimes(p(S)-\Id)\|<1/4$ gives
$\|(\Id_E\otimes p(S))x_k\|>3/4$ for every $k$.
This contradiction proves the theorem.
\end{proof}

\begin{corollary}[uniform disc-algebra gaps for powers of Ritt operators]\label{cor:disc-algebra-powers}
Let $T$ be a Ritt contraction on a Hilbert space. For arbitrary
Hilbert spaces $E,F$ and every $W\in A(\D;\B(E,F))$ with
$\|W(1)\|<\|W\|_\infty$,
\[
 \sup_{n\ge1}\|W(T^n)\|<\|W\|_\infty.
\]
For each fixed $W$ and finite $C_0$, the gap can be chosen uniformly
over all Ritt contractions $T$ on Hilbert spaces satisfying
$\CR(T)\le C_0$.
\end{corollary}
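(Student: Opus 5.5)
The plan has two parts: a pointwise gap for each power $T^n$, obtained from Theorem~\ref{thm:disc-algebra-gap}, and then a compactness argument that makes the gap uniform in $n$ and in $T$.

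\emph{Pointwise gap.} For each $n\ge1$ the operator $S=T^n$ is a contraction, and $\sigma(S)\cap\T\subset\{1\}$ by spectral mapping, since $T$ is Ritt. Theorem~\ref{thm:disc-algebra-gap} therefore gives $\|W(T^n)\|<m:=\|W\|_\infty$ for each $n$. This alone does not give a uniform gap, so the remaining work is to make the argument quantitative.

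\emph{Uniformity by contradiction.} Suppose there are Ritt contractions $T_k$ with $\CR(T_k)\le C_0$, exponents $n_k$, and unit vectors $x_k\in E\otimes H_k$ such that $\|W(T_k^{n_k})x_k\|\to m$. Put $S_k=T_k^{n_k}$, let $U_k$ be unitary dilations of $S_k$, and let $\nu_k$ be the associated probability measures $\langle(\Id_E\otimes P_{U_k}(\cdot))x_k,x_k\rangle$. As in the proof of Theorem~\ref{thm:disc-algebra-gap}, \eqref{eq:operator-valued-spectral-bound} shows that $\nu_k$ concentrates on $\mathcal F=\{\zeta\in\T:\|W(\zeta)\|=m\}$. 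This is a compact subset of $\T\setminus\{1\}$ that depends only on $W$.

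The contradiction will come from a single scalar polynomial $p$, independent of $k$, satisfying
\[
\max_{\mathcal F}|p|<\tfrac14,\qquad \sup_k\|p(S_k)-\Id\|<\tfrac14 .
\]
To construct $p$, use the operator-independent resolvent control. Since $\CR(T_k)\le C_0$, Proposition~\ref{prop:explicitpowers} gives $\CR(S_k)\le C_0+2$ for every $k$. Hence $\|(z-S_k)^{-1}\|\le (C_0+2)/|z-1|$ for $|z|>1$, and by continuity also on $\T\setminus\{1\}$.

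\emph{Main obstacle.} The compact set $K$ in the proof of Theorem~\ref{thm:disc-algebra-gap} depended on $\sigma(S)$, which is not uniform in $k$. To get around this, first take the target function $f$ equal to $0$ on a neighbourhood of $\mathcal F$ and equal to $1$ on a neighbourhood of the closed set $\ol\D\setminus V$, where $V$ is a small open neighbourhood of $\mathcal F$ with $1\notin\ol V$. Then approximate $f$ by polynomials on a region $\Omega$ that contains $\ol\D$ minus a small neighbourhood of $\mathcal F$.

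The contour is chosen as follows. Each $\sigma(S_k)$ lies in $\ol\D$, and the resolvent bound above gives a uniform estimate for $\|(z-S_k)^{-1}\|$ on any contour lying outside $\D$ and staying away from $1$ except near $1$ itself. Take $\gamma$ to be a circle of radius $1+\eta$, but detoured inward into $\D$ near $\mathcal F$ so that it avoids the region where $f=0$. I expect the inward detour to be the hardest part, since there the resolvent is not controlled by $\CR$.

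The first remedy I would try is a rotated sectorial estimate. Writing $z=\rho\zeta$ with $\zeta$ near $\mathcal F$ and $\rho<1$ close to $1$, I would use the fact that $\zeta-S_k$ is uniformly invertible with bound $1+(C_0+2)/|\zeta-1|$, which comes from Theorem~\ref{thm:rootsofzeta} applied with $M=1$. By the Neumann series this bound persists on a fixed-radius disc around each $\zeta\in\mathcal F$, with radius depending only on $C_0$ and $\dist(1,\mathcal F)$. The detour can therefore stay inside this disc, and the resolvent is bounded uniformly in $k$ on all of $\gamma$.

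With $\gamma$ fixed in this way, Runge approximation $p_j\to f$ uniformly on the closed region bounded by $\gamma$ together with a neighbourhood of $\mathcal F$, combined with the Cauchy integral $p_j(S_k)-\Id=\frac1{2\pi i}\int_\gamma(p_j(z)-1)(z-S_k)^{-1}\,dz$, gives $\sup_k\|p_j(S_k)-\Id\|\to0$. I would also check that $\sigma(S_k)$ lies inside $\gamma$. This holds because the uniformly invertible discs around $\mathcal F$ lie in $\rho(S_k)$, so every spectral point of $S_k$ is enclosed by the detoured contour.

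\emph{Conclusion.} The concentration of $\nu_k$ on $\mathcal F$ and $\max_{\mathcal F}|p|<\frac14$ give $\limsup_k\|(\Id_E\otimes p(S_k))x_k\|<\frac14$. On the other hand, $\|p(S_k)-\Id\|<\frac14$ gives $\|(\Id_E\otimes p(S_k))x_k\|>\frac34$ for every $k$. This contradiction shows that the gap is uniform in $n$ and over all $T$ with $\CR(T)\le C_0$, and in particular $\sup_{n\ge1}\|W(T^n)\|<\|W\|_\infty$ for each fixed Ritt contraction $T$.
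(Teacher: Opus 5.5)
Your argument is correct, but it takes a different route from the paper. The paper never reopens the proof of Theorem~\ref{thm:disc-algebra-gap}. It forms a single contraction $S=\bigoplus_{n\ge1}T^n$, or $S=\bigoplus_{j,n}T_j^n$ for the uniformity statement. By Proposition~\ref{prop:explicitpowers}, $\CR(S)=\sup\CR(T_j^n)\le C_0+2$, so $S$ is Ritt and $\sigma(S)\cap\T\subset\{1\}$. The paper then applies Theorem~\ref{thm:disc-algebra-gap} once and uses $\|W(S)\|=\sup_{j,n}\|W(T_j^n)\|$.

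You instead rerun that theorem's proof with uniform constants. The operator-dependent compact set $K$ becomes a fixed contour: the circle $|z|=1+\eta$, where contractivity alone gives $\|(z-S_k)^{-1}\|\le1/\eta$, with inward detours through a fixed neighbourhood of $\mathcal F$. On that neighbourhood the resolvents are bounded uniformly in $k$, by Theorem~\ref{thm:rootsofzeta} plus a Neumann series. So one Runge polynomial $p$ serves all $k$, and the concentration of the measures $\nu_k$ works even though the Hilbert spaces vary.

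What each approach buys:
\begin{itemize}
\item The direct-sum trick is shorter and uses Theorem~\ref{thm:disc-algebra-gap} as a black box. The uniform control near $\mathcal F$ is simply encoded in the Ritt property of $S$.
\item Your version is more transparent. It isolates what makes the gap uniform, namely a fixed neighbourhood of $\mathcal F$ contained in every $\rho(S_k)$ with a uniform resolvent bound, and it gives one test polynomial depending only on $W$ and $C_0$.
\end{itemize}
Both arguments remain nonconstructive.

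A few points need care when you write it out:
\begin{itemize}
\item The function $f$ must equal $1$ on a neighbourhood of the whole closed region bounded by $\gamma$. That region reaches radius $1+\eta$, so it is larger than $\ol\D\setminus V$.
\item Cover $\mathcal F$ by finitely many closed arcs, so that each region cut off between a detour and the outer circle lies inside the uniformly invertible neighbourhood. This is what guarantees that $\gamma$ encloses $\sigma(S_k)$.
\item Check that the complement of that closed region together with a neighbourhood of $\mathcal F$ is connected, as Runge's theorem requires.
\item Your remark that the resolvent is not controlled by $\CR$ on the detour is inaccurate. The bound there comes from $\CR(S_k)\le C_0+2$ on $\T$ via Proposition~\ref{prop:boundary}, which is exactly what your remedy uses.
\end{itemize}
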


\begin{proof}
On $\mathcal H=\bigoplus_{n\ge1}H$, set
$S=\bigoplus_{n\ge1}T^n$. This is a contraction. By
Proposition~\ref{prop:explicitpowers},
\[
 \sup_{|\lambda|>1}|\lambda-1|\|(\lambda-S)^{-1}\|
 =\sup_{n\ge1}\CR(T^n)\le\CR(T)+2.
\]
Hence $S$ is Ritt and $\sigma(S)\cap\T\subset\{1\}$.
Theorem~\ref{thm:disc-algebra-gap} applies to $S$. Hilbert tensor
products distribute over Hilbert direct sums, and the functional
calculus respects these identifications, first for polynomials and
then by uniform approximation. Consequently
\[
 \|W(S)\|=\sup_{n\ge1}\|W(T^n)\|<\|W\|_\infty.
\]

For the final assertion, suppose that a sequence of Ritt contractions
$T_j\in\B(H_j)$ satisfies $\CR(T_j)\le C_0$ and
$\sup_n\|W(T_j^n)\|\to\|W\|_\infty$.
The contraction
$S=\bigoplus_{j,n}T_j^n$ is Ritt, with $\CR(S)\le C_0+2$,
whereas $\|W(S)\|=\|W\|_\infty$. This contradicts
Theorem~\ref{thm:disc-algebra-gap}.
\end{proof}

\begin{theorem}[characterisation by a fixed operator-valued symbol]\label{thm:operator-symbol-characterisation}
Let $T$ be a contraction on a Hilbert space with
$\sigma(T)\cap\T\subset\{1\}$. Fix Hilbert spaces $E,F$ and a
function $W\in A(\D;\B(E,F))$ satisfying
$\|W(1)\|<\|W\|_\infty$. Then
\[
 T\text{ is Ritt}
 \quad\Longleftrightarrow\quad
 \sup_{n\ge1}\|W(T^n)\|<\|W\|_\infty.
\]
\end{theorem}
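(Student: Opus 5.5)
The forward implication is Corollary~\ref{cor:disc-algebra-powers}: if $T$ is a Ritt contraction on a Hilbert space, then $\sup_{n\ge1}\|W(T^n)\|<\|W\|_\infty$ for every $W\in A(\D;\B(E,F))$ with $\|W(1)\|<\|W\|_\infty$. So only the converse needs work.

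For the converse, assume $q:=\sup_{n\ge1}\|W(T^n)\|<m:=\|W\|_\infty$. The plan is to turn this operator-valued gap into a uniform arc resolvent bound for the powers $T^n$, and then apply the discrete Kato criterion in arc form, Proposition~\ref{thm:general} with Lemma~\ref{lem:allpowers}. The hypotheses there are met: $T$ is a contraction, hence power bounded, and $\sigma(T)\cap\T\subset\{1\}$ by assumption.

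\emph{Step 1: choose an arc.} Since $\|W(1)\|<m$ and $\zeta\mapsto\|W(\zeta)\|$ is continuous, there is $\zeta_0\in\T\setminus\{1\}$ with $\|W(\zeta_0)\|=m$. Choose a closed arc $\arc\subset\T\setminus\{1\}$ around $\zeta_0$ on which $\|W(\eta)\|>q':=(q+m)/2$.

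\emph{Step 2: bound $(\eta-S)^{-1}$ for $S=T^n$, $\eta\in\arc$.} Lemma~\ref{lem:divdiff} does not apply directly: $W$ is operator-valued and the relevant quantity is $\|W(\eta)\|$, not $\min|w|$. I would replace the divided difference by a dilation argument. The idea is to show that an approximate eigenvector $x$ of $S$ for $\eta$ makes $\|W(S)(e\otimes x)\|$ close to $\|W(\eta)e\|$. Fix a unit vector $x$ with $\|(\eta-S)x\|=d$, and let $U$ be the minimal unitary dilation of $S$. Then
\[
 \|P_H(\eta-U)x\|=d\qquad\text{and}\qquad \|(\eta-U)x\|^2=\|x\|^2+1-2\operatorname{Re}\bar\eta\langle Sx,x\rangle\le 2d .
\]
Hence the spectral measure $\nu_x$ of $U$ concentrates near $\eta$: $\int|\zeta-\eta|^2\,d\nu_x\le2d$.

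First reduce to a polynomial. Fix a unit $e\in E$ with $\|W(\eta)e\|>q'$ and approximate $W$ uniformly by an operator polynomial $W_\varepsilon$ within $\varepsilon$. Then
\[
 \|W_\varepsilon(U)(e\otimes x)-W_\varepsilon(\eta)e\otimes x\|\le L_\varepsilon\sqrt{2d},
\]
where $L_\varepsilon$ is the Lipschitz constant of $W_\varepsilon$ on $\T$. It remains to compress: one needs $P_H$ applied to this vector to still have norm close to $\|W(\eta)e\|$. This works because $U^jx$ is close to $\eta^jx$ in norm, with error at most $j\sqrt{2d}$, so $W_\varepsilon(S)(e\otimes x)\approx W_\varepsilon(\eta)e\otimes x$ with an error controlled by $d$ and the degree and coefficient sizes of $W_\varepsilon$.

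Consequently, if $d$ is small (below a threshold $d_0$ depending only on $W$, $q$, $\varepsilon$), then $\|W(S)\|>q$, a contradiction. So $\|(\eta-S)x\|\ge d_0$ for all unit $x$, all $\eta\in\arc$ and all $n$. The same argument applies to $S^*=(T^*)^n$ with $\bar\eta$ and the symbol $\widetilde W(z)=W(\bar z)^*$, which gives a lower bound for $S^*-\bar\eta$. Alternatively, invertibility follows from $\eta\in\rho(T^n)$, which holds by spectral mapping since $\sigma(T^n)\cap\T\subset\{1\}$. Either way $\|(\eta-T^n)^{-1}\|\le d_0^{-1}$ uniformly in $n$ and $\eta\in\arc$.

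\emph{Step 3: conclude.} Proposition~\ref{thm:general}, with the sampling property of $(\N,\arc)$ from Lemma~\ref{lem:allpowers}, shows that $T$ is Ritt.

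The main obstacle is Step 2: obtaining a lower bound on $\|(\eta-T^n)x\|$ that is uniform in $n$ from an operator-valued symbol. The key point is that the estimate $\|U^jx-\eta^jx\|\le j\|(U-\eta)x\|\le j\sqrt{2d}$ uses only contractivity and the dilation, so no $n$-dependent constant enters, and the polynomial approximation makes the degree finite.
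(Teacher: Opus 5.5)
Your proof is correct, but for the converse it takes a different route from the paper.

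\textbf{The paper's route.} The paper scalarises at the outset. It chooses unit vectors $e\in E$, $f\in F$ with $|\langle W(\zeta_0)e,f\rangle|>q+3\varepsilon$. By tensor compression, $w(z)=\langle W(z)e,f\rangle$ satisfies $\|w(T^n)\|\le\|W(T^n)\|\le q$. It then replaces $w$ by a nearby scalar polynomial $p$ and applies Corollary~\ref{cor:defect} verbatim, that is, the divided-difference Lemma~\ref{lem:divdiff} together with the sampling property of $\N$.

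\textbf{Your route.} You keep $W$ operator-valued and derive the uniform arc resolvent bound directly from approximate eigenvectors. This is in effect an operator-valued form of the divided-difference estimate, so the core mechanism is the same. Your version avoids the compression to scalar symbols and produces an explicit threshold $d_0$. The paper's version is shorter because it reuses scalar machinery already established.

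\textbf{A simplification in Step~2.} The unitary dilation is not needed for the lower bound. Since the factors commute, $\|(S^j-\eta^j)x\|=\|\sum_{i<j}\eta^iS^{j-1-i}(S-\eta)x\|\le jd$ for any contraction $S$. Hence, for $W_\varepsilon(z)=\sum_jW_jz^j$, you get $\|W_\varepsilon(S)(e\otimes x)-W_\varepsilon(\eta)e\otimes x\|\le d\sum_jj\|W_j\|$. This error is linear in $d$, rather than of order $\sqrt{d}$ as your dilation route gives. The compression step you worry about is also automatic: $W_\varepsilon(\eta)e\otimes x$ already lies in $F\otimes H$, and compression is contractive.

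\textbf{What to state explicitly.} You should spell out the bookkeeping of the two approximation errors:
\begin{itemize}
\item $\|W(S)-W_\varepsilon(S)\|\le\varepsilon$, by von Neumann's inequality; beyond the definition of $W(S)$, this is the one place your converse uses the Hilbert structure;
\item $\|W(\eta)e-W_\varepsilon(\eta)e\|\le\varepsilon$.
\end{itemize}
Fix $\varepsilon<(q'-q)/4$ first, and then require $d\sum_jj\|W_j\|<(q'-q)/2$. This yields $\|W(T^n)\|>q$, the desired contradiction.
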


\begin{proof}
The forward implication is Corollary~\ref{cor:disc-algebra-powers}.
For the converse, put
$q=\sup_{n\ge1}\|W(T^n)\|<m:=\|W\|_\infty$.
Choose $\zeta_0\in\T$ with $\|W(\zeta_0)\|=m$;
then $\zeta_0\ne1$. Choose $\varepsilon>0$ and unit vectors
$e\in E$, $f\in F$ such that
\[
 |\langle W(\zeta_0)e,f\rangle|>q+3\varepsilon.
\]
This uses only the definition of the operator norm; no norm
attainment by $W(\zeta_0)$ is needed. Define the scalar function
$w(z)=\langle W(z)e,f\rangle\in A(\D)$. The tensor compression defining
$w(T^n)$ shows that
\[
 \|w(T^n)\|\le\|W(T^n)\|\le q\qquad(n\ge1).
\]
Choose a scalar polynomial $p$ with
$\|p-w\|_\infty<\varepsilon$. Since every $T^n$ is a
contraction, the disc-algebra functional calculus gives
\[
 \sup_{n\ge1}\|p(T^n)\|\le q+\varepsilon,
 \qquad |p(\zeta_0)|>q+2\varepsilon.
\]
By continuity, there is a nondegenerate closed arc
$\arc\subset\T\setminus\{1\}$ containing $\zeta_0$ such that
$m_{\arc}(p)>q+\varepsilon$. The full sequence $\N$ has the
sampling property for this arc, by Lemma~\ref{lem:allpowers}.
Corollary~\ref{cor:defect} applied to $p$ therefore proves that
$T$ is Ritt.
\end{proof}

The uniformity argument for general operator-valued disc-algebra
functions is nonconstructive. For finite matrix polynomials,
Theorem~\ref{thm:polydefect} provides the computable bound
\eqref{eq:matrixgap} through the outer Fej\'er--Riesz factor.

\begin{example}[uniform convexity does not imply polynomial norm gaps]\label{ex:UCpolynomial}
The Hilbert-space hypothesis cannot be replaced by uniform convexity,
even for scalar polynomials and nilpotent contractions. On the uniformly
convex space $X=\ell^4_3$, let
\[
 T(x_1,x_2,x_3)=(0,x_1,x_2),\qquad w(z)=1+z-z^2.
\]
Then $\|T\|=1$ and $T^3=0$, so $T$ is Ritt. Moreover,
\[
 |w(1)|=1,\qquad
 |w(e^{it})|^2=3-2\cos(2t),\qquad
 \|w\|_\infty=\sqrt5.
\]
For $x=(-1,1,1)$ one has $w(T)x=(-1,0,3)$, and consequently
\[
 \|w(T)\|\ge\frac{\|w(T)x\|_4}{\|x\|_4}
 =\left(\frac{82}{3}\right)^{1/4}>\sqrt5.
\]
Thus even the non-strict inequality $\|w(T)\|\le\|w\|_\infty$ fails.
Extensions to $L^p$ contractions for $p\ne2$ would require additional
functional-calculus hypotheses or a different comparison norm.
\end{example}

\subsection*{Acknowledgement.}
I first heard a presentation of \cite{BGT}, and realised that I could answer some of the questions raised there, during Yuri Tomilov's talk at the conference ``Recent Trends in Operator Theory and Function Theory'', Lille, 1--5 June 2026. I am grateful to Yuri for his stimulating lecture and for the discussions that followed it.

I am also grateful to Sophie Grivaux for many conversations about Jamison sequences in the course of our collaboration on \cite{BDG,BadeaGrivaux07,BadeaGrivaux17,BGSurvey} and during Sophie's visit to Reading.

This work was supported by the CDP C2EMPI and its institutional partners through the project R-CDP-24-004-C2EMPI, by the EU COST network OC-2025-1-28418, and by the Heilbronn Institute for Mathematical Research.

\end{document}